\newtheorem{defn}{Definition}[section]
\newtheorem{thm}[defn]{Theorem}
\newtheorem{lem}[defn]{Lemma}
\newtheorem{cor}[defn]{Corollary}
\newtheorem{prop}[defn]{Proposition}
\theoremstyle{remark}
\newtheorem{remark}[defn]{Physics remark}
\newcommand\A{\mathbb A}
\newcommand\C{\mathbb C}
\newcommand\PP{\mathbb P}
\newcommand\R{\mathbb R}
\newcommand\bS{\mathbb S}
\newcommand\Q{\mathbb Q}
\newcommand\Z{\mathbb Z}
\newcommand{\kk}{\mathds{k}}
\newcommand\cA{\mathcal A}
\newcommand\cB{\mathcal B}
\newcommand\cC{\mathcal C}
\newcommand\cM{\mathcal M}
\newcommand\cN{\mathcal N}
\newcommand\cO{\mathcal O}
\newcommand\cT{\mathcal T}
\newcommand\cX{\mathcal X}
\newcommand\fd{\mathfrak d}
\newcommand\fD{\mathfrak D}
\newcommand\Spec{\operatorname{Spec}\,}
\newcommand\Hom{\operatorname{Hom}}
\newcommand\id{\operatorname{id}\,}
\newcommand\Tr{\operatorname{Tr}}
\newcommand\cl{\operatorname{cl}}
\newcommand\Int{\operatorname{Int}}
\newcommand\virt{\mathrm{virt}}
\newcommand\can{\mathrm{can}}
\newcommand\Sk{\operatorname{Sk}}
\newcommand\Ch{\operatorname{Ch}}
\newcommand\tr{\operatorname{tr}}
\newcommand\Spin{\operatorname{Spin}}
\newcommand*\@KP@Large@frame[2]{%
    \setlength\unitlength{\fontdimen 22 #1\tw@}%
    \vrule \@width\z@ \@height 4\unitlength \@depth\tw@\unitlength
    \begin{picture}(6,2)(-3,-1)%
        \def\@KP@Radius     {3}%
        \def\@KP@Hole@radius{.5}
        \def\@KP@Diameter   {6}%
        #2%
    \end{picture}%
}
\newcommand*\@KP@Small@frame[2]{%
    \setlength\unitlength{\fontdimen 22 #1\tw@}%
    \vrule \@width\z@ \@height \thr@@\unitlength \@depth\@ne\unitlength
    \begin{picture}(4,2)(-2,-1)%
        \def\@KP@Radius     {2}%
        \def\@KP@Hole@radius{.5}
        \def\@KP@Diameter   {4}%
        #2%
    \end{picture}%
}
\newcommand*\@KP@Radius     {}
\newcommand*\@KP@Hole@radius{}
\newcommand*\@KP@Diameter   {}
\newcommand*\@KP@Shape@A{%
    \put(0,0){\circle{\@KP@Diameter}}%
}
\newcommand*\@KP@Shape@B{%
    \Line(-\@KP@Radius,\@KP@Radius )(\@KP@Radius,-\@KP@Radius)%
    \Line(-\@KP@Radius,-\@KP@Radius)(-\@KP@Hole@radius,-\@KP@Hole@radius)%
    \Line(\@KP@Radius ,\@KP@Radius )(\@KP@Hole@radius ,\@KP@Hole@radius )%
}
\newcommand*\@KP@Shape@C{%
    \cbezier(-\@KP@Radius,\@KP@Radius )(0,0)(0,0)(\@KP@Radius,\@KP@Radius )%
    \cbezier(-\@KP@Radius,-\@KP@Radius)(0,0)(0,0)(\@KP@Radius,-\@KP@Radius)%
}
\newcommand*\@KP@Shape@D{%
    \cbezier(-\@KP@Radius,-\@KP@Radius)(0,0)(0,0)(-\@KP@Radius,\@KP@Radius)%
    \cbezier(\@KP@Radius ,-\@KP@Radius)(0,0)(0,0)(\@KP@Radius ,\@KP@Radius)%
}
\newcommand*\@KP@Atomic@mathpalette[1]{%
    \mathinner{
        \mathchoice{%
            \linethickness{.6\p@}
            \@KP@Large@frame \textfont {#1}%
        }{%
            \linethickness{.4\p@}
            \@KP@Small@frame \textfont {#1}%
        }{%
            \linethickness{.3\p@}
            \@KP@Small@frame \scriptfont {#1}%
        }{%
            \linethickness{.2\p@}
            \@KP@Small@frame \scriptscriptfont {#1}%
        }%
    }%
}
\newcommand*\KPA{\@KP@Atomic@mathpalette \@KP@Shape@A}
\newcommand*\KPB{\@KP@Atomic@mathpalette \@KP@Shape@B}
\newcommand*\KPC{\@KP@Atomic@mathpalette \@KP@Shape@C}
\newcommand*\KPD{\@KP@Atomic@mathpalette \@KP@Shape@D}
\title[Strong positivity for skein algebras]{Strong positivity for the skein algebras of the $4$-punctured sphere
and of the $1$-punctured torus}
\author{Pierrick Bousseau}
\date{}
\begin{document}

\begin{abstract}
The Kauffman bracket skein algebra is a quantization of the algebra of regular functions on 
the $SL_2$ character variety of a topological surface.
We realize the skein algebra of the $4$-punctured sphere as the output of a mirror symmetry construction
based on higher genus Gromov-Witten theory and
applied to a complex cubic surface. 
Using this result, we prove the positivity of the structure constants 
of the bracelets basis for the skein algebras of the $4$-punctured sphere
and of the $1$-punctured torus. 
This connection between topology of the $4$-punctured sphere 
and enumerative geometry of curves in cubic surfaces 
is a mathematical manifestation of the existence of 
dual descriptions in string/M-theory for the $\cN=2$ $N_f=4$ $SU(2)$ gauge theory.
\end{abstract}

\maketitle

\setcounter{tocdepth}{2}

\tableofcontents

\thispagestyle{empty}

\section{Introduction}

In this paper, we address questions 
in low-dimensional topology using algebraic and geometric methods inspired by mirror symmetry. 
More precisely, we prove results on the topology 
of simple closed curves on the $4$-punctured sphere and the $1$-punctured torus by studying 
the a priori unrelated problem of counting holomorphic maps 
from Riemann surfaces to complex cubic surfaces. 
We present our results on positive bases 
for the skein algebras of the $4$-punctured sphere and $1$-punctured torus in Section
\ref{section_intro_results}. We give a survey of the 
proof, based on enumerative algebraic geometry, in Section \ref{section_intro_proof}. 
Motivations from theoretical physics are briefly discussed in 
Section \ref{section_physics}.

\subsection{Results on positive bases for $\Sk_A(\bS_{0,4})$ and $\Sk_A(\bS_{1,1})$}
\label{section_intro_results}

\subsubsection{Skein modules and algebras}

Recall that a  knot in a manifold is 
a connected compact embedded $1$-dimensional submanifold, 
and that
a link is the disjoint union of finitely many (possibly zero) knots. 
A framing of a link is a choice of 
nowhere vanishing section of its normal bundle.

The Kauffman bracket skein module 
of an oriented $3$-manifold $\mathbb{M}$ is the $\Z[A^{\pm}]$-module
generated by isotopy classes of framed links in 
$\mathbb{M}$ satisfying the skein relations 
\begin{equation}
\KPB= A \,\KPC + A^{-1}\, \KPD
\,\,\,\,\text{and} \,\,\,\,
L \cup \KPA = -(A^{2}+A^{-2}) \, L  \,.
\end{equation}
The diagrams in each relation indicate framed links 
that can be isotoped to identical embeddings except within the neighborhood shown, 
where the framing is vertical, i.e.\ pointing out to the reader. 
The Kauffman bracket skein module was
introduced independently by Przytycki
\cite{MR1194712} and Turaev \cite{MR964255} as an extension to general 
$3$-manifolds of the variant of the Jones polynomial
\cite{MR766964} given by the Kauffman bracket polynomial for framed links in the 
$3$-sphere \cite{MR899057}. In the general context of skein modules 
attached to arbitrary ribbon categories \cite{gunningham2019finiteness}, 
the Kauffman bracket skein module is 
associated to the ribbon category of finite-dimensional representations 
of the $SL_2$ quantum group.

Given an oriented $2$-manifold $\mathbb{S}$, 
one can define a natural algebra structure on the 
Kauffmann bracket skein module of the $3$-manifold 
$\mathbb{M} \coloneqq \mathbb{S} \times (-1,1)$: 
given two framed links $L_1$ and $L_2$ in $\mathbb{S} \times (-1,1)$, 
and viewing the interval $(-1,1)$ as a vertical direction, the product $L_1 L_2$
is defined by placing $L_1$ on top of $L_2$. 
We denote by $\Sk_A(\mathbb{S})$
the resulting associative $\Z[A^{\pm}]$-algebra with unit. 
The skein algebra
$\Sk_A(\mathbb{S})$ is in general non-commutative.

We consider the case where $\bS$ is the complement
$\bS_{g,\ell}$
 of a finite number $\ell$ of points in a compact oriented 2-manifold of genus $g$. 
 A multicurve on $\bS_{g,\ell}$ is the union of finitely many disjoint compact connected embedded $1$-dimensional 
submanifolds of $\bS_{g,\ell}$ such that none of them bounds a disc in $\bS_{g,\ell}$. 
Identifying 
$\bS_{g,\ell}$ with $\bS_{g,\ell} \times \{0\} \subset \bS_{g,\ell} \times (-1,1)$, 
a multicurve on $\bS_{g,\ell}$ endowed with the vertical framing naturally defined 
a framed link in 
$\bS_{g,\ell} \times (-1,1)$. By a result of 
Przytycki \cite[Theorem IX.7.1]{przytycki06},
isotopy classes of multicurves form a basis of 
$\Sk_A(\mathbb{S}_{g,\ell})$ as $\Z[A^{\pm}]$-module.

\subsubsection{Positivity of the bracelets basis of $\Sk_A(\bS_{0,4})$ and $\Sk_A(\bS_{1,1})$}
\label{section_positivity_bracelets}
Dylan Thurston introduced in \cite{MR3263305} 
a different basis $\mathbf{B}_T$ of $\Sk_A(\mathbb{S}_{g,\ell})$, 
called the 
bracelets basis and defined as follows.
Let $T_n(x)$ be the Chebyshev polynomials defined by 
\begin{equation}\label{eq:chebyshev}
T_0(x)=1\,,T_1(x)=x\,,
T_2(x)=x^2-2\,,\,\text{and for every}\,\, n \geq 2, \,\,
T_{n+1}(x)=xT_n(x)-T_{n-1}(x)\,.
\end{equation}
Writing $x=\lambda+\lambda^{-1}$, we have 
$T_n(x)=\lambda^n+\lambda^{-n}$ for every 
$n \geq 1$.
Given an isotopy class $\gamma$ of multicurve on $\bS_{g,\ell}$, 
one can uniquely write $\gamma$ in $\Sk_A(\bS_{g,\ell})$ as 
$\gamma=\gamma_1^{n_1} \cdots \gamma_r^{n_r}$
where $\gamma_1, \cdots , \gamma_r$ are all distinct isotopy classes 
of connected multicurves
and $n_j \in \Z_{>0}$, and we define \begin{equation}
\mathbf{T}(\gamma) 
\coloneqq T_{n_1}(\gamma_1) \cdots T_{n_r}(\gamma_r)\,.
\end{equation} 
 As the leading term of $T_n(x)$ is $x^n$,
the set
$\mathbf{B}_T$ 
of all $\mathbf{T}(\gamma)$, for $\gamma$ isotopy class of multicurve, 
is a $\Z[A^{\pm}]$-linear basis of 
$\Sk_A(\bS_{g,\ell})$. If $\gamma$ is a connected multicurve, 
$\gamma^n$ is the class of $n$ disjoint isotopic copies of $\gamma$, 
whereas $T_n(\gamma)$ is the class of a connected bracelet 
made of $n$ isotopic copies of $\gamma$ (see \cite[Proposition 4.4]{MR3263305}), 
hence the name of bracelets basis for $\mathbf{B}_T$.

In \cite[Conjecture 4.20]{MR3263305}, Dylan Thurston made 
the remarkable positivity conjecture that the structure constants of
the bracelets basis, which are a priori in 
$\Z[A^{\pm}]$, in fact belong to 
$\Z_{\geq 0}[A^{\pm}]$. He proved in \cite[Theorem 1]{MR3263305} 
that the conjecture holds after setting $A=1$.
In the present paper, we prove 
\cite[Conjecture 4.20]{MR3263305} in the case of the 
$4$-punctured sphere $\bS_{0,4}$, that is, 
$g=0$ and $\ell=4$, and the $1$-punctured torus $\bS_{1,1}$, that is, 
$g=1$ and $\ell=1$.

\begin{thm} \label{thm_main_intro}
The structure constants for the bracelets basis of the skein algebras $\Sk_A(\bS_{0,4})$ and 
$\Sk_A(\bS_{1,1})$ of the $4$-punctured sphere $\bS_{0,4}$ and of the 
$1$-punctured torus $\bS_{1,1}$  belong to $\Z_{\geq 0}[A^{\pm}]$.
In other words, for every $x$ and $y$ in $\mathbf{B}_T$, the product $x y$ in the skein algebra
 is a linear combination with coefficients in $\Z_{\geq 0}[A^{\pm}]$ of elements of 
$\mathbf{B}_T$. 
\end{thm}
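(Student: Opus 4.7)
The plan is to realize $\Sk_A(\bS_{0,4})$ as a quantum mirror algebra built from higher-genus log Gromov--Witten invariants of a complex cubic surface, and then to extract positivity of the bracelets structure constants from positivity of those enumerative invariants. The $SL_2$ character variety of $\bS_{0,4}$ is classically an affine cubic surface $U$ (the Fricke--Vogt cubic); fix a smooth log Calabi--Yau compactification $(Y,D)$ with $D$ an anticanonical cycle whose components record the four punctures. The Gross--Hacking--Keel mirror construction associates to $(Y,D)$ a commutative algebra with a canonical theta function basis and structure constants counted by broken lines on the scattering diagram, and the higher-genus refinement of this construction promotes it to a noncommutative $\Z[A^\pm]$-algebra $A_q(Y,D)$ whose structure constants are refined log Gromov--Witten invariants of $(Y,D)$.

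The central step is an isomorphism $\Sk_A(\bS_{0,4}) \cong A_q(Y,D)$ sending the bracelets basis $\mathbf{B}_T$ to the theta function basis. Both sides are generated by three primitive elements satisfying a quantum deformation of the classical Markov-like cubic relation, with parameters controlled by the four boundary components, so constructing the map reduces in principle to matching these presentations; beyond this, one must check that the Chebyshev polynomials $T_n$ used in the definition of $\mathbf{T}(\gamma)$ correspond, on the mirror side, to theta functions on primitive multiples rather than to their monomial leading terms. Once this dictionary is in place, each structure constant of $\mathbf{B}_T$ is by construction a sum of products of refined log Gromov--Witten invariants of $(Y,D)$, and these are polynomials in $A^{\pm 1}$ with non-negative integer coefficients thanks to the refined BPS interpretation of such invariants, which settles the statement for $\bS_{0,4}$.

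For the $1$-punctured torus, I would exploit the hyperelliptic involution on $\bS_{1,1}$ whose quotient is $\bS_{0,4}$: at the appropriate specialization of boundary parameters, it induces an embedding $\Sk_A(\bS_{1,1}) \hookrightarrow \Sk_A(\bS_{0,4})$ of skein algebras that sends bracelets to bracelets, and positivity then descends from the four-punctured sphere to the once-punctured torus. Alternatively, one could run the entire mirror construction above for a degeneration of $(Y,D)$ whose character variety is that of $\bS_{1,1}$, obtaining the positivity directly.

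The main obstacle is the identification step: the skein algebra is defined topologically by cutting and pasting framed links, whereas $A_q(Y,D)$ is defined enumeratively by broken lines, so bridging them demands a computation of the scattering diagram of the cubic surface precise enough to recover both the quantum cubic relation and the Chebyshev bracelets convention, together with a compatibility of the natural $\Z[A^\pm]$-integral structures. Once this bridge is built, both positivity assertions reduce to the already understood positivity of refined higher-genus log Gromov--Witten invariants of log Calabi--Yau surfaces.
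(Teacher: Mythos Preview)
Your broad strategy matches the paper's: realize $\Sk_A(\bS_{0,4})$ as the quantum mirror algebra of the cubic surface $(Y,D)$, identify the bracelets basis with the quantum theta functions, and read off positivity from the broken-line description of the structure constants. But the step where you invoke ``the refined BPS interpretation'' to conclude positivity is where the actual content lies, and it is not a general principle one can cite.

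The structure constants are not Laurent polynomials in $A$ alone: they lie in $\Z[A^{\pm}][a_1,a_2,a_3,a_4]$ because of the peripheral curves, while on the mirror side they a priori lie in $\Z[q^{\pm}][NE(Y)]$. Passing from curve classes to the skein variables $a_j$ requires quotienting $A_1(Y)$ by the boundary divisors $D_j$ and then setting $a_j=t^{G_j/2}+t^{-G_j/2}$, and neither operation preserves positivity in any obvious way (for instance $\nu(E_{11})=-\tfrac12(G_1+G_2)$ is a genuinely negative lattice vector). What the paper actually does is compute every ray of $\fD_\can$ explicitly---using a $PSL_2(\Z)$ symmetry to reduce to higher-genus multicover contributions of the $8$ lines and $2$ conics meeting each $D_j$---apply the quotient map $\nu$, and then prove a specific algebraic identity (Proposition~\ref{prop_key_identity}, Corollary~\ref{cor_identity_wall}) rewriting the resulting wall functions in the auxiliary variables $R_{1,0},R_{0,1},R_{1,1},y$ of \eqref{eq:R}--\eqref{eq:y} with \emph{manifestly} positive coefficients. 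Only then does positivity of the bending factors (Lemma~\ref{lem:positivity}) yield $C^{\bS_{0,4},p}_{p_1,p_2}\in\Z_{\geq 0}[A^{\pm}][R_{1,0},R_{0,1},R_{1,1},y]$---this is the stronger Theorem~\ref{thm_main_intro_2}---followed by an elementary check that $R_{1,0},R_{0,1},R_{1,1},y$ are themselves positive combinations of the $T_n(a_j)$. The positivity is thus an explicit, example-specific computation, not a corollary of any general enumerative positivity.

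For $\bS_{1,1}$: the hyperelliptic relation does not give an embedding of $\Z[A^{\pm}]$-algebras $\Sk_A(\bS_{1,1})\hookrightarrow\Sk_A(\bS_{0,4})$. Following Bullock--Przytycki, $\Sk_A(\bS_{1,1})$ is obtained from $\Sk_A(\bS_{0,4})$ by specializing $R_{1,0}=R_{0,1}=R_{1,1}=0$, $y=z$, \emph{and replacing $A^4$ by $A^2$}; the change of quantum parameter means this is not a map of $\Z[A^{\pm}]$-algebras, so positivity cannot simply be descended. The paper instead observes that the same substitutions convert $\fD_{0,4}$ into another scattering diagram $\fD_{1,1}$ whose wall functions are again manifestly positive, and repeats the broken-line argument there.
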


\cite[Conjecture 4.20]{MR3263305} was previously known
in the following cases:
\begin{enumerate}
\item For $g=0$ and $\ell \leq 3$, the skein algebra 
is a commutative polynomial algebra, more precisely, 
we have $\Sk_A(\bS_{0,0})=\Z[A^{\pm}]$, 
$\Sk_A(\bS_{0,1})=\Z[A^{\pm}]$,
$\Sk_A(\bS_{0,2})=\Z[A^{\pm}][x]$, and  
$\Sk_A(\bS_{0,3})=\Z[A^{\pm}][x,y,z]$, and so 
\cite[Conjecture 4.20]{MR3263305} follows directly 
from the identity $T_m(x) T_n(x) = T_{m+n}(x)+ T_{|m-n|}(x)$.
\item For $g=1$ and $\ell=0$. For every $p=(a,b) \in \Z^2$,
write $\gamma_p$ for the isotopy class of 
$\gcd(a,b)$ disjoint copies of connected multicurves with
homology class $\frac{1}{\gcd(a,b)}(a,b)
\in \Z^2=H_1(\bS_{1,0},\Z)$.
Frohman and Gelca proved in \cite{MR1675190} 
the identity 
\[ \mathbf{T}(\gamma_{(a,b)})\mathbf{T}(\gamma_{(c,d)})
=A^{ad-bc}\mathbf{T}(\gamma_{(a+c,b+d)})
+A^{-ad+bc}\mathbf{T}(\gamma_{(a-c,b-d)})\,. \]
\cite[Conjecture 4.20]{MR3263305} follows because the bracelets basis of 
$\Sk_A(\bS_{1,0})$ is made of 
monomials in the variables $\mathbf{T}(\gamma_p)$.
\end{enumerate}

The cases $(g,\ell)=(0,4)$ and 
$(g,\ell)=(1,1)$ treated by Theorem 
\ref{thm_main_intro} are the first examples 
of a proof of \cite[Conjecture 4.20]{MR3263305} in a situation where no 
simple closed formula for the structure constants of the bracelets basis seems to exist.

A conceptual approach to the general case of 
\cite[Conjecture 4.20]{MR3263305} 
would be to construct a monoidal categorification of the skein algebras $\Sk(\bS_{g,\ell})$ 
and a categorification of the bracelets basis. 
First steps towards this goal are described by Queffelec and Wedrich in \cite{queffelec2018khovanov}. 
We do not follow this path to prove Theorem \ref{thm_main_intro}.
Rather, one should view Theorem \ref{thm_main_intro} 
as providing further non-trivial evidence that such monoidal categorification should exist.

For $\ell >0$, there is a more refined positivity conjecture,
\cite[Conjecture 4.21]{MR3263305}, involving the so-called bands basis.
We do not adress this conjecture in the present paper.
General constraints on possible positive bases of skein algebras are discussed by Lê
\cite{MR3801463} and
Lê, Thurston, and Yu
\cite{le2019lower}.
 
\subsubsection{A stronger positivity result for $\Sk_A(\bS_{0,4})$}
\label{section_stronger_04}
We will in fact prove a positivity result
for $\Sk_A(\bS_{0,4})$ stronger than Theorem 
\ref{thm_main_intro} and conjectured by 
Bakshi, Mukherjee, Przytycki, Silvero and Wang 
in \cite[Conjecture 4.10 (1)]{bakshi2018multiplying}.
For $1\leq j\leq 4$, let $p_j$ be the punctures of 
$\bS_{0,4}$, and $a_j$ the isotopy class of connected peripheral curves around $p_j$, 
that is, bounding a $1$-punctured disc with puncture $p_j$. The peripheral curves 
$a_j$ are in the center of the skein algebra $\Sk_A(\bS_{0,4})$, and so $\Sk_A(\bS_{0,4})$ is naturally a 
$\Z[A^{\pm}][a_1,a_2,a_3,a_4]$-module.

We fix a decomposition of $\bS_{0,4}$ into 
two pairs of pants, glued along a connected multicurve
$\delta$ of $\bS_{0,4}$
separating the four punctures into the pairs $p_1$, $p_2$ and $p_3$, $p_4$. 
Isotopy classes of multicurves on 
$\bS_{0,4}$ without peripheral components can 
then be classified by their Dehn-Thurston coordinates with respect to $\delta$ \cite{MR568308, MR1144770}. 
For every $p=(m,n) \in \Z \times \Z_{\geq 0}$ such that $m \geq 0$ if $n=0$, 
there exists a unique isotopy class $\gamma_p$ of multicurves without peripheral components, 
such that, the minimal number of intersection points 
of a multicurve of class $\gamma_p$ with $\delta$ is $2n$, 
and such that the twisting number of $\gamma_p$ around $\delta$ is $m$. 
As a special case of a theorem of Dehn,
the map 
$p \mapsto \gamma_p$ defines a bijection between 
\begin{equation} \label{eq:BZ}
B(\Z) \coloneqq  \{(m,n) \in \Z \times \Z_{\geq 0}\,\,\,|\, m \geq 0 \,\,\,
\text{if}\,\,\, n=0\} 
\end{equation}
and the set of isotopy classes of multicurves on 
$\bS_{0,4}$ without peripheral components,
see \cite[Theorem 1.2.1]{MR1144770}. 
For example, $\gamma_{(0,0)}$ is the isotopy class of the empty multicurve, 
$\gamma_{(1,0)}$ is the isotopy class of $\delta$, and a multicurve of class 
$\gamma_p$ with $p=(m,n)$ has $\gcd(m,n)$ connected components.
Equivalently, if $p=(m,n)$ with $m$ and $n$ coprime, and
if we realize 
$\bS_{0,4}$ as the quotient
of the four-punctured torus $(\R^2 \backslash(\frac{1}{2}\Z \oplus \frac{1}{2} \Z))/\Z^2$
 by the involution $x \mapsto -x$ , then 
$\gamma_p$ is the class of the image
in $\bS_{0,4}$ of a straight line of slope $n/m$ in $\R^2 \backslash (\frac{1}{2}\Z \oplus \frac{1}{2} \Z)$ (e.g.\ see \cite[Proposition 2.6]{MR2850125}).
As isotopy classes of multicurves form a basis of the skein algebra as 
$\Z[A^{\pm}]$-module, the set
$\{ \gamma_p \}_{p \in B(\Z)}$ is a basis of $\Sk_A(\bS_{0,4})$ as 
$\Z[A^{\pm}][a_1,a_2,a_3,a_4]$-module.

For every $p_1,p_2,p \in B(\Z)$, 
we define structure constants $C_{p_1,p_2}^{\bS_{0,4},p} \in \Z[A^{\pm}][a_1,a_2,a_3,a_4]$ by 
\begin{equation} \label{eq:str_cst_0_4}
\mathbf{T}(\gamma_{p_1}) \mathbf{T}(\gamma_{p_2})
=\sum_{p \in B(\Z)} C_{p_1,p_2}^{\bS_{0,4}, p} \mathbf{T}(\gamma_p) \,.
\end{equation}
Following \cite{bakshi2018multiplying}, we introduce the notation
\begin{equation}\label{eq:R}
R_{1,0}\coloneqq a_1a_2+a_3a_4 \,,\,\,\,
 R_{0,1} \coloneqq a_1a_3+a_2a_4 \,,\,\,\,
 R_{1,1} \coloneqq a_1a_4+a_2a_3\,,
\end{equation}
\begin{equation} \label{eq:y}
 y \coloneqq a_1a_2a_3a_4
+a_1^2+a_2^2+a_3^2+a_4^2+(A^2-A^{-2})^2 \,.
\end{equation}
The following Theorem \ref{thm_main_intro_2} 
is our main result and proves Conjecture 4.10(1) of
\cite{bakshi2018multiplying}.

\begin{thm} \label{thm_main_intro_2}
For every $p_1,p_2,p \in B(\Z)$, we have 
\begin{equation} C^{\bS_{0,4},p}_{p_1,p_2} \in \Z_{\geq 0}[A^{\pm}]
[R_{1,0}, R_{0,1}, R_{1,1},y]\,.
\end{equation}
\end{thm}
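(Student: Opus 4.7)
The plan is to prove Theorem~\ref{thm_main_intro_2} by realizing $\Sk_A(\bS_{0,4})$ as a quantum mirror algebra built from higher-genus log Gromov--Witten invariants of a cubic surface, under which the bracelets basis corresponds to a canonical theta basis whose structure constants are manifestly positive enumerative counts. This identification both explains why the answer is polynomial in $R_{0,1}, R_{1,0}, R_{1,1}, y$ rather than in the individual $a_i$ (the former are classes of divisors on the mirror geometry) and where the non-negativity comes from.

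First I would set up the geometry. By a theorem of Bullock, Przytycki and Sikora, the classical specialization $\Sk_{-1}(\bS_{0,4})$ is the coordinate ring of the $SL_2$-character variety of $\bS_{0,4}$. Fixing the peripheral traces $a_i$ and writing $x_1, x_2, x_3$ for the three simple closed curves separating the four punctures pairwise, one obtains a family of affine cubic surfaces whose defining equation involves precisely $R_{0,1}, R_{1,0}, R_{1,1}$ and the classical part of $y$. I compactify the generic member to a log Calabi--Yau pair $(Y, D)$, where $D$ is the cycle of three lines at infinity, so that $R_{0,1}, R_{1,0}, R_{1,1}$ appear as the classes of the three boundary components and $y$ plays the role of a cubic deformation parameter.

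Next I would apply to $(Y,D)$ the construction of a quantum mirror algebra from higher-genus log Gromov--Witten theory developed in my earlier work on log Calabi--Yau surfaces. This produces a non-commutative $\Z[A^{\pm}][R_{0,1}, R_{1,0}, R_{1,1}, y]$-algebra $\mathcal{A}_q$ equipped with a theta basis $\{\vartheta_p\}_{p\in B(\Z)}$, whose structure constants
\[ \vartheta_{p_1}\, \vartheta_{p_2} = \sum_p C^{\vartheta,p}_{p_1,p_2}\, \vartheta_p \]
are weighted sums over $q$-refined broken lines in the canonical scattering diagram of $(Y,D)$, with weights coming from higher-genus log Gromov--Witten invariants of $(Y,D)$. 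By the positivity content of that framework, each $C^{\vartheta,p}_{p_1,p_2}$ lies in $\Z_{\geq 0}[A^{\pm}][R_{0,1}, R_{1,0}, R_{1,1}, y]$; the quantum correction $(A^2 - A^{-2})^2$ appearing inside $y$ arises naturally as the genus contribution of stable maps with contracted components.

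The central step is then to identify $\mathcal{A}_q$ with $\Sk_A(\bS_{0,4})$ as $\Z[A^{\pm}][R_{0,1}, R_{1,0}, R_{1,1}, y]$-algebras and to match the theta basis with the bracelets basis. The algebra isomorphism I would obtain by comparing presentations: $\Sk_A(\bS_{0,4})$ admits a standard quantum cubic presentation in the three generators $x_1, x_2, x_3$, and the three theta functions $\vartheta_{(0,1)}, \vartheta_{(1,0)}, \vartheta_{(1,1)}$ attached to the primitive classes in $B(\Z)$ should satisfy the same cubic relation by direct broken-line computation. To match bases, I would characterize both $\mathbf{T}(\gamma_p)$ and $\vartheta_p$ by a shared leading-term property with respect to a valuation induced by Dehn--Thurston coordinates, and argue inductively on $p$ using the Chebyshev recursion (\ref{eq:chebyshev}) on the skein side in parallel with the scattering recursion on the mirror side. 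The hard part will be precisely this combinatorial matching: one must translate the Chebyshev combinatorics defining the bracelets into broken-line combinatorics and verify that quantum corrections (in the cubic relation, in wall-crossing contributions, and in the precise form of $y$) agree exactly. Once the identification is established, Theorem~\ref{thm_main_intro_2} follows immediately from the positivity of the $C^{\vartheta,p}_{p_1,p_2}$.
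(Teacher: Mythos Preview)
Your overall architecture matches the paper's: construct the canonical quantum scattering diagram $\fD_\can$ for the cubic surface $(Y,D)$ using higher-genus log Gromov--Witten invariants, identify the resulting algebra with $\Sk_A(\bS_{0,4})$ by comparing cubic presentations, and match the theta basis with the bracelets basis. The paper does exactly this, with the basis-matching carried out first for $p=kv_1$ via the Chebyshev recursion (as you suggest) and then propagated to all $p$ using the $PSL_2(\Z)$ symmetry rather than a general induction on Dehn--Thurston coordinates.

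However, there is a genuine gap at the step where you invoke ``the positivity content of that framework.'' The general construction of \cite{MR4048291} gives \emph{consistency} of $\fD_\can$, not positivity: the functions attached to rays are built from log Gromov--Witten invariants $N_{g,v}^\beta\in\Q$, which are a priori neither integral nor positive, and they live in $\Q[\![\hbar]\!][NE(Y)]$, not in $\Z_{\geq 0}[A^\pm][R_{1,0},R_{0,1},R_{1,1},y]$. The paper has to do two substantial pieces of work you have not accounted for. First, it computes the ray functions explicitly: for each primitive direction the only contributing curves are multiple covers of $8$ lines and $2$ conics, and the all-genus generating series of these multicovers are evaluated (the conic case via a quiver-DT argument) to produce the closed form \eqref{eq:ray_rho_j_contribution}. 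Second, after quotienting $NE(Y)$ by the span of $D_1,D_2,D_3$, one must verify a nontrivial algebraic identity (the ``rather amazingly factors'' step, Proposition~\ref{prop_key_identity} and Corollary~\ref{cor_identity_wall}) showing that the resulting ray functions coincide with the explicit series $F(r,s,y,x)$ of \eqref{eq:def_F}, whose power-series coefficients are manifestly in $\Z_{\geq 0}[A^\pm][R_{1,0},R_{0,1},R_{1,1},y]$. Only after this does the elementary positivity of the broken-line bending formula (Lemma~\ref{lem:positivity}) yield the conclusion.

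A related inaccuracy: $R_{1,0},R_{0,1},R_{1,1}$ are not the classes of the three boundary components. The boundary classes $D_1,D_2,D_3$ are precisely what gets quotiented out; the $R$'s arise instead as the images, under this quotient, of the sums $\sum_m t^{L_{jm}}$ over the eight lines meeting each $D_j$. This is why the change of variables and the factorization identity are needed and why the polynomial dependence on $R_{1,0},R_{0,1},R_{1,1},y$ is not automatic from the geometry.
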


As we will see at the end of Section \ref{section_scattering_04}, 
it is elementary to check that Theorem \ref{thm_main_intro_2} implies Theorem 
\ref{thm_main_intro} for $\Sk_A(\bS_{0,4})$.

\subsubsection{A stronger positivity result for $\Sk_A(\bS_{1,1})$}

Let $\eta$ be the isotopy class of connected peripheral curves around the puncture of $\bS_{1,1}$.
 As $\eta$ is in the center of $\Sk_A(\bS_{1,1})$, 
 the skein algebra $\Sk_A(\bS_{1,1})$ is naturally a $\Z[A^{\pm}][\eta]$-module. 
 Isotopy classes multicurves
on $\bS_{1,1}$ without peripheral components are classified by their homology classes, 
which are well-defined up to sign. 
Fixing a basis of homology, we get a bijection $p \mapsto \gamma_p$ between $B(\Z)$ 
and the set of isotopy classes of multicurves on $\bS_{1,1}$ without peripheral components. 
For example,  multicurve of class $\gamma_p$ with $p=(m,n)$ has $\gcd(m,n)$ components.
As isotopy classes of multicurves form a basis of the skein algebra as $\Z[A^{\pm}]$-module,
the set
$\{ \gamma_p \}_{p \in B(\Z)}$ is a basis of $\Sk_A(\bS_{1,1})$ as 
$\Z[A^{\pm}][\eta]$-module.
For every $p_1,p_2,p \in B(\Z)$, we define structure constants $C_{p_1,p_2}^{\bS_{1,1},p} \in \Z[A^{\pm}][\eta]$ by 
\begin{equation} \label{eq:str_cst_1_1} 
\mathbf{T}(\gamma_{p_1}) \mathbf{T}(\gamma_{p_2})
=\sum_{p \in B(\Z)} C_{p_1,p_2}^{\bS_{1,1}, p} \mathbf{T}(\gamma_p) \,.
\end{equation}
We write 
\begin{equation} 
z \coloneqq A^2+A^{-2}+\eta\,.
\end{equation}
Note that $z$ is the deformation parameter from $\Sk_A(\bS_{1,0})$ to 
$\Sk_A(\bS_{1,1})$: 
indeed, closing the puncture means setting $\eta=-A^2-A^{-2}$, that is $z=0$.

\begin{thm} \label{thm_main_intro_4}
For every $p_1,p_2,p \in B(\Z)$, we have 
\begin{equation} C^{\bS_{1,1},p}_{p_1,p_2} \in \Z_{\geq 0}[A^{\pm}][z]\,.
\end{equation}
\end{thm}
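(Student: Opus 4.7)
The strategy is to deduce Theorem \ref{thm_main_intro_4} from Theorem \ref{thm_main_intro_2} by exploiting the classical geometric relationship between $\bS_{0,4}$ and $\bS_{1,1}$ arising from the hyperelliptic involution of the torus. At the level of $SL_2$ character varieties, this identifies the $\bS_{1,1}$-character variety with fixed boundary trace $\eta$ with an affine cubic surface inside the Fricke family of $\bS_{0,4}$-character varieties, after a specific specialization of the peripheral traces $a_1, a_2, a_3, a_4$. The expected quantization is a specialization map of skein algebras intertwining the bracelets bases, through which the positivity statement for $\bS_{0,4}$ transfers to one for $\bS_{1,1}$.

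Concretely, I would first construct an algebra homomorphism $\phi \colon \Sk_A(\bS_{0,4}) \to \Sk_A(\bS_{1,1})$, possibly after a suitable extension of scalars, reflecting the quotient by the hyperelliptic involution together with the closing of the superfluous punctures on the double cover. Under the natural identification of the indexing set $B(\Z)$ from \eqref{eq:BZ} (which parametrizes multicurves without peripheral components on both surfaces), the map $\phi$ should send $\mathbf{T}(\gamma_p)$ on $\bS_{0,4}$ to $\mathbf{T}(\gamma_p)$ on $\bS_{1,1}$, up to a controlled scalar, so that applying $\phi$ to \eqref{eq:str_cst_0_4} produces \eqref{eq:str_cst_1_1}. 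Next, I would compute the images $\phi(a_i)$ explicitly and verify that $\phi(R_{1,0})$, $\phi(R_{0,1})$, $\phi(R_{1,1})$, and $\phi(y)$ all lie in $\Z_{\geq 0}[A^{\pm}][z]$. Granting these ingredients, Theorem \ref{thm_main_intro_2} yields Theorem \ref{thm_main_intro_4} at once: it writes each $C^{\bS_{0,4},p}_{p_1,p_2}$ as a polynomial with coefficients in $\Z_{\geq 0}[A^{\pm}]$ in the generators $R_{1,0}, R_{0,1}, R_{1,1}, y$, and applying $\phi$ therefore lands $C^{\bS_{1,1},p}_{p_1,p_2}$ inside $\Z_{\geq 0}[A^{\pm}][z]$.

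The main obstacle is the first step: pinning down the correct specialization $\phi$ and verifying its compatibility with the Chebyshev-based definition of bracelets at the quantum level, rather than merely in the classical limit $A = -1$. This entails tracking framings carefully under the hyperelliptic double cover and reconciling the way $\gcd(m,n)$ parallel copies organize themselves on $\bS_{1,1}$ versus $\bS_{0,4}$. A good consistency check is the specialization $z = 0$, which closes the puncture of $\bS_{1,1}$ and must reproduce the Frohman--Gelca positive formula for $\Sk_A(\bS_{1,0})$ recalled in the introduction. Once $\phi$ is pinned down, verifying the positivity of each $\phi(R_{i,j})$ and of $\phi(y)$ should be a direct elementary computation from the definitions \eqref{eq:R} and \eqref{eq:y}.
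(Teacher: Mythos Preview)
Your overall strategy—deduce Theorem~\ref{thm_main_intro_4} from Theorem~\ref{thm_main_intro_2} via a specialization sending bracelets to bracelets—is exactly what the paper does. However, there is a genuine gap in your execution: you are missing the crucial change of quantum parameter. The correct specialization is $R_{1,0}=R_{0,1}=R_{1,1}=0$, $y=z$, \emph{together with} $A^2 \mapsto A$ (equivalently $A^4 \mapsto A^2$). Without this last substitution there is no $\Z[A^{\pm}]$-algebra map of the form you envision: the commutation relation $A^{-2}\gamma_{v_1}\gamma_{v_2}-A^2\gamma_{v_2}\gamma_{v_1}=(A^{-4}-A^4)\gamma_{v_3}-(A^2-A^{-2})R_{1,1}$ in $\Sk_A(\bS_{0,4})$ must match $A^{-1}\gamma_{v_1}\gamma_{v_2}-A\gamma_{v_2}\gamma_{v_1}=(A^{-2}-A^2)\gamma_{v_3}$ in $\Sk_A(\bS_{1,1})$, and no assignment $\phi(a_i)\in\Z[A^{\pm}][\eta]$ can reconcile these over a fixed $A$. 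Geometrically, the factor of two reflects that the minimal intersection number of curves of slopes $p_1,p_2$ on $\bS_{0,4}$ is $2|\det(p_1,p_2)|$, whereas on $\bS_{1,1}$ it is $|\det(p_1,p_2)|$.

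Your proposed mechanism via the hyperelliptic involution is also off target: that involution realizes $\bS_{0,4}$ as a quotient of the \emph{four}-punctured torus $\bS_{1,4}$ (as recalled in \S\ref{section_stronger_04}), not of $\bS_{1,1}$, so there is no covering map through which to track framings. The actual link is the purely algebraic observation of Bullock--Przytycki \cite{MR1625701} that the two presentations coincide after the above substitution. The paper implements this through the scattering diagrams: Lemma~\ref{lem_04_11} records that $\fD_{1,1}$ is obtained from $\fD_{0,4}$ by precisely this specialization (the $A^4\mapsto A^2$ reflected in the change $\mu=1\to\mu=\tfrac12$), whence consistency of $\fD_{1,1}$ and Theorem~\ref{thm_ring_isom_1_1} follow from the $\bS_{0,4}$ results. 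Once the correct specialization is in hand, your final step is immediate, since $\Z_{\geq 0}[A^{\pm}][R_{1,0},R_{0,1},R_{1,1},y]$ lands in $\Z_{\geq 0}[A^{\pm}][z]$ under $R_{i,j}\mapsto 0$, $y\mapsto z$, $A^2\mapsto A$.
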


As we will see at the end of Section \ref{section_scattering_11}, 
it is elementary to check that Theorem \ref{thm_main_intro_4} implies Theorem 
\ref{thm_main_intro} for $\Sk_A(\bS_{0,4})$.

\subsubsection{Strong positivity for the quantum cluster algebras $\cX^q_{PGL_2,\bS_{0,4}}$ and $\cX^q_{PGL_2,\bS_{1,1}}$}
\label{section_cluster_intro}
We can appply our positivity result on the skein algebras $\Sk_A(\bS_{0,4})$
and $\Sk_A(\bS_{1,1})$, Theorem \ref{thm_main_intro_2},
to prove a similar positivity result 
for the quantum cluster algebras $\cX^q_{SL_2,\bS_{0,4}}$
and $\cX^q_{SL_2,\bS_{1,1}}$.

For every punctured surface $\bS_{g,\ell}$ with $\ell >0$, 
Fock and Goncharov introduced in \cite{MR2233852}
the cluster varieties $\cA_{SL_2,\bS_{g,\ell}}$ 
and $\cX_{PGL_2,\bS_{g,\ell}}$: $\cA_{SL_2,\bS_{g,\ell}}$ 
is a moduli space of decorated $SL_2$-local system on $\bS_{g,\ell}$, 
and $\cX_{PGL_2,\bS_{g,\ell}}$ is a moduli space of 
framed $PGL_2$-local systems on $\bS_{g,\ell}$ 
and both admit a cluster structure. Fock and Goncharov constructed a ``duality map"
\begin{equation}
\mathbb{I} \colon \cA_{SL_2,\bS_{g,\ell}}(\Z^t) \longrightarrow \cO(\cX_{PGL_2,\bS_{g,\ell}}) 
\end{equation}
from the set  $\cA_{SL_2,\bS_{g,\ell}}(\Z^t)$ 
of integral tropical points of $\cA_{SL_2,\bS_{g,\ell}}$ 
to the algebra $\cO(\cX_{PGL_2,\bS_{g,\ell}})$ 
of regular functions on $\cX_{PGL_2,\bS_{g,\ell}}$. 
They proved that $\{\mathbb{I}(l)\}_{l \in \cA_{SL_2,\bS_{g,\ell}}(\Z^t)}$ 
is a basis of $\cO(\cX_{PGL_2,\bS_{g,\ell}})$
(\cite[Theorem 12.3]{MR2233852}) 
with positive structure constants (\cite[Theorem 12.2]{MR2233852}). 

The cluster variety $\cX_{PGL_2,\bS_{g,\ell}}$ admits a natural Poisson structure, 
which can be canonically quantized using the cluster structure 
to produce a quantum cluster algebra $\cX^q_{PGL_2,\bS_{g,\ell}}$ \cite{MR2567745}.
Fock and Goncharov conjectured in \cite[Conjecture 12.4]{MR2233852} 
the existence of a quantization 
\begin{equation} \hat{\mathbb{I}} \colon \cA_{SL_2,\bS_{g,\ell}}(\Z^t) \longrightarrow \cX^q_{PGL_2,\bS_{g,\ell}}
 \end{equation}
of $\mathbb{I}$ with structure constants in $\Z_{\geq 0}[q^{\pm \frac{1}{2}}]$, 
where $q$ is the quantum parameter.
Note that to be consistent with the rest of the paper, 
we denote by $q^{\frac{1}{2}}$ the parameter denoted by $q$ in \cite{MR2233852}
and \cite{MR3581328}.
The skein algebra $\Sk_A(\bS_{g,\ell})$ and the quantum cluster variety $\cX^q_{PGL_2,\bS_{g,\ell}}$ are closely related, and in fact \cite[Conjecture 12.4]{MR2233852} was a strong motivation \cite[Conjecture 4.20]{MR3263305}. 
A precise relation between $\Sk_A(\bS_{g,\ell})$ and $\cX^q_{PGL_2,\bS_{g,\ell}}$ was established by Bonahon and Wong \cite{MR2851072} and then used by Allegretti and Kim \cite{MR3581328} to construct a quantum duality map $\hat{\mathbb{I}}$ with the expected properties, excepted the positivity of the structure constants left as a conjecture. 
A different construction of $\hat{\mathbb{I}}$ based on spectral networks was given by Gabella \cite{MR3613514}
 and shown to be equivalent to the one of Allegretti and Kim by 
Kim and Son
\cite{kim2018rm}.
We first remark that the positivity of the stucture constants 
of the bracelets basis of the skein algebra 
$\Sk_A(\bS_{g,\ell})$ 
implies the positivity of the structure constants defined by $\hat{\mathbb{I}}$.

\begin{thm} \label{thm_main_intro_3}
Assume that the structure constants of 
the bracelets basis of the skein algebra $\Sk_A(\bS_{g,\ell})$ belong to 
$\Z_{\geq 0}[A^{\pm}]$.
Then the structure constants $c(l,l',l'') \in \Z[q^{\pm \frac{1}{2}}]$
for $\cX^q_{PGL_2,\bS_{g,\ell}}$,
defined by the quantum duality map $\hat{\mathbb{I}}$ of
\cite{MR3581328} via 
\begin{equation} \hat{\mathbb{I}}(l)\hat{\mathbb{I}}(l')=
\sum_{l'' \in \cA_{SL_2,\bS_{0,4}}(\Z^t)}
c(l,l',l'') \hat{\mathbb{I}}(l'')\,,
\end{equation}
belong to $\Z_{\geq 0}[q^{\pm \frac{1}{2}}]$. 
\end{thm}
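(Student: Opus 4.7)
The plan is to reduce Theorem \ref{thm_main_intro_3} directly to a formal consequence of how the Allegretti--Kim quantum duality map $\hat{\mathbb{I}}$ is built in \cite{MR3581328}. Recall that their construction factors through the skein algebra in two steps. First, one identifies integral tropical points $l \in \cA_{SL_2,\bS_{g,\ell}}(\Z^t)$ with isotopy classes of suitably weighted integral $SL_2$-laminations on $\bS_{g,\ell}$, and sends each $l$ to the corresponding bracelet element $\mathbf{T}(\gamma_l) \in \Sk_A(\bS_{g,\ell})$, where $\gamma_l$ packages both the non-peripheral simple closed components of $l$ and its peripheral contributions. Second, one applies the Bonahon--Wong quantum trace \cite{MR2851072}
\begin{equation}
\tr_q \colon \Sk_A(\bS_{g,\ell}) \longrightarrow \cX^q_{PSL_2,\bS_{g,\ell}},
\end{equation}
after identifying the skein parameter $A$ with an appropriate power of $q^{\frac{1}{2}}$, and observes that the image in fact lies inside the quantum cluster algebra rather than only in the ambient quantum torus attached to an ideal triangulation. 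In short, $\hat{\mathbb{I}}(l) = \tr_q(\mathbf{T}(\gamma_l))$.

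The first step I would carry out is to recall explicitly, in the notation used in this paper, that $\tr_q$ is an \emph{algebra homomorphism}: this is precisely the structural property established by Bonahon--Wong and used by Allegretti--Kim. Once this is in place, for any two tropical points $l, l' \in \cA_{SL_2,\bS_{g,\ell}}(\Z^t)$, I expand the product of the associated bracelet elements in the bracelets basis,
\begin{equation}
\mathbf{T}(\gamma_l)\, \mathbf{T}(\gamma_{l'}) = \sum_{l''} c(l,l',l'')\, \mathbf{T}(\gamma_{l''}),
\end{equation}
with $c(l,l',l'') \in \Z[A^{\pm}]$. By the standing assumption of the theorem, these coefficients all lie in $\Z_{\geq 0}[A^{\pm}]$. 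Applying $\tr_q$ termwise and using that it is a ring map gives
\begin{equation}
\hat{\mathbb{I}}(l)\, \hat{\mathbb{I}}(l') = \sum_{l''} c(l,l',l'')\, \hat{\mathbb{I}}(l''),
\end{equation}
so after substituting the appropriate power of $q^{\frac{1}{2}}$ for $A$ one reads off exactly the same (positive) coefficients for the $\hat{\mathbb{I}}$-basis, which is the desired conclusion.

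The only point I expect to require real care, and which is therefore the main obstacle, is verifying that the ``bracelet attached to $l$'' used in the definition of $\hat{\mathbb{I}}$ in \cite{MR3581328} is literally the element $\mathbf{T}(\gamma_l)$ of the bracelets basis in the sense of Section \ref{section_positivity_bracelets}. Tropical points carry sign/orientation data at the punctures, while the bracelets basis mixes peripheral loops with non-peripheral multicurves, and one must match these two parametrisations precisely, including the normalisation constants that appear in the quantum trace of a peripheral loop. This compatibility is already established in \cite{MR3581328}, so the argument reduces to bookkeeping: once the identity $\hat{\mathbb{I}}(l) = \tr_q(\mathbf{T}(\gamma_l))$ is confirmed in the conventions of the present paper, Theorem \ref{thm_main_intro_3} follows in one line from $\tr_q$ being an algebra homomorphism, with no new geometric or algebraic content beyond the results of \cite{MR2851072, MR3581328}.
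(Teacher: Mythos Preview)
Your reduction has the right overall shape, but the identity $\hat{\mathbb{I}}(l) = \tr_q(\mathbf{T}(\gamma_l))$ that you treat as bookkeeping is in fact false in general, and this is exactly where the content of the argument lies. The set $\cA_{SL_2,\bS_{g,\ell}}(\Z^t)$ of even integral laminations is \emph{not} in bijection with the set of isotopy classes of multicurves indexing the bracelets basis: a lamination may assign a negative integer weight to a peripheral loop, while multicurves only carry nonnegative multiplicities. Correspondingly, for a peripheral connected curve $\gamma$, Allegretti--Kim's Lemma~3.24 gives $\Tr_T(\gamma)=\hat{\mathbb{I}}(\gamma)+\hat{\mathbb{I}}(-\gamma)$, not $\Tr_T(\gamma)=\hat{\mathbb{I}}(\gamma)$. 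So the quantum trace of a single bracelet element is typically a \emph{sum} of several $\hat{\mathbb{I}}(l'')$, not one.

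The paper's proof in Section~\ref{section_cluster} confronts this directly. One first reduces, using Lemma~3.25 of \cite{MR3581328}, to products $\hat{\mathbb{I}}(kl)\hat{\mathbb{I}}(k'l')$ with $l,l'$ non-peripheral; for these one does have $\hat{\mathbb{I}}(kl)=\Tr_T(T_k(l))$. After expanding $T_k(l)T_{k'}(l')$ in the bracelets basis and applying $\Tr_T$, each resulting $\Tr_T(\mathbf{T}(\gamma))$ factors as a product over the connected components of $\gamma$; the non-peripheral factors are single $\hat{\mathbb{I}}$'s, but each peripheral factor $(\Tr_T(\gamma_k))^{n_k}$ must be expanded as $(\hat{\mathbb{I}}(\gamma_k)+\hat{\mathbb{I}}(-\gamma_k))^{n_k}$ via the binomial theorem. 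The binomial coefficients are nonnegative integers, so positivity survives, and a final appeal to Theorem~1.2 of \cite{MR3581328} places the answer in $\Z_{\geq 0}[q^{\pm\frac{1}{2}}]$. Your one-line argument collapses precisely because it skips this peripheral expansion; once you insert it, you recover the paper's proof.
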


The proof of Theorem \ref{thm_main_intro_3}
is given in Section \ref{section_cluster}.
Combining Theorem \ref{thm_main_intro_3} with Theorems 
\ref{thm_main_intro_2} and \ref{thm_main_intro_4}, we obtain the 
following corollary.

\begin{cor} \label{cor_intro}
The structure constants
defined by the quantum duality map $\hat{\mathbb{I}}$ of
\cite{MR3581328} for $\cX^q_{PGL_2,\bS_{0,4}}$ and
$\cX^q_{PGL_2,\bS_{1,1}}$
belong to $\Z_{\geq 0}[q^{\pm \frac{1}{2}}]$.
\end{cor}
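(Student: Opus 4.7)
The plan is to observe that Corollary \ref{cor_intro} is essentially a formal consequence of the three theorems already stated in this subsection, so the proof is a two-line assembly rather than a new argument. First, I would invoke Theorem \ref{thm_main_intro_2} for $(g,\ell)=(0,4)$ and Theorem \ref{thm_main_intro_4} for $(g,\ell)=(1,1)$. These refined statements give the structure constants of the bracelets basis in $\Z_{\geq 0}[A^{\pm}][R_{1,0},R_{0,1},R_{1,1},y]$ and $\Z_{\geq 0}[A^{\pm}][z]$ respectively; since $R_{1,0}, R_{0,1}, R_{1,1}, y$ are polynomials in the central peripheral classes $a_1,a_2,a_3,a_4$ with coefficients in $\Z_{\geq 0}[A^{\pm}]$, and similarly $z=A^2+A^{-2}+\eta$ has nonnegative coefficients in $A^{\pm}$ and $\eta$, expanding yields that the bracelets basis structure constants in $\Sk_A(\bS_{0,4})$ and $\Sk_A(\bS_{1,1})$ belong to $\Z_{\geq 0}[A^{\pm}]$, i.e.\ the hypothesis of Theorem \ref{thm_main_intro_3} is satisfied for these two surfaces (the reductions used here are exactly the ones promised at the ends of Sections \ref{section_scattering_04} and \ref{section_scattering_11}).

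Second, I would feed this positivity into Theorem \ref{thm_main_intro_3} applied to $(g,\ell)=(0,4)$ and $(g,\ell)=(1,1)$. The theorem then immediately produces, from the positivity of bracelets structure constants, the desired positivity of the structure constants $c(l,l',l'') \in \Z_{\geq 0}[q^{\pm \frac{1}{2}}]$ for the quantum cluster algebras $\cX^q_{PSL_2,\bS_{0,4}}$ and $\cX^q_{PSL_2,\bS_{1,1}}$ defined by the quantum duality map $\hat{\mathbb{I}}$ of \cite{MR3581328}.

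There is no genuine obstacle at this stage: all the real work is in Theorems \ref{thm_main_intro_2} and \ref{thm_main_intro_4} (which are established by the enumerative mirror symmetry construction outlined in Section \ref{section_intro_proof}) and in Theorem \ref{thm_main_intro_3} (whose proof is postponed to Section \ref{section_cluster} and which uses the Bonahon--Wong quantum trace map to relate the bracelets basis of $\Sk_A(\bS_{g,\ell})$ to the image of $\hat{\mathbb{I}}$ inside $\cX^q_{PSL_2,\bS_{g,\ell}}$). The only point requiring a small verification is the reduction step in the first paragraph, namely that the refined positivity over $\Z_{\geq 0}[A^{\pm}][R_{1,0},R_{0,1},R_{1,1},y]$ or $\Z_{\geq 0}[A^{\pm}][z]$ genuinely implies positivity over $\Z_{\geq 0}[A^{\pm}]$ after expressing $R_{i,j}, y, z$ in terms of $a_1,\dots,a_4,\eta$; this follows at once since each of these polynomials has nonnegative integer coefficients in $A^{\pm}$ times monomials in the generators. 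With those observations in place, the corollary is immediate.
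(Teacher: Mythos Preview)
Your approach is correct and identical to the paper's: the corollary is stated there as an immediate consequence of combining Theorem~\ref{thm_main_intro_3} with Theorems~\ref{thm_main_intro_2} and~\ref{thm_main_intro_4}, exactly as you outline.

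One small inaccuracy: you write that $y$ is a polynomial in $a_1,\dots,a_4$ with coefficients in $\Z_{\geq 0}[A^{\pm}]$, but this is false as stated, since $(A^2-A^{-2})^2=A^4-2+A^{-4}$ contributes a negative constant. The actual reduction (which you correctly defer to in the next clause) expresses $y$ in the variables $T_n(a_j)$ rather than the $a_j$ themselves: one has $a_j^2=T_2(a_j)+2$, and then $y=T_1(a_1)T_1(a_2)T_1(a_3)T_1(a_4)+\sum_j T_2(a_j)+A^4+6+A^{-4}$, which does have coefficients in $\Z_{\geq 0}[A^{\pm}]$. This is precisely the computation at the end of Section~\ref{section_scattering_04}, so your deferral there is the right move; just be careful not to state the stronger (false) claim along the way.
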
 

\subsection{Structure of the proof: quantum scattering diagrams and curve counting}
\label{section_intro_proof}

We will prove Theorems \ref{thm_main_intro_2}
and \ref{thm_main_intro_4}
by giving an algorithm which computes the structure constants for the bracelets basis of 
$\Sk_A(\bS_{0,4})$ 
and $\Sk_A(\bS_{1,1})$, and makes manifest their positivity properties. 
This algorithm is based on the notion of quantum broken lines 
defined by a quantum scattering diagram.

\subsubsection{Quantum scattering diagrams, quantum broken lines and quantum theta functions}
Scattering diagrams and broken lines are algebraic and 
combinatorial objects playing a key role in the Gross-Siebert 
approach to mirror symmetry. Scattering diagrams were introduced by Gross and Siebert
\cite{MR2846484}, following early insights of Kontsevich and Soibelman
\cite{MR2181810}. Broken lines were introduced by Gross
\cite{MR2600995}, studied by Carl, Pumperla and Siebert \cite{cps}, 
and discussed in a quite general context by Gross, Hacking, and Siebert \cite{gross2016theta}. 
Given an integral affine manifold with singularities $B$, 
a scattering diagram $\fD^{\cl}$ is a collection of codimension $1$ integral affine subspaces of $B$ 
called walls and which are decorated by power series. A broken line is a continuous picewise integral affine 
line in $B$ which bends when crossing 
walls of $\fD^{\cl}$. 
When the scattering diagram $\fD^{\cl}$ is so called consistent, 
one can construct a commutative associative algebra $\cA_{\fD^{\cl}}$, coming with a basis
$\{\vartheta_p^{\cl}\}_{p \in B(\Z)}$ of so-called theta functions indexed by integral points $B(\Z)$ of $B$, 
and whose structure constants are determined explicitly in terms of the broken lines.

Scattering diagrams and broken lines have $q$-deformed versions, that we
refer to as quantum scattering diagrams and quantum broken lines. 
Quantum scattering diagrams were considered by Kontsevich and Soibelman
\cite{MR2181810, MR2596639, kontsevich2013wall},
and Filippini and Stoppa
\cite{MR3383167}. Quantum broken lines were studied by 
Mandel \cite{mandel2015scattering} and the author
\cite{MR4048291}. Given a consistent quantum scattering diagram $\fD$, 
one can construct an associative
non-necessarily commutative algebra $\cA_\fD$, coming with a basis
$\{\vartheta_p\}_{p \in B(\Z)}$ of so-called quantum theta functions, 
and whose structure constants are determined explicitly in terms of the quantum broken lines.

Scattering diagrams and broken lines have been used by Gross, Hacking, Keel and Kontsevich \cite{GHKK18} to construct canonical bases with positive structure constants for cluster algebras. Their quantum versions have been used more recently by Davison and Mandel
\cite{DaM19} to construct canonical bases with structures constants in $\Z_{\geq 0}[q^{\frac{1}{2}}]$
for quantum cluster algebras.
It is expected that the canonical basis of \cite{GHKK18} coincides with the canonical basis constructed by Fock and Goncharov \cite{MR2233852}, and that the canonical basis of \cite{DaM19} 
for  $\cX^q_{PGL_2,\bS_{g,\ell}}$ agrees with the canonical basis constructed by Allegretti and Kim \cite{MR3581328}. Proving these conjectural expectations would lead to a general proof of the quantum positivity conjecture \cite[Conjecture 12.4]{MR2233852}. In the present paper, we use quantum scattering diagrams which are slightly different from the ones in \cite{DaM19}, but  related to them by 
``moving worms". The positivity properties of our quantum scattering diagrams will follow from their explicit descriptions, and we will not have to use the general quantum positivity result of \cite{DaM19}.

In order to prove Theorem \ref{thm_main_intro_2}
and \ref{thm_main_intro_4}, we will first define explicit quantum scattering diagrams $\fD_{0,4}$
and $\fD_{1,1}$ over the integral affine manifold with singularities $B=\R^2/\langle -\id \rangle$ and prove that they are consistent. 
We will then show that
the algebras $\cA_{\fD_{0,4}}$
and $\cA_{\fD_{1,1}}$ are respectively isomorphic to the skein algebras 
$\Sk_A(\bS_{0,4})$ and $\Sk_A(\bS_{1,1})$, and that the bases of quantum theta functions 
agree with the bracelets bases. 
The positivity of the structure constants 
will follow from the description in terms of quantum broken lines 
and from the explicit definitions of $\fD_{0,4}$
and $\fD_{1,1}$.
As the results for the $1$-punctured torus $\bS_{1,1}$ will follow from those 
for $\bS_{0,4}$ by specialization and change of variables, 
we focus on the case of the $4$-punctured sphere $\bS_{0,4}$. 
There are two results to show: the consistency of $\fD_{0,4}$ 
(Theorem \ref{thm:consistent_0_4}), and the identification of $\cA_{\fD_{0,4}}$ with 
$\Sk_A(\bS_{0,4})$ matching the basis of quantum theta functions 
with the bracelets basis (Theorem \ref{thm_ring_isom}).

\subsubsection{Consistent quantum scattering diagrams from curve counting}
We will prove the consistency of $\fD_{0,4}$ by 
showing that $\fD_{0,4}$ 
arises from the enumerative geometry 
of holomorphic curves in complex cubic surfaces.
It is a general expectation from mirror symmetry 
that one should obtain consistent scattering diagrams by counting genus $0$
holomorphic curves in log Calabi-Yau varieties, 
see the work of Gross, Pandharipande and Siebert
\cite{MR2667135} and Gross, Hacking
and Keel \cite{MR3415066} in dimension $2$, and Gross and Siebert
\cite{MR3821173}, Keel and Yu
\cite{keel2019frobenius}, and Argüz and Gross
\cite{arguz2020higher} in higher dimensions.
Given a maximal log Calabi-Yau variety
$(Y,D)$,
that is, the pair of a smooth projective variety 
$Y$ over $\C$ and of an anticanonical normal crossing divisor $D$
with a $0$-dimensional stratum,
one can construct a consistent canonical scattering diagram
$\fD_\can^{\cl}$ by counting holomorphic maps from genus $0$ 
holomorphic curves to $Y$ whose images intersect $D$ at a single point
\cite{MR3415066, MR3821173}.
More precisely, these counts of holomorphic curves 
are defined using logarithmic Gromov-Witten theory \cite{MR3011419, MR3257836}.
The corresponding algebra $\cA_{\fD_\can^{\cl}}$ is then 
the algebra of functions on the family of varieties mirror to $(Y,D)$. Heuristically, the integral affine manifold with singularities $B$ containing $\fD_\can^{\cl}$
should be the basis of a special Lagrangian torus fibration on
the complement of $D$ in $Y$ \cite{MR1429831,MR2386535}. 

For $(Y,D)$ a maximal log Calabi-Yau surface, 
we explained in \cite{MR4048291} how to construct 
a consistent canonical quantum scattering diagram $\fD_\can$ 
in terms of log Gromov-Witten counts 
of holomorphic maps from higher genus holomorphic curves to $Y$ 
whose images intersect $D$ at a single point. 
The corresponding non-commutative algebra 
$\cA_{\fD_\can}$ is a deformation quantization 
of the mirror family of holomorphic symplectic surfaces constructed in \cite{MR3415066}.
The main idea of the present paper is to apply the framework of \cite{MR4048291} 
for $Y$ a smooth cubic surface and $D$ a triangle of lines on $Y$.
Before giving more details, we need to review the 
general relation between skein algebras and character varieties.

\subsubsection{Skein algebras and character varieties}
\label{section_skein_character}
Let $\Ch_{SL_2}(\bS_{g,\ell})$ be the $SL_2$-character variety of the 
$\ell$-punctured genus $g$ surface 
$\bS_{g,\ell}$. This is an affine variety of finite type over 
$\Z$ obtained as affine GIT quotient by the $SL_2$ 
conjugation action of the affine variety of representations of the fundamental group
$\pi_1(\bS_{g,\ell})$ into $SL_2$. 
The character variety $\Ch_{SL_2}$ admits a natural Poisson structure.

Setting 
$A=-e^{\frac{i \hbar}{4}}$, the skein algebra 
$\Sk_A(\bS_{g,\ell})$ defines a deformation quantization 
of the Poisson variety $\Ch_{SL_2}(\bS_{g,\ell})$.
If $\gamma$ is a multicurve on $\bS_{g,\ell}$ 
with connected components $\gamma_1, \cdots, \gamma_r$, 
then, the map sending a representation $\rho \colon \pi_1(\bS_{g,\ell})
\rightarrow SL_2$  to $\prod_{j=1}^r (-\tr (\rho(\gamma_j)))$ 
defines a regular
function $f_\gamma$
on
$\Ch_{SL_2}(\bS_{g,\ell})$.
The map $\gamma \mapsto f_\gamma$ defines a ring isomorphism between the
specialization 
$\Sk_{-1}(\bS_{g,\ell})$ of the skein algebra at $A=-1$ 
and the ring of regular functions of $\Ch_{SL_2}(\bS_{g,\ell})$.
If $\gamma$ is a connected multicurve on 
$\bS_{g,\ell}$, then the building blocks 
$T_n(\gamma)$ of the bracelets basis are
quantizations of the functions $\rho \mapsto -\tr(\rho(\gamma)^n)$ on 
$\Ch_{SL_2}(\bS_{g,\ell})$.

The general idea of a connection between skein algebras 
and quantization goes back to Turaev \cite{MR1142906}.
Bullock
\cite{MR1600138} and Przytycki and Sikora with a different 
proof \cite{MR1710996} showed that $\gamma \mapsto f_\gamma$ 
defines a ring isomorphism between the quotient of $\Sk_{-1}(\bS_{g,\ell})$
 by its nilradical and the ring of regular functions of $\Ch_{SL_2}(\bS_{g,\ell})$. 
 The fact that the nilradical of $\Sk_{-1}(\bS_{g,\ell})$ is trivial 
 was shown by Charles and Marché for $\ell=0$
\cite[Theorem 1.2]{MR2914854},
and by Przytycki and Sikora
\cite{MR3885180} in general. 

\subsubsection{Curve counting in cubic surfaces}
It is classically known that the $SL_2$-character variety 
$\Ch_{SL_2}(\bS_{0,4})$
of the $4$-punctured sphere $\bS_{0,4}$ can be described 
explicitly as a $4$-parameters family of affine cubic surfaces:
original 19\textsuperscript{th} century sources are \cite{MR1508833, fricke1896theorie}, \cite[II, Eq.(9), p298]{MR0183872} and more recent references 
include \cite{MR558891, MR688949, MR1685040, MR2497777, goldman2010affine}.
Recently, Gross, Hacking, Keel and Siebert \cite{gross2019cubic}
proved that this family of cubic surfaces 
is the result of the general mirror construction
of \cite{MR3415066}
for maximal log Calabi-Yau surfaces applied 
to a pair $(Y,D)$, where $Y$ is a smooth projective cubic surface in $\PP^3$ 
and $D$ is a triangle of lines on $Y$. 
In other words, they showed that the algebra obtained 
from the consistent canonical scattering diagram defined by counting genus $0$
holomorphic curves in $(Y,D)$ is exactly the algebra of regular functions on $\Ch_{SL_2}(\bS_{0,4})$.

Thus, we now have two ways to produce a deformation quantization of 
$\Ch_{SL_2}(\bS_{0,4})$ and it is natural to compare them: 
either consider the skein algebra $\Sk_A(\bS_{0,4})$, 
or consider the algebra $\cA_{\fD_\can}$ 
obtained from the consistent canonical quantum scattering diagram 
$\fD_\can$ defined in \cite{MR4048291} by counting higher genus holomorphic curves in 
$(Y,D)$.

First of all, we will compute explicitly the quantum scattering diagram $\fD_\can$. 
It involves computing higher genus log Gromov-Witten invariants of $(Y,D)$. 
The corresponding calculation in genus $0$ was done in 
\cite{gross2019cubic}: exploiting a
large $PSL_2(\Z)$ group of birational automorphisms of $(Y,D)$, 
Gross, Hacking, Keel and Siebert showed that the genus $0$ 
calculation can be reduced to genus $0$ multiple covers of $8$ lines and $2$ conics in 
$(Y,D)$. Following the same strategy, 
we will prove that the higher genus calculation reduces to higher genus multiple covers of the same $8$ lines and $2$ conics. 
The contribution of multiple covers of lines is fairly standard 
but the contribution of multiple covers of the conics is more intricate 
and we will use our previous work
\cite{bousseau2018quantum_tropical} on higher genus log Gromov-Witten 
invariants of log Calabi-Yau surfaces to evaluate it. 
At the end of the day, we can phrase the result 
as stating that $\fD_\can$ is equal (after an appropriate specialization of variables) 
to the explicit quantum scattering diagram $\fD_{0,4}$. 
As $\fD_\can$ is consistent by \cite{MR4048291}, 
this proves the consistency of $\fD_{0,4}$.

\subsubsection{Comparison of $\cA_{\fD_{0,4}}$ and $\Sk_A(\bS_{0,4})$} 

Once we know that the quantum scattering diagram $\fD_{0,4}$ is consistent, 
we have the corresponding algebra $\cA_{\fD_{0,4}}$, 
with its basis of quantum theta functions
$\{ \vartheta_p \}_{p \in B(\Z)}$ 
and structure constants expressed in terms of quantum broken lines. 
It remains to construct an isomorphism of algebras
$\varphi \colon \cA_{\fD_{0,4}}
\rightarrow \Sk_A(\bS_{0,4})$ matching the bracelets basis $\{\mathbf{\gamma}_p \}_{p \in B(\Z)}$ and the basis of quantum theta functions $\{ \vartheta_p\}_{p \in B(\Z)}$, 
i.e.\ such that $\varphi(\vartheta_p)=
\mathbf{T}(\gamma_p)$ for every $p \in B(\Z)$.

By explicit computations with quantum broken lines in $\fD_{0,4}$, 
we will obtain an explicit presentation of $\cA_{\fD_{0,4}}$ 
by generators and relations as a family of non-commutative cubic surfaces (Theorem \ref{thm_eq_cubic_nu}). 
On the other hand, it was known since Bullock and Przytycki \cite{MR1625701}
that the description of $\Ch_{SL_2}(\bS_{0,4})$ as a family of cubic surfaces 
deforms into a presentation of the skein algebra 
$\Sk_A(\bS_{0,4})$ as a family of non-commutative cubic surfaces 
(Theorem \ref{thm:sk_cubic}). Comparing these two families 
of non-commutative cubic surfaces, we
will define an isomorphism of algebras
$\varphi \colon \cA_{\fD_{0,4}}
\rightarrow \Sk_A(\bS_{0,4})$. 

Finally, we will have to prove that
$\varphi(\vartheta_p)=
\mathbf{T}(\gamma_p)$ for every $p \in B(\Z)$. We will first prove it
for $p=(k,0)$ by some explicit computation of quantum broken lines. 
In particular, we will see how the recursion relation 
\eqref{eq:chebyshev}
defining the Chebyshev polynomials $T_n(x)$ naturally arises 
from drawing quantum broken lines. To prove the general result, 
we will check explicitly that $\varphi$ intertwines the natural action of $PSL_2(\Z)$ on $\Sk_A(\bS_{0,4})$ via the mapping class group of $\bS_{0,4}$, 
with an action of $PSL_2(\Z)$ of $\cA_{\fD_{0,4}}$
coming from a $PSL_2(\Z)$-symmetry of 
the quantum scattering diagram $\fD_{0,4}$.
This ends our summary of the proof.

We remark that by taking the classical limit 
of the statement that $\varphi(\vartheta_p)=
\mathbf{T}(\gamma_p)$ for every $p \in B(\Z)$, 
we obtain that the classical theta functions $\vartheta_p^{\cl}$ constructed in
\cite{MR3415066, gross2019cubic}
agree with the trace functions 
$\rho \mapsto -\tr(\rho(\gamma_{p_{prim}})^k)$ on the character variety 
$\Ch_{SL_2}(\bS_{0,4})$, where $p=kp_{prim}$ with $k \in \Z_{\geq 1}$ and $p_{prim}
\in B(\Z)$ primitive (Corollary \ref{cor_traces}).

\subsubsection{More on non-commutative cubic surfaces}

We briefly comment about works related to an essential ingredient of the proof 
of our main result: the presentation of $\Sk_
A(\bS_{0,4})$ as a family of non-commutative cubic surfaces. 
This non-commutative cubic equation has appeared in quite a number of contexts. 
The present paper provides one more: the non-commutative cubic surface appears 
for us as a quantum mirror in the sense of \cite{MR4048291} 
and as the result of calculations in higher genus log Gromov-Witten theory.

The quantization of the family of affine cubic surfaces $\Ch_{SL_2}(\bS_{0,4})$ 
from the point of view of quantum Teichmüller theory has been studied 
by Chekhov and Mazzocco \cite[Eq. (3.20)-(3.24)]{MR2733812},  
and by Hiatt \cite{MR2661526}.
Quantization from the cluster point of view has been 
discussed by Hikami \cite[Eq. (7.2)-(7.3)]{MR3901894}. 
The general relation between skein algebras and the quantum Teichmüller/cluster points of view 
follows from the existence of the quantum trace map of Bonahon and Wong
\cite{MR2851072} (see also \cite{MR3827810}).

The skein algebra $\Sk_A(\bS_{0,4})$ is 
isomorphic to the spherical double affine Hecke algebra (DAHA) of type 
$(C^{\vee}_1, C_1)$ defined in
\cite{MR1715325, MR2085854,  MR2006574}.  
The explicit connection between the spherical DAHA of type $(C^{\vee}_1, C_1)$ 
and the quantization of cubic surfaces was established by Oblomkov
\cite{MR2037756}. Terwilliger \cite[Proposition 16.4]{MR3116183} 
wrote down an explicit presentation of the spherical 
DAHA of type $(C^{\vee}_1, C_1)$ from which the isomorphism with $\Sk_A(\bS_{0,4})$ is clear.
A much earlier appearance of the non-commutative cubic surface 
is the Askey-Wilson algebra $AW(3)$
of Zhedanov \cite{MR1151381}.
A comparison between $AW(3)$ and the  spherical DAHA of type $(C^{\vee}_1, C_1)$ 
was done by Koornwinder \cite{MR2425640}.
More details on the relation between the skein and DAHA points of view can be found in  
\cite[Section 2]{MR3530443}, \cite[Section 2]{MR3765471}, \cite{MR4001842}.

Skein algebras can also be considered in the framework of $SL_2$-factorization homology. 
Explicit presentations of $\Sk_A(\bS_{0,4})$ and 
$\Sk_A(\bS_{1,1})$ as non-commutative cubic surfaces 
are recovered using this point of view by Cooke
\cite{cooke2018kauffman}.

\subsection{Line operators and BPS spectrum of the $\cN=2$ $N_f=4$ $SU(2)$ gauge theory}

\label{section_physics}

In this section, which can be ignored by a 
purely mathematically minded reader, we briefly discuss the 
string/M-theoretic motivation for a connection between the skein algebra 
$\Sk_A(\bS_{0,4})$ and the enumerative geometry of curves in cubic surfaces.

Let $\mathcal{T}$ be a four-dimensional quantum field theory 
with $\cN=2$ supersymmetry. Such theory has in general 
an interesting dynamics connecting its short-distance behaviour (UV) with its long-distance behaviour (IR).
The IR behaviour of $\cT$ is largely determined by its Seiberg-Witten geometry
$\nu \colon \cM \rightarrow B$ \cite{MR1293681, MR1306869}, described as follows.
The special Kähler manifold with singularities $B$ 
is the Coulomb branch of the moduli space of vacua of $\cT$ on 
$\R^{1,3}$. The hyperkähler manifold $\cM$ is the 
Coulomb branch of the moduli space of vacua of $\cT$ on
$\R^{1,2} \times S^1$. The map $\nu$ is a complex integrable system, that is, 
$\nu$ is holomorphic with respect to a specific complex structure 
$I$ on $\cM$, and the fibers of $\nu$ are
holomorphic Lagrangian with respect to the
$I$-holomorphic symplectic form. General fibers of $\nu$ 
endowed with the complex structure $I$ are abelian varieties.

Due to supersymmetry, particular sectors of $\cT$ have remarkable protections against arbitrary quantum corrections and so can be often computed exactly. 
Examples of such protected sectors are the algebra $\cA_\cT$ of $\frac{1}{2}$BPS line operators and the 
spectrum of BPS 1-particle states. 
The algebra $\cA_\cT$ depends only on the UV behaviour of $\cT$. 
By wrapping around $S^1$, a line operator on $\R^{1,3}$ becomes a local operator on $\R^{1,2}$, and so its expectation value can be viewed as a function on $\cM$. 
In fact, 
$\cA_\cT$ is an algebra of functions on $\cM$ which are holomorphic for a complex structure $J$ on $\cM$ with respect to which $\nu$ is a special Lagrangian fibration. 
By contrast, the BPS spectrum depends on a choice of vacuum $u \in B$ 
and changes discontinuously along real codimension-one walls in $B$.

Gaiotto, Moore and Neitzke \cite{MR3250763}
described how to construct a non-commutative deformation 
$\cA_\cT^q$ of $\cA_\cT$ by twisting correlation functions 
by rotations in the plane transverse to the line operators. 
They also explained that, given a choice of vacuum $u \in B$, 
line operators have expansions in terms of IR line operators 
with coefficients given by counts of framed BPS states. 
These expansions depend discontinuously on $u$: they jump when the spectrum 
of framed BPS states jumps by forming bound states with (unframed) BPS states.

The same $\cN=2$ theory can often 
be engineered
in several ways in string/M-theory. Given a punctured Riemann surface $\bS_{g,\ell}$, 
one obtains a $\cN=2$ theory $\cT_{g,\ell}$ by compactifying on $\bS_{g,\ell}$ 
the six-dimensional $\cN=(2,0)$ superconformal field theory of type $A_1$
 living, at low energy and after decoupling of gravity, 
 on two coincident $M5$-branes in $M$-theory \cite{MR3006961}. 
 The corresponding Seiberg-Witten geometry $\nu \colon \cM \rightarrow B$ 
 is the Hitchin fibration on the moduli space $\cM$ of semistable $SL_2(\C)$-Higgs bundles on 
$\bS_{g,\ell}$ (with regular singularities and given residues at the punctures).
By non-abelian Hodge theory, $\cM$ with its complex structure $J$ 
is isomorphic to the 
 $SL_2(\C)$-character variety of $\bS_{g,\ell}$ 
(with given conjugacy classes around the punctures). 
The algebra $\cA_\cT$ of line operators is identified with the algebra of regular functions on the 
$SL_2(\C)$-character variety, and the non-commutative algebra $\cA_\cT^q$ is 
identified with the skein algebra 
$\Sk_A(\bS_{g,\ell})$, which is physically realized 
as the algebra of loop operators in quantum Liouville theory on 
$\bS_{g,\ell}$ \cite{MR2672742}. 
Explicit discussions of the families of non-commutative cubic surfaces 
describing $\Sk_A(\bS_{0,4})$ and 
$\Sk_A(\bS_{1,1})$ can be found in
\cite[Eq. (3.32)-(3.33)]{MR3080552},
\cite[Eq. (5.29)]{MR3250763}
\cite[Eq. (6.3)-(6.4)]{MR3418499},
\cite[Eq. (3.55)-(3.57)]{MR3435496}.

The theory $\cT_{0,4}$ has a Lagrangian description: 
it is the $\cN=2$ $SU(2)$ gauge theory with $N_f=4$ matter hypermultiplets
in the fundamental representation.
It is one of the earliest example of 
$\cN=2$ theory for which the low-energy effective action and 
the BPS spectrum have been determined by Seiberg and Witten
\cite{MR1306869}. In particular, 
$\cT_{0,4}$ admits a $PSL_2(\Z)$ $S$-duality group and 
a $\Spin(8)$ flavour symmetry group,  which are mixed by 
the triality action of $PSL_2(\Z)$ via its quotient $PSL_2(\Z/2\Z) \simeq S_3$. 
The Coulomb branch $B$ of 
$\cT_{0,4}$ is of complex dimension one. In the complex structure $I$, the map
$\nu \colon \cM \rightarrow B$ is an elliptic fibration. 
In the complex structure $J$, the space $\cM$ is
a $SL_2(\C)$-character variety for $\bS_{0,4}$, 
and so an affine cubic surface obtained as complement of a triangle $D$ of lines in a smooth projective cubic surface $Y$.

The key point is that there is a different realization of 
$\cT_{0,4}$ from $M$-theory. Consider $M$-theory on the $11$-dimensional background 
$\R^{1,3} \times \cM \times \R^3$ 
with an $M5$-brane on $\R^{1,3} \times \nu^{-1}(u)$, 
where $u \in B$. Then, the theory living on the $\R^{1,3}$ part of the 
$M5$-brane is $\cT_{0,4}$ in its vacuum $u$
\cite{MR1408388, MR1413905}. Furthermore, 
BPS states are geometrically realized by open $M2$-branes in $\cM$ with boundary on 
$\nu^{-1}(u)$ \cite{MR1662174}. 
Via the Ooguri-Vafa correspondence between counts of open $M2$-branes 
and open topological string
theory \cite{MR1765411}, these counts can be translated into all-genus open Gromov-Witten invariants of $\cM$. In the limit where $u$ is large, 
one can close open curves in 
$\cM$ into closed curves in $Y$ meeting $D$ in a single point, 
and we recover the invariants entering 
the definition of the canonical quantum scattering diagram 
$\fD_{\can}$ of $(Y,D)$. Our explicit description of $\fD_\can$ 
will agree with the expected $PSL_2(\Z)$-symmetric BPS spectrum of $\cT_{0,4}$ 
at large $u$
\cite{MR1306869, MR1471168} and can be viewed as a new derivation of it. 

We can now obtained the desired 
connection.
When the $\cN=2$ $N_f=4$ $SU(2)$ gauge theory is realized as a compactification
on $\bS_{0,4}$ of the $\cN=(2,0)$ $A_1$ theory,
the skein algebra $\Sk_A(\bS_{0,4})$ 
naturally appears as the algebra of line operators. 
On the other hand,
when 
the $\cN=2$ $N_f=4$ $SU(2)$ gauge theory 
is realized on a $M5$-brane wrapped on a torus fiber of $\nu \colon \cM \rightarrow B$, 
the enumerative geometry of holomorphic curves in the cubic surface $(Y,D)$
naturally appears as describing the BPS spectrum.
By Gaiotto, Moore and Neitzke \cite{MR3250763}, 
line operators and BPS spectrum are related via the wall-crossing phenomenon for the IR expansions 
of the line operators in terms of counts of framed BPS states. 
It is exactly what will happen in our proof: 
quantum scattering diagrams encode BPS states, quantum broken lines describe
framed BPS states, and the skein algebra will be reconstructed from quantum broken lines.

\subsection{Plan of the paper}
In Section
\ref{section_quantum_scattering}, 
we introduce the notions of quantum scattering diagram 
and quantum broken line in the restricted setting that will be used in all the paper. 
In Section
\ref{section_algorithms}, we introduce the quantum scattering diagram $\fD_{0,4}$, 
we state Theorem 
\ref{thm:consistent_0_4} on the consistency of $\fD_{0,4}$ and Theorem 
\ref{thm_ring_isom} comparing $\cA_{\fD_{0,4}}$ and $\Sk_A(\bS_{0,4})$, 
and we explain how Theorems \ref{thm_main_intro}-\ref{thm_main_intro_3} follow from
 Theorem 
\ref{thm:consistent_0_4} and Theorem 
\ref{thm_ring_isom}. 
In Section
\ref{section_canonical_scattering}, we define the canonical quantum scattering 
$\fD_\can$ encoding higher genus log Gromov-Witten invariants of the cubic surface $(Y,D)$, and we compute $\fD_\can$ explicitly.
In Section
\ref{section_derivation}, we compute a presentation 
by generators and relations of the algebra 
$\cA_{\fD_\can}$ defined by $\fD_\can$.
Finally, in Section
\ref{section_comparison}, we compare 
$\cA_{\fD_\can}$ with $\Sk_A(\bS_{0,4})$, and 
we end the proofs of Theorems 
\ref{thm:consistent_0_4} and
\ref{thm_ring_isom}.

\subsection*{Acknowledgment}

I acknowledge the support of Dr.\ Max R\"ossler, the Walter Haefner Foundation and the ETH Z\"urich
Foundation.

\section{Quantum scattering diagrams and quantum theta functions}

\label{section_quantum_scattering}

In Section \ref{section_integral_affine},
we introduce the integral affine manifold
with singularity $B$.
In Section \ref{section_quantum_broken_lines}, 
we define the notions of quantum scattering diagram and quantum broken lines on $B$.
In Section \ref{section_quantum_theta}, 
we define the algebra $\cA_\fD$ attached to a consistent quantum scattering diagram $\fD$.

\subsection{The integral affine manifold with singularity $B$}
\label{section_integral_affine}

Let $B$ be the quotient of $\R^2$ by the linear transformation $(x,y) \mapsto (-x,-y)$. 
We denote $0 \in B$ the image of $0 \in \R^2$. 
As $(x,y) \mapsto (-x,-y)$ acts freely on $\R^2 \backslash \{0\}$, 
the standard integral linear structure of $\R^2$ induces an integral linear structure on 
$B_0 \coloneqq B \backslash \{ 0\}$. 
The integral linear structure on 
$B_0$ has the non-trivial order two monodromy
$-\id$ around $0$, and so does not extend to the whole of $B$. 
We view $B$ as an integral linear manifold with singularity, with unique singularity $0$. 
We denote by $B_0(\Z)$ the set of integral points 
of the integral linear manifold $B_0$ and $B(\Z) \coloneqq B_0(\Z) \cup \{0\}$.

Concretely, we identify $B$ with the upper half-plane $\{(x,y) \in \R^2\,|\, y \geq 0\}$
the positive $x$-axis and the negative $x$-axis 
being identified by $(x,0) \mapsto (-x,0)$, 
and we describe $B(\Z)$ as in Equation \eqref{eq:BZ}.
Let $v_1$, $v_2$, $v_3$ be the three integral points of $B$ given by $v_1=(1,0)=(-1,0)$, 
$v_2=(0,1)$, and $v_3=(-1,1)$. We denote by 
$\rho_1$, $\rho_2$, $\rho_3$ the rays $\R_{\geq 0} v_1$, $\R_{\geq 0} v_2$, $\R_{\geq 0} v_3$, see Figure
\ref{figure: tropicalization}. 
We will generally consider the index $j$ of a point $v_j$ or of a ray $\rho_i$
as taking values modulo $3$, 
so that it makes sense to talk about the point $v_{j+1}$ 
or the ray $\rho_{j+1}$. For every $j \in \{1,2,3\}$, 
we denote by $\sigma_{j,j+1}$ the closed two-dimensional cone of $B$ 
generated by the rays $\rho_j$ and $\rho_{j+1}$. 
In particular, every element $v \in \sigma_{j,j+1}$ can be
uniquely written as $v=av_j+bv_{j+1}$ with 
$a,b \in \R_{\geq 0}$.
The three cones $\sigma_{j,j+1}$ 
define an integral polyhedral decomposition $\Sigma$ of $B$.

We write $\Lambda$ the rank two local system on $B_0$
of integral tangent vectors to $B_0$, 
and we fix a trivialization of $\Lambda$ 
on each two-dimensional cone $\sigma_{j,j+1}$.
In particular, for every point $Q \in \sigma_{j,j+1}$ and $p \in B(\Z) \cap 
\sigma_{j,j+1}$, we can view $p$ as an integral tangent vector at the point $Q$.
\begin{figure}[h]
\centering
\setlength{\unitlength}{1cm}
\begin{picture}(6,4)
\put(3,0.5){\circle*{0.1}}
\put(3,0.5){\line(1,0){4}}
\put(3,0.5){\line(-1,0){4}}
\put(3,0.5){\line(0,1){3}}
\put(3,0.5){\line(-1,1){3}}
\put(6,0.2){$\rho_1$}
\put(4,0.5){\circle*{0.1}}
\put(4,0.2){$v_1$}
\put(3,0.1){$0$}
\put(3,1.5){\circle*{0.1}}
\put(3.2,1.5){$v_2$}
\put(2,1.5){\circle*{0.1}}
\put(1.4,1.5){$v_3$}
\put(0,2.8){$\rho_3$}
\put(3.2,3.2){$\rho_2$}
\put(1.5,3){$\sigma_{2,3}$}
\put(5,2.5){$\sigma_{1,2}$}
\put(0,1.5){$\sigma_{3,1}$}
\put(2,0.5){\circle*{0.1}}
\put(2,0.2){$v_1$}
\put(0,0.2){$\rho_1$}
\end{picture}
\caption{$(B,\Sigma)$}
\label{figure: tropicalization}
\end{figure}

\subsection{Quantum scattering diagrams and quantum broken lines}
\label{section_quantum_broken_lines}

In Sections \ref{section_quantum_broken_lines} and 
\ref{section_quantum_theta}, we fix a 
$\Z[A^{\pm}][t^{D_1},t^{D_2}, t^{D_3}]$-algebra $R$ 
of coefficients, and an half-integer 
$\mu \in \frac{1}{2}\Z$. 
We will use the skew-symmetric bilinear form 
$\langle-,-\rangle \coloneqq \mu \det(-,-)$ on $\Lambda$.

\begin{defn}\label{defn_quantum_ray}
A \emph{quantum ray} $\rho$ with coefficients in $R$ is a pair $(p_{\rho}, f_\rho)$ where:
\begin{enumerate}
\item $p_{\rho} \in B_0(\Z)$ primitive.
\item $f_\rho$ is an element of $R\,[\![z^{-p_\rho}]\!]$
such that $f_\rho=1 \mod z^{-p_\rho}$.
\end{enumerate}
\end{defn}

\begin{defn}\label{defn_quantum_scattering_gen} 
A \emph{quantum scattering diagram} over $R$ is a collection
$\fD=\{\rho=(p_{\rho}, f_{\rho})\}$
of quantum rays with coefficients in $R$
such that $\rho_1=\rho_2$ if 
$\R_{\geq 0} p_{\rho_1}= \R_{\geq 0} p_{\rho_2}$.
\end{defn}

\begin{defn} \label{defn_broken_line}
Let $\fD$ be a quantum scattering diagram
over $R$.
A \emph{quantum broken line} $\gamma$ 
for $\fD$
with charge  $p \in B_0(\Z)$ and endpoint $Q \in B_0$ 
is a proper continuous 
piecewise integral affine map
\[ \gamma \colon
(-\infty, 0]
\rightarrow B_0 \]
with only finitely 
many domains of linearity,
together with, for each 
$L \subset (-\infty, 0]$
a maximal
connected domain of linearity
of $\gamma$,
a choice of monomial
$m_L=c_L z^{p_L}$
with
$c_L \in R$ non-zero
and
$p_L$ a section of the local system $\gamma^{-1}(\Lambda)|_L$ on $L$, such that
the following statements hold. 
\begin{enumerate}
\item For each maximal connected domain of linearity $L$, we have 
$-p_L(t)=\gamma'(t)$ for every $t \in L$.
\item $\gamma(0)=Q
\in B_0$.
\item  
For the unique unbounded domain of linearity $L$, 
$\gamma|_L$ goes off to infinity parallel to $\R_{\geq 0} p$ and
$m_L=z^{p}$
for $t \rightarrow -\infty$. 
\item Let $t \in (-\infty,0)$
be a point at which $\gamma$ is not linear, passing from the domain of
linearity $L$ to 
the domain of linearity $L'$.
Then, there exists a quantum ray 
$\rho=(p_{\rho}, f_{\rho})$ of $\fD$ such that 
$\gamma(t) \in \R_{\geq 0} p_{\rho}$.
Write $m_L=c_L z^{p_L}$, 
$m_{L'}=c_{L'}z^{p_{L'}}$,
$N \coloneqq |\det(p_{\rho},p_L)|$,  and 
\[ f_{\rho}=\sum_{k \geq 0} c_k z^{-k p_{\rho}} \,.\] 
If $\rho \neq \rho_j$ for every $1 \leq j \leq 3$, then we set $\alpha \coloneqq 1$. 
If $\rho=\rho_j$, $\gamma$ goes from $\sigma_{j-1,l}$ to $\sigma_{j,j+1}$, 
and $p_L=av_{j-1}+bv_j$, then we set $\alpha \coloneqq t^{aD_j}$. 
If $\gamma$ goes from $\sigma_{j,j+1}$ and $\sigma_{j-1,j}$, and 
$p_L=av_j+bv_{j+1}$, then we set $\alpha \coloneqq t^{bD_j}$.
Then there
exists a monomial $d_\ell z^{-\ell p_\rho}$
in the series 
\begin{equation} \label{eq:bending_formula}
\sum_{\ell \geq 0} d_\ell z^{-\ell p_\rho} \coloneqq 
 \prod_{j=0}^{N-1} 
\left( \sum_{k \geq 0} c_k A^{4 \mu k(j-
\frac{N-1}{2})} z^{-kp_\rho} \right)\,, 
\end{equation}
such that 
\begin{equation} \label{eq:broken_line_contribution}
 c_{L'}=\alpha d_\ell c_L \,\,\, 
\text{and} \,\,\,
p_{L'}=p_L- \ell p_{\rho}  \,.
\end{equation}
In other words, when the quantum broken line $\gamma$ bends from 
$L$ to $L'$, the attached monomial changes 
according to Equation \eqref{eq:broken_line_contribution}.
\end{enumerate}
\end{defn}

Note that in some cases, we will consider a 
$\Z[A^{\pm}][t^{D_1},t^{D_2},t^{D_3}]$-module $R$ 
where $t^{D_j}$ acts as the identity on $R$ for all $1 \leq j \leq 3$. 
In such case, we can forget 
the discussion of the factor $\alpha$ in Definition \ref{defn_broken_line}.

We recall symmetrized (invariant under $A \mapsto A^{-1})$ versions of standard $q$-objects.
For every nonnegative integer $n$, define the $A$-integer 
\begin{equation} [n]_A \coloneqq \frac{A^{2 n}-A^{-2 n}}{A^{2}-A^{-2}}= A^{-2(n-1)}\sum_{j=0}^{n-1} A^{4 j} \in \Z_{\geq 0}[A^{\pm}] \,,\end{equation}
and the $A$-factorial
\begin{equation} [n]_A! \coloneqq \prod_{j=1}^n [j]_A \in \Z_{\geq 0}[A^{\pm}]\,.\end{equation}
For every nonnegative integers $k$ and $n$, 
define the $A$-binomial coefficient (e.g. see Section 1.7 of
\cite{MR2868112})
\begin{equation}\label{eq:binomial}
\binom{n}{k}_A 
\coloneqq 
\frac{[n]!_A}{[k]!_A [n-k]!_A}
\in \Z_{\geq 0}[A^{\pm}] \,.
\end{equation}

\begin{lem}
Let $f_\rho=\sum_{k \geq 0} c_k z^{-kp_\fd}$ such that $f_\rho=1 \mod z^{-p_\fd}$. 
Writing 
\[f_\rho=\prod_{k \geq 1}(1+a_k z^{-kp_\fd})\,,\] 
we have 
\begin{equation}
\prod_{j=0}^{N-1} 
\left( \sum_{k \geq 0} c_k A^{4 \mu k(j-
\frac{N-1}{2})} z^{-kp_\rho} \right)
=\prod_{k \geq 1}\left(
\sum_{j=0}^N \binom{N}{j}_{A^{\mu k}} 
a_k^j z^{-kj p_\rho}
\right) \,.
\end{equation}
\end{lem}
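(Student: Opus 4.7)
The plan is to reduce the identity to the classical symmetric $q$-binomial theorem. First, let me write the left-hand side more conceptually: setting $u \coloneqq z^{-p_\rho}$, the inner sum $\sum_k c_k A^{4\mu k(j-(N-1)/2)} u^k$ is just $f_\rho$ evaluated at the rescaled argument $A^{4\mu(j-(N-1)/2)} u$. Substituting the factorization $f_\rho(u) = \prod_{k \geq 1}(1 + a_k u^k)$ into each of the $N$ factors and then interchanging the two products (the outer one over $j$ is finite; the inner one over $k$ converges formally because $a_k u^k$ has positive $u$-order), the problem reduces to computing, for each $k \geq 1$ separately, the finite product
\[
P_k \;\coloneqq\; \prod_{j=0}^{N-1}\bigl(1 + a_k A^{4\mu k(j-(N-1)/2)} u^k\bigr).
\]

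For fixed $k$, I set $Q \coloneqq A^{2\mu k}$ and $y \coloneqq a_k u^k$. The exponent rearranges as $A^{4\mu k(j-(N-1)/2)} = Q^{2j-(N-1)}$, so $P_k = \prod_{j=0}^{N-1}(1 + Q^{2j-(N-1)} y)$, which is exactly the input for the symmetric $q$-binomial theorem. The theorem gives
\[
P_k \;=\; \sum_{j=0}^{N} \binom{N}{j}_{A^{\mu k}} a_k^j u^{kj},
\]
the $q$-binomial coefficients on the right matching \eqref{eq:binomial} because
\[
[n]_{A^{\mu k}} \;=\; \frac{A^{2\mu k n}-A^{-2\mu k n}}{A^{2\mu k}-A^{-2\mu k}} \;=\; \frac{Q^n - Q^{-n}}{Q - Q^{-1}}.
\]
Taking the product of the $P_k$ over $k \geq 1$ and re-substituting $u = z^{-p_\rho}$ then yields precisely the right-hand side of the lemma.

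The argument is essentially formal; once the products are regrouped in the correct order, the identity follows from a single invocation of the symmetric $q$-binomial theorem, which itself is readily verified by induction on $N$ via the Pascal-type recursion $\binom{N}{j}_Q = Q^{-j}\binom{N-1}{j}_Q + Q^{N-j}\binom{N-1}{j-1}_Q$. There is no genuine obstacle. The only bookkeeping to watch is the half-integer parameter $\mu$, but since both sides live a priori in $R[\![z^{-p_\rho}]\!]$ with coefficients that are Laurent polynomials in $A^{\mu}$, and the relevant $q$-binomials involve only integer powers of $A^{4\mu k}$ (as visible from the factorization $[n]_A = A^{-2(n-1)}\sum_{j=0}^{n-1} A^{4j}$ displayed before \eqref{eq:binomial}), no ambiguity arises.
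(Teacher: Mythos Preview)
Your proof is correct and takes essentially the same approach as the paper: the paper's proof is the single sentence ``The result follows from the $q$-binomial theorem,'' and you have simply spelled out the reduction (substituting the factorization of $f_\rho$, swapping the finite and formal products, and applying the symmetric $q$-binomial theorem factor by factor) that the paper leaves implicit.
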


\begin{proof}
The result follows from the $q$-binomial theorem (see e.g.\ Equation 
(1.87) of \cite{MR2868112}).
\end{proof}

\begin{defn}
Using the notation of Definition \ref{defn_broken_line}, 
the positive integer $\sum_{k \geq 0} n_k k$ is the 
\emph{amount of bending} of the quantum broken line $\gamma$ 
between the domain of linearity $L$ and the domain of
linearity $L'$.
\end{defn}

\begin{defn}  \label{defn_monomial_broken_line}
Let $\fD$ be a quantum scattering diagram
over $R$ and 
$\gamma$ a broken line for $\fD$. The \emph{final monomial} of $\gamma$ is the monomial 
$m_L$ attached to the domain of linearity $L$
of $\gamma$ containing $0$. We write the final
monomial of $\gamma$ as $c(\gamma) z^{s(\gamma)}$, 
where $c(\gamma) \in R$ and 
$s(\gamma) \in \Lambda_{\gamma(0)}$.
\end{defn}

Following \cite{gross2019cubic}, we now introduce a function 
$F \colon B \rightarrow \R$, which will be used to constrain the possible broken lines.
In order to minimize the number of minus signs, 
we take for our $F$ the function $-F$ in the notation of
\cite{gross2019cubic}. 
Let $F \colon B \rightarrow \R$ be the continuous
function on $B$, which is linear on each cone
of $\Sigma$ and such that $F(v_j)=1$ for every 
$1 \leq j \leq 3$. Explicitly, we have 
 $F((x,y))=x+y$ for $(x,y) \in \sigma_{1,2}$,
 $F((x,y))=y$ for $(x,y) \in \sigma_{2,3}$,
 and $F((x,y))=x+2y$ for $(x,y) \in \sigma_{3,1}$. 
 Note that, for every 
 $p \in B(\Z)$, $F(p)$ is a nonnegative integer.

\begin{prop} \label{prop_bound}
Let $\fD$ be a quantum scattering diagram, $p_1, p_2 \in B_0(\Z)$,
$p \in B(\Z)$, and $Q$ a point in the interior of a 
two-dimensional cone of $\Sigma$
containing $p$. Let $(\gamma_1,\gamma_2)$ be
a pair of quantum broken lines for
$\fD$ with charges $p_1$,$p_2$ and common endpoint $Q$, 
such that writing $c(\gamma_1)z^{s(\gamma_1)}$ and 
$c(\gamma_2)z^{s(\gamma_2)}$ the final monomials, we have 
$s(\gamma_1)+s(\gamma_2)=p$. 
Then, the following holds.
\begin{enumerate}
\item $F(p)\leq F(p_1)+F(p_2)$.
\item If either $\gamma_1$ or 
$\gamma_2$ crosses one of the rays
$\rho_j$ or bends at a wall, then 
\[F(p) \leq F(p_1)-F(p_2)-1\,.\]
\item The sum over all the walls $\rho$ at which either 
$\gamma_1$ or $\gamma_2$ bends, of 
the product of $F(p_{\rho})$ by the amount of bending, 
is bounded above by $F(p_1)+F(p_2)-F(p)$. 
\end{enumerate}
\end{prop}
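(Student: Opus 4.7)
The plan is to reduce the proposition to a \emph{single-broken-line monotonicity} statement and then combine it for $\gamma_1$ and $\gamma_2$. First I would prove that for any quantum broken line $\gamma$ with charge $p'\in B_0(\Z)$ and endpoint $Q'$ in the interior of a two-dimensional cone of $\Sigma$, one has
\begin{equation*}
F(p') \;\geq\; F(s(\gamma)) \;+\; \sum_{\rho} N_\rho\, F(p_\rho),
\end{equation*}
where the sum runs over the walls $\rho$ at which $\gamma$ bends, $N_\rho$ is the amount of bending, and $F(s(\gamma))$ denotes the linear extension of $F$ from the cone containing $Q'$ applied to the tangent vector $s(\gamma)$. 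Moreover, the deficit in this inequality is at least one whenever $\gamma$ bends at some wall or transversally crosses one of the rays $\rho_j$.

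To establish this lemma, I would track along the trajectory of $\gamma$ the quantity $\phi(t) := F_{\sigma(\gamma(t))}(p_{L(t)})$, where $F_\sigma$ denotes the unique linear extension of $F|_\sigma$ to the tangent lattice of $\sigma$. By the asymptotic behavior of the broken line one has $\phi(-\infty) = F(p')$, and by definition of $s(\gamma)$ one has $\phi(0) = F(s(\gamma))$. The function $\phi$ is piecewise constant and only changes at three types of events: (a) at a bending at a wall $\rho$ interior to a two-dimensional cone, where \eqref{eq:broken_line_contribution} gives $p_{L'} = p_L - N_\rho p_\rho$ and linearity of $F$ on the cone yields a drop of exactly $N_\rho F(p_\rho)$; (b) at a bending at a wall supported on some ray $\rho_j$, where the drop decomposes as $N_\rho F(v_j) = N_\rho$ from the slope change plus a non-negative \emph{cone-jump} at $p_{L'}$ coming from the kink of $F$ across $\rho_j$; (c) at a transversal crossing of $\rho_j$ without bending, where the cone-jump contributes a positive integer. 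In all three cases $\phi$ is non-increasing, and the cumulative decrease bounds $\sum_\rho N_\rho F(p_\rho)$ from above. Since every primitive $p_\rho \in B_0(\Z)$ satisfies $F(p_\rho) \geq 1$ (by linearity of $F$ on each cone together with $F(v_j)=1$), and since each transversal integer crossing of $\rho_j$ produces a cone-jump of at least one, the strict form follows whenever a bending or a $\rho_j$-crossing occurs.

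To deduce the proposition I would apply this lemma to $\gamma_1$ and $\gamma_2$ and add. Because $Q$ lies in the interior of a single two-dimensional cone on whose tangent space $F$ is linear, the identity $s(\gamma_1) + s(\gamma_2) = p$ gives $F(s(\gamma_1)) + F(s(\gamma_2)) = F(p)$. Summing the two single-line inequalities then yields
\begin{equation*}
F(p_1) + F(p_2) - F(p) \;\geq\; \sum_{\rho} N_\rho\, F(p_\rho),
\end{equation*}
the sum running over walls where either $\gamma_1$ or $\gamma_2$ bends; this is (3). Part (1) follows from the non-negativity of the right-hand side, and (2) follows because any bending contributes at least one to the bending sum while any transversal $\rho_j$-crossing contributes at least one to the additional slack. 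The main subtlety of the argument is the local computation at the singular rays $\rho_j$, where the slope change from bending and the kink contribution from crossing must be handled simultaneously, and one must verify that the kink of $F$ across $\rho_j$ is of the correct sign for the direction in which a legitimate broken line actually crosses. Once this local analysis is carried out, the global conclusion follows by a clean telescoping argument along the broken lines.
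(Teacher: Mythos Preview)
Your proposal is correct and follows essentially the same approach as the paper. Your quantity $\phi(t) = F_{\sigma(\gamma(t))}(p_{L(t)})$ is exactly $-dF(\gamma'(t))$ in the paper's notation (since $p_L = -\gamma'$), and your three events (a)--(c) match the paper's case analysis precisely; the only organizational difference is that you isolate the single-broken-line monotonicity as a lemma before summing, whereas the paper works directly with the pair, but the underlying telescoping computation is identical.
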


\begin{proof}

If $\gamma$ is a broken line, we can consider 
the piecewise constant function 
$dF(\gamma'(.))\colon t \mapsto dF(\gamma'(t))$ 
defined on the interior of the domains of 
linearity of $\gamma$.

Let $\gamma_1$ and $\gamma_2$ be broken lines like 
in the statement of Proposition \ref{prop_bound}. 
As $Q \notin \bigcup_{j=1}^3 \rho_j$,
$F$ is linear in a neighborhood of $Q$, and so for $t<<0$, we have 
\[ F(p_1)+F(p_2)-F(p)
=-dF(\gamma_1'(t))-dF(\gamma_2'(t))+dF(-\gamma_1'(0)-\gamma_2'(0))\]
\[
=(dF(\gamma_1'(0))-dF(\gamma_1'(t)))+
(dF(\gamma_2'(0)-dF(\gamma_2'(t))) \,.\]

Therefore,
using the notations of 
Definition
\ref{defn_broken_line}, it is enough to show that each time $\gamma$ crosses a ray 
$\rho_j$, $dF(\gamma'(.))$ increases at least by $1$, 
and that each time 
$\gamma$
  bends along a quantum ray
$\rho$ of $\fD$, $dF(\gamma'(.))$
increases at least by $F(p_{\rho}) \sum_{k\geq 0}n_k k$.

We consider first the case where $\gamma$ crosses a ray $\rho_j$ 
without bending
at some $t \in (-\infty,0]$. Let
$t_-<t$ and $t_+>t$ be very close to $t$.
 Assume for example that $j=2$ and that $\gamma$ 
 goes from $\sigma_{12}$ to $\sigma_{13}$.
Then, $\gamma'(t_-)=(a,b)$ with $a<0$, and so 
$dF(\gamma'(t_-))=a+b$ and $dF(\gamma'(t_+)=b$, that is $dF(\gamma'(.))$ 
increases by $-a \in \Z_{\geq 1}$. If  $\gamma$ goes from $\sigma_{13}$ to $\sigma_{12}$, then 
$\gamma'(t_-)=(a,b)$ with $a>0$, and so 
$dF(\gamma'(t_-))=b$ and $dF(\gamma'(t_+)=a+b$, and so  $dF(\gamma'(.))$ 
increases by $a \in \Z_{\geq 1}$. Cases with $j=1$ and $j=3$ follow similarly.

We then consider the case where 
$\gamma$ bends along a quantum ray $\rho$ of $\fD$.
Let $t_-$ be in the domain of linearity $L$ just before the bending 
and $t_+$ the domain of linearity
$L'$ just after the bending. 
If $p_\fd \neq v_j$ for every $j$, then $F$ is linear on a 
neighborhood of the bending and so
\[ dF(\gamma'(t_+))-dF(\gamma'(t_-))
=dF(p_{L})-dF(p_{L'})=F(p_{\rho}) \sum_{k \geq 1} n_k k\,.\]
If $p_\fd=v_j$ for some $j$, then, following the analysis done 
in the case $\gamma$ crosses $\rho_j$ without bending, we have the even stronger bound
\[ dF(\gamma'(t_+))-dF(\gamma'(t_-))
=dF(p_{L})-dF(p_{L'})>F(p_{\rho}) \sum_{k \geq 1} n_k k\,.\]

\end{proof}

\begin{defn}
Let $\fD$ be a quantum scattering diagram,
$p_1, p_2 \in B_0(\Z)$, and $p \in B(\Z)$. 
We define
$\fD_{p_1,p_2}^p \coloneqq \{ \rho=(p_{\rho},f_{\rho}) \in \fD \,\,|\,\, |F(p_{\rho})| \leq F(p)-F(p_1)-F(p_2)\} $.
\end{defn}

\begin{lem} \label{lem_finite}
Let $\fD$ be a quantum scattering diagram,
$p_1, p_2 \in B_0(\Z)$, and 
$p \in B(\Z)$. Then $\fD_{p_1,p_2}^p$ is finite.
\end{lem}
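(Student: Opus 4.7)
The plan is to reduce the claim to the statement that, for every real number $N$, the set
\[ S_N \coloneqq \{p \in B_0(\Z) \text{ primitive} : F(p) \leq N\} \]
is finite. By Definition \ref{defn_quantum_scattering_gen}, the scattering diagram $\fD$ contains at most one ray per primitive direction in $B_0(\Z)$, so the assignment $\rho \mapsto p_\rho$ is injective on $\fD$. Since $F(p_\rho) \geq 0$ for every primitive $p_\rho \in B_0(\Z)$, the condition $|F(p_\rho)| \leq F(p) - F(p_1) - F(p_2)$ in the definition of $\fD_{p_1,p_2}^p$ reduces to $F(p_\rho) \leq N$ with $N = F(p) - F(p_1) - F(p_2)$ (and the set is empty when $N < 0$). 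Hence $\fD_{p_1,p_2}^p$ embeds into $S_N$, and it suffices to show that $S_N$ is finite.

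To prove the finiteness of $S_N$, I would work cone by cone in the polyhedral decomposition $\Sigma = \{\sigma_{1,2}, \sigma_{2,3}, \sigma_{3,1}\}$ introduced in Section \ref{section_integral_affine}. On each two-dimensional cone $\sigma_{j,j+1}$, the function $F$ restricts to an explicit linear functional: namely $x+y$ on $\sigma_{1,2}$, $y$ on $\sigma_{2,3}$, and $x+2y$ on $\sigma_{3,1}$. Each of these is strictly positive on its cone away from the origin, as is clear from the normalization $F(v_j) = 1$. Consequently, the sublevel set $\{v \in \sigma_{j,j+1} : F(v) \leq N\}$ is a bounded convex polytope in the plane. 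Because $B_0(\Z) \cap \sigma_{j,j+1}$ is a subset of the standard integer lattice $\Z^2$, its intersection with a bounded set is finite. Taking the union over the three cones yields the finiteness of $S_N$, and therefore of $\fD_{p_1,p_2}^p$.

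There is no substantive obstacle: the entire argument reduces to the elementary observation that a positive linear functional has compact sublevel sets on a finitely generated rational polyhedral cone. The only piece of bookkeeping is the injectivity of $\rho \mapsto p_\rho$, which is built into the definition of a quantum scattering diagram.
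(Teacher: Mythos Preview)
Your proof is correct and is essentially the same as the paper's: the paper's proof is the single sentence ``The function $F \colon B \rightarrow \R$ is proper,'' and your argument is precisely an unpacking of this fact via the cone-by-cone description of $F$. Your additional bookkeeping (injectivity of $\rho \mapsto p_\rho$, handling $N<0$) is fine but not strictly needed for the finiteness claim.
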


\begin{proof}
The function $F \colon B \rightarrow \R$ is proper.
\end{proof}

\begin{prop} \label{prop_finite}
Let $\fD$ be a quantum scattering diagram, $p_1, p_2 \in B_0(\Z)$, 
$p\in B(\Z)$, and $Q$ a point in the interior 
of a two-dimensional cone of $\Sigma$ containing $p$. 
Then there are finitely many 
pairs $(\gamma_1,\gamma_2)$ of quantum broken lines for
$\fD$ with charges $p_1$,$p_2$ and common endpoint $Q$, 
such that  writing $c(\gamma_1)z^{s(\gamma_1)}$ and 
$c(\gamma_2)z^{s(\gamma_2)}$ the final monomials, we have 
$s(\gamma_1)+s(\gamma_2)=p$. 
Furthermore, if $\rho$ is a bending quantum ray for either 
$\gamma_1$ or $\gamma_2$, then $\rho \in \fD_{p_1,p_2}^p$.
\end{prop}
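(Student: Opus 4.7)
\textbf{The plan is} to extract both conclusions from the three bounds of Proposition \ref{prop_bound}, combined with the finiteness of $\fD_{p_1,p_2}^p$ provided by Lemma \ref{lem_finite}, and then to check that each combinatorial type of broken line pair corresponds to at most one actual geometric pair.

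\textbf{Bending walls and finiteness of combinatorics.} Suppose $\rho$ is a wall where $\gamma_1$ or $\gamma_2$ bends, and let $m \geq 1$ be the corresponding amount of bending. Isolating this term from the sum in Proposition \ref{prop_bound}(3) yields $F(p_\rho) \cdot m \leq F(p_1) + F(p_2) - F(p)$, whence $F(p_\rho) \leq F(p_1) + F(p_2) - F(p)$ and $\rho \in \fD_{p_1,p_2}^p$. This already proves the second assertion, and by Lemma \ref{lem_finite} only finitely many walls of $\fD$ can appear in any pair satisfying the hypotheses. The same inequality, now summed over all bending events of both $\gamma_1$ and $\gamma_2$, bounds the total number of bendings and the amount of bending at each bend. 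A parallel analysis of ray crossings, using the step-by-step increase of $dF(\gamma_i'(\cdot))$ established in the proof of Proposition \ref{prop_bound}, shows that each crossing of a $\rho_j$ without bending contributes at least $1$ to a quantity bounded by $F(p_1)+F(p_2)-F(p)$, hence bounds the number of such crossings too.

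\textbf{Rigid reconstruction from combinatorial skeletons.} Fix a combinatorial skeleton for each $\gamma_i$, meaning an ordered list of walls (necessarily in $\fD_{p_1,p_2}^p$) and rays $\rho_j$ that are bent at or crossed, together with a bending datum $n=(n_k)$ at each bend with $\sum_k n_k = N$ and $\sum_k n_k k$ equal to the prescribed amount of bending. Starting from the unbounded domain, where the slope is forced to be $p_i$, the bending rule \eqref{eq:broken_line_contribution} determines the slope $p_L$ and the coefficient $c_L$ on every subsequent domain of linearity. Tracing backward from $Q$ with the thus-computed final slope, each bend point (resp.\ ray crossing) is the unique intersection of the current line segment with the next wall (resp.\ ray), so the piecewise linear map $\gamma_i$ is entirely reconstructed. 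Since there are finitely many admissible skeletons for each of $\gamma_1$ and $\gamma_2$, there are finitely many pairs $(\gamma_1,\gamma_2)$, and a fortiori finitely many satisfying $s(\gamma_1)+s(\gamma_2)=p$.

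\textbf{The main obstacle} I anticipate is the rigid reconstruction step: one must verify that the combinatorial data actually pins down the locations of the bends, not merely the sequence of slopes. This amounts to a careful backward tracing from $Q$, invoking that a half-line meets a given wall or ray in at most one point, together with Definition \ref{defn_broken_line}(1), which ties each slope to the travel direction $\gamma'$. Once granted, this turns the combinatorial finiteness into an honest count of geometric broken lines.
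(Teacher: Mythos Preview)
Your proposal is correct and follows essentially the same approach as the paper: invoke Proposition \ref{prop_bound} to bound the bending rays and bending amounts, and Lemma \ref{lem_finite} for the finiteness of $\fD_{p_1,p_2}^p$. The paper's own proof is a two-sentence sketch that leaves implicit exactly the points you spell out (bounding the number of ray crossings, and the rigid backward reconstruction from $Q$), so your write-up is a more complete version of the same argument rather than a different one.
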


\begin{proof}
By Proposition \ref{prop_bound} and Lemma \ref{lem_finite}, 
there are finitely many possible bending quantum rays for $\gamma_1$ and $\gamma_2$, 
and the amount of each bending is uniformly bounded.
\end{proof}

\subsection{Quantum theta functions}
\label{section_quantum_theta}

\begin{defn}
Let $\fD$ be a quantum scattering diagram over $R$, $p_1, p_2 \in B_0(\Z)$,
$p \in B(\Z)$, and $Q \in B_0$ a point in a connected component of 
\[ B_0 \backslash \left(\bigcup_{j=1}^3 \rho_j\cup \bigcup_{\rho \in \fD_{p_1,p_2}^p} \R_{\geq 0} p_{\rho} \right)\] 
whose closure contains $\R_{\geq 0}p$, and such that 
the half-line $\R_{\geq 0}Q$ has irrational slope.
We define the \emph{structure constants}
\begin{equation} \label{eq:structure_constant}
C_{p_1,p_2}^{\fD, p}(Q) \coloneqq \sum_{(\gamma_1,\gamma_2)}
c(\gamma_1)c(\gamma_2) A^{2 \langle 
s(\gamma_1), s(\gamma_2) \rangle} \in R\,,
\end{equation}
where the sum is over pairs $(\gamma_1,\gamma_2)$ of quantum broken lines for
$\fD$ with charges $p_1$,$p_2$ and common endpoint $Q$, 
such that  writing $c(\gamma_1)z^{s(\gamma_1)}$ and 
$c(\gamma_2)z^{s(\gamma_2)}$ the final monomials, we have 
$s(\gamma_1)+s(\gamma_2)=p$. 
\end{defn}

We extend the definition of $C_{p_1,p_2}^{\fD, p}(Q)$ 
to all $p_1,p_2,p \in B(\Z)$ by setting 
\begin{equation}
C_{0,p_2}^{\fD, p}(Q)\coloneqq \delta_{p_2,p}
\,\,\,\,
\text{and} 
\,\,\,\,
C_{p_1,0}^{\fD, p}(Q)
\coloneqq \delta_{p_1,p} \,.
\end{equation}

By Proposition \ref{prop_finite}, 
the sum in Equation \eqref{eq:structure_constant} is indeed finite.

\begin{defn} \label{def:algebra}
A quantum scattering diagram 
$\fD$ over $R$ is \emph{consistent} if the following conditions hold:
\begin{enumerate} 
\item For every $p_1, p_2,p \in B(\Z)$,
the structure constant $C_{p_1,p_2}^{\fD, p}(Q)$ 
does not depend on the choice of the point $Q$. 
In such case, we write 
$C_{p_1,p_2}^{\fD, p}$ for $C_{p_1,p_2}^{\fD, p}(Q)$.
\item The product on the free $R$-module 
\[\cA_{\fD} \coloneqq \bigoplus_{p \in B(\Z)} R \, \vartheta_p \]
defined by 
\begin{equation}\label{eq:product_theta}
 \vartheta_{p_1}\vartheta_{p_2}=\sum_{p\in B(\Z)} 
C_{p_1,p_2}^{\fD,p} \vartheta_p 
\end{equation}
is associative.
\end{enumerate}
\end{defn}

In other words, given a consistent quantum scattering diagram
$\fD$ over $R$, one can construct an associative $R$-algebra 
$\cA_{\fD}$, coming with a $R$-linear basis $\{\vartheta_p\}_{p \in B(\Z)}$, 
called basis of \emph{quantum theta functions}, 
and whose structure constants can be computed 
in terms of quantum broken lines by Equation \eqref{eq:structure_constant}.

Note that the sum in Equation \eqref{eq:product_theta} 
is indeed finite: 
if $C_{p_1,p_2}^{\fD,p} \neq 0$, then 
$F(p) \leq F(p_1)+F(p_2)$ by Proposition
\ref{prop_finite}
and there are finitely many such $p \in B(\Z)$ by properness of $F$.

\begin{lem} \label{lem_filtration}
Let $p_1,p_2 \in B(\Z)$. If there is no two-dimensional 
cone of $\Sigma$ containing 
both $p_1$ and $p_2$, then 
\begin{equation}\label{eq:no_cone}
\vartheta_{p_1}\vartheta_{p_2}=\sum_{\substack{p\in B(\Z)\\ F(p) \leq F(p_1)+F(p_2)-1} }
C_{p_1,p_2}^{\fD,p} \vartheta_p \,.
\end{equation}
If there is a two-dimensional cone of 
$\Sigma$ containing both $p_1$ and $p_2$, then 
\begin{equation} \label{eq:cone}
\vartheta_{p_1}\vartheta_{p_2}=
A^{2 \langle p_1, p_2 \rangle}
\vartheta_{p_1+p_2}+
\sum_{\substack{p\in B(\Z)\\ F(p) \leq F(p_1)+F(p_2)-1} }
C_{p_1,p_2}^{\fD,p} \vartheta_p \,.
\end{equation}
\end{lem}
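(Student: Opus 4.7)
The plan is to read off both identities directly from the $F$-bounds supplied by Proposition \ref{prop_bound}. By part (1) of that proposition, every pair of quantum broken lines $(\gamma_1,\gamma_2)$ contributing to a non-zero $C^{\fD,p}_{p_1,p_2}(Q)$ satisfies $F(p)\le F(p_1)+F(p_2)$, so the only question is when this maximum can be attained.

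By part (2) of Proposition \ref{prop_bound}, the equality $F(p)=F(p_1)+F(p_2)$ forces both $\gamma_1$ and $\gamma_2$ to avoid crossing any ray $\rho_j$ and to avoid bending at any wall of $\fD$. Under these constraints each $\gamma_i$ must be the straight broken line $\gamma_i(t)=Q-p_i t$ with a single domain of linearity whose monomial remains $z^{p_i}$ throughout (by condition (3) of Definition \ref{defn_broken_line}); in particular $\gamma_i$ lies entirely in one two-dimensional cone of $\Sigma$. Such a pair therefore exists if and only if $Q$, $p_1$ and $p_2$ simultaneously lie in a common two-dimensional cone of $\Sigma$, and in that case one has $c(\gamma_i)=1$ and $s(\gamma_i)=p_i$. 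By Equation \eqref{eq:structure_constant}, this unique pair contributes $A^{2\langle p_1,p_2\rangle}$ to $C^{\fD,p_1+p_2}_{p_1,p_2}(Q)$ and nothing to $C^{\fD,p}_{p_1,p_2}(Q)$ for any $p\neq p_1+p_2$.

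From here the two cases of the lemma follow immediately. If $p_1$ and $p_2$ do not share a common two-dimensional cone of $\Sigma$, then the maximal pair described above does not exist for any admissible $Q$, so every non-zero $C^{\fD,p}_{p_1,p_2}$ satisfies $F(p)\le F(p_1)+F(p_2)-1$, yielding \eqref{eq:no_cone}. If instead $p_1,p_2$ lie in a common two-dimensional cone $\sigma$, then $p_1+p_2\in\sigma$, and I would invoke the consistency of $\fD$ to evaluate $C^{\fD,p_1+p_2}_{p_1,p_2}$ using any valid $Q\in\sigma$; the unique straight pair contributes $A^{2\langle p_1,p_2\rangle}$, while all other $p$ obey $F(p)\le F(p_1)+F(p_2)-1$, producing \eqref{eq:cone}. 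The only real point of care is that no non-straight or exotic pair can attain the top $F$-weight; this is precisely what Proposition \ref{prop_bound}(2) is stated to exclude, so there is no genuine obstacle.
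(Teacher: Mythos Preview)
Your proof is correct and follows essentially the same argument as the paper: use Proposition \ref{prop_bound}(1) for the overall bound $F(p)\le F(p_1)+F(p_2)$, then Proposition \ref{prop_bound}(2) to force any pair attaining equality to consist of straight, non-crossing broken lines confined to the cone of $Q$, which singles out the unique contribution $A^{2\langle p_1,p_2\rangle}\vartheta_{p_1+p_2}$ when $p_1,p_2$ share a cone and rules it out otherwise. Your write-up is in fact slightly more explicit than the paper's in identifying the straight pair and its final monomials.
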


\begin{proof}
By Proposition \ref{prop_bound}, all the
non-zero terms in the sum \eqref{eq:product_theta} 
have $F(p) \geq F(p_1)+F(p_2)$, and the only possibility for 
$F(p)=F(p_1)+F(p_2)$ is that all the broken lines contributing to 
$C_{p_1,p_2}^{\fD,p}$ do not cross $\bigcup_{j=1}^3 \rho_j$ 
and do not bend. 
If there is no two-dimensional cone of $\Sigma$ containing 
both $p_1$ and $p_2$, then a broken line contributing to $C_{p_1,p_2}^{\fD,p}$
necessarily crosses $\bigcup_{j=1}^3  \rho_j$, so 
$F(p) \leq F(p_1)+F(p_2)-1$, and we obtain 
Equation 
\eqref{eq:no_cone}. 
If there is a two-dimensional cone of 
$\Sigma$ containing both $p_1$ and $p_2$,
then the only possibly non-zero term with
$F(p)=F(p_1)+F(p_2)$ is obtained for $p=p_1+p_2$, for which the
 broken lines are straight, and we obtain
 Equation 
\eqref{eq:cone}.   
\end{proof}

For every $n \in \Z_{\geq 0}$, define 
\begin{equation} \cA_{\fD_\can}^n \coloneqq 
\bigoplus_{\substack{p \in B(\Z) \\
F(p) \leq n }} R \,\vartheta_p \,.
\end{equation}
By Lemma
\ref{lem_filtration}, the increasing filtration 
$(\cA_{\fD_\can}^n)_{n \in \Z_{\geq 0}}$ defines a structure of filtered algebra on 
$\cA_{\fD_\can}$.
For every $p \in B(\Z)$, we define 
$m[p] \in \cA_{\fD_\can}$ as the following monomials in $\vartheta_{v_1}$,
$\vartheta_{v_2}$, $\vartheta_{v_3}$:
if $p=av_j+bv_{j+1}$ with 
$a \geq 0$ and $b \geq 0$, then 
$m[p]\coloneqq \vartheta_{v_j}^a 
\vartheta_{v_{j+1}}^b$.

\begin{lem} \label{lem_m_linear_basis}
The monomials $m[p]$ for $p \in B(\Z)$ form a 
$R$-linear basis of $\cA_{\fD_\can}$. 
In particular, the quantum theta functions 
$\vartheta_{v_1}$, $\vartheta_{v_2}$ and 
$\vartheta_{v_3}$ generate 
$\cA_{\fD_\can}$ as 
$R$-algebra.
\end{lem}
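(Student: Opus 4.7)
The plan is to prove both assertions simultaneously, by induction on the filtration $(\cA_{\fD_\can}^n)_{n \geq 0}$ introduced above. Concretely, I will show that for every $n \geq 0$, the set $\{m[p] : p \in B(\Z),\, F(p) \leq n\}$ is an $R$-linear basis of $\cA_{\fD_\can}^n$. Once this is established, taking the union over $n$ yields the basis statement for $\cA_{\fD_\can}$, and the generation statement follows at once since each $m[p]$ is by definition a monomial in $\vartheta_{v_1},\vartheta_{v_2},\vartheta_{v_3}$.

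The base case $n=0$ is immediate, since $\cA_{\fD_\can}^0 = R\,\vartheta_0$ and $m[0]=\vartheta_0$. For the induction step, the key computation is that for $p = a v_j + b v_{j+1}$ with $a, b \geq 0$ and $F(p)=a+b$,
\[ m[p] = A^{2ab\langle v_j, v_{j+1}\rangle}\,\vartheta_p \pmod{\cA_{\fD_\can}^{F(p)-1}}. \]
To obtain this I would apply Lemma \ref{lem_filtration} iteratively. First, since $v_j$ and $v_j$ both lie in $\sigma_{j,j+1}$ and $\langle v_j, v_j\rangle = 0$, an induction on $a$ gives $\vartheta_{v_j}^a = \vartheta_{a v_j} \pmod{\cA_{\fD_\can}^{a-1}}$, and similarly $\vartheta_{v_{j+1}}^b = \vartheta_{b v_{j+1}} \pmod{\cA_{\fD_\can}^{b-1}}$. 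Multiplying these and using that the filtered product maps $\cA_{\fD_\can}^i \cdot \cA_{\fD_\can}^k \to \cA_{\fD_\can}^{i+k}$ controls the cross terms, I then apply Lemma \ref{lem_filtration} one last time to the pair $(a v_j, b v_{j+1})$, which lies in the common two-dimensional cone $\sigma_{j,j+1}$, to extract the displayed leading coefficient $A^{2ab\langle v_j, v_{j+1}\rangle}$.

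Since $A$ is invertible in $R$, these diagonal coefficients are units, so the matrix expressing $\{m[q] : F(q)\leq n\}$ in terms of $\{\vartheta_q : F(q) \leq n\}$ is upper triangular with unit diagonal with respect to the partial order induced by $F$. Combined with the inductive hypothesis that $\{m[q] : F(q) \leq n-1\}$ is a basis of $\cA_{\fD_\can}^{n-1}$, this completes the induction. The only small points of care are to check that $m[p]$ is unambiguously defined when $p = k v_j$ lies on a ray $\rho_j$ (where the two neighboring-cone expressions both reduce to $\vartheta_{v_j}^k$), and to keep track of the filtration bounds when expanding the products; there is no genuine obstacle beyond this bookkeeping, since Lemma \ref{lem_filtration} provides exactly the input required.
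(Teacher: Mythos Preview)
Your proposal is correct and follows essentially the same approach as the paper: both use Lemma~\ref{lem_filtration} to show that $m[p]$ and $\vartheta_p$ differ by a power of $A$ modulo $\cA_{\fD_\can}^{F(p)-1}$, so that the change-of-basis matrix from $\{\vartheta_p\}$ to $\{m[p]\}$ is unit upper triangular with respect to the filtration. The paper states this in one sentence without writing out the induction or the explicit leading coefficient $A^{2ab\langle v_j,v_{j+1}\rangle}$, but the content is the same.
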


\begin{proof}
By Lemma
\ref{lem_filtration}, we have $m[p] \in 
\cA_{\fD_\can}^{F(p)}$, and the images of 
$m[p]$ and $\vartheta_p$ in the quotient 
$\cA_{\fD_\can}^{F(p)}/\cA_{\fD_\can}^{F(p)-1}$ 
only differ by a power of $A$. Therefore, the fact that 
$\{\vartheta_p\}_{p\in B(\Z)}$ is a $R$-linear 
basis of $\cA_{\fD_\can}$ implies that 
$\{m[p]\}_{p \in B(\Z)}$
is also a $R$-linear basis of $\cA_{\fD_\can}$.
\end{proof}

\section{Algorithms from the quantum scattering diagrams $\fD_{0,4}$ and $\fD_{1,1}$}
\label{section_algorithms}

In Section
\ref{section_scattering_04}, we first introduce the quantum scattering diagram $\fD_{0,4}$, and then
we state Theorem 
\ref{thm:consistent_0_4} on the consistency of $\fD_{0,4}$ and Theorem 
\ref{thm_ring_isom} comparing $\cA_{\fD_{0,4}}$ and $\Sk_A(\bS_{0,4})$. 
We also explain how to deduce Theorem \ref{thm_main_intro}
for $\Sk_A(\bS_{0,4})$
and Theorem \ref{thm_main_intro_2} from
 Theorem 
\ref{thm:consistent_0_4} and Theorem 
\ref{thm_ring_isom}. In Section \ref{section_scattering_11}, we introduce the quantum scattering diagram $\fD_{1,1}$ and we explain how Theorem \ref{thm_main_intro}
for $\Sk_A(\bS_{1,1})$
and Theorem \ref{thm_main_intro_4} follow from
 Theorem 
\ref{thm:consistent_0_4} and Theorem 
\ref{thm_ring_isom}.
In Section \ref{section_closed_torus}, we use our description of 
$\Sk_{A}(\bS_{1,1})$ to recover the results of
Frohman and Gelca \cite{MR1675190} on the skein algebra $\Sk_A(\bS_{1,0})$ 
of the closed torus $\bS_{1,0}$. Finally, in Section 
\ref{section_cluster}, we prove Theorem \ref{thm_main_intro_3} relating 
positivity for the bracelets basis of the skein algebras and
positivity for the quantum cluster $\cX$-varieties.

\subsection{The quantum scattering diagram $\fD_{0,4}$}
\label{section_scattering_04}
We take 
$R
=\Z[A^{\pm}][R_{1,0}, R_{0,1}, R_{1,1}, y]$, and 
$\mu=1$.
We view $R$ as a $\Z[A^{\pm}][t^{D_1},t^{D_2},t^{D_3}]$-module 
$R$ where $t^{D_j}$ acts as the identity on $R$ for all $1 \leq j \leq 3$. 
This means that we can ignore the discussion of the factors $\alpha$ 
in the Definition \ref{defn_broken_line} of quantum broken lines.

\begin{defn}
We define
\begin{align} \label{eq:def_F}
 F(r,s,y,x) 
\coloneqq  1&+ \frac{rx(1+x^2)}{(1-A^{-4}x^2)(1-A^4x^2)}
+\frac{yx^2}{(1-A^{-4}x^2)(1-A^4x^2)}\\ \nonumber
&
+ \frac{sx^3(1+sx+x^2)}{(1-A^{-4}x^2)
(1-x^2)^2(1-A^4x^2)}\,.
\end{align}
\end{defn}

\begin{lem} \label{lem_positivity_F}
Expanding $F(r,s,y,x)$ as a power series in $x$, we have
\[ F(r,s,y,x) \in \Z_{\geq 0}[A^{\pm}][r,s,y][\![x]\!]
\,.\]
\end{lem}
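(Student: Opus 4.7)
The statement is an elementary positivity check, and the plan is to reduce it to the observation that products and sums of formal power series with coefficients in the semiring $\Z_{\geq 0}[A^{\pm}][r,s,y]$ remain in that semiring.

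First I would expand each of the three denominator factors as a geometric series:
\begin{equation*}
\frac{1}{1-A^{-4}x^2} = \sum_{k \geq 0} A^{-4k} x^{2k}, \quad
\frac{1}{1-A^{4}x^2} = \sum_{k \geq 0} A^{4k} x^{2k}, \quad
\frac{1}{1-x^2} = \sum_{k \geq 0} x^{2k},
\end{equation*}
each of which manifestly lies in $\Z_{\geq 0}[A^{\pm}][\![x]\!]$. Next I would observe that the three numerators appearing in \eqref{eq:def_F}, namely $rx(1+x^2)$, $yx^2$, and $sx^3(1+sx+x^2)$, are polynomials in $x$ with coefficients in $\Z_{\geq 0}[r,s,y]$.

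Then I would invoke the elementary fact that the semiring $\Z_{\geq 0}[A^{\pm}][r,s,y][\![x]\!]$ is closed under sums and products. Each of the three fractions in $F(r,s,y,x)$ is a finite product of elements from this semiring (the numerator polynomial times the relevant geometric series expansions of the reciprocals of denominator factors), hence lies in $\Z_{\geq 0}[A^{\pm}][r,s,y][\![x]\!]$. Finally, adding the constant $1$ and the three nonnegative fractions preserves this property, yielding the claim. There is no real obstacle here; the only point to verify is that each denominator factor starts with $1$ so that the geometric series expansion is well-defined as a formal power series in $x$, which is immediate from the explicit form of the denominators in \eqref{eq:def_F}.
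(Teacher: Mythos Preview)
Your proof is correct and takes essentially the same approach as the paper, which simply notes that the result is immediate from the defining equation for $F$ and the geometric series expansion $\frac{1}{1-u}=\sum_{k\geq 0}u^k$. Your version is just a more detailed spelling-out of this observation.
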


\begin{proof}
Immediate from Equation 
\eqref{eq:def_F} defining $F(r,s,y,x)$ and from 
the power series expansion 
\[ \frac{1}{1-u}=\sum_{k \geq 0} u^k \,.\]
\end{proof}

The first few terms of $F$ as a power series in $x$ are 
\begin{equation}
 F(r,s,y,x)=1+rx+yx^2
 +(s+r(A^{-4}+1+A^4))x^3
 +(s^2+A^{-4}+A^4)x^4+\dots 
 \end{equation}

\begin{defn} \label{defn_rays_sk}
For every $(m,n) \in B_0(\Z)$ with $m$ and $n$ coprime, we define a quantum ray 
$\rho_{m,n}=(p_{\rho_{m,n}},
f_{\rho_{m,n}})$ with coefficients 
in $\Z[A^{\pm}][R_{1,0}, R_{(0,1}, R_{1,1},y]$
by 
$p_{\rho_{m,n}}=(m,n)$,
and
\begin{enumerate}
\item if $(m,n)=(1,0) \mod 2$, 
$ f_{\rho_{m,n}}
\coloneqq F(R_{1,0}, R_{0,1}R_{1,1},y,z^{-(m,n)})$,
\item if $(m,n)=(0,1) \mod 2$, 
$ f_{\rho_{m,n}}
\coloneqq F(R_{0,1}, R_{1,0}R_{1,1},y,z^{-(m,n)})$,
\item if $(m,n)=(1,1) \mod 2$, 
$ f_{\rho_{m,n}}
\coloneqq F(R_{1,1}, R_{1,0}R_{0,1},y,z^{-(m,n)})$.
\end{enumerate}
\end{defn}

\begin{lem} \label{lem_positive_functions}
For every $(m,n) \in B_0(\Z)$ with $m$ and $n$
coprime, we have 
\[ f_{\rho_{m,n}} \in \Z_{\geq 0}[A^{\pm}][R_{1,0}, R_{0,1}, R_{1,1},y][\![z^{-(m,n)} ]\!]  \,.\]
\end{lem}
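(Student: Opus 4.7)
The plan is to reduce the statement directly to Lemma \ref{lem_positivity_F}, which establishes the positivity of the power series $F(r,s,y,x)$ as a series in $x$ with coefficients in $\Z_{\geq 0}[A^{\pm}][r,s,y]$. First I would observe that in each of the three cases of Definition \ref{defn_rays_sk}, the power series $f_{\rho_{m,n}}$ is obtained from $F(r,s,y,x)$ by substituting $x = z^{-(m,n)}$ and specializing $r$ and $s$ to elements of the polynomial ring $\Z_{\geq 0}[R_{1,0}, R_{0,1}, R_{1,1}]$: namely $(r,s)$ is one of the pairs $(R_{1,0}, R_{0,1}R_{1,1})$, $(R_{0,1}, R_{1,0}R_{1,1})$, or $(R_{1,1}, R_{1,0}R_{0,1})$, according to the parity class of $(m,n)$ modulo $2$.

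Next I would note the key observation that each of these specializations lies in the positive cone: both $r$ and $s$ are monomials in the variables $R_{1,0}, R_{0,1}, R_{1,1}$ with coefficient $1 \in \Z_{\geq 0}$. Hence the ring homomorphism
\[
\Z_{\geq 0}[A^{\pm}][r,s,y] \longrightarrow \Z_{\geq 0}[A^{\pm}][R_{1,0}, R_{0,1}, R_{1,1}, y]
\]
defined by these substitutions preserves positivity of coefficients, since sums and products of elements of $\Z_{\geq 0}[A^{\pm}][R_{1,0}, R_{0,1}, R_{1,1}, y]$ remain in the same positive cone.

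Finally, combining these two facts with Lemma \ref{lem_positivity_F} gives the desired inclusion
\[
f_{\rho_{m,n}} \in \Z_{\geq 0}[A^{\pm}][R_{1,0}, R_{0,1}, R_{1,1}, y][\![z^{-(m,n)}]\!].
\]
There is essentially no obstacle here; the statement is a formal consequence of Lemma \ref{lem_positivity_F} together with the fact that the parity-dependent substitutions prescribed in Definition \ref{defn_rays_sk} all have positive integer coefficients. The substantive positivity content has already been carried out in Lemma \ref{lem_positivity_F} via the geometric series expansion of $1/(1-u)$.
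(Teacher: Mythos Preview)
Your proposal is correct and follows exactly the same approach as the paper, which simply states that the result is immediate from Lemma~\ref{lem_positivity_F} and Definition~\ref{defn_rays_sk}. You have merely unpacked in more detail why the specialization preserves positivity, which is fine but not strictly necessary.
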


\begin{proof}
Immediate from Lemma \ref{lem_positivity_F}
and Definition \ref{defn_rays_sk}.
\end{proof}

\begin{defn} \label{defn_scat_04}
We define a quantum scattering diagram 
$\fD_{0,4}$ over $\Z[A^{\pm}][R_{1,0}, R_{0,1}, R_{1,1}, y]$
by
\[ \fD_{0,4} \coloneqq \{\rho_{m,n}\,|\,(m,n) \in B_0(\Z)\,,\,\gcd(m,n)=1\} \,.\]
\end{defn}

\begin{remark}
In the physics language used in Section \ref{section_physics}, we claim that
the quantum scattering diagram $\fD_{0,4}$ 
encodes the BPS spectrum of the $\cN=2$ theory $\cT_{0,4}$, that is, of the 
$\cN=2$ $N_f=4$ $SU(2)$ gauge theory,
at large values of $u$ on the Coulomb branch. The fact that $f_{\rho_{m,n}}$ only depends on $(m,n) \mod 2$ via permutations of 
$\{R_{1,0},R_{0,1},R_{1,1}\}$ reflects the $PSL_2(\Z)$
$S$-duality symmetry, mixed with the $\Spin(8)$ flavour symmetry 
by the triality action of 
$PSL_2(\Z/2\Z) \simeq S_3$ \cite{MR1306869}.
However, it is not so clear from
Equation \eqref{eq:def_F} that the precise form of $f_{\rho_{m,n}}$
agrees with the expected BPS spectrum of $\cT_{0,4}$. This will become manifest after some rewriting:
see Remark \ref{rem}.
\end{remark}

The following Theorem \ref{thm:consistent_0_4} and Theorem \ref{thm_ring_isom} are 
our main technical results and their proof will take the remainder of the paper.
The proof of Theorem \ref{thm:consistent_0_4} ends in Section 
\ref{sect_can_nu}, whereas the proof of Theorem \ref{thm_ring_isom}
is concluded in Section \ref{section_end_proof}.

\begin{thm} \label{thm:consistent_0_4}
The quantum scattering diagram 
$\fD_{0,4}$ is consistent.
\end{thm}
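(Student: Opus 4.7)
The plan is to prove consistency of $\fD_{0,4}$ not by a direct combinatorial verification (which seems hopeless, given the intricate bending behavior encoded by the series $F(r,s,y,x)$), but by realizing $\fD_{0,4}$ as a \emph{canonical} quantum scattering diagram arising from higher genus log Gromov--Witten theory of a cubic surface pair, and appealing to consistency of that canonical object as established in the author's prior work \cite{MR4048291}.

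Concretely, I would proceed as follows. First, fix a maximal log Calabi--Yau surface $(Y,D)$ where $Y$ is a smooth projective cubic in $\PP^3$ and $D=D_1+D_2+D_3$ is a triangle of lines; the tropicalization of $(Y,D)$ is precisely the integral affine manifold $B$ with its decomposition $\Sigma$ into the three cones $\sigma_{j,j+1}$ (with rays $\rho_j$ corresponding to the $D_j$). Attach to $(Y,D)$ its canonical quantum scattering diagram $\fD_\can$ as defined in \cite{MR4048291}: this is a quantum scattering diagram over a coefficient ring recording the classes $[D_j]$ (i.e.\ the formal variables $t^{D_j}$) together with the Kähler parameter, whose ray functions along each primitive direction $(m,n)$ are given by generating series of higher genus log Gromov--Witten invariants counting stable log maps $f\colon C\to Y$ of class $\beta$ with $f^{-1}(D)$ a single point of contact order $(m,n)$, weighted by $\hbar$ to the genus. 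The central result of \cite{MR4048291} asserts that $\fD_\can$ is consistent.

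The technical heart of the proof is then to compute $\fD_\can$ explicitly and match it, after the specialization that sets the classes $[D_j]$ to $1$ and identifies $R_{1,0},R_{0,1},R_{1,1},y$ with the appropriate combinations of Kähler/flavor parameters, with $\fD_{0,4}$ of Definition \ref{defn_scat_04}. Following the strategy of \cite{gross2019cubic} for the genus $0$ computation, I would exploit the large $PSL_2(\Z)$ birational automorphism group of $(Y,D)$ (whose tropical action on $B$ permutes primitive directions transitively up to the three mod-$2$ classes) to reduce the full computation of $\fD_\can$ to the invariants contributed by a small set of ``seed'' curves: the $8$ lines in $Y\setminus D$ and the $2$ conics meeting $D$ at a single interior point of $D_j$ for each $j$. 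For the lines, each has normal bundle $\cO(-1)$ and the higher genus multiple cover contributions assemble into the standard quantum dilogarithm factors of the form $\prod_{k}(1+a\, z^{-kp})$, which produce, after $PSL_2(\Z)$-orbit summation, the numerator terms $rx(1+x^2)$ and the denominator factors $(1-A^{\pm 4}x^2)$ in Equation \eqref{eq:def_F}. The conic contributions, which carry the variables $y$ and are responsible for the more intricate factor $sx^3(1+sx+x^2)/((1-A^{-4}x^2)(1-x^2)^2(1-A^4x^2))$, are computed using the author's formula for higher genus log Gromov--Witten invariants of log Calabi--Yau surfaces from \cite{bousseau2018quantum_tropical}, which expresses them in terms of refined tropical/scattering data.

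The hard part will be the conic computation and the $PSL_2(\Z)$-equivariant assembly: one must check that the seed contributions of lines and conics, once propagated under the $PSL_2(\Z)$ action, reproduce ray-by-ray precisely the function $F(r,s,y,x)$ of Equation \eqref{eq:def_F} with the prescribed dependence of $(R_{1,0},R_{0,1},R_{1,1})$ on the residue class of $(m,n)$ mod $2$. Once this identification $\fD_\can=\fD_{0,4}$ (after specialization of $t^{D_j}$ to $1$, which is harmless since the $\alpha$-factors in Definition \ref{defn_broken_line} are trivialized in our $R$) is established, Theorem \ref{thm:consistent_0_4} is immediate from the consistency of $\fD_\can$. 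A cleanup step is to verify that the specialization $t^{D_j}\mapsto 1$ is well-defined on $\fD_\can$ (i.e.\ that the relevant generating series make sense after collapsing these monomial variables), which follows from the finiteness statements in Proposition \ref{prop_finite} combined with the Kähler positivity of the curve classes involved.
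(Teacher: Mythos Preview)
Your proposal is correct and follows essentially the same route as the paper: consistency of $\fD_{0,4}$ is deduced from the consistency (Theorem~\ref{thm_can_consistent}, from \cite{MR4048291}) of the canonical quantum scattering diagram $\fD_\can$ attached to the cubic surface pair $(Y,D)$, after computing $\fD_\can$ explicitly via the $PSL_2(\Z)$ reduction of \cite{gross2019cubic} to the $8$ lines and $2$ conics, evaluating the conic multicovers through \cite{bousseau2018quantum_tropical}, and then matching with $\fD_{0,4}$ under the quotient map $\nu$ killing the classes $[D_j]$ (your ``specialization $t^{D_j}\mapsto 1$''). One small correction to your attribution of terms: in the paper's computation the entire denominator $(1-A^{-4}x^2)(1-x^2)^2(1-A^4x^2)$ comes from the two conics (Lemma~\ref{lem:gw_conics}) and the entire numerator $\prod_{m=1}^8(1+t^{L_{1m}}x)$ from the eight lines (Lemma~\ref{lem:gw_lines}); the individual summands of $F(r,s,y,x)$ in \eqref{eq:def_F} arise only after the algebraic identity of Corollary~\ref{cor_identity_wall} rewrites this ratio.
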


By Theorem \ref{thm:consistent_0_4}, 
it makes sense to consider the
$\Z[A^{\pm}][R_{1,0}, R_{0,1}, R_{1,1}, y]$-algebra $\cA_{\fD_{0,4}}$
given by Definition
\ref{def:algebra}, 
with its basis 
$\{ \vartheta_p \}_{p \in B(\Z)}$ of quantum theta functions, 
and structure constants
$C_{p_1,p_2}^{\fD_{0,4},p}$.

\begin{thm} \label{thm_ring_isom}
There is a unique morphism 
\[ \varphi \colon \cA_{\fD_{0,4}} 
\longrightarrow \Sk_A(\bS_{0,4}) \]
of $\Z[A^{\pm}][R_{1,0},R_{0,1}, R_{1,1},y]$-algebras
such that \[ \varphi(\vartheta_p)=\mathbf{T}(\gamma_p)\] 
for every $p \in B(\Z)$. Moreover, 
after extension of scalars for  $\cA_{\fD_{0,4}} $ from 
$\Z[A^{\pm}][R_{1,0},R_{0,1}, R_{1,1},y]$ to $\Z[A^{\pm}][a_1,a_2,a_3,a_4]$, $\varphi$ becomes an isomorphism of $\Z[A^{\pm}][a_1,a_2,a_3,a_4]$-algebras.

In particular, structure constants of the skein algebra $\Sk_A(\bS_{0,4})$ 
defined by Equation \eqref{eq:str_cst_0_4} coincide 
with the structure constants of the scattering diagram $\fD_{0,4}$ defined by 
Equation \eqref{eq:structure_constant}:
for every $p_1,p_2,p \in B(\Z)$, we have
\begin{equation} 
C_{p_1,p_2}^{\bS_{0,4},p}=C_{p_1,p_2}^{\fD_{0,4},p} \,.
\end{equation}
\end{thm}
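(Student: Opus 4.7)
The plan is to compare explicit presentations of both sides as families of non-commutative cubic surfaces, and then to match the distinguished bases using quantum broken line computations together with $PSL_2(\Z)$-equivariance.

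\textbf{Step 1: Generators and presentations.} By Lemma \ref{lem_m_linear_basis}, the algebra $\cA_{\fD_{0,4}}$ is generated as an $R$-algebra by the three ``boundary'' quantum theta functions $\vartheta_{v_1}, \vartheta_{v_2}, \vartheta_{v_3}$. I would first compute the three products $\vartheta_{v_i}\vartheta_{v_j}$ and $\vartheta_{v_1}\vartheta_{v_2}\vartheta_{v_3}$ by enumerating all quantum broken lines landing in a chamber adjacent to $v_i+v_j$ (respectively $v_1+v_2+v_3=0$). Proposition \ref{prop_bound} bounds $F(p) \leq F(v_i)+F(v_j) = 2$, so the broken lines involved have very limited bending, and the resulting computation should express these products in terms of $\vartheta_{v_k}$, the coefficients $R_{1,0},R_{0,1},R_{1,1},y$, and powers of $A$. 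This yields a presentation of $\cA_{\fD_{0,4}}$ as a non-commutative cubic surface (the content of the promised Theorem \ref{thm_eq_cubic_nu}). On the other side, I would invoke the Bullock--Przytycki presentation (Theorem \ref{thm:sk_cubic}) of $\Sk_A(\bS_{0,4})$ as a non-commutative cubic surface with three generators, the simple curves separating $\{p_1,p_2\}|\{p_3,p_4\}$, $\{p_1,p_3\}|\{p_2,p_4\}$, $\{p_1,p_4\}|\{p_2,p_3\}$, whose relations involve exactly the combinations $R_{1,0},R_{0,1},R_{1,1}$ and $y$ of the peripheral classes $a_1,\dots,a_4$. Matching the two presentations term by term defines $\varphi$ on generators and, automatically, as an $R$-algebra morphism; uniqueness of $\varphi$ is then forced by the fact that $\vartheta_{v_1},\vartheta_{v_2},\vartheta_{v_3}$ generate $\cA_{\fD_{0,4}}$, and after extension of scalars to $\Z[A^{\pm}][a_1,a_2,a_3,a_4]$ the two cubic presentations coincide, making $\varphi$ an isomorphism.

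\textbf{Step 2: Matching bases for axis points.} The harder statement is $\varphi(\vartheta_p)=\mathbf{T}(\gamma_p)$ for every $p \in B(\Z)$. I would first establish this for the ``axis'' points $p=(k,0)$ by induction on $k$. The key computation is $\vartheta_{v_1}\vartheta_{(k,0)}$ via quantum broken lines: starting from a generic endpoint $Q$ in the interior of a cone containing $(k,0)$, the two broken lines with charges $v_1$ and $(k,0)$ either remain straight (contributing $\vartheta_{(k+1,0)}$) or one bends once across a ray $\rho_j$ crossing point, producing a $\vartheta_{(k-1,0)}$ contribution with coefficient $1$. The bending formula \eqref{eq:bending_formula} together with the form of $f_{\rho_{1,0}}$ at low order reproduces exactly the Chebyshev recursion \eqref{eq:chebyshev}; hence by induction $\varphi(\vartheta_{(k,0)}) = T_k(\varphi(\vartheta_{v_1})) = T_k(\mathbf{T}(\gamma_{v_1})) = \mathbf{T}(\gamma_{(k,0)})$, using that $\gamma_{(k,0)}=k\gamma_{v_1}$ as a multicurve.

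\textbf{Step 3: $PSL_2(\Z)$-equivariance.} The quantum scattering diagram $\fD_{0,4}$ is manifestly invariant under the action of $PSL_2(\Z)$ on $B=\R^2/\langle -\id\rangle$ preserving the set of rays up to permutation of $\{R_{1,0},R_{0,1},R_{1,1}\}$ induced by $PSL_2(\Z) \twoheadrightarrow S_3$. This symmetry induces an action of $PSL_2(\Z)$ on $\cA_{\fD_{0,4}}$ permuting the basis $\{\vartheta_p\}$: concretely, $g \cdot \vartheta_p = \vartheta_{g\cdot p}$. On the skein side, the mapping class group of $\bS_{0,4}$ contains a natural copy of $PSL_2(\Z)$ (acting on $\bS_{0,4}$ via the identification with a double cover of the $4$-punctured torus) which permutes the isotopy classes $\gamma_p$ by $g \cdot \gamma_p = \gamma_{g \cdot p}$, and hence the bracelets $\mathbf{T}(\gamma_p)$. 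Since $\varphi$ already matches the generators $\vartheta_{v_i}\leftrightarrow \mathbf{T}(\gamma_{v_i})$ compatibly with $S_3$, I would verify that $\varphi$ is $PSL_2(\Z)$-equivariant by checking it on generators. Combined with Step 2, this gives $\varphi(\vartheta_p)=\mathbf{T}(\gamma_p)$ for every $p$ in the $PSL_2(\Z)$-orbit of the axis, which is all primitive $p$; the general case $p=kp_{\mathrm{prim}}$ follows from another Chebyshev induction along the line through $p_{\mathrm{prim}}$, of the same shape as Step 2.

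\textbf{Main obstacle.} The concrete enumeration of quantum broken lines needed in Step 2 is the technical heart of the argument: one must check that the intricate bending rule \eqref{eq:bending_formula_0}--\eqref{eq:broken_line_contribution}, applied to the specific generating function $F(r,s,y,x)$ of Equation \eqref{eq:def_F}, produces exactly the Chebyshev recursion $T_{n+1}(x)=xT_n(x)-T_{n-1}(x)$ --- neither more nor less. Verifying that no spurious broken lines contribute (using the sharp bounds of Proposition \ref{prop_bound}) and that the factor $A^{2\langle s(\gamma_1),s(\gamma_2)\rangle}$ in \eqref{eq:structure_constant} cancels in just the right way is where the proof will require care. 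Once this is in hand, the comparison of cubic presentations and the $PSL_2(\Z)$-equivariance are relatively formal, and the equality of structure constants $C_{p_1,p_2}^{\bS_{0,4},p}=C_{p_1,p_2}^{\fD_{0,4},p}$ follows by applying $\varphi$ to \eqref{eq:product_theta} and comparing with \eqref{eq:str_cst_0_4}.
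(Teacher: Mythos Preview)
Your proposal follows the paper's strategy closely and is essentially correct: compare the two non-commutative cubic presentations (Theorem \ref{thm_eq_cubic_nu} versus Theorem \ref{thm:sk_cubic}) to define $\varphi$, establish $\varphi(\vartheta_{kv_1})=\mathbf{T}(\gamma_{kv_1})$ via a Chebyshev recursion, and propagate to all $p$ by $PSL_2(\Z)$-equivariance. Two corrections are worth making.

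First, your ``Main obstacle'' misidentifies the mechanism behind the Chebyshev recursion. The product $\vartheta_{v_1}\vartheta_{kv_1}=\vartheta_{(k+1)v_1}+\vartheta_{(k-1)v_1}$ in $\cA_{\fD_{0,4}}$ does \emph{not} arise from bending or from the function $f_{\rho_{1,0}}=F(R_{1,0},R_{0,1}R_{1,1},y,z^{-v_1})$; both contributing broken lines are straight. The second configuration has $\gamma_2$ heading to infinity in the direction $-v_1$, which in the orbifold $B=\R^2/\langle -\id\rangle$ is the same ray $\R_{\geq 0}v_1$, giving $s(\gamma_1)+s(\gamma_2)=(1-k)v_1=(k-1)v_1$. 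The argument ruling out all other contributions (Lemma \ref{lem_power_theta}) is that any bending along a ray in the open upper half-plane forces the final direction to have positive vertical component, so $s(\gamma_1)+s(\gamma_2)\notin\Z v_1$. Thus the recursion reflects the $\Z/2\Z$-orbifold geometry of $B$, not the wall function $F$.

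Second, checking $PSL_2(\Z)$-equivariance on generators is not quite automatic: since $Tv_2=v_1+v_2$, you need $\varphi(\vartheta_{v_1+v_2})=\gamma_{v_1+v_2}$, which is not among the three defining equations of $\varphi$ and requires the separate broken-line computation of $\vartheta_{v_1}\vartheta_{v_2}$ (Lemma \ref{lem_product_theta_1}) together with the matching skein relation. Also, $\bS_{0,4}$ is the \emph{quotient} of the $4$-punctured torus by the hyperelliptic involution, not a double cover of it.
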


\begin{cor} \label{cor_traces}
The classical theta functions $\vartheta_p^{\cl}$ constructed in
\cite{MR3415066, gross2019cubic}
coincide with the trace functions 
$\rho \mapsto -\tr(\rho(\gamma_{p_{prim}})^k)$ on the character variety 
$\Ch_{SL_2}(\bS_{0,4})$, where $p=kp_{prim}$ with $k \in \Z_{\geq 1}$ and $p_{prim}
\in B(\Z)$ primitive. 
\end{cor}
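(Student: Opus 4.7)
The plan is to deduce the corollary by specializing Theorem \ref{thm_ring_isom} at $A = -1$. I would begin by recording the two classical limits. On the skein side, by the Bullock--Przytycki--Sikora result recalled in Section \ref{section_skein_character}, setting $A = -1$ identifies $\Sk_{-1}(\bS_{0,4})$ with $\cO(\Ch_{SL_2}(\bS_{0,4}))$ via $\gamma \mapsto -\tr(\rho(\gamma))$ on connected multicurves. On the scattering side, the wall functions of Definition \ref{defn_rays_sk} are polynomial in $A^{\pm}$, and the broken-line count \eqref{eq:structure_constant} is a finite sum by Proposition \ref{prop_finite}, so $\fD_{0,4}$ and $\cA_{\fD_{0,4}}$ admit classical limits $\fD_{0,4}^{\cl}$ and $\cA_{\fD_{0,4}^{\cl}}$, and the quantum theta basis $\{\vartheta_p\}_{p \in B(\Z)}$ specializes to a commutative theta basis $\{\vartheta_p|_{A=-1}\}_{p \in B(\Z)}$ of $\cA_{\fD_{0,4}^{\cl}}$.

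Next, I would identify $\vartheta_p|_{A=-1}$ with the classical theta function $\vartheta_p^{\cl}$ constructed in \cite{MR3415066, gross2019cubic}. By the proof of Theorem \ref{thm:consistent_0_4} carried out in Section \ref{section_canonical_scattering}, $\fD_{0,4}$ arises, after an appropriate specialization of variables, as the canonical quantum scattering diagram $\fD_{\can}$ of the cubic pair $(Y, D)$ defined in \cite{MR4048291} from higher genus log Gromov-Witten invariants. Specializing the higher-genus invariants at $A = -1$ recovers the genus-zero log Gromov-Witten invariants, so $\fD_{0,4}^{\cl}$ coincides with the canonical classical scattering diagram of $(Y, D)$ used in \cite{gross2019cubic} to construct the classical theta functions; thus $\vartheta_p|_{A=-1} = \vartheta_p^{\cl}$.

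With these two identifications in hand, specializing the isomorphism $\varphi$ of Theorem \ref{thm_ring_isom} at $A = -1$ yields $\vartheta_p^{\cl} = \mathbf{T}(\gamma_p)|_{A = -1}$ as regular functions on $\Ch_{SL_2}(\bS_{0,4})$. For $p = k p_{prim}$ the multicurve $\gamma_p$ consists of $k$ parallel copies of the connected multicurve $\gamma_{p_{prim}}$, so $\mathbf{T}(\gamma_p) = T_k(\gamma_{p_{prim}})$; evaluating at $A = -1$ and applying the Chebyshev identity $T_k(\lambda + \lambda^{-1}) = \lambda^k + \lambda^{-k}$ to the eigenvalues of $\rho(\gamma_{p_{prim}}) \in SL_2(\C)$ produces the trace function $\rho \mapsto -\tr(\rho(\gamma_{p_{prim}})^k)$, as claimed. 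I expect the main obstacle to be the identification in the second paragraph of the classical limit of the quantum canonical scattering diagram with the classical GHKS scattering diagram of \cite{gross2019cubic}; this is the $A = -1$ shadow of the comparison that is the heart of the proof of Theorem \ref{thm:consistent_0_4}, so once that theorem is established the corollary follows by specialization without further input.
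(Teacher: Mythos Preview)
Your proposal is correct and follows essentially the same approach as the paper: specialize Theorem~\ref{thm_ring_isom} at $A=-1$, identify the classical limit of the quantum theta functions with the classical theta functions of \cite{MR3415066, gross2019cubic}, and identify the classical limit of $\mathbf{T}(\gamma_p)$ with the trace function via the skein--character-variety correspondence of Section~\ref{section_skein_character}. The paper's proof is a terse two-sentence version of exactly this; your second paragraph makes explicit the scattering-diagram comparison (via $\fD_{0,4}=\nu(\fD_\can)$ and the $q\to 1$ reduction of the higher-genus invariants to genus zero) that the paper leaves implicit in the phrase ``our quantum theta functions reduce to the classical theta functions,'' and your final paragraph spells out the Chebyshev computation behind the phrase ``by the general relation between skein algebras and character varieties.''
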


\begin{proof}
It is an immediate corollary of Theorem \ref{thm_ring_isom}. In the classical limit 
$A=-1$, our quantum theta functions $\vartheta_p$ reduce to the classical theta functions 
$\vartheta_p^{\cl}$ of \cite{MR3415066, gross2019cubic}, and the element 
$\mathbf{T}(\gamma_p)$ of $\Sk_A(\bS_{0,4})$ reduces to the function 
$\rho \mapsto -\tr(\rho(\gamma_{p_{prim}})^k)$ on $\Ch_{SL_2}(\bS_{0,4})$
by the general relation between skein algebras and character varieties 
reviewed in Section \ref{section_skein_character}.
\end{proof}

Theorem \ref{thm_ring_isom} implies Theorem \ref{thm_main_intro_2}. 
Indeed, by Lemma \ref{lem_positive_functions}, 
the functions attached to the quantum rays of $\fD_{0,4}$ have coefficients in 
$\Z_{\geq 0}[A^{\pm}][R_{1,0}, R_{0,1}, R_{1,1},y]$, 
and so it follows from the definition
of the structure constants $C_{p_1,p_2}^{\fD_{0,4},p} $
in Equation \eqref{eq:structure_constant} in terms of broken lines, and
from the formulas
\eqref{eq:bending_formula}-\eqref{eq:broken_line_contribution} recursively 
computing the contribution of a broken line, that 
\begin{equation}
C_{p_1,p_2}^{\fD_{0,4},p}
\in \Z_{\geq 0}[A^{\pm}][R_{1,0}, R_{0,1}, R_{1,1},y]
\end{equation}
for every $p_1,p_2,p \in B(\Z)$.
By Theorem \ref{thm_ring_isom}, we have 
$C_{p_1,p_2}^{\bS_{0,4},p}=C_{p_1,p_2}^{\fD_{0,4},p}$, and so 
\begin{equation}
C_{p_1,p_2}^{\bS_{0,4},p}
\in \Z_{\geq 0}[A^{\pm}][R_{1,0}, R_{0,1}, R_{1,1},y]\,,
\end{equation}
that is, Theorem \ref{thm_main_intro_2} holds.

Finally, we explain how Theorem \ref{thm_main_intro_2} 
implies Theorem \ref{thm_main_intro} for $\Sk_A(\bS_{0,4})$.
A general element of the bracelets basis 
$\mathbf{B}_T$ of $\Sk_A(\bS_{0,4})$ is of the form 
\begin{equation} T_{n_1}(a_1)T_{n_2}(a_2)T_{n_3}(a_3)T_{n_4}(a_4)
\mathbf{T}(\gamma_p)
\end{equation}
for some $n_j \in \Z_{>0}$ for $1 \leq j \leq 4$ 
and some $p \in B(\Z)$. As the $T_{n_j}(a_j)$ are in the center of $\Sk_A(\bS_{0,4})$ and 
\begin{equation} \label{eq:cheby}T_{n_j}(a_j)T_{n_j'}(a_j)=T_{n_j+n_j'}(a_j)
+T_{|n_j-n_j'|}(a_j)\,,
\end{equation} it is enough to show that the structure constants 
$C_{p_1,p_2}^{\fD_{0,4},p}$ are polynomials 
in the variables $T_n(a_j)$ with coefficients in $\Z_{\geq 0}[A^{\pm}]$. 
By Theorem \ref{thm_main_intro_2}, we have
\[C_{p_1,p_2}^{\bS_{0,4},p}
\in \Z_{\geq 0}[A^{\pm}][R_{1,0}, R_{0,1}, R_{1,1},y]\,.\]
Using Equation \eqref{eq:cheby} again, it is enough to show that 
$R_{1,0}$, $R_{0,1}$, $R_{1,1}$ and $y$
are polynomials in the variables $T_n(a_j)$ with coefficients in $\Z_{\geq 0}[A^{\pm}]$.
Using that $T_1(x)=x$ and $T_2(x)=x^2-2$, we have from Equations 
\eqref{eq:R}-\eqref{eq:y} 
\begin{equation} R_{1,0}=T_1(a_1)T_1(a_2)+T_1(a_3)T_1(a_4)\,,
\end{equation}
\begin{equation}  R_{0,1}=T_1(a_1)T_1(a_3)+T_1(a_2)T_1(a_4)\,,
\end{equation}
\begin{equation}  R_{1,1}=T_1(a_1)T_1(a_4)+T_1(a_2)T_1(a_3)\,,
\end{equation}
\begin{equation}  y
=T_1(a_1)T_1(a_2)T_1(a_3)T_1(a_4)
+T_2(a_1)^2+T_2(a_2)^2+T_2(a_3)^2+
T_2(a_4)^2+A^4+6+A^{-4} \,,
\end{equation}
and so the result holds.

\subsection{The quantum scattering diagram $\fD_{1,1}$}
\label{section_scattering_11}

We take $R=\Z[A^{\pm}][z]$
and $\mu=\frac{1}{2}$.
We view $R$ as a $\Z[A^{\pm}][t^{D_1},t^{D_2},t^{D_3}]$-module 
where $t^{D_j}$ acts as the identity on $R$ for all $1 \leq j \leq 3$. 
This means that we can ignore the discussion of the factors $\alpha$ 
in the Definition \ref{defn_broken_line} of quantum broken lines.

\begin{defn}
We define 
\begin{equation} \label{eq:def_G}
G(z,x) \coloneqq 1+\frac{zx^2}{(1-A^{-2}x^2)
(1-A^2 x^2)}
\end{equation} 
\end{defn}

\begin{lem} \label{lem_positivity_G}
Expanding $G(z,x)$ as a power series in $x$, we have
\[ G(z,x) \in \Z_{\geq 0}[A^{\pm}][z][\![x]\!]
\,.\]
\end{lem}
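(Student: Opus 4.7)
The plan is to mirror the proof of Lemma \ref{lem_positivity_F} almost verbatim, since the structure of $G(z,x)$ is strictly simpler than that of $F(r,s,y,x)$: it has only one nontrivial term beyond the constant $1$, and its denominator is already of the form $(1-A^{-2}x^2)(1-A^2x^2)$ which matches the prototype handled in the previous lemma.

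First I would expand each geometric factor separately using the identity $\tfrac{1}{1-u} = \sum_{k \geq 0} u^k$:
\begin{equation}
\frac{1}{1-A^{-2}x^2} = \sum_{k \geq 0} A^{-2k} x^{2k}, \qquad \frac{1}{1-A^{2}x^2} = \sum_{k \geq 0} A^{2k} x^{2k}.
\end{equation}
Both series manifestly lie in $\Z_{\geq 0}[A^{\pm}][\![x]\!]$, with each coefficient a single monomial in $A^{\pm}$ with coefficient $+1$. The set $\Z_{\geq 0}[A^{\pm}][\![x]\!]$ is closed under multiplication (sums and products of Laurent polynomials with nonnegative integer coefficients remain nonnegative), so the product $\tfrac{1}{(1-A^{-2}x^2)(1-A^2x^2)}$ also lies in $\Z_{\geq 0}[A^{\pm}][\![x]\!]$.

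Multiplying this series by $zx^2$ produces an element of $\Z_{\geq 0}[A^{\pm}][z][\![x]\!]$, and finally adding the constant $1$ preserves this property. Hence $G(z,x) \in \Z_{\geq 0}[A^{\pm}][z][\![x]\!]$, as claimed.

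There is no real obstacle here: the lemma is a direct computation using closure of $\Z_{\geq 0}[A^{\pm}][z][\![x]\!]$ under sums and products, exactly parallel to Lemma \ref{lem_positivity_F}. The proof should be one or two lines, stating that the claim is immediate from the definition \eqref{eq:def_G} together with the geometric series expansion of $(1-u)^{-1}$.
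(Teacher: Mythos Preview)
Your proposal is correct and follows exactly the same approach as the paper: the paper's proof is the one-line statement that the result is immediate from the definition \eqref{eq:def_G} of $G(z,x)$ together with the geometric series expansion $\frac{1}{1-u}=\sum_{k\geq 0} u^k$. Your version simply spells out the closure-under-products step more explicitly.
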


\begin{proof}
Immediate from Equation 
\eqref{eq:def_F} defining $G(z,x)$ and from 
the power series expansion 
\[ \frac{1}{1-u}=\sum_{k \geq 0} u^k \,.\]
\end{proof}

The first few terms of $G$ as a power series in $x$ are 
\begin{equation}
 G(z,x)=1+zx^2
 +(A^{-2}+A^2)x^4+\dots 
\end{equation}
Note that writing $z=A^2+A^{-2}+\eta$ and 
$\eta=\lambda+\lambda^{-1}$, we have 
\begin{equation} \label{eq:bps_n_4}
G(z,x)=\frac{1+\eta x^2+x^4}{(1-A^{-2}x^2)(1-A^2 x^2)}=\frac{(1+\lambda x^2)(1+\lambda^{-1}x^2)}{(1-A^{-2}x^2)(1-A^2 x^2)}\,.
\end{equation}

\begin{defn} \label{defn_rays_sk_1}
For every $(m,n) \in B_0(\Z)$ with $m$ and $n$ coprime, 
we define a quantum ray 
$\tau_{m,n}=(p_{\tau_{m,n}},
f_{\tau_{m,n}})$
with coefficients in 
$\Z[A^{\pm}][z]$
by $ p_{\tau_{m,n}}=(m,n)$,
and
$ f_{\tau_{m,n}}
\coloneqq G(z ,z^{-(m,n)})$.
\end{defn}

\begin{lem} \label{lem_positive_functions_1_1}
For every $(m,n) \in B_0(\Z)$ with $m$ and $n$
coprime, we have 
\[ f_{\tau_{m,n}} \in \Z_{\geq 0}[A^{\pm}][z][\![z^{-(m,n)} ]\!]  \,.\]
\end{lem}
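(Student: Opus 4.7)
The plan is to deduce this lemma directly from Lemma \ref{lem_positivity_G} by substitution. By Definition \ref{defn_rays_sk_1}, the function $f_{\tau_{m,n}}$ is defined as $G(z, z^{-(m,n)})$, so the statement reduces to checking that substituting the formal variable $x$ in the power series $G(z,x)$ by the monomial $z^{-(m,n)}$ preserves nonnegativity of coefficients.

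First, I would invoke Lemma \ref{lem_positivity_G} to write
\[ G(z,x) = \sum_{k \geq 0} g_k(z) x^k \,, \qquad g_k(z) \in \Z_{\geq 0}[A^{\pm}][z] \,. \]
Then the substitution $x \mapsto z^{-(m,n)}$ yields
\[ f_{\tau_{m,n}} = G(z, z^{-(m,n)}) = \sum_{k \geq 0} g_k(z) \, z^{-k(m,n)} \,, \]
which is manifestly an element of $\Z_{\geq 0}[A^{\pm}][z][\![z^{-(m,n)}]\!]$.

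There is essentially no obstacle here: the only thing to verify is that the substitution is well-defined as a formal power series, which follows from the fact that the exponents $-k(m,n)$ for $k \geq 0$ are all distinct elements of the rank-one submonoid $\Z_{\geq 0} \cdot (-(m,n))$ of $\Lambda$, so no infinite sums of coefficients occur. The positivity is then preserved term by term.
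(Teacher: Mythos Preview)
Your proof is correct and follows exactly the same approach as the paper, which simply states that the result is immediate from Lemma \ref{lem_positivity_G} and Definition \ref{defn_rays_sk_1}. You have merely spelled out the substitution argument in more detail.
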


\begin{proof}
Immediate from Lemma \ref{lem_positivity_G}
and Definition \ref{defn_rays_sk_1}.
\end{proof}

\begin{defn} 
We define a quantum scattering diagram 
$\fD_{1,1}$ over $\Z[A^{\pm}][z]$ by
\[ \fD_{1,1} \coloneqq \{\tau_{m,n}\,|\,(m,n) \in B_0(\Z)\,,\,\gcd(m,n)=1\} \,.\]
\end{defn}

\begin{remark}
In the physics language used in Section \ref{section_physics}, 
the quantum scattering diagram $\fD_{1,1}$ 
encodes the BPS spectrum of the $\cN=2$ theory $\cT_{1,1}$ 
at large values of $u$ on the Coulomb branch. 
The theory $\cT_{1,1}$ has a Lagrangian description: 
it is the $\cN=2$ $SU(2)$ gauge theory coupled with 
a matter hypermultiplet in the adjoint representation, 
also known as the $\cN=2^{\star}$ theory. 
The BPS spectrum at large values of $u$ 
reduces to the BPS spectrum on the Coulomb branch of the theory 
with zero mass for the matter hypermultiplet, 
that is of the $\cN=4$ $SU(2)$ gauge theory.
Our definition of $\fD_{1,1}$ agrees 
with the expected BPS spectrum on the Coulomb branch of the $\cN=4$ $SU(2)$ gauge theory: 
for every $(m,n) \in \Z^2$ with $m$ and $n$ coprime, 
we have one vector multiplet of charge $(2m,2n)$, which corresponds to 
to the denominator of Equation 
\eqref{eq:bps_n_4}, 
two hypermultiplets of charge $(2m,2n)$,
which correspond to the numerator of 
Equation \eqref{eq:bps_n_4}
and no other states of charge a multiple of $(m,n)$
\cite{MR1306869}. 
Note that the $\cN=2$ vector multiplet and the two 
$\cN=2$ hypermultiplets combine into one $\cN=4$ vector multiplet. The states of 
charge $(2,0)$ can be seen classically 
(as $W$-bosons and elementary quarks), 
and the general states of charge $(2m,2n)$ are obtained from them by $SL_2(\Z)$ S-duality.
\end{remark}

\begin{lem} \label{lem_04_11}
$\fD_{1,1}$ is obtained from $\fD_{0,4}$
by replacing $A^4$ by $A^2$, setting 
$R_{1,0}=R_{0,1}=R_{1,1}=0$ and $y=z$.
\end{lem}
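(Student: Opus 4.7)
The plan is to verify the lemma by a direct comparison of the two scattering diagrams term by term, since both are defined explicitly by a collection of rays together with attached generating functions.

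First I would compare the underlying ray data. By Definition \ref{defn_scat_04} and the definition of $\fD_{1,1}$, both scattering diagrams consist of one ray indexed by each primitive $(m,n) \in B_0(\Z)$, with direction vector $p_{\rho_{m,n}} = p_{\tau_{m,n}} = (m,n)$. Hence the ray supports already coincide and the problem reduces to identifying the attached scattering functions $f_{\rho_{m,n}}$ with $f_{\tau_{m,n}}$ under the prescribed substitution.

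Next I would analyse the function $F(r,s,y,x)$ of Equation \eqref{eq:def_F}. In each of the three parity cases of Definition \ref{defn_rays_sk}, the argument $r$ is one of $R_{1,0}, R_{0,1}, R_{1,1}$ and the argument $s$ is the product of the other two. Consequently the specialization $R_{1,0} = R_{0,1} = R_{1,1} = 0$ kills $r$ and $s$ uniformly across the three cases, so that in every case $f_{\rho_{m,n}}$ reduces to
\begin{equation}
F(0,0,y,z^{-(m,n)}) = 1 + \frac{y \, z^{-2(m,n)}}{(1 - A^{-4} z^{-2(m,n)})(1 - A^{4} z^{-2(m,n)})}.
\end{equation}
Substituting $y = z$ and replacing $A^4$ by $A^2$ (hence $A^{-4}$ by $A^{-2}$) yields
\begin{equation}
1 + \frac{z \, z^{-2(m,n)}}{(1 - A^{-2} z^{-2(m,n)})(1 - A^{2} z^{-2(m,n)})} = G(z, z^{-(m,n)}),
\end{equation}
which is exactly $f_{\tau_{m,n}}$ of Definition \ref{defn_rays_sk_1}.

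Finally, I would note that the substitution $A^4 \mapsto A^2$ is moreover compatible with the change of the bilinear form parameter from $\mu = 1$ (used in $\fD_{0,4}$) to $\mu = 1/2$ (used in $\fD_{1,1}$): the quantity that actually enters both the scattering functions and the quantum bending rule \eqref{eq:bending_formula} is $A^{4\mu}$, which equals $A^4$ in the first case and $A^2$ in the second. No serious obstacle arises; the lemma is an elementary comparison of closed-form expressions, and its purpose is to allow the consistency of $\fD_{0,4}$ (Theorem \ref{thm:consistent_0_4}) and the identification with $\Sk_A(\bS_{0,4})$ (Theorem \ref{thm_ring_isom}) to be transported to the corresponding statements for $\bS_{1,1}$ and thereby deduce Theorems \ref{thm_main_intro} and \ref{thm_main_intro_4} in that case.
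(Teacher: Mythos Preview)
Your proof is correct and follows the same approach as the paper, which simply says the result is immediate from comparing the defining equations \eqref{eq:def_F} and \eqref{eq:def_G}. You have merely written out this comparison in full detail, and your closing remark about the compatibility with the change of $\mu$ is a helpful addition that the paper leaves implicit.
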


\begin{proof}
Immediate from comparing the Equations \eqref{eq:def_F} and 
\eqref{eq:def_G} defining 
$F(r,s,y,x)$ and $G(z,x)$.
\end{proof}

\begin{thm} \label{thm:consistent_1_1}
The quantum scattering diagram $\fD_{1,1}$ is consistent.
\end{thm}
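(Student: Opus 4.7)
The plan is to derive Theorem \ref{thm:consistent_1_1} as a specialization of Theorem \ref{thm:consistent_0_4}, using Lemma \ref{lem_04_11} as the bridge. The crucial observation is that, although the parameter $\mu$ differs between the two diagrams ($\mu=1$ for $\fD_{0,4}$ and $\mu=\tfrac{1}{2}$ for $\fD_{1,1}$), every $A$-dependent combinatorial ingredient of $\fD_{0,4}$ actually lies in the subring $\Z[A^{\pm 2}] \subset \Z[A^{\pm}]$. Indeed, for $\mu=1$ the twisting factor equals $A^{2\det(s(\gamma_1),s(\gamma_2))}$, the bending factor in \eqref{eq:bending_formula} equals $A^{2k(2j-N+1)}$, and the wall functions built from $F(r,s,y,x)$ only involve $A^{\pm 4}$. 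For $\mu=\tfrac{1}{2}$, the analogous expressions for $\fD_{1,1}$ are $A^{\det}$, $A^{k(2j-N+1)}$, and $A^{\pm 2}$, so the substitution of Lemma \ref{lem_04_11} is compatible throughout.

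Consequently, the substitution $A^2 \mapsto A$ of Lemma \ref{lem_04_11} extends to a genuine ring homomorphism
\[
\sigma \colon \Z[A^{\pm 2}][R_{1,0}, R_{0,1}, R_{1,1}, y] \longrightarrow \Z[A^{\pm}][z], \qquad A^2 \mapsto A, \;\; R_{i,j} \mapsto 0, \;\; y \mapsto z.
\]
A direct comparison with Definition \ref{defn_rays_sk_1} together with Lemma \ref{lem_04_11} shows that $\sigma$ carries each wall function $f_{\rho_{m,n}}$ of $\fD_{0,4}$ to the corresponding wall function $f_{\tau_{m,n}}$ of $\fD_{1,1}$, and matches the bending and twisting factors term by term. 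The piecewise linear function $F \colon B \to \R$ and the polyhedral decomposition $\Sigma$ are intrinsic to $B$, so Lemma \ref{lem_finite} and Proposition \ref{prop_finite} apply uniformly and exhibit the same set of allowed piecewise linear paths for broken lines in both diagrams. Since each bending rule \eqref{eq:broken_line_contribution} is polynomial in the wall coefficients and in $A^{\pm 2}$, the image under $\sigma$ of a broken line for $\fD_{0,4}$ is precisely the corresponding broken line for $\fD_{1,1}$, with possibly vanishing contribution whenever $\sigma$ kills a required $c_k$. Summing gives $\sigma\bigl(C_{p_1,p_2}^{\fD_{0,4},p}(Q)\bigr) = C_{p_1,p_2}^{\fD_{1,1},p}(Q)$ for every $p_1,p_2,p \in B(\Z)$ and every admissible $Q$.

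Both defining conditions of Definition \ref{def:algebra}, the independence of $C_{p_1,p_2}^{\fD_{0,4},p}(Q)$ on $Q$ and the associativity of the product on $\cA_{\fD_{0,4}}$, are polynomial identities in the source ring of $\sigma$; hence they descend via $\sigma$ to the same conditions for $\fD_{1,1}$, yielding Theorem \ref{thm:consistent_1_1}. The only real obstacle is bookkeeping: one must confirm that every $A$-power appearing in the broken-line combinatorics for $\fD_{0,4}$ is genuinely even (so that $\sigma$ is defined on the relevant subring) and that $\sigma$ surjects onto the combinatorial data governing $\fD_{1,1}$. Both checks reduce to the elementary exponent arithmetic indicated in the first paragraph and follow immediately from the explicit formulas for $F$ and $G$.
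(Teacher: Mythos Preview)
Your proof is correct and follows the same specialization strategy as the paper, which simply says that by Lemma \ref{lem_04_11}, $\fD_{1,1}$ is a specialization of $\fD_{0,4}$, so consistency descends from Theorem \ref{thm:consistent_0_4}. You spell out more carefully than the paper does why the change of parameter from $\mu=1$ to $\mu=\tfrac12$ is absorbed by the substitution $A^2\mapsto A$: the bending exponents $4\mu k(j-\tfrac{N-1}{2})$, the twisting exponents $2\langle s(\gamma_1),s(\gamma_2)\rangle=2\mu\det$, and the wall functions all lie in $\Z[A^{\pm 2}]$ for $\fD_{0,4}$, so the ring map $\sigma$ is well defined and carries the entire broken-line calculus to that of $\fD_{1,1}$.
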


\begin{proof}
By Lemma \ref{lem_04_11}, $\fD_{1,1}$ is a specialization 
of $\fD_{0,4}$, and so the consistency of $\fD_{1,1}$
follows from the consistency of 
$\fD_{0,4}$ given by Theorem \ref{thm:consistent_0_4}.
\end{proof}
 
By Theorem \ref{thm:consistent_1_1},
it makes sense to consider the
$\Z[A^{\pm}][z]$-algebra $\cA_{\fD_{1,1}}$
given by Definition
\ref{def:algebra}, 
with its basis 
$\{ \vartheta_p \}_{p \in B(\Z)}$ of quantum theta functions, 
and structure constants
$C_{p_1,p_2}^{\fD_{1,1},p}$.

\begin{thm} \label{thm_ring_isom_1_1}
There is a unique morphism 
\[ \varphi \colon \cA_{\fD_{1,1}} 
\longrightarrow \Sk_A(\bS_{1,1}) \]
of $\Z[A^{\pm}][z]$-algebras
such that \[ \varphi(\vartheta_p)=\mathbf{T}(\gamma_p)\] 
for every $p \in B(\Z)$. Moreover, 
$\varphi$ is an isomorphism of $\Z[A^{\pm}][z]$-algebras.

In particular, structure constants of the skein algebra 
$\Sk_A(\bS_{1,1})$ defined by Equation \eqref{eq:str_cst_1_1} 
coincide with the structure constants of the scattering diagram
 $\fD_{1,1}$ defined by 
Equation \eqref{eq:structure_constant}:
for every $p_1,p_2,p \in B(\Z)$, we have
\begin{equation} 
C_{p_1,p_2}^{\bS_{1,1},p}=C_{p_1,p_2}^{\fD_{1,1},p} \,.
\end{equation}
\end{thm}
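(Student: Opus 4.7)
The plan is to mirror the strategy used for Theorem \ref{thm_ring_isom} in the $\Sk_A(\bS_{0,4})$ case, exploiting the specialization of Lemma \ref{lem_04_11} wherever possible to transfer structural results from the $\bS_{0,4}$ setting to the $\bS_{1,1}$ setting.

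First, I would extract a presentation by generators and relations of $\cA_{\fD_{1,1}}$. By Lemma \ref{lem_m_linear_basis}, the three quantum theta functions $\vartheta_{v_1}, \vartheta_{v_2}, \vartheta_{v_3}$ generate $\cA_{\fD_{1,1}}$ as a $\Z[A^{\pm}][z]$-algebra. The products $\vartheta_{v_i}\vartheta_{v_j}$ can be computed directly by drawing the low-$F$-degree quantum broken lines for $\fD_{1,1}$; alternatively, they are obtained from the presentation of $\cA_{\fD_{0,4}}$ produced in Theorem \ref{thm_eq_cubic_nu} by applying the substitutions of Lemma \ref{lem_04_11} (set $R_{1,0}=R_{0,1}=R_{1,1}=0$, $y=z$, and replace $A^4$ by $A^2$). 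The output is a presentation of $\cA_{\fD_{1,1}}$ as a non-commutative cubic surface over $\Z[A^{\pm}][z]$.

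Next, I would invoke the classical presentation of $\Sk_A(\bS_{1,1})$ as a non-commutative cubic surface in three generators, corresponding to three simple closed curves on the once-punctured torus that pairwise intersect in exactly one point. This is the $\bS_{1,1}$-analog of Theorem \ref{thm:sk_cubic} and is recovered, for instance, in Cooke \cite{cooke2018kauffman}. Matching the two presentations defines a $\Z[A^{\pm}][z]$-algebra map $\varphi \colon \cA_{\fD_{1,1}} \to \Sk_A(\bS_{1,1})$ with $\varphi(\vartheta_{v_j}) = \mathbf{T}(\gamma_{v_j})$ for $j=1,2,3$, and a filtration argument based on Lemma \ref{lem_m_linear_basis} promotes $\varphi$ to an isomorphism.

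To verify $\varphi(\vartheta_p) = \mathbf{T}(\gamma_p)$ for every $p \in B(\Z)$, I would first treat the case $p=(k,0)$ by expanding $\vartheta_{v_1}^k$ using the quantum broken lines of $\fD_{1,1}$; the Chebyshev recursion \eqref{eq:chebyshev} arises naturally just as in the $\bS_{0,4}$ case. For general $p$, I would exploit the manifest $SL_2(\Z)$-invariance of $\fD_{1,1}$: the function $G(z, z^{-(m,n)})$ attached to each quantum ray $\tau_{m,n}$ is the same for all $(m,n)$ up to relabeling, so $SL_2(\Z)$ permutes the rays of $\fD_{1,1}$ and thereby induces an $SL_2(\Z)$-action on $\cA_{\fD_{1,1}}$ permuting the $\vartheta_p$ along the tautological $SL_2(\Z)$-action on $B(\Z)$. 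On the skein side, the mapping class group of $\bS_{1,1}$ is $SL_2(\Z)$ and its action on the bracelets basis is again tautological on $B(\Z)$. Checking that $\varphi$ intertwines the two actions on $\vartheta_{v_1}, \vartheta_{v_2}, \vartheta_{v_3}$, combined with the transitivity of $SL_2(\Z)$ on primitive elements of $B(\Z)\setminus\{0\}$, then propagates $\varphi(\vartheta_p) = \mathbf{T}(\gamma_p)$ to all primitive $p$, and hence to every $p$ via the Chebyshev recursion.

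The main obstacle will be establishing the $SL_2(\Z)$-equivariance of $\varphi$: the scattering-diagram action and the mapping-class-group action are constructed in quite different ways, so one must check their compatibility directly on a set of generators of $SL_2(\Z)$ (say the standard $S$ and $T$). This step is the precise analog of the $PSL_2(\Z)$-equivariance argument in the proof of Theorem \ref{thm_ring_isom}, and the same technique should carry over once the identifications of generators have been pinned down.
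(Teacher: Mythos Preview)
Your approach is correct and would succeed, but it is more laborious than the paper's. The paper takes a shortcut: rather than re-running the presentation-matching, Chebyshev recursion, and $SL_2(\Z)$-equivariance arguments in the $\bS_{1,1}$ setting, it observes that the specialization of Lemma \ref{lem_04_11} acts compatibly on \emph{both} sides of the already-proven isomorphism of Theorem \ref{thm_ring_isom}. On the scattering side, Lemma \ref{lem_04_11} says that $\cA_{\fD_{1,1}}$ is obtained from $\cA_{\fD_{0,4}}$ by setting $R_{1,0}=R_{0,1}=R_{1,1}=0$, $y=z$ (together with the $A^4\mapsto A^2$ shift coming from $\mu=1\mapsto\mu=\tfrac12$), and this specialization sends $\vartheta_p$ to $\vartheta_p$. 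On the skein side, the paper invokes Bullock--Przytycki \cite{MR1625701} for the parallel statement that $\Sk_A(\bS_{1,1})$ is obtained from $\Sk_A(\bS_{0,4})$ by the same specialization, with the multicurves $\gamma_p$ matching. Since Theorem \ref{thm_ring_isom} already identifies $\vartheta_p$ with $\mathbf{T}(\gamma_p)$ before specialization, the identification descends in one line.

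Your route uses the specialization only to transport the \emph{presentation}, and then re-derives the basis-matching $\varphi(\vartheta_p)=\mathbf{T}(\gamma_p)$ from scratch. This costs you the full $SL_2(\Z)$-equivariance check (including the analog of $\varphi(\vartheta_{v_1+v_2})=\gamma_{v_1+v_2}$ needed for the $T$-generator), but it has the advantage of being self-contained: you do not need to rely on the assertion, attributed to \cite{MR1625701} without further argument, that the multicurve bases of the two skein algebras correspond under the purely algebraic specialization.
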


\begin{proof}
Theorem \ref{thm_ring_isom_1_1} follows from the similar result, Theorem 
\ref{thm_ring_isom}, for $\bS_{0,4}$. Indeed, by Lemma \ref{lem_04_11}, 
the algebra 
$\cA_{\fD_{1,1}}$ is obtained from $\cA_{\fD_{0,4}}$ by setting 
$R_{1,0}=R_{0,1}=R_{1,1}=0$, $y=z$,
and by matching the quantum theta 
functions 
$\{ \vartheta_p \}_{p \in B(\Z)}$.
On the other hand,
Bullock and Przytycki gave in \cite{MR1625701} explicit 
presentations by generators and relations of $\Sk_A(\bS_{0,4})$ and 
$\Sk_A(\bS_{1,1})$, and observed that 
 $\Sk_A(\bS_{1,1})$ is obtained from
 $\Sk_A(\bS_{0,4})$ by setting 
$R_{1,0}=R_{0,1}=R_{1,1}=0$, $y=z$, 
and by matching the multicurves 
$\{\gamma_p\}_{p \in B(\Z)}$. 
\end{proof}

Theorem \ref{thm_ring_isom_1_1} implies Theorem \ref{thm_main_intro_4}. 
Indeed, by Lemma \ref{lem_positive_functions_1_1}, 
the functions attached to the quantum rays of $\fD_{1,1}$ have coefficients in 
$\Z_{\geq 0}[A^{\pm}][z]$, and so it follows from the definition
of the structure constants $C_{p_1,p_2}^{\fD_{1,1},p} $
in Equation \eqref{eq:structure_constant} 
in terms of broken lines, and from the formulas 
\eqref{eq:bending_formula}-\eqref{eq:broken_line_contribution} recursively computing 
the contribution of a broken line 
 that 
\begin{equation}
C_{p_1,p_2}^{\fD_{1,1},p}
\in \Z_{\geq 0}[A^{\pm}][z]
\end{equation}
for every $p_1,p_2,p \in B(\Z)$.
By Theorem \ref{thm_ring_isom_1_1}, we have 
$C_{p_1,p_2}^{\bS_{1,1},p}=C_{p_1,p_2}^{\fD_{1,1},p}$, and so 
\begin{equation}
C_{p_1,p_2}^{\bS_{1,1},p}
\in \Z_{\geq 0}[A^{\pm}][z]\,,
\end{equation}
that is, Theorem \ref{thm_main_intro_4} holds.

Finally, we explain how Theorem \ref{thm_main_intro_4} 
implies Theorem \ref{thm_main_intro} for $\Sk_A(\bS_{1,1})$.
A general element of the bracelets basis 
$\mathbf{B}_T$ of $\Sk_A(\bS_{1,1})$ is of the form 
\begin{equation} T_{n}(\eta)
\mathbf{T}(\gamma_p)
\end{equation}
for some $n \in \Z_{>0}$ 
and some $p \in B(\Z)$. As the $T_{n}(\eta)$ 
are in the center of $\Sk_A(\bS_{1,1})$, it follows from the identity 
\eqref{eq:cheby}
that it is enough to show that the structure constants 
$C_{p_1,p_2}^{\fD_{1,1},p}$ are polynomials in the variables 
$T_n(\eta)$ with coefficients in $\Z_{\geq 0}[A^{\pm}]$. 
By Theorem \ref{thm_main_intro_4},
we have
$C_{p_1,p_2}^{\bS_{1,1},p}
\in \Z_{\geq 0}[A^{\pm}][z]$.
Using Equation \eqref{eq:cheby} again, it is enough to show that 
$z$ is a polynomial in the variables $T_n(\eta)$ 
with coefficients in $\Z_{\geq 0}[A^{\pm}]$.
As $z=T_1(\eta)+A^2+A^{-2}$, this indeed holds.

\subsection{Recovering the skein algebra of the closed torus}
\label{section_closed_torus}

As reviewed in Section \ref{section_positivity_bracelets},
Frohman and Gelca \cite{MR1675190}
described explicitly the skein algebra $\Sk_A(\bS_{1,0})$ 
of the (closed) torus $\bS_{1,0}$. 
We explain below how this result appears as a limit of 
our description of the skein algebra $\Sk_A(\bS_{1,1})$ 
of the $1$-punctured torus $\bS_{1,1}$.

The closed torus $\bS_{1,0}$ is obtained from the $1$-punctured torus $\bS_{1,1}$ by closing the puncture, that is by
making topologically trivial the peripheral curve $\eta$. 
Thus, the skein algebra $\Sk_A(\bS_{1,0})$ is obtained from $\Sk_A(\bS_{1,1})$
by setting $\eta=-A^2-A^{-2}$, that is $z=0$. 
Theorem \ref{thm_ring_isom_1_1} gives a description of 
$\Sk_A(\bS_{1,1})$ in terms of the scattering diagrams 
$\fD_{1,1}$. Setting $z=0$ in the Definition \ref{defn_rays_sk_1} of 
$\fD_{\can}$, we obtain a trivial scattering diagrams 
whose all quantum rays $\rho=(p_\rho, f_\rho)$ have 
$f_\rho=1$. In particular, no broken line can bend 
in such scattering diagram, and the structure constants become extremely simple.

Let $p_1, p_2 \in B_0(\Z)$. Using the 
$PSL_2(\Z)$ action on $B(\Z)$, we can assume that $p_1$ is horizontal, 
that is, $p_1=(a,0)$.  Then, there are only 
two configurations of broken lines contributing to the product 
$\vartheta_{p_1} \vartheta_{p_2}$:
the one with $\gamma_1$ straight going to infinity parallel 
to $\R_{\geq 0}p_1$ and $\gamma_2$ straight going to infinity parallel to $\R_{\geq 0}p_2$, and the one
with $\gamma_1$ straight going to infinity parallel 
to $-\R_{\geq 0}p_1$ and $\gamma_2$ straight going to infinity parallel to $\R_{\geq 0}p_1$. 
Therefore, applying Equation\eqref{eq:structure_constant}, we obtain
\begin{equation}
\vartheta_{p_1}\vartheta_{p_2}=
A^{\det(p_1,p_2)} \vartheta_{p_1+p_2}
+A^{-\det(p_1,p_2)} \vartheta_{p_1-p_2} \,,
\end{equation}
that is, we recover the product-to-sum formula 
of Frohman and Gelca \cite{MR1675190} 
(see also \cite{MR3346923} for a different proof).

Note that in the limit where the scattering diagram on 
$B$ becomes trivial, and considering the classical limit 
$A=1$, the mirror cubic surface constructed in \cite{gross2019cubic}
becomes  
\[ \vartheta_{v_1}\vartheta_{v_2}\vartheta_{v_3}
=\vartheta_{v_1}^2+\vartheta_{v_2}^2+\vartheta_{v_3}^2-4 \,,\]
which is isomorphic to $(\mathbb{G}_m)^2/(\Z/2\Z)$, 
where $\Z$ acts on the torus $(\mathbb{G}_m)^2$ by 
$(x,y) \mapsto (x^{-1},y^{-1})$
(an isomorphism is given by 
$\vartheta_{v_1}=x+x^{-1}$, 
$\vartheta_{v_2}=y+y^{-1}$,
$\vartheta_{v_3}=xy+x^{-1}y^{-1}$).
This is the classical version of the description 
given by Frohman and Gelca \cite{MR1675190} of $\Sk_A(\bS_{1,0})$ 
as a $\Z/2\Z$-quotient of the quantum torus.

\subsection{Application to quantum cluster algebras}
\label{section_cluster}

In this section, we prove Theorem \ref{thm_main_intro_3}, 
that is, that the positivity of the structure constants 
of the bracelets basis of the skein algebra $\Sk_A(\bS_{g,\ell})$ 
implies the positivity of the structure constants
defined by the quantum duality map $\hat{\mathbb{I}}$ of
\cite{MR3581328}. We use the notations
introduced in Section \ref{section_cluster_intro}.

The quantum duality map $\hat{\mathbb{I}}$ is defined in 
\cite{MR3581328} using the quantum 
trace map of Bonahon and Wong
\cite{MR2851072}. Given an ideal triangulation $T$ of $\bS_{g,\ell}$, 
there is a corresponding quantum trace map, 
which is an injective algebra morphism
\[ \Tr_T \colon \Sk_A(\bS_{g,\ell})
\longrightarrow \hat{\mathcal{Z}}_T \,,\]
where $\hat{\mathcal{Z}}_T$ is the square root Chekhov-Fock algebra.

The set of tropical points $\cA_{SL_2,\bS_{g,\ell}}(\Z^t)$ 
is the set of 
even integral laminations on $\bS_{g,\ell}$ \cite{MR2233852}.
For every $l \in \cA_{SL_2,\bS_{g,\ell}}(\Z^t)$, 
we can write uniquely
$l = \sum_j k_j l_j$
where the $l_j$ are connected multicurves 
with distinct homotopy classes, 
$k_j \in \Z_{\geq 1}$ if $l_j$ is not peripheral, and 
$k_j \in \Z$ if $l_j$ is peripheral.
By Definition 3.11 of 
\cite{MR3581328}, we have 
$\hat{\mathbb{I}}(l)=\prod_j \hat{\mathbb{I}}(k_j l_j)$.
Therefore, to prove Theorem \ref{thm_main_intro_3}, 
it is enough 
to prove the positivity of the structure constants 
appearing in the products of the form
$\hat{\mathbb{I}}(kl)\hat{\mathbb{I}}(k'l')$ 
where $l$ and $l'$ are connected multicurves, 
$k \in \Z_{\geq 0}$ (resp.\ $k' \in \Z_{\geq 1}$)
if $l$ (resp.\ $l'$) is not peripheral, $k \in \Z$ (resp.\ 
$k' \in \Z$) if $l$ (resp.\ $l'$)
is peripheral.

By Lemma 3.25 of \cite{MR3581328}, for  $l$ a 
peripheral connected multicurve, $k  \in \Z$, 
and $l'$ a lamination, we have 
$\hat{\mathbb{I}}(kl) 
\hat{\mathbb{I}}(l')=
\hat{\mathbb{I}}(kl+l')$. 
It follows that it is enough to prove the positivity 
of the structure constants appearing in the products of the form
$\hat{\mathbb{I}}(kl)\hat{\mathbb{I}}(k'l')$ 
where $l$ and $l'$ are 
non-peripheral connected multicurves and $k,k' \in \Z_{\geq 1}$.

So, let us consider $l$ and $l'$ non-peripheral 
connected multicurves and $k,k' \in \Z_{\geq 1}$.
By Definitions 3.4 and 3.8 of
\cite{MR3581328},we have 
$\hat{\mathbb{I}}(kl)=\Tr_T(T_k(l))$
and $\hat{\mathbb{I}}(k'l')=\Tr_T (T_{k'}(l'))$.
Therefore, assuming the positivity 
of the structure constants of the bracelets basis of 
$\Sk_A(\bS_{g,\ell})$, we have 
\begin{equation} \hat{\mathbb{I}}(kl)
\hat{\mathbb{I}}(k'l')
=\Tr_T(T_k(l))\Tr_T (T_{k'}(l'))
=\Tr_T(T_k(l)T_{k'}(l'))
=\sum_\gamma C_{kl,k'l'}^{\gamma} 
\Tr_T(\mathbf{T}(\gamma)) \,,
\end{equation}
where the sum is over finitely many multicurves $\gamma$ and 
$C_{kl,kl'}^{\gamma} \in \Z_{\geq 0}
[A^{\pm}]$.
Write 
$\gamma=\gamma_1^{n_1} \cdots \gamma_r^{n_r}$
with $\gamma_1, \cdots , \gamma_r$ 
all distinct isotopy classes of connected multicurves, 
$\gamma_1,\cdots, \gamma_s$ non-peripheral and 
$\gamma_{s+1},\cdots, \gamma_r$ peripheral, and $n_j \in \Z_{>0}$. 
We have 
\begin{equation} \Tr_T(\mathbf{T}(\gamma))
=\prod_{j=1}^r \Tr_T(T_{n_j}(\gamma_j))
=\prod_{j=1}^s \hat{\mathbb{I}}
(n_j \gamma_j)
\prod_{k=s+1}^r (\Tr_T(\gamma_k))^{n_k} \,.
\end{equation}
For $s+1 \leq k \leq r$, $\gamma_k$ is peripheral, and so
by Lemma 3.24 of \cite{MR3581328}, we have 
\begin{equation}\hat{\mathbb{I}}(\gamma_k)
+\hat{\mathbb{I}}(-\gamma_k)=\Tr_T(\gamma_k)\,.
\end{equation}
Therefore, using again Lemma 3.25 of \cite{MR3581328}, we have 
\begin{equation} 
\begin{aligned}\hat{\mathbb{I}}(kl)
\hat{\mathbb{I}}(k'l')
&=\sum_\gamma C_{kl,k'l'}^{\gamma} 
\prod_{j=1}^s \hat{\mathbb{I}}
(n_j \gamma_j)
\prod_{k=s+1}^r 
(\hat{\mathbb{I}}(\gamma_k)
+\hat{\mathbb{I}}(-\gamma_k))^{n_k}  
\\
&=\sum_\gamma C_{kl,k'l'}^{\gamma} 
\prod_{j=1}^s \hat{\mathbb{I}}
(n_j \gamma_j)
\prod_{k=s+1}^r 
\left(\sum_{a=0}^{n_k}
\binom{n_k}{a}
\hat{\mathbb{I}}((2a-n_k)\gamma_k)\right)
\\
&=\sum_\gamma C_{kl,k'l'}^{\gamma} 
\sum_{a_{s+1}=0}^{n_{s+1}}
\cdots 
\sum_{a_r=0}^{n_r}
\binom{n_{s+1}}{a_{s+1}}
\cdots \binom{n_r}{a_r} 
\hat{\mathbb{I}}
\left(\sum_{j=1}^s n_j \gamma_j + \sum_{k=s+1}^r (2a_k-n_k) \gamma_k \right) \,,
\end{aligned}
\end{equation}
and so, under our assumption that 
$C_{kl,kl'}^\gamma \in \Z_{\geq 0}[A^{\pm}]$, the structure constants of 
$\hat{\mathbb{I}}$ belong to $\Z_{\geq 0}[A^{\pm}]$. On the other hand,
by Theorem 1.2 of  \cite{MR3581328}, these structure constants belong to 
$\Z[A^{\pm 2}]=\Z[q^{\pm \frac{1}{2}}]$. 
Thus, the structure constants of 
$\hat{\mathbb{I}}$ belong to $\Z_{\geq 0}[q^{\pm \frac{1}{2}}]$ 
and this proves Theorem \ref{thm_main_intro_3}.

\section{The canonical quantum scattering diagram}

\label{section_canonical_scattering}

We fix $\kk$ an algebraically closed field of characteristic zero, 
$Y$ a smooth projective cubic surface in $\PP^3_{\kk}$, 
and $D$ the union of three projective lines $D_1$, 
$D_2$, $D_3$ in 
$\PP^3_{\kk}$ contained in $Y$ and 
forming a triangle configuration.
In Section \ref{section_can_scatt}, 
 we review following \cite{MR4048291} 
the construction of the canonical  quantum scattering diagram
$\fD_\can$ associated to $(Y,D)$.
After some preliminaries on curve classes in
$Y$ presented in Section \ref{section_curves}, we give some explicit
description of $\fD_\can$ in Sections 
\ref{section_ray_rho_j_contribution} and 
\ref{section_contribution_general_rays}.

\subsection{The canonical quantum scattering diagram $\fD_{\can}$}
\label{section_can_scatt}

Following \cite[Section 3]{MR4048291}, 
we review the definition of the canonical quantum scattering diagram
attached to $(Y,D)$. The canonical quantum scattering diagram is defined
in terms of the enumerative geometry of curves in $(Y,D)$. 
More precisely, the canonical quantum scattering diagram encodes 
the data of logarithmic Gromov-Witten invariants of $(Y,D)$.

The pair $(B,\Sigma)$ defined in Section 
\ref{section_integral_affine} is the tropicalization of $(Y,D)$. 
It plays for $(Y,D)$ the role of a fan for a toric surface. 
In particular, the three two-dimensional cones 
$\sigma_{j,j+1}$ of $\Sigma$ are in natural 
correspondence with the three points $D_j \cap D_{j+1}$, 
the three one-dimensional rays $\rho_j$ of $\Sigma$
are in natural correspondence with the three divisors $D_j$, 
and the point $0 \in B$ is in natural correspondence 
with the complement $U$ of $D$ in $Y$. 
The integral linear structure on 
$B_0$ encodes the self-intersection numbers of the divisors $D_j$: 
for every 
$j \in \{1,2,3\}$, the fact that $D_j^2=-1$  translates into the
fact that $v_{j-1}+v_j+v_{j+1}=0$. 
We refer to \cite[\S 1]{gross2019cubic} and 
\cite[\S 1.2]{MR3415066}
for further details on the construction of the tropicalization.

Let $NE(Y)$ be the Mori cone of $Y$, 
i.e.\ the cone generated by effective curve classes in the group 
$A_1(Y)$ generated by numerical equivalence classes of curves on $Y$.
The group $A_1(Y)$ is a free abelian group of rank $7$.
The Mori cone $NE(Y)$ is a strictly convex rational polyhedral 
cone in $A_1(Y)$, generated by the classes of the 27 lines on $Y$.
We write $ \Q[\![\hbar]\!][NE(Y)]$ for the corresponding monoid 
ring with coefficients in the power series algebra 
$\Q[\![\hbar]\!]$, and $t^\beta$ 
for the monomial in 
$ \Q[\![\hbar]\!][NE(Y)]$ 
defined by 
$\beta \in NE(Y)$. 
We will apply the formalism of Section
\ref{section_quantum_scattering} with 
$R=\Q[\![\hbar]\!][NE(Y)]$, viewed as a 
$\Z[A^{\pm}][t^{D_1},t^{D_2},t^{D_3}]$-algebra, 
where $A$ acts by multiplication by 
$e^{\frac{i\hbar}{4}}$, and $t^{D_j}$ 
acts by multiplication by the corresponding element in $\Z[NE(Y)]$.
We will often use the notation $q=e^{i\hbar}=A^4$.

Let $\beta \in NE(Y)$ and $v \in B_0(\Z)$. 
We can write $v=av_j + bv_{j+1}$ with 
$a,b \in \Z_{\geq 0}$ for some $j \in \{1,2,3\}$. 
We are considering genus $g$ one-pointed stable maps
$f\colon (C,p) \rightarrow (Y,D)$ with $f^{-1}(D)=\{p\}$,  
such that $g$ has contact order $a$ with $D_j$ at $p$ and 
contact order $b$ with $D_{j+1}$
at $p$. Logarithmic Gromov-Witten theory 
\cite{MR3257836, MR3011419} provides 
a nice compactification $\overline{M}_{g,v}^\beta(Y/D)$ of
the space of such stable maps. The moduli space 
$\overline{M}_{g,v}^\beta(Y/D)$  is a proper Deligne-Mumford stack, 
coming with a virtual fundamental cycle
$[\overline{M}_{g,v}^\beta(Y/D)]^{\virt}$ of degree $g$.

Let $\pi \colon \cC \rightarrow \overline{M}_{g,v}^\beta(Y/D)$ 
be the universal source curve, $\omega_\pi$ 
the relative dualizing sheaf of $\pi$,
and $\pi_{*} \omega_\pi$ the rank $g$
Hodge bundle on $\overline{M}_{g,v}(Y/D,\beta)$. 
The top Chern class $\lambda_g$ of 
the Hodge bundle is a (complex) degree $g$ 
cohomology class on $\overline{M}_{g,v}^\beta(Y/D)$. 
We define log Gromov-Witten invariants $N_{g,v}^{\beta}$ 
of $(Y,D)$ by integration of the cohomology class 
$(-1)^g \lambda_g$ on the virtual fundamental cycle:
\begin{equation} \label{eq:log_gw}
N_{g,v}^{\beta} \coloneqq \int_{[\overline{M}_{g,v}^\beta(Y/D)]^{\virt}} (-1)^g \lambda_g \,.
\end{equation}
We have in general $N_{g,v}^{\beta} \in \Q$. For $g=0$,
we recover the genus $0$ log Gromov-Witten invariants
$N_v^{\beta}$ considered in \cite{MR3415066, gross2019cubic}.

\begin{lem} \label{lem_curve_classes_finite}
Given $v \in B_0(\Z)$, there exists finitely many $\beta \in NE(Y)$ 
such that $N_{g,v}^{\beta} \neq 0$ for some 
$g$.
\end{lem}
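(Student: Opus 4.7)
The plan is to pin $\beta$ down by intersection numbers with the boundary divisors and then invoke ampleness of $-K_Y$ to conclude finiteness. First I would unpack what $v \in B_0(\Z)$ says about $\beta$: writing $v = a v_j + b v_{j+1}$ with $a,b \in \Z_{\geq 0}$, any stable log map $f \colon (C,p) \to (Y,D)$ contributing to $\overline{M}_{g,v}^{\beta}(Y/D)$ satisfies $f^{-1}(D) = \{p\}$ with prescribed contact orders $a$ along $D_j$ and $b$ along $D_{j+1}$. This translates immediately into the intersection numbers
\begin{equation}
\beta \cdot D_j = a, \qquad \beta \cdot D_{j+1} = b, \qquad \beta \cdot D_{j-1} = 0.
\end{equation}

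Next, I would sum these three equalities. Since $D = D_1 + D_2 + D_3$ is anticanonical on the cubic surface $Y$, we get $\beta \cdot (-K_Y) = a+b$, an integer depending only on $v$. So if $N_{g,v}^\beta \neq 0$ for some $g$, then the moduli space is nonempty and $\beta$ is an effective class lying on the hyperplane $\{\gamma \in A_1(Y) : \gamma \cdot (-K_Y) = a+b\}$.

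The key point is now that $Y$ is a smooth cubic in $\PP^3$, so $-K_Y = H$ is very ample, and $NE(Y)$ is a strictly convex rational polyhedral cone generated by the $27$ lines, each of which pairs to $1$ with $H$. For any fixed $N$, the slice $\{\beta \in NE(Y) \cap A_1(Y)_{\Z} : \beta \cdot (-K_Y) = N\}$ is therefore finite: under the ample polarization, effective integral classes of bounded $H$-degree form a finite set (equivalently, $NE(Y) \cap \{\gamma \cdot H \leq N\}$ is a bounded polytope and $A_1(Y)$ is a lattice). Applying this with $N = a+b$ produces the desired finiteness.

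The argument is essentially immediate once the intersection numbers are read off, so there is no real obstacle; the only thing to double-check is that the conditions $f^{-1}(D) = \{p\}$ together with the prescribed contact orders encoded in $v$ genuinely fix $\beta \cdot D_k$ for all three $k$, which is standard in the log Gromov-Witten setup recalled in the paragraph preceding \eqref{eq:log_gw}.
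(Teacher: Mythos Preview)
Your proof is correct and follows essentially the same approach as the paper: both extract the constraint $\beta \cdot D = a+b$ from the tangency data and invoke ampleness of $D = -K_Y$ to conclude finiteness. Your version is a bit more verbose (you record $\beta \cdot D_{j-1} = 0$ and spell out the bounded-polytope reasoning), but the argument is the same.
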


\begin{proof}
Write $v=av_j + bv_{j+1}$ with $a,b \in \Z_{\geq 0}$ 
for some $j \in \{1,2,3\}$.
The moduli space $\overline{M}_{g,v}^\beta(Y/D)$ is possibly non-empty, 
and so the invariant $N_{g,v}^{\beta}$ possibly non-zero, 
only if $\beta \cdot D_j=a$ and $\beta \cdot D_{j+1}=b$, 
and so in particular $\beta \cdot D=a+b$. 
As $D$ is an ample divisor on $Y$, 
the set of such curve classes $\beta$ is finite.
\end{proof}

\begin{defn} \label{defn_rays_can}
For every $(m,n) \in B_0(\Z)$ with $m$ and $n$ coprime, 
we define a quantum ray 
$\fd_{m,n}=(p_{\fd_{m,n}},
f_{\fd_{m,n}})$ with coefficients 
in $\Q[\![\hbar]\!][NE(Y)]$
by 
$p_{\fd_{m,n}}=(m,n)$,
and
\begin{equation} \label{eq:rays_can}
 f_{\fd_{m,n}}
\coloneqq 
\exp \left( \sum_{k \geq 1} \sum_{\beta \in NE(Y)} 
\sum_{g \geq 0}
2 \sin \left(\frac{k \hbar}{2} \right)
N_{g,k(m,n)}^{\beta} 
\hbar^{2g-1} t^{\beta} z^{-k(m,n)} 
\right)\,.
\end{equation}
\end{defn}

Note that by Lemma \ref{lem_curve_classes_finite}, we have indeed 
$f_{\fd_{(m,n)}} \in 
\Q[\![\hbar]\!][NE(Y)][\![z^{-(m,n)}]\!]$, as required by Definition 
\ref{defn_quantum_ray}.

\begin{defn}
We define a quantum scattering diagram 
$\fD_{\can}$ over $\Q[\![\hbar]\!][NE(Y)]$
by
\[ \fD_{\can} \coloneqq \{\fd_{m,n}\,|\,(m,n) \in B_0(\Z)\,,\,\gcd(m,n)=1\} \,.\]
We refer to $\fD_{\can}$ as the 
\emph{canonical quantum scattering diagram} defined by $(Y,D)$.
\end{defn}

The following Theorem \ref{thm_can_consistent} 
is the specialization to the case of the cubic surface $(Y,D)$ 
of one of the main results of \cite{MR4048291} on the consistency of 
canonical quantum scattering diagrams attached to log Calabi-Yau surfaces.

\begin{thm}[\cite{MR4048291}]
\label{thm_can_consistent}
The quantum scattering diagram $\fD_{\can}$ is consistent.
\end{thm}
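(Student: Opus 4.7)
My plan is to derive Theorem \ref{thm_can_consistent} directly from the general consistency result of \cite{MR4048291} for canonical quantum scattering diagrams attached to maximal log Calabi-Yau surfaces, so the proof splits into a short verification step and an unpacking of the cited theorem. First I would check that $(Y,D)$ is in the domain of the general theorem: each line $D_j$ on the smooth cubic $Y$ has self-intersection $-1$, so by adjunction the divisor $D = D_1 + D_2 + D_3$ is a reduced normal-crossings anticanonical cycle with three $0$-dimensional strata $D_j \cap D_{j+1}$. Hence $(Y,D)$ is maximal log Calabi-Yau, and the tropicalization $(B,\Sigma)$ constructed in Section \ref{section_integral_affine} coincides with the one associated to $(Y,D)$ in \cite{MR3415066,gross2019cubic}. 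Consequently the quantum scattering diagram $\fD_\can$ defined in Definition \ref{defn_rays_can} is the canonical one of \cite{MR4048291}, and consistency follows from the general theorem.

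To describe what the cited theorem accomplishes, recall that consistency means that the structure constants $C_{p_1,p_2}^{\fD_\can,p}(Q)$ are independent of the chamber containing $Q$, which is equivalent, in the standard formulation, to the statement that the ordered product of wall-crossing automorphisms along any small loop around $0 \in B$ equals the identity in the completed Heisenberg-type quantum torus algebra with coefficients in $\Q[\![\hbar]\!][NE(Y)]$. By the very definition of $f_{\fd_{m,n}}$ in \eqref{eq:rays_can} this consistency is an identity among the higher-genus log Gromov-Witten invariants $N_{g,v}^{\beta}$ with $\lambda_g$-insertion, so the whole problem is translated into enumerative geometry.

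The argument in \cite{MR4048291} then proceeds via the degeneration and gluing formulas in logarithmic Gromov-Witten theory: one chooses a toric degeneration of $(Y,D)$ to a union of toric pieces indexed by a polyhedral decomposition of $B$, and shows that each invariant $N_{g,v}^{\beta}$ is computed by a sum over rational tropical curves in $B$ with local vertex contributions given by log Gromov-Witten invariants of the toric strata. Under this reduction, the loop identity becomes a combinatorial balancing identity for tropical curves, which is the quantum/$\lambda_g$-refined analog of the Gross-Pandharipande-Siebert identity from \cite{MR2667135,MR3415066}. The main technical step, and the hard part of the background theorem, is establishing this refined identity: it relies on the explicit multiplicity formulas for the refined tropical vertex worked out in \cite{bousseau2018quantum_tropical} and on a Kontsevich-Soibelman-style wall-crossing argument that packages the genus expansion into the sine prefactor appearing in \eqref{eq:rays_can}. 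For the present paper none of this machinery needs to be re-established; the proof of Theorem \ref{thm_can_consistent} is complete once the hypothesis check above has been made and \cite{MR4048291} has been invoked.
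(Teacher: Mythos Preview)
Your proposal is correct and matches the paper's treatment: the paper does not give a proof of Theorem~\ref{thm_can_consistent} at all, but simply states it as a specialization of the main consistency result of \cite{MR4048291} to the cubic surface $(Y,D)$, which is exactly what you do after your hypothesis check. Your additional paragraphs sketching the internal mechanism of \cite{MR4048291} (degeneration, tropical gluing, refined vertex) are extra exposition beyond what the paper provides; they are reasonable context but not part of the paper's argument, and strictly speaking you should be careful not to overcommit to a particular proof architecture for the cited result unless you have verified it against \cite{MR4048291} directly.
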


In the following sections, we give an explicit 
description of the canonical quantum 
scattering diagram $\fD_{\can}$. 

\subsection{Curves on the cubic surface}
\label{section_curves}

Recall that lines in $\PP^3_{\kk}$ contained in $Y$
are exactly the $(-1)$-curves on $Y$.
A smooth projective cubic surface 
$Y \subset \PP^3_{\kk}$ contains $27$ lines, 
a classical result going back to Cayley
\cite{cayley1849triple} and Salmon 
\cite{salmon1849triple}
(see e.g. \cite[V.4]{Hartshorne}).
Three of these lines are $D_1$, $D_2$, $D_3$, 
whose union is the triangle $D$. 
It remains $24$ lines on $Y$ not contained in $D$.
By the adjunction formula, each of them intersect $D$
in a single point. One can easily check that for every $j \in \{1,2,3\}$, 
there are $8$ lines not containing in $D$ and intersecting $D_j$, 
that we write 
$L_{jk}$ for $1 \leq k \leq 8$. 

For convenience in describing curves classes of $Y$, 
we also fix for every $j \in \{1,2,3\}$ a pair of disjoint lines
$\{E_{j1},E_{j2}\} \subset \{L_{jm}\}_{1 \leq m \leq 8} $.
Contracting the $6$
$(-1)$-curves 
$E_{11}$, $E_{12}$, $E_{21}$, $E_{22}$, $E_{31}$, $E_{32}$,
gives a presentation of $Y$ 
as a blow-up of $\PP^2_\kk$ in $6$ points. 
We denote by $H \in NE(Y)$ the pullback of 
the class of a line in $\PP^2_\kk$. 
Note that for every $j \in \{1,2,3\}$, we have 
\begin{equation} 
D_j=H-E_{j1}-E_{j2}
\end{equation}

From the explicit 
description of the $27$ lines on 
$\PP^2$ blown-up in $6$ points as 
the $6$ exceptional divisors, the strict transforms of the $15$ 
lines passing through pairs of blown-up points, and the
$6$ strict transforms of the conics passing though 
$5$-tuples of blown-up points, 
we obtain the list of the classes of the $8$ lines $L_{1m}$ 
intersecting $D_1$ and distinct from $D_2$ and $D_3$.
We have $2$ exceptional divisors, 
$4$ strict transforms of a line, and 
$2$ strict transforms of a conic:
\begin{equation} \label{eq:lines_1}
\begin{aligned}
\{ L_{1m}\}_{1 \leq m \leq 8}
= \{ E_{11}\,, E_{12}\,,
H-E_{21}-E_{31}\,,
H-E_{21}-E_{32}\,,
H-E_{22}-E_{31}\,,
H-E_{22}-E_{32}\,, \\
2H-E_{11}-E_{21}-E_{22}-E_{31}-E_{32} \,,
2H-E_{12}-E_{21}-E_{22}-E_{31}-E_{32}
\}\,.
\end{aligned}
\end{equation}
Similarly, we have 
\begin{equation}
\begin{aligned} \label{eq:lines_2}
\{ L_{2m}\}_{1 \leq m \leq 8}
= \{ E_{21}\,, E_{22}\,,
H-E_{11}-E_{31}\,,
H-E_{11}-E_{32}\,,
H-E_{12}-E_{31}\,,
H-E_{12}-E_{32}\,, \\
2H-E_{21}-E_{11}-E_{12}-E_{31}-E_{32} \,,
2H-E_{22}-E_{11}-E_{12}-E_{31}-E_{32}
\}\,,
\end{aligned}
\end{equation}
and 
\begin{equation} \label{eq:lines_3}
\begin{aligned}
\{ L_{3m}\}_{1 \leq m \leq 8}
= \{ E_{31}\,, E_{32}\,,
H-E_{11}-E_{21}\,,
H-E_{11}-E_{22}\,,
H-E_{12}-E_{21}\,,
H-E_{12}-E_{22}\,, \\
2H-E_{31}-E_{11}-E_{12}-E_{21}-E_{22} \,,
2H-E_{32}-E_{11}-E_{12}-E_{21}-E_{22}
\}\,.
\end{aligned}
\end{equation}

For every $j \in \{1,2,3\}$, writing 
$\{1,2,3\}=\{j,k,\ell\}$, there are exactly two conics in $Y$  
tangent to $D_j$ and not intersecting $D_k \cup D_{\ell}$, 
that we write $\cC_{jk}$ for $1 \leq k \leq 2$. 
This comes from the  fact that there are two conics 
passing through $4$ points and tangent to a given line 
in $\PP^2_\kk$. The class in $NE(Y)$ of the two conics 
$\cC_{jk}$ for $1 \leq k \leq 2$ is 
\begin{equation} 
2H-E_{k1}-E_{k2}-E_{\ell 1}-E_{\ell 2}=D_k+D_{\ell} \,.
\end{equation}

\subsection{Contribution of the rays $\fd_j$: calculations in log Gromov-Witten theory}
\label{section_ray_rho_j_contribution}

For every $j \in \{1,2,3\}$, we write $\fd_j$ for 
$\fd_{m,n}$ where $v_j=(m,n)$. 
In the following Proposition
\ref{prop_ray_rho_j_contribution}, 
we compute explicitly the quantum rays  
$\fd_j$ of $\fD_{\can}$.

\begin{prop}\label{prop_ray_rho_j_contribution} 
For every $j \in \{1,2,3\}$,
writing $\{1,2,3\}=\{j,k,\ell\}$ , the quantum ray $\fd_j=(v_j, f_{\fd_j})$ 
in $\fD_\can$
satisfies  
\begin{equation} \label{eq:ray_rho_j_contribution}
f_{\fd_j}
= \frac{\prod_{m=1}^8(1+t^{L_{jm}}z^{-v_j})}{(1-q^{-1} t^{D_k+D_{\ell}}z^{-2v_j})
(1- t^{D_k+D_{\ell}}z^{-2v_j})^2
(1-q t^{D_k+D_{\ell}}z^{-2v_j})} 
\end{equation}
where $q=e^{i\hbar}$.
\end{prop}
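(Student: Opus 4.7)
By \eqref{eq:rays_can}, proving the proposition is equivalent to identifying the effective classes $\beta$ contributing to $N_{g,kv_j}^\beta$ for each $k \geq 1$ and $g \geq 0$, and evaluating these invariants. First I would analyze the geometric constraints: any contributing $\beta$ must satisfy $\beta \cdot D_j = k$ and $\beta \cdot D_{j\pm 1} = 0$, so the underlying stable maps hit $D$ only along $D_j$. Following the strategy used in \cite{gross2019cubic} for the genus $0$ case and adapting it to arbitrary genus, I would exploit the $PSL_2(\Z)$-group of birational automorphisms of $(Y,D)$: starting from an arbitrary contributing class, repeated application of these transformations together with deformation invariance of log Gromov-Witten invariants (which extends to the $(-1)^g\lambda_g$-twisted theory since $\lambda_g$ is pulled back from $\overline{\mathcal M}_g$) reduces $\beta$ to a nonnegative multiple of either one of the eight lines $L_{jm}$ or one of the two conics $\cC_{jm}$; other classes have virtually empty moduli and contribute zero.

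Once this reduction is in place, the remaining work is to evaluate the two types of multiple cover contributions. For each line $L_{jm}$, a rational $(-1)$-curve meeting $D$ transversally at a single point of $D_j$, the standard higher-genus multiple cover formula for rigid curves in the $(-1)^g \lambda_g$-twisted log theory (as used in \cite{MR4048291}, refining the genus $0$ formula of \cite{MR2667135}) gives
\[
\sum_{g \geq 0} 2\sin\!\bigl(k\hbar/2\bigr)\, N_{g,kv_j}^{kL_{jm}}\, \hbar^{2g-1} \;=\; \frac{(-1)^{k-1}}{k},
\]
and exponentiating the sum over $k \geq 1$ produces the factor $1+t^{L_{jm}}z^{-v_j}$; taking the product over the $8$ lines yields the numerator of \eqref{eq:ray_rho_j_contribution}. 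For each conic $\cC_{jm}$, which is a rational $(0)$-curve of class $D_k+D_\ell$ tangent to $D_j$ with contact order $2$ at a single point and disjoint from $D_{j\pm 1}$, the relevant quantum multiple cover series has been computed in my previous paper \cite{bousseau2018quantum_tropical} on higher-genus log Gromov-Witten invariants of log Calabi-Yau surfaces; summing over $k \geq 1$ and exponentiating the contributions of both conics (which have the same class $D_k+D_\ell$) produces the denominator $\bigl[(1-q^{-1}x)(1-x)^2(1-qx)\bigr]^{-1}$ with $x=t^{D_k+D_\ell}z^{-2v_j}$, where the four-factor structure reflects the two conics together with the quantum $\lambda_g$-deformation of their cover series.

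The main obstacle is the higher-genus extension of the first step: a priori, new classes $\beta$ could contribute in higher genus that play no role in genus $0$, and the $PSL_2(\Z)$-based reduction of \cite{gross2019cubic} must be shown to remain effective once one keeps track of the $\lambda_g$-insertion. Handling this requires checking that every birational transformation in the reduction scheme is deformation-equivalent in a $\lambda_g$-preserving sense and that the irreducible classes surviving the reduction are precisely those of the $8$ lines and $2$ conics, so that the multiple cover formulas of \cite{MR4048291} and \cite{bousseau2018quantum_tropical} apply verbatim. Combining the three steps with \eqref{eq:rays_can} then yields the claimed closed form for $f_{\fd_j}$.
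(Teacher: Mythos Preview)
Your overall architecture (identify the contributing $\beta$, then evaluate multiple covers of lines and conics) matches the paper, and your description of the line and conic contributions is essentially right: the line series is indeed a Bryan--Pandharipande computation, and the conic series is extracted from \cite{bousseau2018quantum_tropical}, in the paper via the log GW/quiver DT correspondence (Theorem~8.13 there) applied to a four-vertex quiver whose only nonzero DT invariant is $\Omega_1=-q^{1/2}-q^{-1/2}$.

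The gap is in your first step. The $PSL_2(\Z)$ birational symmetry of $(Y,D)$ is \emph{not} used in the paper to reduce classes contributing to a fixed ray $\fd_j$; that symmetry permutes the rays themselves (this is the content of Section~\ref{section_contribution_general_rays}, where $\fd_{m,n}$ is obtained from $\fd_{1,0}$). The stabilizer of $v_j$ in $PSL_2(\Z)$ is too small to drive a reduction of the kind you describe, and ``virtually empty moduli'' is not what happens: the moduli spaces $\overline{M}_{g,kv_j}^\beta(Y/D)$ are typically nonempty and contain highly degenerate stable log maps. What the paper does instead is a three-part argument you do not mention. First, a tropical balancing argument on the tropicalization $h\colon\Gamma\to B$ shows that for \emph{any} stable log map in $\overline{M}_{g,kv_j}^\beta(Y/D)$ the image meets $D$ only along $D_j$, and in fact at the single point $f(p)$ with connected preimage (Lemmas~\ref{lem_tropical_balancing_1}--\ref{lem_tropical_balancing_2}). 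Second, the vanishing of $\lambda_g$ on curves whose dual graph has positive genus (Lemma~\ref{lem_lambda_vanishing}) lets one restrict to tree-type dual graphs; combined with the connectedness of $f^{-1}(p)$, each non-contracted component of $C$ then meets $D_1$ in exactly one point (Lemma~\ref{lem_homology_classification_2}). Third, the adjunction-and-intersection classification from \cite[Proposition~2.4]{gross2019cubic} (which does not use $g=0$) forces each such component to be one of the $L_{jm}$ or $\cC_{jm}$; for generic $Y$ these ten curves meet $D_j$ at distinct points, so the whole map is a multiple cover of a single one of them (Lemma~\ref{lem_homology_classification_3}). Your proposal needs to replace the $PSL_2(\Z)$ reduction by this tropical/$\lambda_g$/adjunction chain.
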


The proof of Proposition \ref{prop_ray_rho_j_contribution}
takes the remainder of Section \ref{section_ray_rho_j_contribution}.
We will show that the numerator 
of $f_{\fd_j}$ is the contribution of 
multicovers of the lines 
$L_{jm}$ for 
$1 \leq m \leq 8$, that the denominator of $f_{\fd_j}$ 
is the contribution of the multicovers of 
the conics $\cC_{jk}$ for 
$1 \leq k \leq 2$, and that no other 
curve classes contribute to 
$f_{\fd_j}$.
Given the cyclic $\Z/3\Z$-symmetry permuting $\{1,2,3\}$, 
we can assume 
$j=1$, $k=2$, $\ell=3$.

\begin{lem} \label{lem:gw_lines}
For every $1 \leq j \leq 8$ and $k \in \Z_{\geq 1}$,
we have 
\begin{equation}
\sum_{g \geq 0}
N_{g,kv_1}^{kL_{1j}} \hbar^{2g-1} 
= \frac{(-1)^{k-1}}{k} \frac{1} {2\sin \left( \frac{k \hbar}{2} \right)} \,.
\end{equation}
\end{lem}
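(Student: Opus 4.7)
The plan is to identify $\overline{M}_{g,kv_1}^{kL_{1j}}(Y/D)$ with a moduli space of relative stable maps to $(\PP^1,\infty)$, and then to recognise the resulting Hodge-type integral as the local multicover contribution of a $(-1)$-curve in a log Calabi--Yau surface, which is known in closed form.

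First, I would show that every stable log map $f\colon(C,p)\to(Y,D)$ parametrised by $\overline{M}_{g,kv_1}^{kL_{1j}}(Y/D)$ factors through $L_{1j}$ and sends $p$ to the unique point $q\coloneqq L_{1j}\cap D_1$. The line $L_{1j}$ is rigid in $Y$, since its normal bundle is $\cO_{L_{1j}}(-1)$, and it is disjoint from $D_2\cup D_3$, so any irreducible component of the image of $f$ in a class proportional to $[L_{1j}]$ must equal $L_{1j}$, while the contact condition on $p$ forces $f(p)=q$. Consequently, $\overline{M}_{g,kv_1}^{kL_{1j}}(Y/D)$ is canonically identified with the moduli space $\overline{M}_{g,(k)}(\PP^1/\infty)$ of genus-$g$ relative stable maps of degree $k$ to $(\PP^1,\infty)$ with full ramification profile $(k)$ at $\infty$ over a single marked point.

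Second, I would compare the logarithmic perfect obstruction theory used to define $N_{g,kv_1}^{kL_{1j}}$ with the relative one on $\overline{M}_{g,(k)}(\PP^1/\infty)$. The two differ by the normal deformations of $L_{1j}$ in $Y$, whose contribution is controlled by $R^\bullet\pi_* f^*\cO_{L_{1j}}(-1)$, where $\pi\colon\cC\to\overline{M}_{g,(k)}(\PP^1/\infty)$ is the universal curve and $f$ is the universal map. A standard virtual pushforward then reduces $N_{g,kv_1}^{kL_{1j}}$ to the Hodge-type integral
\[
N_{g,kv_1}^{kL_{1j}}=\int_{[\overline{M}_{g,(k)}(\PP^1/\infty)]^{\virt}} (-1)^g\lambda_g\cdot e\bigl(-R^\bullet\pi_* f^*\cO(-1)\bigr).
\]

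Third, this integral is precisely the local multicover contribution of a rigid $(-1)$-curve meeting the boundary transversally once, and its evaluation is standard: by the Bryan--Pandharipande and Faber--Pandharipande formulas for multicovers of $(-1)$-curves, together with the derivation of $(-1)$-curve contributions given in \cite{MR4048291}, summing over $g$ yields the closed form $\frac{(-1)^{k-1}}{k}\cdot\frac{1}{2\sin(k\hbar/2)}$. I expect the main obstacle to be the rigorous identification of the logarithmic and relative perfect obstruction theories, including the matching of the log structure along $D_1$ with the relative one at $\infty\in\PP^1$ and the verification that no boundary strata in the logarithmic moduli space contribute beyond those captured by the relative model.
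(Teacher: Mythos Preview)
Your proposal is correct and follows essentially the same approach as the paper: factor through the rigid $(-1)$-curve $L_{1j}\simeq\PP^1$, compare the log and relative obstruction theories to obtain the Hodge integral with the $\cO(-1)$ normal bundle insertion, and then invoke Bryan--Pandharipande for the closed-form evaluation. The paper also explicitly rewrites $(-1)^g\lambda_g$ as $e(R^1\pi_*f^*\cO_{\PP^1})$ via Serre duality and cites \cite{abramovich2012comparison} for the log/relative comparison, which addresses exactly the obstacle you flagged.
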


\begin{proof}
For every $1 \leq j \leq 8$, the line $L_{1j}$ is a $(-1)$-curve, 
so rigid unique representative of its curve class. 
Hence, every stable log map of class $k L_{1j}$ factors through $L_{1j}$. 
Therefore, we can compute $N_{g,kv_1}^{kL_{1j}}$ 
as some integral over a moduli space of stable log maps to 
$L_{1j} \simeq \PP^1$. More precisely, we have 
\begin{equation}
\label{eq:gw_P1} 
N_{g,kv_1}^{kL_{1j}}=\int_{[M_{g,k}]^{\virt}} 
e(R^1 \pi_* f^*(\cO_{\PP^1} \oplus \cO_{\PP^1}(-1)) \,,
\end{equation}
where $M_{g,k}$ is the moduli space of genus $g$ degree $k$
stable log maps to $\PP^1$ compactifying the 
moduli space of stable maps fully ramified over a given point 
$\infty \in \PP^1$, $\pi \colon \cC \rightarrow M_{g,k}$ 
is the universal curve and $f \colon \cC \rightarrow \PP^1$ 
is the universal stable log map. The insertion $R^1 \pi_*f^* \cO_{\PP^1}$
comes by Serre duality from the insertion 
$(-1)^g \lambda_g$ in Equation \eqref{eq:log_gw}, 
and the insertion $R^1 \pi_*f^* \cO_{\PP^1}(-1)$ 
comes from the comparison of the obstruction theories for stable maps to $Y$ 
with stable maps to $L_j \simeq \PP^1$, using that the normal bundle of
$L_j$ in $Y$ is $\cO_{\PP^1}(-1)$. 
The integral in the right-hand side of Equation \eqref{eq:gw_P1} 
was computed by Bryan and Pandharipande \cite[Theorem 5.1]{MR2115262} 
in relative Gromov-Witten theory of $(\PP^1,\infty)$, 
which is known to be equivalent to 
log Gromov-Witten theory of $(\PP^1,\infty)$ by 
\cite{abramovich2012comparison}.
\end{proof}

\begin{lem} \label{lem:gw_conics}
For every $k \in \Z_{\geq 1}$, we have 
\begin{equation} \sum_{g \geq 0} N_{g,2kv_1}^{k(D_2+D_3)} \hbar^{2g-1}
=\frac{1}{k} \frac{2 \cos \left( \frac{k\hbar}{2}\right)}{2 \sin \left( \frac{k \hbar}{2} \right)}\,.
\end{equation}
\end{lem}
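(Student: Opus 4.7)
The plan is to reduce the moduli problem to multicovers of the two conics $\mathcal{C}_{1,1}$ and $\mathcal{C}_{1,2}$, and then evaluate the per-conic contribution by invoking the higher-genus multicover formula for smooth rational curves in log Calabi-Yau surfaces from \cite{bousseau2018quantum_tropical}.

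First I would show that the only stable log maps contributing to $N^{k(D_2+D_3)}_{g,2kv_1}$ are multicovers of a single conic $\mathcal{C}_{1,k}$. Since $k(D_2+D_3)\cdot D_2 = k(D_2+D_3)\cdot D_3 = 0$ and the tangency profile with $D_2$ and $D_3$ is trivial, the image of any such stable log map is disjoint from $D_2\cup D_3$. Writing an effective class $\alpha=aH-\sum b_{ij}E_{ij}$ and imposing $\alpha\cdot D_2 = \alpha\cdot D_3=0$ together with the explicit lists \eqref{eq:lines_1}--\eqref{eq:lines_3}, one sees that the only irreducible effective curves on $Y$ disjoint from $D_2\cup D_3$ are the eight lines $L_{1m}$ and the two conics $\mathcal{C}_{1,1},\mathcal{C}_{1,2}$. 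For a generic cubic surface these ten curves meet $D_1$ at ten distinct points, so the single-point contact condition forces all components of the image to coincide. No multiple of a single line class equals $k(D_2+D_3)$, so the image must be one of the two conics; and since $\mathcal{C}_{1,1}\cdot\mathcal{C}_{1,2}=(D_2+D_3)^2=0$ the two conics are disjoint, so a connected stable log map factors through exactly one of them.

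Next I would compute the contribution of degree-$k$ covers of a single conic $\mathcal{C}\cong\mathbb{P}^1$. Because $\mathcal{C}$ is tangent to $D_1$ at a unique point $p$ with multiplicity two, the induced log structure on $\mathcal{C}$ has a single log-marked point of weight $2$ at $p$, and a degree-$k$ cover with contact order $2k$ at one point is forced to be totally ramified over $p$. The comparison of the obstruction theories of $\overline{M}^\beta_{g,2kv_1}(Y/D)$ and of the moduli of stable log maps to $(\mathcal{C},2p)$ introduces an obstruction bundle built from $N_{\mathcal{C}/Y}$, which is trivial since $\mathcal{C}^2=0$. This expresses $N^{k(D_2+D_3)}_{g,2kv_1}$ (for fixed conic) as a Hodge-type integral over the moduli space of log stable maps to $(\mathbb{P}^1, 2p)$ totally ramified over $p$, with integrand combining the $(-1)^g\lambda_g$ insertion from the definition \eqref{eq:log_gw} with the Euler class of the $R^1\pi_*f^*\mathcal{O}$ obstruction bundle.

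Finally I would invoke the multicover formula of \cite{bousseau2018quantum_tropical} for smooth rational components in a log CY surface with $C\cdot D=m$ at a single point, specialized to $m=2$ and trivial normal bundle. This formula evaluates the above integral into the closed trigonometric expression $\tfrac{1}{k}\cdot\tfrac{\cos(k\hbar/2)}{2\sin(k\hbar/2)}$ per conic; summing the two contributions from $\mathcal{C}_{1,1}$ and $\mathcal{C}_{1,2}$ gives the stated identity. The main obstacle is the last step: one needs to carefully track the two different sources of $\lambda_g$ classes (from the definition and from the normal bundle obstruction), and to relate the resulting Hurwitz-type integral over totally ramified covers of a weight-$2$ point to the formula of \cite{bousseau2018quantum_tropical}, where the cosine factor appears as a reflection of the doubled tangency condition as opposed to the transverse case $m=1$ treated in Lemma \ref{lem:gw_lines}.
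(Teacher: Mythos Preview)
Your reduction to multicovers of the two conics $\mathcal{C}_{1,1}$, $\mathcal{C}_{1,2}$ is correct and matches the paper's first step. The substantive divergence, and the genuine gap, is in how you evaluate the per-conic contribution.

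You propose to compute the Hodge-type integral over log stable maps to $(\mathbb{P}^1,2p)$ directly, then invoke a ``multicover formula'' from \cite{bousseau2018quantum_tropical} specialized to tangency $m=2$ and trivial normal bundle. No such formula is stated in that reference. What \cite{bousseau2018quantum_tropical} contains, and what the paper actually uses, is a correspondence (Theorem~8.13 there) between the higher-genus log Gromov--Witten invariants in question and refined Donaldson--Thomas invariants of an explicit quiver. The paper itself flags that the direct multicover calculation you sketch ``has not been done previously directly in Gromov--Witten theory'' and deliberately takes this detour. Your own final paragraph concedes that relating your integral to anything in \cite{bousseau2018quantum_tropical} is the ``main obstacle'', which is an accurate self-diagnosis: as written, this step is a hope rather than an argument. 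A concrete warning sign is that a naive obstruction-bundle reduction with $N_{\mathcal{C}/Y}\cong\mathcal{O}$ would produce an integrand proportional to $\lambda_g^2$, which vanishes for $g\geq 1$ by Mumford's relation; the log structure at the weight-$2$ point must enter nontrivially, and you have not explained how.

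For comparison, the paper proceeds as follows. After reducing to the two conics, it contracts two disjoint exceptional curves not meeting the class $D_2+D_3$, obtaining a surface $Y'$ to which Theorem~8.13 of \cite{bousseau2018quantum_tropical} associates a four-vertex quiver $Q_{Y'}$ (two sources, two sinks, one arrow from each source to each sink). The correspondence gives
\[
\sum_{g\geq 0} N_{g,2kv_1}^{k(D_2+D_3)}\hbar^{2g-1}
= -\sum_{k=\ell k'} \frac{1}{\ell}\,\frac{\Omega_{k'}^{Q_{Y'}}(q^{\ell/2})}{2\sin(\ell\hbar/2)}.
\]
The DT side is then computed by hand: for dimension vector $(1,1,1,1)$ the moduli space is $M_1^{\mathrm{st}}=M_1^{\mathrm{ss}}=\mathbb{P}^1$, so $\Omega_1^{Q_{Y'}}(q^{1/2})=-q^{1/2}-q^{-1/2}$; for $k'>1$ one shows $M_{k'}^{\mathrm{st}}=\varnothing$ by reducing to the Kronecker quiver, hence $\Omega_{k'}^{Q_{Y'}}=0$. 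Substituting gives exactly $\tfrac{1}{k}\cdot\tfrac{2\cos(k\hbar/2)}{2\sin(k\hbar/2)}$. The cosine you were expecting to emerge from the doubled tangency in fact comes from the Poincar\'e polynomial of $\mathbb{P}^1$ on the DT side.
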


\begin{proof}
The linear system of curves in $Y$ of class 
\[ D_2+D_3=2H-E_{21}-E_{22}-E_{31}-E_{34}\]
is one-dimensional, made of strict transforms of conics in 
$\PP^2$ passing through four given points.
The only curves of class $D_2+D_3$ tangent with 
$D_1$ are the two conics $\cC_{11}$ and $\cC_{12}$.
Hence, every stable log map in the moduli space 
$\overline{M}_{g,2v_1}^{k(D_2+D_3)}(Y/D)$ factors 
through either $\cC_{11}$ or $\cC_{12}$. 
However, the required multicover computation is more complicated 
that the one used in
Lemma \ref{lem:gw_lines} and in fact has done been 
done previously directly in Gromov-Witten theory.
We will follow a slightly roundabout path and 
use a general correspondence theorem between log Gromov-Witten invariants 
of log Calabi-Yau surfaces and quiver Donaldson-Thomas invariants 
proved in  \cite{bousseau2018quantum_tropical}.

As no curve of class $D_2+D_3$ intersect $E_{11}$ or $E_{21}$,
we can contract the two $(-1)$-curves
$E_{11}$ and $E_{21}$ and compute the invariants 
$N_{g,2kv_1}^{k(D_2+D_3)}$ on the resulting surface $Y'$. 
Following Section 8.5 of \cite{bousseau2018quantum_tropical}, 
we can attach to $Y'$ a quiver $Q_{Y'}$:
vertices of $Q_{Y'}$ in one-to-one correspondence 
with the exceptional divisors 
$E_{21}$, $E_{22}$, $E_{31}$, $E_{32}$. 
As $\langle v_2, v_3 \rangle =1$, 
we have an edge from every vertex corresponding to $E_{21}, E_{22}$ 
to every vertex corresponding to $E_{31}$, $E_{32}$.

\begin{figure}
\begin{center}
\begin{tikzpicture}[>=angle 90]
\matrix(a)[matrix of math nodes,
row sep=3em, column sep=5em,
text height=1.5ex, text depth=0.25ex]
{V_{21}& &V_{31}\\
V_{22}& &V_{32}\\};
\path[->](a-1-1) edge node[above]{$f_1$} (a-1-3);
\path[->](a-1-1) edge node[above]{$f_2$} (a-2-3);
\path[->](a-2-1) edge  node[below]{$f_4$} (a-2-3);
\path[->](a-2-1) edge  node[below]{$f_3$} (a-1-3);
\end{tikzpicture}
\end{center}
\caption{The quiver $Q_{Y'}$}
\end{figure}
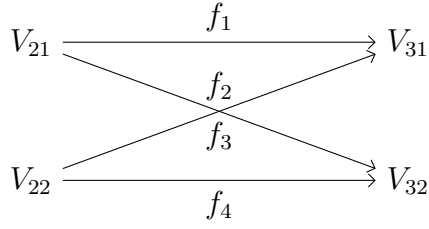

As in Section 8.5 of \cite{bousseau2018quantum_tropical}, let 
$M_k^{ss}$ (resp. $M_k^{st}$) 
be the moduli space of semistable (resp. stable) 
representations of $Q_{Y'}$ 
of dimension vector $(k,k,k,k)$, 
where we consider the 
maximally non-trivial stability condition. Write 
$\iota \colon M_k^{st} \hookrightarrow M_k^{ss}$ 
for the natural inclusion and define 
\begin{equation} \Omega_k^{Q_{Y'}}(q^{\frac{1}{2}})\coloneqq (-q^{\frac{1}{2}})^{-\dim M_k^{st}}
\sum_{j=0}^{\dim M_k^{ss}} \dim H^{2j}(M_k^{ss},\iota_{!*}
\Q_{M_k^{st}} ) q^j\,, 
\end{equation}
where $\iota_{!*}$ is the intermediate extension functor.
Applying Theorem 8.13 of \cite{bousseau2018quantum_tropical}, we obtain
\begin{equation} \label{eq:dw_dt} \sum_{g \geq 0} N_{g,2kv_1}^{k(D_2+D_3)} \hbar^{2g-1}
=- \sum_{k=\ell k'} \frac{1}{\ell}
\frac{\Omega_{k'}^{Q_{Y'}}(q^{\frac{\ell}{2}})}{2 \sin \left( \frac{\ell \hbar}{2} \right)} \,,
\end{equation}
where $q=e^{i\hbar}$.

We have $M_1^{st}=M_1^{sst}=\PP^1$ and so 
\begin{equation}\label{eq:dt_1}
\Omega_1^{Q_{Y'}}(q^{\frac{1}{2}})=-q^{-\frac{1}{2}}-q^{\frac{1}{2}}\,.
\end{equation} On the other hand,
$M_k^{st}$ is empty for $k>1$ and so 
\begin{equation} \label{eq:dt_2}\Omega_k^{Q_{Y'}}(q^{\frac{1}{2}})=0
\end{equation} 
for $k>1$. To check that $M_k^{st}$ is empty for $k>1$, 
one can argue as follows.
Given a representation $(V_{21},V_{22},V_{31},V_{32},f_1,f_2,f_3,f_4)$
of $Q_{Y'}$, one constructs a representation 
$(V_{21} \oplus V_{22}, V_{31} \oplus V_{32}, 
f_1 \oplus f_4, f_2 \oplus f_3)$ of the Kronecker quiver.
One then uses the fact that, 
by the classification of representations of the Kronecker quiver 
(see for example Section 1.8 
of \cite{gabriel1997representations}), 
 every representation of dimension $(n,n)$ 
of the Kronecker quiver contains a subrepresentation of dimension $(1,0)$.

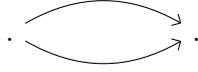
\begin{figure}
\begin{center}
\begin{tikzpicture}[>=angle 90]
\matrix(a)[matrix of math nodes,
row sep=3em, column sep=5em,
text height=1.5ex, text depth=0.25ex]
{.& .\\};
\path[->](a-1-1) edge [bend left] (a-1-2);
\path[->](a-1-1) edge [bend right] (a-1-2);
\end{tikzpicture}
\end{center}
\caption{The Kronecker quiver}
\end{figure}

Lemma \ref{lem:gw_conics} follows by combination of Equations 
\eqref{eq:dw_dt}-\eqref{eq:dt_1}-\eqref{eq:dt_2}.

\end{proof}

\begin{lem} \label{lem_homology_classification_1}
Let $k \in \Z_{\geq 1}$ and $\beta \in NE(Y)$ 
such that there exists $g \in \Z_{\geq 0}$ and a stable log map 
$(f \colon C \rightarrow Y) \in \overline{M}_{g,kv_1}^{\beta}(Y/D)$ 
with $C$ irreducible and $f$ generically injective. 
Then, we have either $\beta=L_{1j}$ for some $1 \leq j \leq 8$ and $k=1$, or 
$\beta=D_2+D_3$ and $k=2$.
\end{lem}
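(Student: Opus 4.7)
The plan is to translate the statement into a purely combinatorial question about intersection numbers on the Picard lattice of $Y$, and then to apply Cauchy--Schwarz together with the arithmetic genus bound to reduce everything to a short case analysis.

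First I would fix notation. Since $f$ is generically injective from the irreducible curve $C$, the image $\Gamma \coloneqq f(C)$ is a reduced irreducible curve of class $\beta$, meeting $D_1$ at a single point with multiplicity $k$ and disjoint from $D_2 \cup D_3$. This yields the three intersection conditions
\[
\beta \cdot D_1 = k, \qquad \beta \cdot D_2 = 0, \qquad \beta \cdot D_3 = 0,
\]
and in particular $-K_Y \cdot \beta = k$ by adjunction. Writing $\beta = aH - \sum_{j,i} b_{ji} E_{ji}$ in the basis of $\Pic(Y)$ associated to the blow-down $Y \to \PP^2_\kk$ chosen in Section \ref{section_curves}, and using $D_j = H - E_{j1} - E_{j2}$, these conditions become
\[
a - b_{11} - b_{12} = k, \qquad a - b_{21} - b_{22} = 0, \qquad a - b_{31} - b_{32} = 0.
\]
The edge case $\beta = E_{11}$ or $\beta = E_{12}$ can be checked by hand (it corresponds to $k=1$, and these classes are among the $L_{1m}$); in all other cases each $b_{ji} = \beta \cdot E_{ji} \geq 0$ because $E_{ji}$ is an irreducible curve distinct from $\Gamma$.

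Next I would invoke the arithmetic genus bound. Since $\Gamma$ is reduced and irreducible, $p_a(\Gamma) \geq 0$, which combined with $K_Y \cdot \beta = -k$ gives $\beta^2 \geq k-2$, i.e.
\[
\sum_{j,i} b_{ji}^2 \;\leq\; a^2 - k + 2.
\]
Applying Cauchy--Schwarz to each of the three pairs $(b_{j1}, b_{j2})$, whose respective sums equal $a-k$, $a$, $a$, yields
\[
\sum_{j,i} b_{ji}^2 \;\geq\; \frac{(a-k)^2}{2} + \frac{a^2}{2} + \frac{a^2}{2} \;=\; a^2 + \frac{(a-k)^2}{2}.
\]
Combining the two inequalities gives the crucial bound
\[
(a-k)^2 \;\leq\; 4 - 2k,
\]
which immediately forces $k \leq 2$, with $a = k = 2$ in the case $k=2$ and $a \in \{1,2\}$ in the case $k=1$.

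Finally I would enumerate the remaining cases. For $k=2$, equality in Cauchy--Schwarz forces $b_{11}=b_{12}=0$ and $b_{21}=b_{22}=b_{31}=b_{32}=1$, giving $\beta = 2H - E_{21}-E_{22}-E_{31}-E_{32} = D_2+D_3$. For $k=1$ with $a=1$, the four solutions $\beta = H - E_{2i} - E_{3j}$ are all among the $L_{1m}$ of \eqref{eq:lines_1}, and for $k=1$ with $a=2$, the saturation of the genus bound pins down the two classes $2H - E_{1i} - E_{21}-E_{22}-E_{31}-E_{32}$, again appearing in \eqref{eq:lines_1}. Together with the two exceptional classes $E_{11}, E_{12}$ handled at the start, these exhaust the eight $L_{1m}$. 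The main obstacle is essentially bookkeeping in this case analysis; the Cauchy--Schwarz bound is what does all the work, and the independence of the argument from the genus $g$ reflects the fact that the classification is purely numerical.
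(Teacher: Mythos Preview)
Your proof is correct and follows the same opening moves as the paper: write $\beta$ in the basis $H, E_{ji}$, read off the three intersection constraints $\beta \cdot D_j$, and invoke the arithmetic genus bound $p_a(f(C)) \geq 0$ via adjunction. The difference is in what happens next. The paper stops at the inequality $a^2 - \sum b_{jm}^2 - k \geq -2$ and defers the classification of solutions to the proof of \cite[Proposition 2.4]{gross2019cubic}, noting that the argument there does not use the genus-zero hypothesis. You instead carry out the classification explicitly: applying Cauchy--Schwarz to each pair $(b_{j1},b_{j2})$ yields $(a-k)^2 \leq 4-2k$, which immediately forces $k \leq 2$ and pins down $a$, after which the case analysis is short. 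This is more self-contained and makes transparent why the bound on $k$ holds.

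One minor point of phrasing: you assert that $f(C)$ is disjoint from $D_2 \cup D_3$, which is stronger than what you actually use and is in fact the content of a separate lemma in the paper (Lemma~\ref{lem_tropical_balancing_1}). All your argument needs is the numerical conditions $\beta \cdot D_2 = \beta \cdot D_3 = 0$, and these follow directly from membership in $\overline{M}_{g,kv_1}^{\beta}(Y/D)$. Similarly, your edge-case treatment correctly singles out $E_{11}, E_{12}$; it is worth noting (though you implicitly use it) that $\Gamma$ cannot equal any of $E_{21},E_{22},E_{31},E_{32}$ since those classes violate $\beta \cdot D_2 = 0$ or $\beta \cdot D_3 = 0$, so all remaining $b_{ji}$ are indeed nonnegative.
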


\begin{proof}
We write $\beta=aL-\sum_{j=1}^3 
\sum_{m=1}^2 b_{jm} E_{jk}$ 
with $a, b_{jm} \in \Z$.
As $\overline{M}_{g,kv_1}^{\beta}(Y/D)$ is not empty, we have 
$\beta \cdot D_1 =k$, $\beta \cdot D_2=0$, 
$\beta \cdot D_3=0$, that is $a-b_{11}-b_{12}=k$, $a-b_{21}-b_{22}=0$, 
$a-b_{31}-b_{32}=0$. As $C$ is irreducible and $f$ is generically injective, 
the image $f(C)$ is an integral curve of class $\beta$. 
In particular, the 
arithmetic genus $p_a(f(C))$ of $f(C)$ is nonnegative, 
and so, by the adjunction formula, we have 
\begin{equation}
-2 \leq 2p_a(f(C))-2 = \beta \cdot (\beta +K_Y)
=\beta \cdot (\beta-D_1)=a^2-\sum_{j=1}^3
\sum_{m=1}^2 b_{jm}^2 -k \,.
\end{equation}
The classes $\beta$ satisfying these constraints 
are classified in the first part of 
the proof of \cite[Proposition 2.4]{gross2019cubic} 
by an argument which does not use the assumption $g=0$ 
done in \cite{gross2019cubic}. 
\end{proof}

\begin{lem} \label{lem_tropical_balancing_1}
Let $g \in \Z_{\geq 0}$, $k \in \Z_{\geq 1}$, and $\beta \in NE(Y)$.
Let $f \colon C/W \rightarrow Y$ be a stable log map defining a point of 
$\overline{M}_{g,kv_1}^{\beta}(Y/D)$. Then $f(C) \cap D_2 = \emptyset$
and 
$f(C) \cap D_3 = \emptyset$.
\end{lem}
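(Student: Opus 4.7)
The plan is to combine a local analysis at the marked point, a tropical maximum principle, and a global intersection-theoretic constraint.

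At the unique marked point $q\in C$ with contact datum $kv_1$, work in adapted local coordinates at $f(q)$ on $Y$: for each divisor $D_j$ passing through $f(q)$, the contact order equals the vanishing order of $f^*u_j$ at $q$. Since $kv_1=(k,0)$ has zero $v_2$-component in the basis of $\sigma_{1,2}$ and zero $v_3$-component in the basis of $\sigma_{3,1}$, if $f(q)$ lay on $D_2$ (resp.\ $D_3$) then $f^*u_2$ (resp.\ $f^*u_3$) would have to vanish at $q$, forcing a positive contact order there, a contradiction. Hence $f(q)\in D_1\setminus(D_2\cup D_3)=D_1^\circ$, and the component $C_\infty$ of $C$ containing $q$ has image meeting $D_1^\circ$, so the corresponding vertex $V_\infty$ in the tropicalization $\Gamma$ of $f$ lies on $\rho_1\cup\{0\}$.

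Next I pass to $\Gamma$, a connected weighted graph in $B$ with a unique unbounded leg of direction $v_1$ and weight $k$, vertices placed in $B$ according to the log stratum of each component's image, and edges satisfying tropical balancing. In the upper half-plane presentation of $B$, the vertical coordinate $y\ge 0$ is a continuous function vanishing precisely on $\rho_1\cup\{0\}$, and the leg direction $v_1=(1,0)$ has zero $y$-component. I apply a maximum principle: at a vertex $V^*$ realizing $H\coloneqq\max_V y(V)$, every outgoing bounded edge direction satisfies $u_{e,y}\le 0$ while balancing gives $\sum_e w_e u_{e,y}=0$, so every edge at $V^*$ is horizontal; by connectedness of $\Gamma$, every vertex has $y=H$. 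Since $y(V_\infty)=0$ by the previous step, $H=0$ and every vertex of $\Gamma$ lies on $\rho_1\cup\{0\}$. This rules out components of $C$ whose image is generically (or by contraction) in $D_2^\circ$, $D_3^\circ$, $D_1\cap D_2$, $D_2\cap D_3$, or $D_3\cap D_1$.

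Finally, the contact datum yields $\beta\cdot D_2=\beta\cdot D_3=0$, and with $D_i\cdot D_j=1-2\delta_{ij}$, the identity $\beta\cdot D_2+\beta\cdot D_3=0$ decomposes as a sum of nonnegative contributions from components generic in $D_1^\circ$ (each contributing $+2d_i$) and from transverse intersections of $U$-components with $D_2\cup D_3$. Each term must vanish: no component is generic in $D_1^\circ$, and no component mapped generically to $U$ meets $D_2$ or $D_3$. Combined with the tropical step, every irreducible component of $C$ either maps into $U$ disjointly from $D_2\cup D_3$ or to a single point of $D_1^\circ$ or $U$, giving $f(C)\cap D_2=\emptyset$ and $f(C)\cap D_3=\emptyset$. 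The main subtlety is the tropical maximum principle in the presence of the singular point $0\in B$ where the affine structure has monodromy $-\id$; this is handled by noting that $y$ descends to a well-defined continuous function on $B$ (since the identification on $\rho_1$ preserves $y=0$), so the local argument goes through vertex by vertex on $B_0$ in contractible neighborhoods where the affine structure is untwisted.
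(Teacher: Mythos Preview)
Your Step 1 contains a genuine gap: the assertion that the log contact order at $q$ with $D_j$ equals the vanishing order of $f^*u_j$ at $q$ fails for degenerate stable log maps. Concretely, if the component $C_\infty$ containing $q$ is contracted by $f$ to the corner $D_1\cap D_2$, then $f(q)\in D_2$ and $f^*u_2\equiv 0$ on $C_\infty$, yet the log contact order at $q$ can still be $kv_1$: the map on ghost sheaves $\overline{M}_{Y,f(q)}=\mathbb{N}^2\to \overline{M}_{C,q}=\overline{M}_W\oplus\mathbb{N}$ can send the $D_2$-generator into the $\overline{M}_W$ factor (with zero $\mathbb{N}$-component), so the tangency with $D_2$ is absorbed into the base log structure rather than the marked-point direction. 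Thus you cannot conclude $y(V_\infty)=0$ this way, and your deduction that the maximal height $H$ equals $0$ collapses.

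The gap is easily repaired, and the repair is essentially the paper's argument: once your maximum principle shows (assuming $H>0$) that all vertices lie at height $H$ and all edges are horizontal, the image $h(\Gamma)$ sits in the line $\{y=H\}\subset B_0$, and then balancing in the $x$-direction must fail at one of the two extremal vertices because $\Gamma$ has a single unbounded leg; hence $H=0$. After this fix, your Step 4 is correct and is a genuinely different route from the paper: you use the numerical constraint $\beta\cdot D_2=\beta\cdot D_3=0$ and positivity of intersection numbers to rule out $D_1$-dominating components and $U$-components meeting $D_2\cup D_3$, whereas the paper invokes the general balancing condition of Gross--Siebert to show that any such intersection forces an edge of $\Gamma$ into the region $\{y>0\}$. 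Your intersection-theoretic argument is more elementary in that it avoids the finer tropical structure at the singular point $0\in B$.
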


\begin{proof}
The proof relies on the study of the tropicalization 
of $f \colon C \rightarrow Y$. 
We refer to \cite[Section 2.5]{abramovich2017decomposition} 
for details on tropicalization of stable log maps.

Let
\begin{center}
\begin{tikzcd}
C \arrow{r}{f} \arrow{d}{\pi}
& Y\\
W &  
\end{tikzcd}
\end{center}
be a stable log map defining a point of 
$\overline{M}_{g,kv_1}^{\beta}(Y/D)$. Here 
$W$ is a log point 
\[ (\Spec \kk, \overline{M}_W \oplus \kk^{\times})\]
defined by some monoid $\overline{M}_W$.
Taking the tropicalization, we obtain a diagram of cone complexes
\begin{center}
\begin{tikzcd}
\Sigma(C) \arrow{r}{\Sigma(f)} \arrow{d}{\Sigma(\pi)}
& \Sigma(Y)\\
\Sigma(W) & 
\end{tikzcd}
\end{center}

As cone complexes, we have $\Sigma(Y) \simeq (B,\Sigma)$. 
On the other hand, 
$\Sigma(\pi) \colon \Sigma(C) \rightarrow \Sigma(W)$ 
is a family of tropical curves parametrized by the cone 
$\Sigma(W) = \Hom(\overline{M}_W,\R_{\geq 0})$. 
We pick a point $w$ in the interior of $\Sigma(W)$ 
and we denote by 
$\Gamma$ the fiber $\Sigma(\pi)^{-1}(w)$. 
The graph underlying the tropical curve $\Gamma$ 
is the dual graph of $C$: $\Gamma$ has a single unbounded edge, 
corresponding to the marked point on $C$, 
vertices of $\Gamma$ are in one-to-one correspondence with 
irreducible components of $C$, 
and bounded edges of $\Gamma$ are in one-to-one 
correspondence with nodes of $C$. 
We denote by $h \colon \Gamma \rightarrow B$
the restriction of $\Sigma(f)$ to $\Gamma = \Sigma(\pi)^{-1}(w)$.
The image $h(E)$ of every edge $E$ of $\Gamma$ 
is a line segment of rational slope in a cone of 
$\Sigma$. In addition, if $h(E)$ is not a point, 
the line segment $h(E)$
is decorated by a weight $w(E) \in \Z_{>0}$. 
For example, denoting by 
$E_\infty$ the unique unbounded edge of $\Gamma$, $h(E_\infty)$ 
is a half-line of direction $v_1$ and weight $k$.

For every $j \in \{1,2,3\}$, the formal completion of $D_j$ in $Y$ 
is isomorphic to the formal completion of a toric divisor 
in a toric surface, and
the formal completion of $(D_j \cap D_{j+1})$ in $Y$ 
is isomorphic to the formal completion of a 
$0$-dimensional toric stratum in a toric surface. 
Furthermore, the integral affine structure on $B_0$ 
has been defined based on these toric descriptions. 
Therefore, it follows from the general balancing 
condition for stable log maps given in \cite[Proposition 1.5]{MR3011419} 
that the toric balancing condition holds on $B_0$: 
for every vertex $V$ of $\Gamma$ with $h(V) \in B_0$, 
denoting $E_\ell$ the edges of $\Gamma$ adjacent to $V$ 
and not contracted to a point by $\Gamma$, and $u_{V,E_\ell}$ 
the primitive integral direction of $h(E_\ell)$ 
pointing outwards $h(V)$, we have $\sum_\ell w(E_\ell) u_{V,E_\ell}=0$. 
We do not have such simple balancing condition at $0 \in B$: 
the integral affine structure is singular at $0$ 
due to the fact that the surface $Y$ is not toric.

If $f(C) \cap D_2 \neq \emptyset$, then either $C$ 
has a component dominating $D_2$ and then $\Gamma$ 
has a vertex $V$ with $h(V) \in \Int(\rho_2)$, or 
$C$ has a component non-dominating but intersecting $D_2$ 
and then it follows from the general balancing condition of  
\cite[Proposition 1.5]{MR3011419} that $\Gamma$ 
has an edge $E$ intersecting 
$\Int(\sigma_{2,3} \cup \rho_2 \cup \sigma_{1,2})$.
Similarly, if $f(C) \cap D_3 \neq \emptyset$, 
then either $C$ has a component dominating $D_3$ 
and then $\Gamma$ has a vertex $V$ with $h(V) \in \Int(\rho_3)$, 
or 
$C$ has a component non-dominating but intersecting $D_3$ 
and then 
$\Gamma$ has an edge $E$ intersecting 
$\Int(\sigma_{3,1} \cup \rho_3 \cup \sigma_{2,3})$.
Therefore, in order to prove Lemma \ref{lem_tropical_balancing_1}, 
it is enough to show that $h(V)$ belongs to the ray $\rho_1$ 
for every $V$ vertex of $\Gamma$. 
It will automatically imply that $h(E) \subset \rho_1$ 
for every edge $E$ of $\Gamma$.

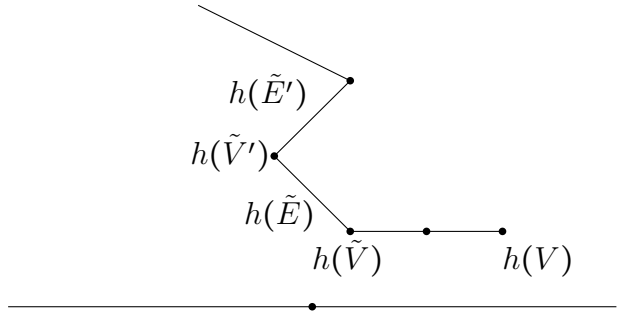
\begin{figure}[h]
\centering
\setlength{\unitlength}{1cm}
\begin{picture}(6,5)
\put(3,0.5){\circle*{0.1}}
\put(3,0.5){\line(1,0){4}}
\put(3,0.5){\line(-1,0){4}}
\put(5.5,1.5){\circle*{0.1}}
\put(4.5,1.5){\circle*{0.1}}
\put(3.5,1.5){\circle*{0.1}}
\put(3.5,1.5){\line(1,0){2}}
\put(5.5,1){$h(V)$}
\put(3,1){$h(\tilde{V})$}
\put(2.1,1.6){$h(\tilde{E})$}
\put(3.5,1.5){\line(-1,1){1}}
\put(2.5,2.5){\circle*{0.1}}
\put(1.4,2.4){$h(\tilde{V}')$}
\put(1.9,3.2){$h(\tilde{E}')$}
\put(2.5,2.5){\line(1,1){1}}
\put(3.5,3.5){\circle*{0.1}}
\put(3.5,3.5){\line(-2,1){2}}
\end{picture}
\caption{Illustration of the proof of Lemma 
\ref{lem_tropical_balancing_1}}
\label{figure: illustration}
\end{figure}

We recall that we use the upper half-plane description of $B$ 
given by Figure \ref{figure: tropicalization}. 
In particular, 
we will refer to this description when using notions of horizontal, 
vertical, left and right.
We argue by contradiction by assuming that there exists a vertex $V$ 
of $\Gamma$ with $h(V) \notin \rho_1$. 
In particular, we have $h(V) \in B_0$ and 
so the toric balancing condition holds at $h(V)$. 

We claim that there exists a vertex $\tilde{V}$ 
of $\Gamma$ such that $h(\tilde{V}) \in B_0$
and a edge $\tilde{E}$ adjacent to $\tilde{V}$ 
such that $u_{\tilde{V},\tilde{E}}$ has positive vertical component. 
Indeed, if it were not the case, 
then $h(\Gamma)$ would be entirely contained in an horizontal line in $B_0$. 
As $\Gamma$ has a unique 
unbounded edge, the toric balancing condition 
cannot hold at both the most left 
and most right vertices of $h(\Gamma)$ and we obtain a contradiction.

The unique unbounded edge of $\Gamma$ being horizontal, 
the edge $\tilde{E}$
is bounded. Let $\tilde{V}'$ be the other vertex of $\tilde{E}$. 
As $u_{\tilde{V},\tilde{E}}$ has positive vertical component, 
the vertical coordinate of $h(\tilde{V}')$ is strictly bigger 
than the one of $h(\tilde{V})$. 
In particular, we have $h(\tilde{V}') \in B_0$, 
the toric balancing condition holds at $h(\tilde{V}')$, 
and so there exists an edge $\tilde{E}'$ adjacent to 
$\tilde{V}'$ such that $u_{\tilde{V}',\tilde{E}'}$ 
has positive vertical component. 
Therefore, we can iterate 
the argument by replacing $(\tilde{V},\tilde{E})$
by $(\tilde{V}',\tilde{E}')$. 
Successive iterations produce infinitely many vertices 
of $\Gamma$, in contradiction with 
the finiteness of the set of vertices of $\Gamma$.
\end{proof}

\begin{lem} \label{lem_tropical_balancing_2}
Let $g \in \Z_{\geq 0}$, $k \in \Z_{\geq 1}$, 
and $\beta \in NE(Y)$.
Let $f \colon C/W \rightarrow Y$ be a stable log map defining a point of 
$\overline{M}_{g,kv_1}^{\beta}(Y/D)$, 
and let $p \in C$ be the corresponding marked point. 
Then, $f(C)$ intersects $D_1$ in a single point, i.e.\ 
$f(C) \cap D_1=\{f(p)\}$, 
and the set $f^{-1}(p)$ of points of $C$ mapping to 
$f(p)$ is connected.
\end{lem}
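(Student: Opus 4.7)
The proof extends the tropical analysis from the proof of Lemma \ref{lem_tropical_balancing_1}. Let $h \colon \Gamma \to B$ be the tropical map associated with $f \colon C/W \to Y$ (for $w$ in the interior of $\Sigma(W)$). By Lemma \ref{lem_tropical_balancing_1}, $h(\Gamma) \subseteq \rho_1 \cup \{0\}$, and the unique unbounded edge $E_\infty$ is a half-line in direction $v_1$ with weight $k$ emanating from a vertex $V_\infty$.

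First, I refine the maximum principle along $\rho_1$. Let $\phi \colon \rho_1 \cup \{0\} \to \R_{\geq 0}$ be the coordinate with $\phi(v_1) = 1$. At a vertex $V$ with $h(V) \in \Int(\rho_1)$, the toric balancing condition applies; since $h(\Gamma) \subseteq \rho_1 \cup \{0\}$, every adjacent edge has direction $\pm v_1$ or is contracted. If $V \neq V_\infty$ realized a local maximum of $\phi \circ h$, all adjacent non-contracted edges would point in direction $-v_1$, so the weighted sum $\sum_\ell w(E_\ell) u_{V, E_\ell}$ would be a strictly negative multiple of $v_1$, violating balancing. Hence $V_\infty$ is the unique vertex where $\phi \circ h$ attains its maximum, and the subgraph $\Gamma_+ \coloneqq h^{-1}(\Int(\rho_1))$ is connected. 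A variant of the same argument rules out components dominating $D_1$: if $V \in \Int(\rho_1)$ had $\beta_V = d [D_1]$ with $d \geq 1$, the tropical balancing at $V$ would acquire a nonzero polar term proportional to the primitive integer vector transverse to $\rho_1$ (reflecting the normal bundle $\cO(-1)$ of $D_1$ in $Y$), forcing an adjacent edge with direction transverse to $\rho_1$ and contradicting Lemma \ref{lem_tropical_balancing_1}. Therefore, every component $C_V$ with $V \in \Gamma_+$ is contracted to a single point $q_V \in D_1$.

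Two adjacent vertices $V, V' \in \Gamma_+$ share a node in $C$, forcing $q_V = q_{V'}$; connectedness of $\Gamma_+$ implies that all the $q_V$ coincide with a single point $q \in D_1$. A dominant component $C_{V_0}$ (with $h(V_0) = 0$) adjacent to $\Gamma_+$ via a node meets $D_1$ at $q$. A dominant component $C_{V_0}$ not adjacent to any vertex of $\Gamma_+$ satisfies $\beta_{V_0} \cdot D_1 = \sum w(E)$, summed over edges from $V_0$ in direction $v_1$; since there are no such edges, $C_{V_0}$ does not meet $D_1$ at all. Finally, $p$ lies on $C_{V_\infty}$: if $V_\infty \in \Gamma_+$, then $f(p) = q$; if $\Gamma_+ = \emptyset$, then $V_\infty \in h^{-1}(0)$ and $E_\infty$ is the only edge from $V_\infty$ in direction $v_1$, so $f(p)$ is the unique point of $f(C_{V_\infty}) \cap D_1$. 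In both cases, $f(C) \cap D_1 = \{f(p)\}$.

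For the connectedness of $f^{-1}(f(p))$: if $\Gamma_+ = \emptyset$, then $f^{-1}(f(p)) = \{p\}$ is trivially connected. Otherwise, $f^{-1}(f(p))$ is the union of the contracted components $C_V$ for $V \in \Gamma_+$ together with the nodes attaching them to adjacent dominant components; this union is connected in $C$ because $\Gamma_+$ is a connected subgraph of $\Gamma$. The main technical step is the derivation of the modified tropical balancing at a vertex $V \in \Int(\rho_1)$ whose component dominates $D_1$: once this yields a polar term transverse to $\rho_1$, its incompatibility with Lemma \ref{lem_tropical_balancing_1} gives the absence of such components, and the rest of the argument reduces to a combinatorial analysis of the tropical graph.
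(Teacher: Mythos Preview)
Your argument is correct and follows the same tropical strategy as the paper: restrict $h(\Gamma)$ to $\rho_1$ via Lemma~\ref{lem_tropical_balancing_1}, show that no component dominates $D_1$, observe that the vertices with $h(V)\in\Int(\rho_1)$ form a connected subtree containing $V_\infty$, and conclude that all contacts with $D_1$ funnel through $f(p)$.

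Two remarks. First, your ``polar term'' argument for excluding components dominating $D_1$ is heavier than needed and is the one step that would require further justification (the precise form of the modified balancing for a component mapping onto a boundary divisor). The paper dispatches this in one line: if some $C_V$ dominated $D_1$, then $f(C)\supseteq D_1\ni D_1\cap D_2$, so $f(C)\cap D_2\neq\emptyset$, contradicting Lemma~\ref{lem_tropical_balancing_1} directly. You never need to invoke any correction to the balancing.

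Second, the phrasing ``$V_\infty$ is the unique vertex where $\phi\circ h$ attains its maximum'' is imprecise, since $\phi$ is unbounded along $E_\infty$. What your balancing argument actually gives (and what you use) is that every vertex $V\in\Gamma_+$ has an adjacent edge in direction $+v_1$; following such edges one must eventually exit along the unique unbounded edge $E_\infty$, so every $V\in\Gamma_+$ is connected to $V_\infty$ inside $\Gamma_+$. This is exactly the path-to-$V_\infty$ argument the paper runs pointwise; you have just packaged it as connectedness of $\Gamma_+$.
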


\begin{proof}
As in the proof of Lemma \ref{lem_tropical_balancing_1}, 
we attach to
$f \colon C/W \rightarrow Y$ a tropical curve $h \colon \Gamma \rightarrow B$.
By Lemma \ref{lem_tropical_balancing_1}, 
we have $h(\Gamma) \subset \rho_1$.
Let $C_\infty$ be the irreducible component 
of $C$ containing the marked point
$p$ and let $V_\infty$ be the corresponding vertex of $\Gamma$. 
The unique unbounded edge $E_\infty$ of $\Gamma$, 
corresponding to the marked point, is attached to $V_\infty$.
Let $C_0$ be an irreducible component of $C$ with 
$f(C_0) \cap D_1 \neq \emptyset$, and let $V_0$ be the corresponding vertex 
of $\Gamma$.
We have to show that $f(C_0) \cap D_1=f(p)$.
By Lemma \ref{lem_tropical_balancing_1}, 
no component of $C$ dominates 
$D_1$. In particular, either $f(C_0)$ is generically contained in 
$Y \backslash D_1$, or $f(C_0)$ is a point on $D_1$.

Let us first assume that $f(C_0)$ is generically contained in 
$Y \backslash D_1$, that is $h(V_0)=\{0\}$. 
Let $x \in f^{-1}(D_1) \cap C_0$. 
By the balancing condition of \cite[Proposition 1.5]{MR3011419}, 
$x$ defines an edge $E$ of $\Gamma$ with 
$h(\Int(E)) \subset \Int(\rho_1)$. If $E=E_\infty$, then $x=p$.
If $E \neq E_\infty$, then $E$ is bounded. 
In this case, let 
$V_1$ be the other vertex of $E$. We have $h(V_1) \in \Int(\rho_1)$.
 As $E_\infty$ is the unique unbounded edge of $\Gamma$, 
 it follows from the toric balancing condition 
 that there exists a path $\gamma$ in $\Gamma$, 
 connecting $V_1$ to $V_\infty$, and such that $h(\gamma) \subset \Int(\rho_1)$. 
Let $C_\gamma \subset C$ be the union of irreducible components of $C$ 
corresponding to the vertices of $\Gamma$ contained in 
$\gamma$. As no component of $C$ dominates $D_1$, 
the connected curve $C_\gamma$ is entirely contracted to a point by $f$. 
Therefore,
we have $f(x)=f(C_\gamma)=f(p)$.

If $f(C_0)$ is a point on $D_1$, we make a similar argument. 
We have 
$h(V_0) \in \Int(\rho_1)$. 
 As $E_\infty$ is the unique unbounded edge of $\Gamma$, 
 it follows from the toric balancing condition 
 that there exists a path $\gamma$ in $\Gamma$, 
 connecting $V_0$ to $V_\infty$, and such that $h(\gamma) \subset \Int(\rho_1)$. 
 Let $C_\gamma \subset C$ be the union 
 of irreducible components of $C$ corresponding 
 to the vertices of $\Gamma$ contained in 
$\gamma$. 
As no component of $C$ dominates $D_1$, 
the connected curve $C_\gamma$ is entirely 
contracted to a point by $f$. Therefore,
we have $f(C_0)=f(C_\gamma)=f(p)$.

The two previous paragraphs also show that every point  
$x \in C$ such that $f(x) \in D_1$ is connected to $C_\infty$
by a chain of irreducible components of $C$ all contracted to a point in $D_1$. 
In particular, the set $f^{-1}(D_1)$ 
of points of $C$ mapped to $D_1$ is connected.

\end{proof}

\begin{lem} \label{lem_lambda_vanishing}
Let $g \in \Z_{\geq 0}$, $k \in \Z_{\geq 1}$ 
and $\beta \in NE(Y)$ be such that, 
for every stable log map $(f \colon C \rightarrow Y) \in \overline{M}_{g,kv_1}^{\beta}(Y/D)$, 
the dual intersection graph of $C$ has positive genus. 
Then, we have $N_{g,kv_1}^{\beta}=0$. 
\end{lem}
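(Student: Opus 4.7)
The plan is to show that the hypothesis forces $\lambda_g$ to vanish identically on $\overline{M}_{g,kv_1}^{\beta}(Y/D)$, so that $N_{g,kv_1}^{\beta}=0$ follows immediately from \eqref{eq:log_gw}. The case $g=0$ is vacuous, since a nodal curve of arithmetic genus zero has a tree as dual graph (the decomposition $g = \sum_v g_v + h^1(\Gamma_C)$ forces $h^1(\Gamma_C)=0$). So I may assume $g\geq 1$; the hypothesis then says that every source curve $C$ of a stable log map in the moduli space carries a non-separating node.

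The key classical input is the vanishing of $\lambda_g$ on the irreducible-boundary stratum $\Delta_{\mathrm{irr}}\subset \overline{M}_{g,1}$ of stable one-pointed curves with a non-separating node. With the partial normalization $\nu\colon \overline{M}_{g-1,3}\to \Delta_{\mathrm{irr}}$ gluing the last two marked points to form the node, sections of the dualizing sheaf $\omega_C$ on the glued curve pull back to meromorphic differentials on the normalization with simple poles at the two preimages and opposite residues. This produces a short exact sequence of vector bundles
\begin{equation*}
0 \longrightarrow \mathbb{E}_{g-1,3} \longrightarrow \nu^*\mathbb{E}_g \longrightarrow \mathcal{O} \longrightarrow 0,
\end{equation*}
so multiplicativity of total Chern classes gives $\nu^*\lambda_g = c_g(\mathbb{E}_{g-1,3})=0$, since $\mathbb{E}_{g-1,3}$ has rank $g-1$. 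As $\nu$ is surjective onto $\Delta_{\mathrm{irr}}$ with positive degree, this forces $\lambda_g|_{\Delta_{\mathrm{irr}}}=0$ in rational cohomology.

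To transfer this vanishing to the log moduli space, I would use the source-stabilization morphism $\mathrm{st}\colon \overline{M}_{g,kv_1}^{\beta}(Y/D)\to \overline{M}_{g,1}$ sending a stable log map to the stabilization of its underlying one-pointed source curve. Because rational tails contribute trivially to the Hodge bundle and the dualizing sheaf is compatible with their contraction, the Hodge bundle $\pi_*\omega_\pi$ entering \eqref{eq:log_gw} is canonically identified with $\mathrm{st}^*\mathbb{E}_g$. By hypothesis, every source curve has a cycle in its dual graph, and cycles are preserved under contractions of rational trees, so $\mathrm{st}$ factors through $\Delta_{\mathrm{irr}}$. Combining with the previous paragraph, $\lambda_g = \mathrm{st}^*(\lambda_g|_{\Delta_{\mathrm{irr}}}) = 0$ on $\overline{M}_{g,kv_1}^{\beta}(Y/D)$, and integration against the virtual class yields $N_{g,kv_1}^{\beta}=0$. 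The main obstacle is the logarithmic bookkeeping: one must verify that the source-stabilization map is well defined on the moduli of stable log maps and that the identification $\pi_*\omega_\pi = \mathrm{st}^*\mathbb{E}_g$ is valid, since the log marked point carries a prescribed contact order that a priori interacts with the usual marked-curve stability. Both are standard once one unwinds the compatibility between log-stability and ordinary stability after forgetting the log structure.
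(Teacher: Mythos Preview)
Your proof is correct and follows the same underlying mechanism as the paper's proof: both rest on the classical vanishing of $\lambda_g$ on families of curves whose dual graph has positive genus. The paper simply cites this fact (referring to \cite[Section 3]{MR3904449}), whereas you unpack it explicitly via the stabilization map $\mathrm{st}\colon \overline{M}_{g,kv_1}^{\beta}(Y/D)\to \overline{M}_{g,1}$, the identification $\pi_*\omega_\pi \cong \mathrm{st}^*\mathbb{E}_g$, and the standard short exact sequence argument showing $\lambda_g|_{\Delta_{\mathrm{irr}}}=0$; your observation that contracting rational trees preserves $h^1$ of the dual graph is exactly what ensures the stabilized curve still lies in $\Delta_{\mathrm{irr}}$.
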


\begin{proof}
Recall that, by definition, we have 
\[ N_{g,kv_1}^{\beta} \coloneqq 
\int_{[\overline{M}_{g,kv_1}^\beta(Y/D)]^{\virt}} (-1)^g \lambda_g \,.\]
The class $\lambda_g$ vanishes for families of stable curves with dual graph 
of positive genus. This classical fact is for example reviewed in 
\cite[Section 3]{MR3904449}.
\end{proof}

\begin{lem} \label{lem_homology_classification_2}
Let $g \in \Z_{\geq 0}$, $k \in \Z_{\geq 1}$ and $\beta \in NE(Y)$.
Let $(f \colon C \rightarrow Y)$ be a stable log map 
defining a point of $\overline{M}_{g,kv_1}^{\beta}(Y/D)$ 
and such that the dual intersection graph of $C$ has genus $0$. 
Then, for every irreducible component 
$C_0$ of $C$ on which $f$ is not constant, $f^{-1}(D_1) \cap C_0$
consists of a single point.
\end{lem}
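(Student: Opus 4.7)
The plan is to tropicalize $f$ as in the proofs of Lemmas \ref{lem_tropical_balancing_1} and \ref{lem_tropical_balancing_2}, producing the tropical curve $h\colon \Gamma \to B$, and to use the genus-$0$ hypothesis on the dual intersection graph of $C$: this hypothesis forces $\Gamma$ to be a tree (with a single unbounded end $E_\infty$ corresponding to the marked point $p$). I would begin by showing that for any non-constant irreducible component $C_0$, the corresponding vertex $V_0 \in \Gamma$ satisfies $h(V_0) = 0$. Indeed, Lemma \ref{lem_tropical_balancing_1} gives $h(\Gamma) \subset \rho_1$, so $h(V_0) \in \{0\} \cup \Int(\rho_1)$; and if $h(V_0) \in \Int(\rho_1)$, then $f(C_0) \subset D_1$ with $f|_{C_0}$ non-constant would force $f(C_0) = D_1$ and hence $f(C_0) \cap D_2 \ne \emptyset$, contradicting Lemma \ref{lem_tropical_balancing_1}. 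Moreover, since $D = D_1 + D_2 + D_3 = -K_Y$ is ample and the complement $U = Y \setminus D$ is affine, the complete positive-dimensional curve $f(C_0)$ cannot lie entirely in $U$, so $f(C_0) \cap D \ne \emptyset$; by Lemma \ref{lem_tropical_balancing_1} this intersection lies in $D_1$, so $f^{-1}(D_1) \cap C_0$ is non-empty and the content of the lemma is that it consists of exactly one point.

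Next, I would translate the points of $f^{-1}(D_1) \cap C_0$ into tropical data at $V_0$. By the balancing analysis used in the proof of Lemma \ref{lem_tropical_balancing_2} (ultimately \cite[Proposition 1.5]{MR3011419}), each point $x \in C_0$ with $f(x) \in D_1$ determines a flag at $V_0$ in $\Gamma$ whose primitive direction is $v_1$; this flag is either the unbounded edge $E_\infty$ (when $p \in C_0$) or a bounded edge $E$ whose other endpoint $V$ satisfies $h(V) \in \Int(\rho_1)$, meaning the adjacent component $C_V$ is contracted to a point of $D_1$. The central step is to suppose, for contradiction, that there are two such flags $E_1, E_2$ at $V_0$ producing two distinct points $x_1, x_2 \in f^{-1}(D_1) \cap C_0$, and to derive a contradiction using the tree property of $\Gamma$ together with the connectedness result of Lemma \ref{lem_tropical_balancing_2}.

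Finally, by Lemma \ref{lem_tropical_balancing_2}, $f^{-1}(D_1) = f^{-1}(f(p))$ is connected as a topological subspace of $C$. Any continuous path inside $f^{-1}(D_1)$ joining $x_1$ to $x_2$ must traverse only irreducible components of $C$ that lie entirely in $f^{-1}(f(p))$, since a non-contracted component (such as $C_0$ itself) contributes only a finite set of isolated points to $f^{-1}(D_1)$; such fully contracted components correspond to vertices $V$ of $\Gamma$ with $h(V) \in \Int(\rho_1)$. Translating this path back to $\Gamma$, it yields a path in $\Gamma$ from the far endpoint of $E_1$ to the far endpoint of $E_2$ going only through vertices in $h^{-1}(\Int(\rho_1))$. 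But $\Gamma$ is a tree, so the unique path between these two endpoints passes through $V_0$, where $h(V_0) = 0 \notin \Int(\rho_1)$, a contradiction. The main obstacle I anticipate is the careful bookkeeping in this last translation between connectedness in $C$ and in the dual graph $\Gamma$, and in particular treating the boundary case $V_0 = V_\infty$ where one flag is $E_\infty$ itself: here the marked point $x_1 = p$ forms the singleton connected component $\{p\}$ of $f^{-1}(D_1)$ on $C_0$, which still cannot be joined to $x_2$ through contracted chains, so the same contradiction applies.
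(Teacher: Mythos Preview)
Your proposal is correct and follows essentially the same approach as the paper: both establish non-emptiness of $f^{-1}(D_1)\cap C_0$ via ampleness of $D$, and both deduce that this set has at most one point from the connectedness of $f^{-1}(f(p))$ (Lemma~\ref{lem_tropical_balancing_2}) together with the genus-$0$ (tree) property of the dual graph. The paper compresses this second step into a single sentence, while you unpack it explicitly through the tropical curve $\Gamma$; the underlying argument is the same.
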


\begin{proof}
By Lemma \ref{lem_tropical_balancing_1}, we have 
$f(C_0) \cap D_2=f(C_0) \cap D_3=\emptyset$. 
As $f$ is non-constant on $C_0$ and $-K_Y=D_1 \cup D_2 \cup D_3$ is ample, 
we deduce that $f^{-1}(D_1) \cap C_0$ is non-empty.

On the other hand, by Lemma \ref{lem_tropical_balancing_2}, 
the set 
$f^{-1}(p)$ of points of $C$ mapping to $f(p)$ is connected. 
As the dual graph of $C$ is of genus $0$, 
we obtain that $C_0$ intersects $f^{-1}(p)$
in at most one point.
\end{proof}

\begin{lem} \label{lem_homology_classification_3}
Let $k \in \Z_{\geq 1}$ and 
$\beta \in NE(Y)$ such that there exists 
$g \in \Z_{\geq 0}$ with $N_{g,kv_1}^{\beta} \neq 0$. 
Then, we have either $\beta=k L_{1j}$ for some
$1 \leq j \leq 8$, or $k$ is even and 
$\beta=\frac{k}{2}(D_2+D_3)$.
\end{lem}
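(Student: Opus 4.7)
The plan is to combine the preceding lemmas to pin down the image curve of any stable log map contributing to $N_{g,kv_1}^{\beta}$. Suppose $N_{g,kv_1}^{\beta} \neq 0$. By Lemma \ref{lem_lambda_vanishing} there exists $(f\colon C \to Y) \in \overline{M}_{g,kv_1}^{\beta}(Y/D)$ whose dual intersection graph has genus $0$; fix such an $f$. By Lemmas \ref{lem_tropical_balancing_1} and \ref{lem_tropical_balancing_2}, the image $f(C)$ is disjoint from $D_2 \cup D_3$ and meets $D_1$ at the single point $f(p)$.

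Next I would analyze each irreducible component $C_0$ of $C$ on which $f$ is non-constant. By Lemma \ref{lem_homology_classification_2}, the intersection $f^{-1}(D_1) \cap C_0$ consists of a single point; together with Lemma \ref{lem_tropical_balancing_1} this forces the image $f(C_0)$ to be an integral curve of $Y$ meeting $D_1$ at a unique point and disjoint from $D_2 \cup D_3$. Composing the normalization $\widetilde{C}_0 \to f(C_0) \hookrightarrow Y$ with the marked point lying over $f(p)$ yields a stable log map with irreducible domain and generically injective underlying map, so Lemma \ref{lem_homology_classification_1} applies and forces $[f(C_0)] \in NE(Y)$ to equal either $L_{1j}$ for some $1 \leq j \leq 8$, or $D_2+D_3$ (i.e.\ the class of one of the two conics $\cC_{1k}$ tangent to $D_1$).

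It then remains to assemble the components. Each non-constant component has image among the ten curves $L_{11},\dots,L_{18},\cC_{11},\cC_{12}$, each meeting $D_1$ at a specific point. For a general smooth cubic surface these ten points of $D_1$ are pairwise distinct, as can be checked directly on the $\PP^2$-blowup model using the classes listed in Section \ref{section_curves}; and since $N_{g,kv_1}^{\beta}$ is invariant under deformations of the log pair $(Y,D)$, we may reduce to this general situation. Because every non-constant component passes through the common point $f(p)\in D_1$, all such components must share the same image — either a single line $L_{1j}$ or a single conic $\cC_{1k}$. Writing $\beta$ as a positive integer multiple $d$ of that common class and using the identity $\beta \cdot D_1 = k$ gives $\beta = k L_{1j}$ (contact order $L_{1j}\cdot D_1 = 1$) in the line case, and $\beta = (k/2)(D_2+D_3)$ with $k$ even (contact order $(D_2+D_3)\cdot D_1 = 2$) in the conic case.

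The main obstacle I expect is the genericity step: one must verify that the ten distinguished points on $D_1$ arising from the $L_{1m}$ and the tangencies of the $\cC_{1k}$ are truly pairwise distinct for a general $(Y,D)$, and then justify reducing to this generic case via deformation invariance of logarithmic Gromov--Witten invariants. Once that rigidity is in hand, the remaining combinatorial bookkeeping combining the images of the non-constant components into a single total class is straightforward.
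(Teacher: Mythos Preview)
Your proposal is correct and follows essentially the same route as the paper: reduce to a genus-$0$ dual graph via Lemma~\ref{lem_lambda_vanishing}, use Lemmas~\ref{lem_tropical_balancing_1}--\ref{lem_homology_classification_2} to see that each non-contracted piece has image an integral curve meeting $D$ only at $f(p)$ and unibranch there, apply Lemma~\ref{lem_homology_classification_1} to the normalization of that image, and finally use distinctness of the ten intersection points on $D_1$ plus deformation invariance to force a single image curve. The only cosmetic difference is that the paper decomposes $f(C)$ into its irreducible image components $C_\ell$ while you decompose the domain $C$; these lead to the same argument. Regarding the obstacle you flag, the paper dispatches it by citing \cite[Proposition~2.4]{gross2019cubic} for the pairwise distinctness of the ten points on $D_1$ for general $Y$, and then invokes deformation invariance of log Gromov--Witten invariants exactly as you anticipate.
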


\begin{proof}
As $N_{g,kv_1}^\beta \neq 0$, there exists
by Lemma \ref{lem_lambda_vanishing}
a stable log map $(f \colon C \rightarrow Y) \in \overline{M}_{g,kv_1}^\beta (Y/D)$
such that the dual intersection graph of 
$C$ has genus $0$.
We denote by $p \in C$ the marked point.

Let $C_1,\dots, C_n$ the irreducible components of $f(C)$
equipped with the reduced scheme structure. 
For every $1 \leq \ell \leq n$, $C_j$ is an integral curve in $Y$. 
Denoting $\beta_\ell \coloneqq [C_\ell]$, we have 
$\beta=\sum_{\ell=1}^n m_\ell [C_\ell]$, where $m_\ell \in \Z_{\geq 1}$
is the multiplicity of $C_\ell$ in the cycle 
$[f(C)]$.
By Lemma \ref{lem_tropical_balancing_1}, we have 
$f(C_\ell) \cap D_2=f(C_\ell) \cap D_3= \emptyset$.
By Lemma \ref{lem_homology_classification_2}, we have $C_\ell 
\cap D_1 = f(p)$ and $C_\ell$ is unibranch at the point $f(p)$.
Therefore the normalization $\tilde{C}_\ell$ of $C_\ell$ 
defines a stable log map 
$(f_\ell \colon \tilde{C}_\ell \rightarrow Y) 
\in \overline{M}_{g_\ell,k_\ell v_1}(Y/D)$, 
where $g_\ell$
is genus of $C_\ell$ and $k_\ell \coloneqq \beta_\ell \cdot D_1$.
As $\tilde{C}_\ell$ is irreducible and $f_\ell$ 
is generically injective, 
we can apply Lemma \ref{lem_homology_classification_1} and so $C_\ell$
is either of the $8$ lines 
$L_{1m}$ or one of the two conics 
$\cC_{1k}$. It is shown in the proof of 
\cite[Proposition 2.4]{gross2019cubic} that for general $Y$, 
the 10 curves of 
$L_{1m}$ for 
$1 \leq m \leq 8$ and 
$\cC_{1k}$ for 
$1 \leq k \leq 2$ intersect $D_1$ in different points.
By deformation invariance of log Gromov-Witten invariants, 
we can assume that $Y$ is general. 
Therefore, we have in fact $n=1$ and 
$f \colon C \rightarrow Y$ is a multiple cover 
of one of the 10 curves $L_{1m}$ for 
$1 \leq m \leq 8$ and 
$\cC_{1k}$ for 
$1 \leq k \leq 2$.
\end{proof}

We can now end the proof of Proposition 
\ref{prop_ray_rho_j_contribution}.
From Equation \eqref{eq:rays_can} 
and Lemma \ref{lem_homology_classification_3}, 
we have 
\[ f_{\fd_1}
= \prod_{j=1}^8 \exp \left( \sum_{k \geq 1}\sum_{g \geq 0}
2 \sin \left( 
\frac{k \hbar}{2} \right) N_{g,kv_1}^{kL_{1j}} \hbar^{2g-1}
t^{kL_{1j}} z^{-kv_1} \right) 
\]
\[ \times \exp \left( \sum_{k \geq 1}\sum_{g \geq 0}
2 \sin \left(k \hbar \right) N_{g,2kv_1}^{k(D_2+D_3)} \hbar^{2g-1}
t^{k(D_2+D_3)} z^{-2kv_1} \right) \,.
\]
By Lemma \ref{lem:gw_lines}, we have 
\[ \exp \left( \sum_{k \geq 1}\sum_{g \geq 0}
2 \sin \left( 
\frac{k \hbar}{2} \right) N_{g,kv_1}^{kL_{1j}} \hbar^{2g-1}
t^{kL_{1j}} z^{-kv_1} \right)  = \exp \left( \sum_{k \geq 1}\frac{(-1)^{k-1}}{k}t^{kL_{1j}} z^{-kv_1} \right) = 1+t^{L_{1j}}z^{-v_1} \,,\]
thus producing the numerator of 
Equation \eqref{eq:ray_rho_j_contribution}.
On the other hand, by Lemma \ref{lem:gw_conics} and setting 
$q=e^{i\hbar}$, we have 
\[\exp \left( \sum_{k \geq 1}\sum_{g \geq 0}
2 \sin \left( 
k \hbar\right) N_{g,2kv_1}^{k(D_2+D_3)} \hbar^{2g-1}
t^{k(D_2+D_3)} z^{-2kv_1} \right) \]
\[ = \exp \left( \sum_{k \geq 1} \frac{(q^k-q^{k})(q^{\frac{k}{2}}
+q^{-\frac{k}{2}})}{k(q^{\frac{k}{2}}-q^{-\frac{k}{2}})}
t^{k(D_2+D_3)}z^{-2kv_1} \right) 
= \exp \left( \sum_{k \geq 1} \frac{q^k+2+q^{-k}}{k}
t^{k(D_2+D_3)}z^{-2kv_1} \right)\]
\[=\frac{1}{(1-q^{-1}t^{D_2+D_3}z^{-2v_1})
(1-t^{D_2+D_3}z^{-2v_1})^2(1-qt^{D_2+D_3}z^{-2v_1})}\,,\]
thus producing the denominator of Equation \eqref{eq:ray_rho_j_contribution}.
This concludes the proof of Proposition 
\ref{prop_ray_rho_j_contribution}.

\subsection{Contribution of general rays: $PSL_2(\Z)$ symmetry}
\label{section_contribution_general_rays}

Following Gross, Hacking, Keel and Siebert \cite{gross2019cubic}  
treating the classical case,
we describe the general quantum rays $\fd_{m,n}$ 
of the canonical quantum scattering diagram $\fD_{\can}$ in terms 
of the quantum rays $\fd_j$ computed in Proposition 
\ref{prop_ray_rho_j_contribution} and of a $PSL_2(\Z)$ symmetry.

\begin{lem} \label{lem_tropical_balancing_3}
Let $g \in \Z_{\geq 0}$, $v \in B_0(\Z)$ and $\beta \in NE(Y)$. 
Let $f \colon C/W \rightarrow Y$ be a stable log map defining a point in 
$\overline{M}_{g,v}^{\beta}(Y/D)$, 
and let $p \in C$ be the corresponding marked point. 
Then, 
$f(C)$ intersects $D$ in a single point, i.e.\ $f(C) \cap D = \{f(p)\}$.
\end{lem}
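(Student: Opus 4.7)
The plan is to extend the tropical arguments of Lemmas~\ref{lem_tropical_balancing_1} and~\ref{lem_tropical_balancing_2} from directions along the rays $\rho_j$ to arbitrary $v \in B_0(\Z)$. Writing $v = av_j + bv_{j+1}$ with $a,b \in \Z_{\geq 0}$ and some $j \in \{1,2,3\}$, the unique unbounded edge $E_\infty$ of the tropicalization $h \colon \Gamma \to B$ of $f$ has image direction $v_{\mathrm{prim}} = \tfrac{1}{\gcd(a,b)}(av_j + bv_{j+1})$. The heart of the argument is the tropical containment
\[
h(\Gamma) \subset \R_{\geq 0}\, v_{\mathrm{prim}}.
\]
When $ab = 0$ this reduces immediately to Lemma~\ref{lem_tropical_balancing_2} via the cyclic $\Z/3\Z$-symmetry among $(D_1,D_2,D_3)$. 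So assume $a,b > 0$, that is, $v_{\mathrm{prim}} \in \Int(\sigma_{j,j+1})$; then the contact data at the marked point forces $f(p) = D_j \cap D_{j+1}$. Once the displayed inclusion is established, no vertex of $\Gamma$ maps onto any ray $\rho_k$, so no component of $C$ dominates any $D_k$, and the path-through-contracted-components argument already used at the end of the proof of Lemma~\ref{lem_tropical_balancing_2} shows every point of $f^{-1}(D)$ is connected to $p$ through components of $C$ collapsed by $f$ onto $D_j \cap D_{j+1}$, giving $f(C) \cap D = \{f(p)\}$.

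To prove the displayed inclusion I would argue by contradiction, in direct analogy with Lemma~\ref{lem_tropical_balancing_1}. Suppose some vertex $V$ of $\Gamma$ has $h(V) \notin \R_{\geq 0} v_{\mathrm{prim}}$. Pick a primitive integral linear functional $\ell$ on $\R^2$ vanishing on $v_{\mathrm{prim}}$, and, after a choice of lift to the universal cover $\R^2 \setminus \{0\}$ of $B_0$ and possibly negating $\ell$, arrange that $\ell(h(V)) > 0$. The toric balancing condition at any vertex of $\Gamma$ in $B_0$ with $\ell(h(\cdot)) > 0$ forces an adjacent edge along which $\ell(h(\cdot))$ strictly increases, because the only edge direction on which $\ell$ vanishes is parallel to $v_{\mathrm{prim}}$ and the unbounded edge $E_\infty$ is attached to a vertex on the $\ell = 0$ ray. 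Iterating then produces an infinite sequence of vertices, contradicting the finiteness of $\Gamma$.

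The main obstacle will be the global consistency of the functional $\ell$: the integral affine structure on $B_0$ has monodromy $-\id$ around $0$, so $\ell$ is only well defined on the universal cover. The iteration must therefore be carried out on a lift of $h(\Gamma)$, and one needs a separate tropical-balancing argument to rule out $h(\Gamma)$ winding around $0$, which would force $\ell$ to flip sign along the way. Once this bookkeeping is handled, the vertical-component iteration of Lemma~\ref{lem_tropical_balancing_1} applies verbatim with ``positive vertical component'' replaced by ``positive $\ell$-value'', and the lemma follows.
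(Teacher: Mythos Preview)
Your approach is correct and is precisely what the paper does: it simply says ``rotate the chart so that $\R_{\geq 0}v$ is the horizontal direction'' and invokes Lemmas~\ref{lem_tropical_balancing_1} and~\ref{lem_tropical_balancing_2} verbatim. Your functional $\ell$ is exactly the rotated vertical coordinate, and the monodromy worry dissolves once you observe that $B$ can be presented as the closed half-plane $\{\ell \geq 0\}$ with the identification $w \sim -w$ on its boundary $\{\ell = 0\}$, so $\ell$ is globally well-defined on $B$ and no lifting or winding bookkeeping is required.
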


\begin{proof}
We proved this result in Lemmas 
\ref{lem_tropical_balancing_1} and 
\ref{lem_tropical_balancing_2} by a tropical argument
when $v$ is a multiple of $v_1$. 
Exactly the same tropical argument can be applied in general: 
up to rotating the chart that we are using to describe $B$, 
we can assume that $\R_{\geq 0}v$ is the horizontal direction.
\end{proof}

First, the linear action of $SL_2(\Z)$ on $\Z^2$ induces an action of 
$PSL_2(\Z)$ on $B(\Z)=\Z^2/\langle -\id \rangle$.
Then, we
define an action of $PSL_2(\Z)$ on the set
\begin{equation}
\Gamma \coloneqq \{ \beta \in NE(Y) \,|\, 
N_{g,v}^{\beta} \neq 0 \,\,\text{for some} \,\, g\in \Z_{\geq 0}
\,\, \text{and} \,\, v\in B_0(\Z) \}
\end{equation}
Note that $A_1(Y) \simeq \Z^7$ has for basis
$H$, $E_{11}$, $E_{12}$, 
$E_{21}$, $E_{22}$, $E_{31}$, $E_{32}$, 
and that $PSL_2(\Z)$ is generated by 
\begin{equation} \label{eq:ST}
 S= \begin{pmatrix}
0 & -1 \\
1 & 1
\end{pmatrix}
\,\,\,\, \text{and} \,\,\,\,
T= \begin{pmatrix}
1 & 1 \\
0 & 1
\end{pmatrix}\,.
\end{equation}
We define an action $S^{*}$ of $S$ on $A_1(Y)$ is defined by 
\begin{equation} \label{eq:S_action}
S^{*}(H)=H \,\,\,\, \text{and}
\,\,\,\, 
S^{*}(E_{jk})=E_{j+1,k}\,,
\end{equation}

\begin{lem} \label{lem_S_preserving}
The transformation 
$S^{*}$ of $A_1(Y)$ preserves $\Gamma$. 
Moreover, for every $g \in \Z_{\geq 0}$, $v \in B(\Z)$ 
and $\beta \in \Gamma$, we have 
\begin{equation}
N_{g,v}^{S^{*} \beta}=N_{g,v}^\beta \,.
\end{equation}
\end{lem}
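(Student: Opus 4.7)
The plan is to produce a genuine biregular symmetry of a suitable model of $(Y,D)$ realizing $S^{*}$ on $A_1$, and then invoke the functoriality of log Gromov-Witten invariants under automorphisms and their deformation invariance in log-smooth families to transport the result to the given $(Y,D)$. Since $Y$ itself need not admit an automorphism cyclically permuting $D_1,D_2,D_3$, the first step is to specialize within the versal family: the moduli of pairs (smooth cubic surface in $\PP^{3}$, triangle of lines on it) is smooth and connected, and it contains a distinguished $\Z/3\Z$-symmetric pair $(Y_0,D_0)$ admitting an order-three biregular automorphism $\sigma$ that cyclically permutes the three boundary components. A concrete such pair is furnished by a Fermat-type cubic with a symmetric triangle of lines.

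Next, I would choose the six disjoint $(-1)$-curves $E_{jk}$ on $Y_0$ compatibly with $\sigma$, so that $\sigma^{*}E_{jk}=E_{j+1,k}$; combinatorially this is a matching within the $\sigma$-orbits on the $24$ lines of $Y_0$ meeting $D_0$. Together with $\sigma^{*}H=H$, this identifies the action of $\sigma^{*}$ on $A_1(Y_0)$ with the map $S^{*}$ defined in \eqref{eq:S_action}, under the canonical identification $A_1(Y)\cong A_1(Y_0)$ obtained from parallel transport in a smooth one-parameter family of pairs connecting $(Y,D)$ to $(Y_0,D_0)$.

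Because $\sigma$ preserves $D_0$ together with its stratification and normal crossings structure, it preserves the log structure of $(Y_0,D_0)$ and therefore induces isomorphisms of the logarithmic moduli stacks $\overline{M}_{g,v}^{\beta}(Y_0/D_0)$ compatible with the universal source curve, the perfect obstruction theory, and the Hodge bundle $\pi_{*}\omega_\pi$. Integrating $(-1)^{g}\lambda_g$ against the virtual fundamental class then yields the identity $N_{g,v}^{S^{*}\beta}=N_{g,v}^{\beta}$ on $(Y_0,D_0)$, with $v$ matched on both sides via the permutation of cones of $\Sigma$ induced by $\sigma$ on the tropicalization. Deformation invariance of log Gromov-Witten invariants in smooth families of log-smooth pairs \cite{MR3257836, MR3011419} then transports this equality to $(Y,D)$. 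Preservation of $\Gamma$ is an immediate corollary: if some $N_{g,v}^\beta\neq 0$, then the corresponding $N_{g,v}^{S^{*}\beta}$ is also nonzero, so $S^{*}\beta\in\Gamma$.

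The main obstacle is the combinatorial alignment in the second step: producing the labeling of the $E_{jk}$ on $Y_0$ so that $\sigma^{*}$ matches $S^{*}$ on the nose rather than some competing cyclic permutation or a map differing by a Weyl-group element. This is handled by tracking the $\sigma$-orbit structure on the $27$ lines of $Y_0$ via the explicit lists \eqref{eq:lines_1}--\eqref{eq:lines_3} and choosing the $E_{jk}$ within $\sigma$-orbits compatibly with the identification transported from $(Y,D)$. Once this identification is fixed, the rest of the argument is formal.
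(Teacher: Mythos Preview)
Your approach is essentially the paper's: its entire proof is the single sentence that $S^*$ is induced by ``the obvious $\Z/3\Z$-cyclic symmetry of $(Y,D)$ permuting the components $D_1,D_2,D_3$ of $D$,'' and you have supplied the details this one-liner suppresses, namely specialization to a genuinely symmetric model together with deformation invariance (since a general cubic surface has no such automorphism). One small caveat on your alignment step: there is no reason the parallel transport of the paper's fixed $E_{jk}$ should land on a $\sigma$-equivariant sextuple on $Y_0$, so $\sigma^*$ need not match the transported $S^*$ on the nose; but since the paper's choice of $E_{jk}$ was explicitly arbitrary (``for convenience''), the cleanest repair is to take $(Y,D)=(Y_0,D_0)$ with a $\sigma$-equivariant choice from the outset and dispense with the transport altogether.
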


\begin{proof}
The transformation $S^{*}$ is induced by the obvious
$\Z/3\Z$-cyclic symmetry of $(Y,D)$ permuting the components 
$D_1$, $D_2$, $D_3$ of $D$, and so the result is clear.
\end{proof}

Let $T^{*}$ be the transformation of $A_1(Y)$ defined by 
\begin{equation} \label{eq:T_action}
T^{*}(H)=2H-E_{31}-E_{32}\,,\,\, 
T^{*}(E_{1j})=E_{1j}\,,\,\, 
T^{*}(E_{2j})=H-E_{3j}\,,\,\, 
T^{*}(E_{3j})=E_{2j}\,.
\end{equation}
Note that $T^{*}$ does not define an action of $T$ on $A_1(Y)$ 
because $T^{*}$ is not bijective. 
Nevertheless, we have the following result.

\begin{lem} \label{lem_T_bijective}
The transformation $T^{*}$ of $A_1(Y)$
preserves $\Gamma$ and the restriction of $T^{*}$ to $\Gamma$ is bijective.
Moreover, for every $g \in \Z_{\geq 0}$, $v \in B(\Z)$ 
and $\beta \in \Gamma$, we have 
\begin{equation}
N_{g,v}^{T^{*} \beta}=N_{g,v}^\beta \,.
\end{equation}
\end{lem}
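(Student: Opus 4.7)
The plan is to realize $T^{*}$ as the action on $A_1(Y)$ of a pseudo-automorphism of the pair $(Y, D)$, in the spirit of Proposition 2.4 of \cite{gross2019cubic}, and then to invoke the invariance of higher-genus log Gromov-Witten invariants under such transformations, following \cite{MR4048291}.

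First I would construct a birational self-map $\varphi \colon Y \dashrightarrow Y$ of $(Y,D)$ that is an isomorphism away from $D$ and whose induced action on the tropicalization $(B,\Sigma)$ is the shear $T$ of Equation \eqref{eq:ST}. Concretely, $\varphi$ should be built as the composition of the contraction of the two disjoint interior $(-1)$-curves $E_{31}$ and $E_{32}$ (each meeting $D_3$ transversally in one point) with the blow-up of the resulting surface at two carefully chosen smooth points lying on the strict transform of $D_3$; the target is then identified with $Y$ via the appropriate nontrivial isomorphism of cubic surfaces. Tracking the action of these elementary birational operations on $\mathrm{Pic}$ should give $\varphi^{*} = T^{*}$ matching Equation \eqref{eq:T_action}: $E_{11}, E_{12}$ are untouched because they lie away from the locus of modification; $E_{31}, E_{32}$ are pushed into $E_{21}, E_{22}$ via the nontrivial identification; $E_{21}, E_{22}$ become $H - E_{3j}$ because they pass through the new blown-up points; and $H$ is sent to $2H - E_{31} - E_{32}$ by tracking the pullback of a line through the Cremona-type transformation.

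Next, I would invoke invariance of the log Gromov-Witten invariants under $\varphi$. Since $\varphi$ is a composition of elementary modifications along the boundary of the maximal log Calabi-Yau surface $(Y, D)$, each of which preserves the log structure and the moduli stack of stable log maps meeting $D$ in a single interior point, the invariants $N_{g,v}^\beta$ are unchanged when $\beta$ is replaced by $T^{*}\beta$. This is the higher-genus extension of the argument used for genus zero in \cite{gross2019cubic} and relies on the invariance properties proved in \cite{MR4048291}. It will yield simultaneously that $T^{*}$ preserves $\Gamma$ and that $N_{g,v}^{T^{*}\beta} = N_{g,v}^\beta$ for all admissible $g$, $v$, $\beta$.

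Finally, bijectivity of $T^{*}|_\Gamma$ will follow from the invertibility of $\varphi$: the inverse $\varphi^{-1}$ induces the transformation corresponding to $T^{-1}$, which preserves $\Gamma$ by the same invariance and serves as an inverse to $T^{*}$ on $\Gamma$. Note that $T^{*}$ is not bijective on the full lattice $A_1(Y)$: a direct linear algebra check shows that its kernel is generated by $D_2 = H - E_{21} - E_{22}$. However, $D_2 \notin \Gamma$, because any curve of class $D_2$ factors through the boundary component $D_2 \subset D$ and so meets $D$ in the two distinct nodes $D_1 \cap D_2$ and $D_2 \cap D_3$, contradicting Lemma \ref{lem_tropical_balancing_3} which forces classes in $\Gamma$ to meet $D$ in a single point. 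The main obstacle will be the explicit construction of $\varphi$ and the verification $\varphi^{*} = T^{*}$ on the generators $H, E_{jk}$, together with the correct formulation of the higher-genus invariance result from \cite{MR4048291} in the form needed to conclude here.
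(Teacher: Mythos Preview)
Your strategy—realize $T^{*}$ geometrically and then invoke birational invariance of log Gromov--Witten invariants—is exactly the paper's. The execution differs in two places that matter.

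First, the geometry. Following \cite{gross2019cubic}, the paper does not contract the interior $(-1)$-curves $E_{31}, E_{32}$; it performs a pure \emph{boundary} modification: blow up the node $D_1 \cap D_2$ of $D$, then contract the boundary component $D_3$. Because only boundary strata are touched, the resulting birational map is automatically an isomorphism on $U = Y \setminus D$. Your proposed map is not: contracting $E_{31}$ collapses the affine line $E_{31}\setminus D \subset U$ to a boundary point, and blowing up a different smooth point of $D_3$ inserts a new affine line into the interior; the composite genuinely modifies $U$, contrary to what you assert in your first paragraph. It is also not clear that your recipe reproduces the formula $T^{*}(E_{2j}) = H - E_{3j}$.

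Second, the invariance. The relevant input is the birational invariance of log Gromov--Witten invariants due to Abramovich--Wise \cite{MR3778185}, which holds in all genera; this is what the paper cites. Your reference \cite{MR4048291} constructs the canonical quantum scattering diagram but does not itself supply the birational invariance statement you need. The paper combines \cite{MR3778185} with Lemma~\ref{lem_tropical_balancing_3}—which guarantees that every class in $\Gamma$ is represented by a curve with all components generically contained in $U$—to meet the hypotheses of Abramovich--Wise. Your observation that $\ker T^{*} = \Z D_2$ and $D_2 \notin \Gamma$ is correct but unnecessary: bijectivity on $\Gamma$ follows directly because the log modification is invertible.
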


\begin{proof}
It is shown
in \cite{gross2019cubic}
that the transformation $T^{*}$ of $A_1(Y)$ 
is induced by a log birational modification of the pair $(Y,D)$: 
given $(Y,D)$, one can blow-up the point $D_1 \cap D_2$ 
and contract $D_3$ to obtain a new pair $(Y',D')$.
By Lemma \ref{lem_tropical_balancing_3}, 
a class $\beta \in \Gamma$ is represented by a curve in $Y$ 
whose all components are generically 
contained in the complement of $D$ in $Y$. 
The result then follows from the invariance of log Gromov-Witten invariants 
under log birational modification proved by Abramovich and Wise 
\cite{MR3778185}.
\end{proof}

By Lemmas \ref{lem_S_preserving} and 
\ref{lem_T_bijective}, we have an action of $S$ and $T$ on the set $\Gamma$, 
which generates an action of 
$PSL_2(\Z)$ on $\Gamma$.
Given a power series $f$ with coefficients polynomial in 
$t^{\beta}$ for $\beta \in \Gamma$, 
and $M \in PSL_2(\Z)$, we define $M^{*}(f)$ by 
$M^{*}(t^{\beta})\coloneqq t^{M^{*}(\beta)}$
for $\beta \in \Gamma$, and extending by linearity.
For a quantum ray 
$\fd=(p_{\fd}, f_{\fd})$ and 
$M \in SL_2(\Z)$, we define
\[ M(\fd) \coloneqq (M(p_{\fd}), 
M^{*}(f_{\fd}) )\,,\]
where $M(p_{\fd})$ is the image of 
$p_{\fd}$ by the action of $M$ on
$B(\Z)$.

\begin{prop} \label{prop_symmetry}
For every $(m,n) \in B(\Z)$ with $m$ and $n$ coprime, 
and $M \in SL_2(\Z)$, we have the following relation 
between the quantum rays
$\fd_{m,n}$ and $\fd_{M((m,n))}$ 
of the canonical quantum scattering diagram $\fD_{\can}$:
\[ \fd_{M((m,n))}= M(\fd_{m,n}) \,.\]
\end{prop}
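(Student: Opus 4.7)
The plan is to reduce to the generators $S$ and $T$ of $PSL_2(\Z)$ and for each verify the claimed identity by directly comparing the defining expressions in Definition \ref{defn_rays_can}. First I would unpack what $\fd_{M(m,n)} = M(\fd_{m,n})$ says. The equality of the underlying directions is immediate from the definition of $M(\fd)$. For the attached functions, after equating exponentials and collecting coefficients of $z^{-kM(m,n)}$, the claim reduces to the identity of log Gromov--Witten invariants
\begin{equation*}
N_{g,kM(m,n)}^{M^{*}\beta} = N_{g,k(m,n)}^{\beta}
\end{equation*}
for every $g \geq 0$, $k \geq 1$, and $\beta \in \Gamma$. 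Since $PSL_2(\Z)$ is generated by $S$ and $T$ (Equation \eqref{eq:ST}), and since the action on $B(\Z)$ and the action $M^{*}$ on $\Gamma$ both respect composition (Lemmas \ref{lem_S_preserving} and \ref{lem_T_bijective}), it is enough to establish the identity for $M \in \{S,T\}$.

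For $M = S$ the required invariance is essentially the content of Lemma \ref{lem_S_preserving}. The cyclic $\Z/3\Z$-automorphism of $(Y,D)$ permuting $D_1, D_2, D_3$ acts on the tropicalization $B$ as the rotation $S$, and induces an isomorphism between $\overline{M}_{g,k(m,n)}^{\beta}(Y/D)$ and $\overline{M}_{g,kS(m,n)}^{S^{*}\beta}(Y/D)$ identifying the virtual fundamental classes and the Hodge bundles, and hence the integrals defining the invariants.

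For $M = T$ I would use Lemma \ref{lem_T_bijective}. The transformation $T^{*}$ on $A_1(Y)$ is induced by the log birational modification from $(Y,D)$ to a second pair $(Y',D')$ obtained by blowing up $D_1 \cap D_2$ and contracting $D_3$. By Lemma \ref{lem_tropical_balancing_3}, every stable log map contributing to $N_{g,v}^{\beta}$ meets $D$ in a single point, so the Abramovich--Wise invariance theorem \cite{MR3778185} for log GW invariants under log birational modifications applies and yields an equality of invariants between $(Y,D)$ and $(Y',D')$. Identifying $(Y',D') \cong (Y,D)$ (both are smooth cubic surfaces with a triangle of lines) produces the required identity $N_{g,kT(m,n)}^{T^{*}\beta} = N_{g,k(m,n)}^{\beta}$.

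The main obstacle is checking that the log modification acts on the integral affine structure precisely as the shear $T$: one must follow carefully how the chart on $B_0$ near $\rho_1$ is altered by blowing up $D_1 \cap D_2$ and then contracting $D_3$, and confirm that a stable log map with tangency direction $v$ to $D$ in $(Y,D)$ corresponds to one with tangency direction $T(v)$ to $D'$ in $(Y',D')$. This tropical bookkeeping is the part of the argument carried out by Gross, Hacking, Keel and Siebert in the classical case \cite{gross2019cubic}, and I expect it to transport to the higher genus invariants essentially verbatim since the Abramovich--Wise invariance is insensitive to $g$ and to the insertion of $(-1)^{g}\lambda_g$.
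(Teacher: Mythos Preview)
Your proposal is correct and follows essentially the same approach as the paper: reduce to the generators $S$ and $T$, invoke Lemmas \ref{lem_S_preserving} and \ref{lem_T_bijective} for the invariance of the log Gromov--Witten invariants, and cite \cite{gross2019cubic} for the compatibility between the $PSL_2(\Z)$-action on $B(\Z)$ and the action on curve classes (the ``tropical bookkeeping'' you flag as the main obstacle). The paper's proof is a two-line version of exactly this argument.
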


\begin{proof}
It is shown in \cite{gross2019cubic} 
that the action of $PSL_2(\Z)$ on $B(\Z)$ is compatible 
with the action of $PSL_2(\Z)$ on curve classes. 
Thus, the result follows from 
Lemma \ref{lem_S_preserving}
and \ref{lem_T_bijective}.
\end{proof}

As $PSL_2(\Z)$ acts transitively on the set of 
$(m,n) \in B(\Z)$ with $m$ and $n$ coprime, 
one can use Proposition 
\ref{prop_symmetry} to compute all the rays 
$\fd_{m,n}$ in terms of the ray 
$\fd_1=\fd_{1,0}$ given by Proposition 
\ref{prop_ray_rho_j_contribution}.

\begin{cor} \label{cor_integrality}
For every $(m,n) \in B(\Z)$ with $m$ and $n$ coprime, we have 
\[ f_{\fd_{m,n}} \in \Z[q^{\pm}][NE(Y)][\![z^{-(m,n)}]\!]\,,\]
where $q=e^{i \hbar}$.
\end{cor}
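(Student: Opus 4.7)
The plan is to bootstrap from the explicit formula for the three ``axis'' rays $\fd_j$ computed in Proposition \ref{prop_ray_rho_j_contribution}, using the $PSL_2(\Z)$-equivariance established in Proposition \ref{prop_symmetry}.

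First, I would expand the closed-form expression of Proposition \ref{prop_ray_rho_j_contribution} as a power series in $z^{-v_j}$. The numerator $\prod_{m=1}^8(1+t^{L_{jm}}z^{-v_j})$ is already a polynomial in $z^{-v_j}$ with coefficients in $\Z[NE(Y)]$, and each factor of the denominator has the form $(1 - q^a\, t^{D_k+D_\ell} z^{-2v_j})$ with $a \in \{-1,0,0,1\}$; expanding each of these via the geometric series $(1-u)^{-1} = \sum_{n \geq 0} u^n$ immediately gives
\[ f_{\fd_j} \in \Z[q^\pm][NE(Y)][\![z^{-v_j}]\!] \]
for $j \in \{1,2,3\}$.

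Second, for arbitrary primitive $(m,n) \in B_0(\Z)$, I would pick $M \in SL_2(\Z)$ with $M((1,0)) = (m,n)$, which exists because $PSL_2(\Z)$ acts transitively on primitive elements of $B(\Z)$. By Proposition \ref{prop_symmetry}, $\fd_{m,n} = M(\fd_{1,0})$, so $f_{\fd_{m,n}}$ is obtained from $f_{\fd_1}$ by the simultaneous relabellings $z^{-k v_1} \mapsto z^{-k(m,n)}$ and $t^\beta \mapsto t^{M^*(\beta)}$. Neither relabelling disturbs the integer $q^{\pm 1}$-coefficients of the series expansion from the previous step. It remains only to verify that the curve-class monomials produced still lie in $\Z[NE(Y)]$; but the classes $\beta = k L_{1m}$ and $\beta = k(D_2+D_3)$ that support $f_{\fd_1}$ all belong to $\Gamma$, and Lemmas \ref{lem_S_preserving} and \ref{lem_T_bijective} guarantee that the $PSL_2(\Z)$-action preserves $\Gamma \subset NE(Y)$. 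Combining these two observations yields $f_{\fd_{m,n}} \in \Z[q^\pm][NE(Y)][\![z^{-(m,n)}]\!]$.

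The argument is essentially a bookkeeping exercise and I do not foresee any substantial obstacle: all of the nontrivial content — identifying the contributing curve classes (Lemmas \ref{lem_homology_classification_1}--\ref{lem_homology_classification_3}), evaluating the multicover contributions from lines and conics (Lemmas \ref{lem:gw_lines} and \ref{lem:gw_conics}), and establishing the $PSL_2(\Z)$-equivariance (Proposition \ref{prop_symmetry}) — has already been packaged into the earlier results. The only mild subtlety is that the operator $T^*$ is not bijective on all of $A_1(Y)$, so one must invoke Lemma \ref{lem_T_bijective} to ensure that $T^*$ nevertheless maps the relevant classes in $\Gamma$ back into $NE(Y)$, so that the displayed membership makes sense uniformly over all $M \in PSL_2(\Z)$.
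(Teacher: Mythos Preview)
Your proposal is correct and follows essentially the same approach as the paper, which simply cites Proposition~\ref{prop_symmetry} together with the explicit formula \eqref{eq:ray_rho_j_contribution} for $f_{\fd_1}$. You have spelled out in more detail both the geometric-series expansion showing $f_{\fd_1}\in\Z[q^{\pm}][NE(Y)][\![z^{-v_1}]\!]$ and the point that the $PSL_2(\Z)$-action keeps the curve-class exponents inside $NE(Y)$ (via Lemmas~\ref{lem_S_preserving} and~\ref{lem_T_bijective}), which the paper leaves implicit.
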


\begin{proof}
It is a corollary of Proposition 
\ref{prop_symmetry} and of the fact that 
$f_{\fd_1} \in \Z[q^{\pm}][NE(Y)][\![z^{-(m,n)}]\!]$ 
by Equation \eqref{eq:ray_rho_j_contribution}.
\end{proof}

By Corollary \ref{cor_integrality}, 
we can view $\fD_{\can}$ as a quantum scattering diagram over the ring 
$\Z[q^{\pm}][NE(Y)]$ rather than over the ring $\Q[\![\hbar]\!][NE(Y)]$.

\begin{cor} \label{cor_ray_11}
The ray $\fd_{1,1}$ of the canonical quantum scattering diagram 
$\fD_{\can}$ is given by $p_{\fd_{1,1}}=(1,1)=v_1+v_2$
and 
\begin{equation} \label{eq:ray_11}
f_{\fd_{1,1}}
=\frac{\prod_{m=1}^8(1+t^{D_3+L_{3m}}z^{-v_1-v_2})}{(1-q^{-1} t^{D_1+D_2+2D_3}z^{-2v_1-2v_2})
(1- t^{D_1+D_2+2D_3}z^{-2v_1-2v_2})^2
(1-q t^{D_1+D_2+2D_3}z^{-2v_1-2v_2})} \,,
\end{equation}
where $q=e^{i\hbar}$.
\end{cor}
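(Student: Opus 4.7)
The plan is to reduce the computation of $\fd_{1,1}$ to the already known expression for the ray $\fd_1 = \fd_{1,0}$ of Proposition \ref{prop_ray_rho_j_contribution}, using the $PSL_2(\Z)$-symmetry of Proposition \ref{prop_symmetry}. Take $M = TS \in SL_2(\Z)$, with $S$ and $T$ the generators from \eqref{eq:ST}; a direct computation shows $M((1,0)) = T((0,1)) = (1,1)$, so Proposition \ref{prop_symmetry} yields $\fd_{1,1} = T(S(\fd_{1,0}))$. It remains to apply $S^*$ and then $T^*$, as defined in \eqref{eq:S_action} and \eqref{eq:T_action}, to the numerator and denominator of $f_{\fd_{1,0}}$, and to compare the result with \eqref{eq:ray_11}.

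Applying $S^*$ to $\fd_{1,0}$ produces $\fd_2$, which is in fact already recorded in Proposition \ref{prop_ray_rho_j_contribution}: the relations $E_{jk} \mapsto E_{j+1,k}$ and $H \mapsto H$ of $S^*$, combined with the explicit lists \eqref{eq:lines_1} and \eqref{eq:lines_2}, show that $S^*$ induces a bijection $\{L_{1m}\} \to \{L_{2m}\}$ and sends $D_2 + D_3$ to $D_3 + D_1$. Applying $T^*$ next, one computes from \eqref{eq:T_action} that $T^*(D_1) = D_1 + D_3$, $T^*(D_2) = 0$, and $T^*(D_3) = D_2 + D_3$, whence $T^*(D_3 + D_1) = D_1 + D_2 + 2D_3$, matching the denominator of \eqref{eq:ray_11}.

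For the numerator, one verifies case by case that $T^*$ induces a bijection from $\{L_{2m}\}$ to $\{D_3 + L_{3m}\}$. This follows from \eqref{eq:lines_2} and \eqref{eq:lines_3}: for instance $T^*(E_{21}) = H - E_{31} = D_3 + E_{32}$ and $T^*(H - E_{11} - E_{31}) = 2H - E_{11} - E_{21} - E_{31} - E_{32} = D_3 + (H - E_{11} - E_{21})$, with the remaining six lines handled analogously. Since $T(S(v_1)) = v_1 + v_2$ also replaces $z^{-v_2}$ by $z^{-v_1 - v_2}$, assembling these pieces gives exactly the formula \eqref{eq:ray_11}. The only substantive work is the bookkeeping verification for the eight classes $L_{2m}$, which is routine and presents no real obstacle.
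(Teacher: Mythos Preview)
Your proof is correct and follows essentially the same approach as the paper. The paper applies $T$ directly to $\fd_2 = \fd_{0,1}$ (since Proposition~\ref{prop_ray_rho_j_contribution} already gives $\fd_j$ for all $j$), whereas you factor through $S$ first, but you immediately note that $S(\fd_{1,0}) = \fd_2$ is already known, so the substance is identical: the same verification that $T^*(D_1+D_3) = D_1+D_2+2D_3$ and that $T^*$ bijects $\{L_{2m}\}$ with $\{D_3 + L_{3m}\}$.
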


\begin{proof}
We have $(1,1)=T(0,1)$, so 
$\fd_{1,1}=T(\fd_{0,1})=T(\fd_2)$.
Therefore, it is enough to check that 
$T^{*}(L_{2m})=D_3+L_{3m}$ for 
$1 \leq m \leq 8$, 
which is clear from the birational description of $T^{*}$, and 
$T^{*}(D_1+D_3)=D_1+D_2+2D_3$, 
which can be checked using Equation
\eqref{eq:T_action}:
\[ T^{*}(D_1+D_3)=T^{*}(2H-E_{11}-E_{12}
-E_{31}-E_{32})\]
\[=4H-2E_{31}-2E_{32}
-E_{11}-E_{12}-E_{21}-E_{22} =D_1+D_2+2D_3 \,.\]
\end{proof}

\section{Derivation of the equations of the quantum mirror}
\label{section_derivation}
In Section \ref{section_canonical_scattering}, 
we defined the canonical quantum scattering diagram $\fD_{\can}$, 
that we can view as a quantum scattering diagram over 
$\Z[q^{\pm}][NE(Y)]$ by 
Corollary \ref{cor_integrality}.
By Theorem \ref{thm_can_consistent}, $\fD_{\can}$ 
is consistent, and so by Section \ref{section_quantum_broken_lines} 
we have a $\Z[q^{\pm}][NE(Y)]$-algebra 
$\cA_{\fD_{\can}}$, coming with a 
$\Z[q^{\pm}][NE(Y)]$-linear basis 
of quantum theta functions 
$\{ \vartheta_p \}_{p\in B(\Z)}$, whose structure constants 
$C_{p_1,p_2}^{\fD_{\can},p}$
can be computed in terms of quantum broken lines 
by Equation \eqref{eq:structure_constant}.
In this section, we give an explicit presentation of $\cA_{\fD_{\can}}$ 
by generators and relations.
The non-commutative algebra 
$\cA_{\fD_{\can}}$ is a deformation quantization of the mirror family of 
$(Y,D)$ considered in 
\cite{gross2019cubic} and our presentation of  $\cA_{\fD_{\can}}$ 
will be a non-commutative deformation of the presentation 
of the mirror family given in \cite{gross2019cubic}.

\subsection{Statement of the presentation of $\cA_{\fD_\can}$ by generators and relations}

\begin{thm} \label{thm_eq_cubic}
The $\Z[q^{\pm}][NE(Y)]$-algebra
$\cA_{\fD_\can}$ admits the following 
presentation by generators and relation:
$\cA_{\fD_\can}$ is generated by 
$\vartheta_{v_1}$, $\vartheta_{v_2}$, $\vartheta_{v_3}$, 
with the relations
\begin{equation} \label{eq:comm_1}
 q^{-\frac{1}{2}} \vartheta_{v_1}
\vartheta_{v_2} 
-q^{\frac{1}{2}} 
\vartheta_{v_2}\vartheta_{v_1} 
=(q^{-1}-q) t^{D_3}\vartheta_{v_3}
-(q^{\frac{1}{2}}-q^{-\frac{1}{2}}) \left( \sum_{j=1}^8 t^{D_3+L_{3j}} \right) \,,
\end{equation}

\begin{equation} \label{eq:comm_2}
q^{-\frac{1}{2}} \vartheta_{v_2}
\vartheta_{v_3} 
-q^{\frac{1}{2}} 
\vartheta_{v_3} \vartheta_{v_2} 
=(q^{-1}-q) t^{D_1}\vartheta_{v_1}
-(q^{\frac{1}{2}}-q^{-\frac{1}{2}}) \left( \sum_{j=1}^8 t^{D_1+L_{1j}} \right)\,,
\end{equation}

\begin{equation} \label{eq:comm_3}
q^{-\frac{1}{2}} \vartheta_{v_3}
\vartheta_{v_1} 
-q^{\frac{1}{2}} 
\vartheta_{v_1} \vartheta_{v_3} 
=(q^{-1}-q) t^{D_2} \vartheta_{v_2}
-(q^{\frac{1}{2}}-q^{-\frac{1}{2}}) \left( \sum_{j=1}^8 t^{D_2+L_{2j}} \right)\,,
\end{equation}

\begin{equation} \label{eq:cubic}
q^{-\frac{1}{2}}
\vartheta_{v_1} 
\vartheta_{v_2}
\vartheta_{v_3}
=q^{-1} t^{D_1} \vartheta_{v_1}^2 
+q t^{D_2} \vartheta_{v_2}^2 
+q^{-1} 
t^{D_3} \vartheta_{v_3}^2
+
q^{-\frac{1}{2}} 
\left( \sum_{j=1}^8 t^{D_1+L_{1j}}
\right) 
\vartheta_{v_1}  
+ q^{\frac{1}{2}} 
\left( 
\sum_{j=1}^8 t^{D_2+L_{2j}} \right)
\vartheta_{v_2}
\end{equation}
\begin{equation*}
+ q^{-\frac{1}{2}} 
\left( \sum_{j=1}^8 
t^{D_3+L_{3j}}
\right) 
\vartheta_{v_3}
+
\sum_{1 \leq j <j' \leq 8} t^{D_1+L_{1j}+L_{1j'}}
-(q^{\frac{1}{2}}-q^{-\frac{1}{2}})^2 t^{D_1+D_2+D_3}\,.
\end{equation*}
\end{thm}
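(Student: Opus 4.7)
My strategy is to verify the four relations directly in $\cA_{\fD_\can}$ by explicit quantum broken line computation, and then argue that they constitute a complete presentation. By Lemma \ref{lem_m_linear_basis}, the three generators $\vartheta_{v_1}, \vartheta_{v_2}, \vartheta_{v_3}$ already suffice, with the monomials $\{m[p]\}_{p \in B(\Z)}$ forming a free $\Z[q^{\pm}][NE(Y)]$-basis of $\cA_{\fD_\can}$. Thus, once the four relations are verified in $\cA_{\fD_\can}$, the natural surjection from the abstract algebra $A$ defined by the presentation onto $\cA_{\fD_\can}$ is an isomorphism provided we can also show that modulo the relations $A$ is spanned by the $m[p]$'s.

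For the commutation relation \eqref{eq:comm_1}, the leading $\vartheta_{v_1+v_2}$ contributions in $\vartheta_{v_1}\vartheta_{v_2}$ and $\vartheta_{v_2}\vartheta_{v_1}$ are $q^{1/2}\vartheta_{v_1+v_2}$ and $q^{-1/2}\vartheta_{v_1+v_2}$ respectively by Lemma \ref{lem_filtration} (since $v_1, v_2 \in \sigma_{1,2}$), and these cancel exactly in the $q$-commutator on the left-hand side of \eqref{eq:comm_1}. What remains is supported on $\vartheta_p$ with $F(p) \leq 1$, i.e.\ $p \in \{0, v_1, v_2, v_3\}$. By Proposition \ref{prop_bound}(3), each contributing pair of broken lines bends only at rays $\rho$ with $F(p_\rho) \leq 1$, which forces $\rho \in \{\fd_1, \fd_2, \fd_3\}$, with total amount of bending at most $1$. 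I would therefore place the endpoint $Q$ generically in the interior of $\sigma_{1,2}$ for $\vartheta_{v_1}\vartheta_{v_2}$ and on the opposite side of $\fd_{1,1}$ (or in an adjacent chamber) for $\vartheta_{v_2}\vartheta_{v_1}$, and enumerate the short list of admissible broken line pairs: straight-straight, or with one line carrying a single amount-$1$ bending at $\fd_j$, or one line crossing a ray $\rho_j$ without bending (picking up the $\alpha = t^{D_j}$ factor of Definition \ref{defn_broken_line}). The only inputs needed from the scattering diagram are the leading terms of $f_{\fd_j}$ from Proposition \ref{prop_ray_rho_j_contribution}, namely the $z^{-v_j}$ coefficient $\sum_m t^{L_{jm}}$ and the $z^{-2v_j}$ coefficient coming from the sum of $(q+2+q^{-1})t^{D_k+D_\ell}$ and from pairs of line contributions. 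The other two commutation relations \eqref{eq:comm_2}--\eqref{eq:comm_3} then follow from the $\Z/3$-cyclic symmetry of Lemma \ref{lem_S_preserving}. For the cubic relation \eqref{eq:cubic}, I would compute $q^{-1/2}\vartheta_{v_1}\vartheta_{v_2}\vartheta_{v_3}$ by first substituting the just-derived expansion of $\vartheta_{v_1}\vartheta_{v_2}$ and then expanding the product $\vartheta_{v_1+v_2}\vartheta_{v_3}$ via broken line pairs; since $v_1+v_2 \in \sigma_{1,2}$ and $v_3 \in \sigma_{2,3}$ lie in different cones, Lemma \ref{lem_filtration} shows all terms have $F$-degree $\leq 2$, and the $F$-bound of Proposition \ref{prop_bound}(3) again confines bending to the rays $\fd_j$.

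For the sufficiency of the four relations, the three commutation relations allow one to straighten any word in $\vartheta_{v_1}, \vartheta_{v_2}, \vartheta_{v_3}$ by successively swapping adjacent factors at the cost of $F$-lower corrections, reducing any element of $A$ modulo the relations to a $\Z[q^{\pm}][NE(Y)]$-linear combination of monomials $\vartheta_{v_1}^{a_1}\vartheta_{v_2}^{a_2}\vartheta_{v_3}^{a_3}$. The cubic relation \eqref{eq:cubic} then allows one to eliminate any occurrence of the triple monomial where all three generators appear, leaving spanning monomials supported on at most two adjacent generators, i.e.\ the $m[p]$'s. A standard induction on $F$-degree makes this rigorous.

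I expect the main obstacle to be the broken line bookkeeping for the cubic relation: combining the expansion of $\vartheta_{v_1}\vartheta_{v_2}$ with the subsequent multiplication by $\vartheta_{v_3}$ forces us to track many $t^\beta$ contributions (from both $\fd_j$-bendings and the $\alpha = t^{D_j}$ wall crossings) across broken lines that can traverse all three cones, and to match the result exactly with the somewhat elaborate right-hand side of \eqref{eq:cubic}, including the purely quantum correction $-(q^{1/2}-q^{-1/2})^2 t^{D_1+D_2+D_3}$. This is however considerably eased by the $\Z/3$-cyclic symmetry (Proposition \ref{prop_symmetry}) and by the consistency Theorem \ref{thm_can_consistent}, which lets us choose the endpoint $Q$ in whichever chamber minimizes the number of relevant broken line pairs.
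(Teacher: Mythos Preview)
Your overall architecture is exactly the paper's: verify the four relations by broken-line enumeration (the paper's Lemmas \ref{lem_product_theta_1}, \ref{lem_commutators_theta}, \ref{lem_product_theta_2}, \ref{lem_cubic_equation_theta}), then argue completeness by straightening words and showing the abstract algebra is spanned by the $m[p]$ (the paper's Lemma \ref{lem_tilde_m_linear_generators} together with Lemma \ref{lem_m_linear_basis}). The cubic relation is also assembled in the same two-step way, first computing $\vartheta_{v_1+v_2}\vartheta_{v_3}$ and then multiplying through.

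There is, however, a concrete error in your ray bookkeeping for \eqref{eq:comm_1}. You assert that Proposition \ref{prop_bound}(3) confines all bending to rays with $F(p_\rho)\le 1$, hence to $\fd_1,\fd_2,\fd_3$. That is false for the constant term: with $p_1=v_1$, $p_2=v_2$ and $p=0$ one has $F(p_1)+F(p_2)-F(p)=2$, so a single bending of amount~$1$ at a ray with $F(p_\rho)=2$ is permitted. And such a bending actually occurs: the coefficient $\sum_{j=1}^8 t^{D_3+L_{3j}}$ in \eqref{eq:comm_1} comes precisely from the broken line of charge $v_2$ bending once at $\fd_{1,1}=\fd_{v_1+v_2}$, using the $z^{-(v_1+v_2)}$ coefficient of $f_{\fd_{1,1}}$ supplied by Corollary \ref{cor_ray_11}. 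If you only allow bending at $\fd_1,\fd_2,\fd_3$ you will not recover this term. The same caution applies to your treatment of $\vartheta_{v_1+v_2}\vartheta_{v_3}$: the $F$-bound with $F(p)=0$ gives a budget of~$3$, so Proposition \ref{prop_bound}(3) alone does not pin down the relevant rays; the paper instead invokes the explicit classical enumeration of broken-line configurations from \cite{gross2019cubic} and then recomputes the monomial weights quantum-mechanically. A minor related point: there is no need to move the endpoint between the two orderings of a product---by consistency the structure constants are endpoint-independent, and the asymmetry between $\vartheta_{v_1}\vartheta_{v_2}$ and $\vartheta_{v_2}\vartheta_{v_1}$ is produced entirely by the skew pairing $q^{\frac{1}{2}\langle s(\gamma_1),s(\gamma_2)\rangle}$.
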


In the classical limit $q^{\frac{1}{2}} \rightarrow 1$, 
Theorem \ref{thm_eq_cubic}
reduces to the main result of 
\cite{gross2019cubic} (Theorem 0.1) describing the 
result of the mirror construction of \cite{MR3415066} 
applied to $(Y,D)$ as the family of cubic surfaces 
given in terms of the classical theta functions 
$\{ \vartheta_p^{\cl}\}_{p \in B(\Z)}$ by the equation
\begin{equation}
    \vartheta_{v_1}^{\cl} 
\vartheta_{v_2}^{\cl}
\vartheta_{v_3}^{\cl}
= t^{D_1} (\vartheta_{v_1}^{\cl})^2 
+t^{D_2} (\vartheta_{v_2}^{\cl})^2 
+ 
t^{D_3} (\vartheta_{v_3}^{\cl})^2 
+
\left( \sum_{j=1}^8 t^{D_1+L_{1j}}
\right) 
\vartheta_{v_1}^{\cl}  
+  
\left( 
\sum_{j=1}^8 t^{D_2+L_{2j}} \right)
\vartheta_{v_2}^{\cl}  
\end{equation}
\begin{equation*}
+
\left( \sum_{j=1}^8 
t^{D_3+L_{3j}}
\right) 
\vartheta_{v_3}^{\cl}  
+
\sum_{1 \leq j <j' \leq 8} t^{D_1+L_{1j}+L_{1j'}}\,.
\end{equation*}

The proof of Theorem \ref{thm_eq_cubic} takes the remainder of 
Section \ref{section_derivation}. 
In Sections \ref{section_products} and 
\ref{section_triple_product}, 
we check that the relations in Theorem \ref{thm_eq_cubic} 
are indeed satisfied in $\cA_{\fD_\can}$.
We use the description of the product of quantum theta functions 
in terms of broken lines given by Equations \eqref{eq:structure_constant}
and \eqref{eq:product_theta}. Recalling that $q=A^4$, we have
\begin{equation} 
\vartheta_{p_1}\vartheta_{p_2}=\sum_{p\in B(\Z)} 
C_{p_1,p_2}^{\fD_{\can},p} \vartheta_p \,, 
\end{equation}
\begin{equation}
C_{p_1,p_2}^{\fD_\can, p} \coloneqq \sum_{(\gamma_1,\gamma_2)}
c(\gamma_1)c(\gamma_2) q^{\frac{1}{2}\langle 
s(\gamma_1), s(\gamma_2) \rangle}\,,
\end{equation}
where the sum is over pairs $(\gamma_1,\gamma_2)$ of quantum broken lines for
$\fD_{\can}$ with charges $p_1$,$p_2$ and common endpoint $Q$ close to $p$, 
such that  writing $c(\gamma_1)z^{s(\gamma_1)}$ and 
$c(\gamma_2)z^{s(\gamma_2)}$ the final monomials, we have 
$s(\gamma_1)+s(\gamma_2)=p$. 

Gross, Hacking, Keel and Siebert \cite{gross2019cubic} 
have done these computations in the classical limit, 
enumerating the possible configurations of broken lines  
and using the function $F$ reviewed in Section 
\ref{section_quantum_broken_lines} to bound the possibilities. 
The arguments
of \cite{gross2019cubic} leading to the 
enumeration of possible configurations 
of broken lines still hold in the quantum case. 
Therefore, we will simply explain how to 
modify in the quantum case the computations of \cite{gross2019cubic}. 
Finally, we end the proof of Theorem 
\ref{thm_eq_cubic}
in Section \ref{section_end_proof_presentation}.

\subsection{Products and commutators of quantum theta functions}
\label{section_products}

\begin{figure}[h]
\centering
\setlength{\unitlength}{1cm}
\begin{picture}(6,3.5)
\put(3,0.5){\circle*{0.1}}
\put(3,0.5){\line(1,0){4}}
\put(3,0.5){\line(-1,0){4}}
\put(3,0.5){\line(0,1){3}}
\put(3,0.5){\line(-1,1){3}}
\put(3.7,1.25){$z$}
\put(6,1.2){$\gamma_1$}
\put(6,1.6){$\gamma_2$}
\color{red}
\put(4,1.5){\circle*{0.1}}
\put(4,1.5){\line(1,0){2}}
\put(4,1.55){\line(1,0){2}}
\end{picture}
\caption{Coefficient of $\vartheta_{2v_1}$
in $\vartheta_{v_1}^2$}
\label{figure_coeff_2v1_v1_v1}
\end{figure}
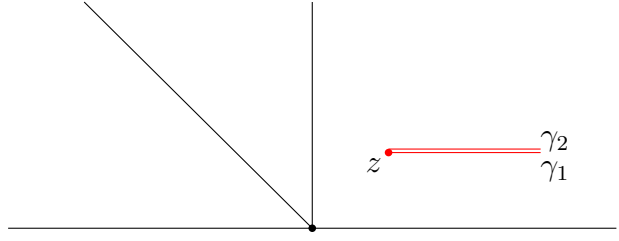

\begin{figure}[h]
\centering
\setlength{\unitlength}{1cm}
\begin{picture}(6,3.5)
\put(3,0.5){\circle*{0.1}}
\put(3,0.5){\line(1,0){4}}
\put(3,0.5){\line(-1,0){4}}
\put(3,0.5){\line(0,1){3}}
\put(3,0.5){\line(-1,1){3}}
\put(3.7,1.25){$z$}
\put(5,1.25){$\gamma_1$}
\put(0,1.25){$\gamma_2$}
\color{red}
\put(4,1.5){\circle*{0.1}}
\put(4,1.5){\line(1,0){2}}
\put(4,1.5){\line(-1,0){5}}
\end{picture}
\caption{Contribution to the coefficient of $\vartheta_0=1$
in $\vartheta_{v_1}^2$}
\label{figure_coeff_1_v1_v1}
\end{figure}

\begin{figure}[h]
\centering
\setlength{\unitlength}{1cm}
\begin{picture}(6,3.5)
\put(3,0.5){\circle*{0.1}}
\put(3,0.5){\line(1,0){4}}
\put(3,0.5){\line(-1,0){4}}
\put(3,0.5){\line(0,1){3}}
\put(3,0.5){\line(-1,1){3}}
\put(3.7,1.25){$z$}
\put(5,1.25){$\gamma_2$}
\put(0,1.25){$\gamma_1$}
\color{red}
\put(4,1.5){\circle*{0.1}}
\put(4,1.5){\line(1,0){2}}
\put(4,1.5){\line(-1,0){5}}
\end{picture}
\caption{Contribution to the coefficient of $\vartheta_0=1$
in $\vartheta_{v_1}^2$}
\label{figure_coeff_1_v1_v1_switch}
\end{figure}

\begin{lem} \label{lem_square_theta}
For every $j,k,\ell$ 
such that $\{j,k,\ell\}
=\{1,2,3\}$, we have 
\begin{equation}
\vartheta_{v_j}^2
=\vartheta_{2v_j}
+2t^{D_k+D_\ell} \,. 
\end{equation}
\end{lem}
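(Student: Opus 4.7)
The plan is to compute the structure constants $C^{\fD_{\can},p}_{v_j,v_j}$ using the broken line formula~\eqref{eq:structure_constant}, the explicit description of $\fD_{\can}$ from Section~\ref{section_canonical_scattering}, and the $F$-bounds of Proposition~\ref{prop_bound}. By the $\Z/3\Z$-symmetry of the cubic surface, I fix $j=1$ so that $\{k,\ell\}=\{2,3\}$.

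Proposition~\ref{prop_bound} forces $F(p)\le F(v_1)+F(v_1)=2$ for any non-vanishing $C^{\fD_{\can},p}_{v_1,v_1}$. For $F(p)=2$, part~(2) of the proposition excludes both bending and crossing of the rays $\rho_\ell$; placing $Q$ in the interior of $\sigma_{1,2}$ close to $2v_1$, the only admissible pair consists of the unique straight horizontal line in $\sigma_{1,2}$ coming from $+\infty\cdot v_1$, paired with itself. This yields $C^{\fD_{\can},2v_1}_{v_1,v_1}=q^{\langle v_1,v_1\rangle}=1$ and zero for all other $p$ with $F(p)=2$. For $F(p)=1$, a check using parts~(2)--(3) of Proposition~\ref{prop_bound} together with a short lattice argument rules out any configuration with $s(\gamma_1)+s(\gamma_2)=v_k$ for some $k$, so those structure constants vanish.

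The key case is $p=0$. Placing $Q$ in the interior of $\sigma_{1,2}$ close to $0$, I identify exactly two ordered pairs of broken lines satisfying $s(\gamma_1)+s(\gamma_2)=0$: in each pair, one broken line is the \emph{right-coming} straight horizontal line in $\sigma_{1,2}$ with final exponent $s=v_1$, and the other is a \emph{left-coming} broken line which travels from the opposite end of $\rho_1$ through $\sigma_{3,1}$, crosses $\rho_3$, passes through $\sigma_{2,3}$, crosses $\rho_2$, and arrives at $Q$ with $s=-v_1$. At each of the two wall crossings, the bending formula~\eqref{eq:broken_line_contribution} applied with the trivial sequence $n=(1,0,0,\dots)$ produces the factor $\alpha$, equal to $t^{D_3}$ at $\rho_3$ and $t^{D_2}$ at $\rho_2$ by direct computation from Definition~\ref{defn_broken_line}~(4); the combined factor per pair is $t^{D_2+D_3}$. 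All other potential bendings at $\rho_3$, $\rho_2$, or at the interior rays $\fd_{m,n}$ of $\fD_{\can}$ violate the constraint $s(\gamma_1)+s(\gamma_2)=0$ by a lattice argument: the total bending vector $\sum k_i p_{\rho_i}$ must have vanishing vertical component, forcing $k_i=0$ at every wall with $p_{\rho_i}\notin\R v_1$. Summing the two symmetric ordered pairs gives $C^{\fD_{\can},0}_{v_1,v_1}=2t^{D_2+D_3}$, and the formula for general $j$ follows from the $\Z/3\Z$-symmetry.

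The main obstacle is ruling out all non-trivial bending configurations for the $p=0$ coefficient: using the explicit form of the scattering functions $f_{\fd_\ell}$ from Proposition~\ref{prop_ray_rho_j_contribution} and the $PSL_2(\Z)$-symmetry from Proposition~\ref{prop_symmetry}, one must verify that higher-order bending amounts always produce final exponents incompatible with $s(\gamma_1)+s(\gamma_2)=0$, so that only the two ``trivial'' wall-crossing configurations survive and give the simple monomial $2t^{D_2+D_3}$ with no quantum corrections.
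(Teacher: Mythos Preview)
Your proposal is correct and follows essentially the same route as the paper. The paper reduces to $j=1$ by the cyclic symmetry, cites \cite[Lemma 3.6]{gross2019cubic} for the enumeration of contributing broken line configurations, and then evaluates the three configurations you identified: the straight pair giving $\vartheta_{2v_1}$, and the two ordered pairs (right-coming, left-coming) and (left-coming, right-coming) each contributing $t^{D_2+D_3}$ from crossing $\rho_3$ and $\rho_2$ without bending. Your self-contained argument via the vertical-component constraint---that any genuine bending at a ray $\rho$ with $p_\rho$ in the strict upper half-plane strictly decreases the vertical component of $p_L$, so $s(\gamma_1)+s(\gamma_2)$ cannot land back on the horizontal axis unless neither line bends---is exactly the argument the paper spells out in the proof of the adjacent Lemma~\ref{lem_power_theta}, so you have effectively reproduced what \cite{gross2019cubic} supplies.
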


\begin{proof}
By the cyclic $\Z/3\Z$-symmetry permuting $\{1,2,3\}$, 
it is enough to treat the case 
$j=1$.
According to the proof of \cite[Lemma 3.6]{gross2019cubic}, 
the only 
configurations of broken lines contributing 
to the product $\vartheta_{v_1}^2$
are given by Figure
\ref{figure_coeff_2v1_v1_v1}, Figure 
\ref{figure_coeff_1_v1_v1}, and Figure 
\ref{figure_coeff_1_v1_v1_switch}
(see \cite[Figure 3.2]{gross2019cubic}).

Figure
\ref{figure_coeff_2v1_v1_v1} gives a term 
$\vartheta_{2v_1}$ in $\vartheta_{v_1}^2$:
we have \[c(\gamma_1)=1\,,\,\, c(\gamma_2)=1\,,\,\,
s(\gamma_1)=(1,0)\,,\,\,
s(\gamma_2)=(1,0)\,,\,\,
 \langle s(\gamma_1),s(\gamma_2) \rangle 
=\langle (1,0), (1,0) \rangle=0\,,\] 
and so $q^{\frac{1}{2} \langle s(\gamma_1),s(\gamma_2) \rangle}=1$.

Figure 
\ref{figure_coeff_1_v1_v1} gives a term $t^{D_2+D_3}$
in $\vartheta_{v_1}^2$: we have 
$c(\gamma_1)=1$,  
$c(\gamma_2)=t^{D_2+D_3}$ because $\gamma_2$
crosses $\R_{\geq 0} v_3$ and $\R_{\geq 0} v_2$ 
without bending, 
\[s(\gamma_1)=(1,0)\,,\,\, s(\gamma_2)=(-1,0)\,,\,\, 
\langle s(\gamma_1),s(\gamma_2) \rangle = \langle (1,0), (-1,0) \rangle =0\,,\] and so 
$q^{\frac{1}{2}\langle s(\gamma_1),s(\gamma_2) \rangle}=1$.
Similarly, Figure \ref{figure_coeff_1_v1_v1_switch} 
gives a term $t^{D_2+D_3}$ in $\vartheta_{v_1}^2$.
\end{proof}

\begin{figure}[h]
\centering
\setlength{\unitlength}{1cm}
\begin{picture}(6,3.5)
\put(3,0.5){\circle*{0.1}}
\put(3,0.5){\line(1,0){4}}
\put(3,0.5){\line(-1,0){4}}
\put(3,0.5){\line(0,1){3}}
\put(3,0.5){\line(-1,1){3}}
\put(3.7,1.25){$z$}
\put(6,1.2){$\gamma_1$}
\put(6,1.6){$\gamma_2$}
\color{red}
\put(4,1.5){\circle*{0.1}}
\put(4,1.5){\line(1,0){2}}
\put(4,1.55){\line(1,0){2}}
\end{picture}
\caption{Coefficient of $\vartheta_{(n+1)v_1}$
in $\vartheta_{v_1}\vartheta_{nv_1}$}
\label{figure_coeff_nv1_v1_v1}
\end{figure}

\begin{figure}[h]
\centering
\setlength{\unitlength}{1cm}
\begin{picture}(6,3.5)
\put(3,0.5){\circle*{0.1}}
\put(3,0.5){\line(1,0){4}}
\put(3,0.5){\line(-1,0){4}}
\put(3,0.5){\line(0,1){3}}
\put(3,0.5){\line(-1,1){3}}
\put(3.7,1.25){$z$}
\put(5,1.25){$\gamma_1$}
\put(0,1.25){$\gamma_2$}
\color{red}
\put(4,1.5){\circle*{0.1}}
\put(4,1.5){\line(1,0){2}}
\put(4,1.5){\line(-1,0){5}}
\end{picture}
\caption{Coefficient of $\vartheta_{(n-1)v_1}$
in $\vartheta_{v_1} \vartheta_{nv_1}$}
\label{figure_coeff_n_v1_v1}
\end{figure}

The following Lemma \ref{lem_power_theta} is not part of 
the proof of Theorem \ref{thm_eq_cubic}. 
It will be used in 
the proof of Theorem \ref{thm:basis_comparison} 
showing that the bracelets basis 
and the quantum theta functions basis agree.

\begin{lem} \label{lem_power_theta}
For every $j,k,\ell$ 
such that $\{j,k,\ell\}
=\{1,2,3\}$, and for every integer $n \geq 1$, we have 
\begin{equation} 
\vartheta_{v_j} \vartheta_{nv_j}
=\vartheta_{(n+1)v_j}+t^{D_k+D_\ell} \vartheta_{(n-1)v_j} \,.
\end{equation}
\end{lem}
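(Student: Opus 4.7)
The proof extends the broken-line computation of Lemma \ref{lem_square_theta} to the product $\vartheta_{v_1}\vartheta_{nv_1}$. By the cyclic $\Z/3\Z$-symmetry of $(Y,D)$ permuting $\{1,2,3\}$, it suffices to treat the case $j=1$, $k=2$, $\ell=3$; fix an endpoint $Q$ in the interior of $\sigma_{1,2}$ just above $\rho_1$.

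Since both $v_1$ and $nv_1$ lie in $\sigma_{1,2}$ and $\langle v_1, nv_1\rangle = 0$, Lemma \ref{lem_filtration} immediately produces the leading term $\vartheta_{(n+1)v_1}$, corresponding to the configuration of Figure \ref{figure_coeff_nv1_v1_v1} in which both broken lines go straight. For the remaining contributions, with $F(p)\leq n$, Proposition \ref{prop_bound} forces any bending wall $\rho$ to satisfy $F(p_\rho)\leq F(p)-F(v_1)-F(nv_1)<0$, which is impossible. Therefore no direction-changing bending may occur, and in the upper half-plane chart of $B$ each broken line is a straight segment that crosses each of $\rho_2,\rho_3$ at most once, reducing the possible configurations to a short finite list.

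Following the enumeration of \cite[Lemma 3.6]{gross2019cubic} (used already in the proof of Lemma \ref{lem_square_theta}), the only remaining non-trivial configuration is that of Figure \ref{figure_coeff_n_v1_v1}, in which one of the two broken lines travels horizontally from the ``opposite side'' of $\rho_1$, crossing $\rho_3$ and then $\rho_2$. The relevant assignment is that the charge-$v_1$ line $\gamma_1$ is the one going around while the charge-$nv_1$ line $\gamma_2$ is straight. The wall-crossing analysis is identical to that in Lemma \ref{lem_square_theta}: the successive $\alpha$-factors at $\rho_3$ and $\rho_2$ combine to give $c(\gamma_1)=t^{D_2+D_3}$ with $s(\gamma_1)=-v_1$, while $c(\gamma_2)=1$ and $s(\gamma_2)=nv_1$. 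Since $s(\gamma_1)+s(\gamma_2)=(n-1)v_1\in\sigma_{1,2}$ and the $q$-twist $q^{\frac{1}{2}\langle -v_1,nv_1\rangle}=1$, this yields the contribution $t^{D_2+D_3}\,\vartheta_{(n-1)v_1}$, giving the desired identity.

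The main technical obstacle is verifying completeness of the enumeration — in particular, ruling out the mirror configuration in which the charge-$nv_1$ line is the one going around. There the final-exponent sum lands at $(1-n)v_1$, which for $n\geq 2$ is not the canonical representative in $\sigma_{1,2}$ of any $p\in B(\Z)$, so the constraint $s(\gamma_1)+s(\gamma_2)=p$ fails and the configuration produces no contribution; only for $n=1$ does this mirror configuration contribute, and it accounts precisely for the extra factor of $2$ appearing in Lemma \ref{lem_square_theta}. All further potential broken lines (multiple passes of a single ray, bending at non-$\rho_j$ walls) are excluded by the no-bending constraint from Proposition \ref{prop_bound} together with the geometric fact that a straight line in the upper half-plane chart makes only a single traversal.
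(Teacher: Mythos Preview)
Your overall strategy matches the paper's, and your identification of the non-trivial configuration is in fact sharper than the paper's own write-up: you correctly argue that the charge-$v_1$ line must be the one that wraps around (giving $s(\gamma_1)+s(\gamma_2)=(-1,0)+(n,0)=(n-1,0)$ and $c(\gamma_1)=t^{D_2+D_3}$), and that the mirror configuration lands at $(1-n,0)$, which for $n\ge 2$ represents no $p\in B(\Z)\cap\sigma_{1,2}$.

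However, your no-bending argument is wrong. You invoke Proposition~\ref{prop_bound} to conclude $F(p_\rho)\le F(p)-F(v_1)-F(nv_1)<0$, but the inequality in Proposition~\ref{prop_bound}(3) goes the other way: the total bending is bounded by $F(p_1)+F(p_2)-F(p)=(n+1)-F(p)$, which for the sub-leading terms ($F(p)\le n$) is $\ge 1$. So Proposition~\ref{prop_bound} alone does \emph{not} exclude bending here. Your appeal to \cite[Lemma~3.6]{gross2019cubic} does not help either; that enumeration only treats the case $n=1$, which is exactly why the paper remarks ``As this case is not treated in \cite{gross2019cubic}, we give an argument.''

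The paper's argument is direct and does not go through $F$: both broken lines are horizontal for $t\ll 0$, and any first bending occurs at a ray $\R_{\ge 0}(m,n')$ meeting the open upper half-plane, hence with $n'>0$. The bending rule $p_{L'}=p_L-k\,p_\rho$ then makes the vertical component of $p_L$ strictly negative, and each subsequent bending (again at a ray with $n'>0$) only decreases it further. Thus if either line bends, $s(\gamma_1)+s(\gamma_2)$ has strictly negative second coordinate and cannot equal any $p\in B(\Z)$. Replace your Proposition~\ref{prop_bound} step with this vertical-component argument and the rest of your proof goes through.
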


\begin{proof}
By the cyclic $\Z/3\Z$-symmetry permuting 
$\{1,2,3\}$, it is enough to treat the case 
$j=1$. We claim that the only configurations 
of broken lines contributing to the product 
$ \vartheta_{v_j} \vartheta_{nv_j}$ are given by Figure 
\ref{figure_coeff_nv1_v1_v1}
and Figure \ref{lem_power_theta}.
As this case is not treated in \cite{gross2019cubic}, 
we give an argument.
The contributing broken lines 
$\gamma_1$ and $\gamma_2$ are horizontal for $t <<0$. 
Assume that one of them bends somewhere and look at the first bending. 
The bending occurs along a quantum ray contained in the strict upper half-plane, 
so the direction of the broken after bending has a positive vertical component. 
Iterating this argument, 
we see that the broken line always remains in the 
strict upper half-plane and its final direction has a positive vertical direction. 
Therefore, if either $\gamma_1$ or $\gamma_2$ bends somewhere, then 
$s(\gamma_1)+s(\gamma_2)$ has a negative vertical component, and so 
$s(\gamma_1)+s(\gamma_2)$ cannot be equal to an element $p \in B(\Z)$ 
and so cannot contribute a term 
in the product $ \vartheta_{v_j} \vartheta_{nv_j}$.
Therefore, $\gamma_1$ and $\gamma_2$ never bend and so Figure 
\ref{figure_coeff_nv1_v1_v1}
and Figure \ref{lem_power_theta}
are the only possibilities.

Figure 
\ref{figure_coeff_nv1_v1_v1} gives a term 
$\vartheta_{v_1}\vartheta_{nv_1}$ in 
$\vartheta_{v_1}\vartheta_{nv_1}$:
we have 
\[ c(\gamma_1)=1\,,\,\,c(\gamma_2)=1\,,\,\,
s(\gamma_1)=(1,0)\,\,\,s(\gamma_2)=(n,0)\,,\,\,\langle s(\gamma_1),s(\gamma_2) \rangle =
\langle (1,0), (n,0) \rangle =0\,\]
and so
$q^{\frac{1}{2}\langle s(\gamma_1),s(\gamma_2) \rangle}=1$.

Figure \ref{figure_coeff_nv1_v1_v1} gives a 
term
$t^{D_2+D_3}\vartheta_{(n-1)v_1}$ in 
$\vartheta_{v_1}\vartheta_{nv_1}$:
we have $c(\gamma_1)=1$, 
$c(\gamma_2)=t^{D_2+D_3}$ because 
$\gamma_2$ crosses $\rho_3$ and $\rho_2$ 
without bending, 
\[ s(\gamma_1)=(1,0)\,,\,\,s(\gamma_2)=(-n,0)\,,\,\, \langle s(\gamma_1),s(\gamma_2) \rangle =
\langle (1,0), (-n,0) \rangle =0\,\]
and so
$q^{\frac{1}{2}\langle s(\gamma_1),s(\gamma_2) \rangle}=1$.
\end{proof}

\begin{figure}[h]
\centering
\setlength{\unitlength}{1cm}
\begin{picture}(6,3.5)
\put(3,0.5){\circle*{0.1}}
\put(3,0.5){\line(1,0){4}}
\put(3,0.5){\line(-1,0){4}}
\put(3,0.5){\line(0,1){3}}
\put(3,0.5){\line(-1,1){3}}
\put(3.7,1.25){$z$}
\put(6,1.25){$\gamma_1$}
\put(3.5,3.25){$\gamma_2$}
\color{red}
\put(4,1.5){\circle*{0.1}}
\put(4,1.5){\line(1,0){2}}
\put(4,1.5){\line(0,1){2}}
\end{picture}
\caption{Coefficient of $\vartheta_{v_1+v_2}$
in $\vartheta_{v_1}\vartheta_{v_2}$}
\label{figure_coeff_v1v2_v1v2}
\end{figure}

\begin{figure}[h]
\centering
\setlength{\unitlength}{1cm}
\begin{picture}(6,3.5)
\put(3,0.5){\circle*{0.1}}
\put(3,0.5){\line(1,0){4}}
\put(3,0.5){\line(-1,0){4}}
\put(3,0.5){\line(0,1){3}}
\put(3,0.5){\line(-1,1){3}}
\put(2.6,1.25){$z$}
\put(0,1.25){$\gamma_1$}
\put(2.55,3.25){$\gamma_2$}
\color{red}
\put(2.5,1.5){\circle*{0.1}}
\put(2.5,1.5){\line(-1,0){3}}
\put(2.5,1.5){\line(0,1){2}}
\end{picture}
\caption{Coefficient of $\vartheta_{v_3}$
in $\vartheta_{v_1}\vartheta_{v_2}$}
\label{figure_coeff_v3_v1v2}
\end{figure}

\begin{figure}[h]
\centering
\setlength{\unitlength}{1cm}
\begin{picture}(6,3.5)
\put(3,0.5){\circle*{0.1}}
\put(3,0.5){\line(1,0){4}}
\put(3,0.5){\line(-1,0){4}}
\put(3,0.5){\line(0,1){3}}
\put(3,0.5){\line(-1,1){3}}
\put(3.8,0.7){$z$}
\put(6,0.7){$\gamma_1$}
\put(3,3.25){$\gamma_2$}
\color{blue}
\put(3,0.5){\line(1,1){3}}
\color{red}
\put(4,1){\circle*{0.1}}
\put(4,1){\line(1,0){3}}
\put(4,1){\line(-1,0){0.5}}
\put(3.5,1){\line(0,1){2.5}}
\end{picture}
\caption{Coefficient of $\vartheta_{0}=1$
in $\vartheta_{v_1}\vartheta_{v_2}$}
\label{figure_coeff_1_v1v2}
\end{figure}

\begin{lem} \label{lem_product_theta_1}
We have 
\begin{equation} \vartheta_{v_1} \vartheta_{v_2} 
=q^{\frac{1}{2}} \vartheta_{v_1+v_2} 
+q^{-\frac{1}{2}} t^{D_3} \vartheta_{v_3}
+ \sum_{j=1}^8 t^{D_3+L_{3j}} \,,
\end{equation}
and 
\begin{equation} \vartheta_{v_2}\vartheta_{v_1} 
=q^{-\frac{1}{2}}\vartheta_{v_1+v_2} 
+q^{\frac{1}{2}} t^{D_3} \vartheta_{v_3}
+ \sum_{j=1}^8 t^{D_3+L_{3j}} \,.
\end{equation}
\end{lem}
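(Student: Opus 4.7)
The plan is to apply the structure constant formula \eqref{eq:structure_constant} configuration-by-configuration. Following \cite{gross2019cubic}, the enumeration of pairs of broken lines contributing to $\vartheta_{v_1}\vartheta_{v_2}$ is unaffected by passing to the quantum setting: exactly the three configurations depicted in Figures \ref{figure_coeff_v1v2_v1v2}, \ref{figure_coeff_v3_v1v2}, and \ref{figure_coeff_1_v1v2} contribute, each to a different basis element (namely $\vartheta_{v_1+v_2}$, $\vartheta_{v_3}$, and $\vartheta_0 = 1$). The new inputs in the quantum calculation are the overall factor $q^{\frac{1}{2}\langle s(\gamma_1), s(\gamma_2)\rangle}$ in \eqref{eq:structure_constant} (with $\mu = 1$, so $\langle\cdot,\cdot\rangle = \det$) and the quantum bending formula \eqref{eq:broken_line_contribution}; neither alters the set of contributing configurations.

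I would then compute each contribution directly. For Figure \ref{figure_coeff_v1v2_v1v2}, both lines are straight with $c(\gamma_i) = 1$, $s(\gamma_1) = v_1$, $s(\gamma_2) = v_2$, giving $q^{\frac{1}{2}} \vartheta_{v_1+v_2}$. For Figure \ref{figure_coeff_v3_v1v2}, $\gamma_2$ is straight vertical while $\gamma_1$ is straight horizontal and crosses $\rho_3$ without bending, picking up the factor $\alpha = t^{D_3}$ from Definition \ref{defn_broken_line}; thus $c(\gamma_1) = t^{D_3}$, $s(\gamma_1) = -v_1$ in the endpoint chart (as in the proof of Lemma \ref{lem_square_theta}), and $\langle -v_1, v_2\rangle = -1$ yields $q^{-\frac{1}{2}} t^{D_3} \vartheta_{v_3}$. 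For Figure \ref{figure_coeff_1_v1v2}, $\gamma_1$ is straight rightward and $\gamma_2$ bends exactly once, at the wall $\fd_{1,1}$; the quantum bending formula extracts the linear coefficient of $z^{-(v_1+v_2)}$ in $f_{\fd_{1,1}}$, which by Corollary \ref{cor_ray_11} equals $\sum_{m=1}^8 t^{D_3+L_{3m}}$. Since $s(\gamma_1) + s(\gamma_2) = v_1 + (-v_1) = 0$ and $\langle v_1, -v_1\rangle = 0$, this contributes $\sum_m t^{D_3+L_{3m}}$ to the coefficient of $\vartheta_0 = 1$ with trivial $q$-factor.

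The identity for $\vartheta_{v_2}\vartheta_{v_1}$ follows by interchanging the roles of $\gamma_1$ and $\gamma_2$ in each configuration. By antisymmetry of $\langle\cdot,\cdot\rangle$, this inverts $q^{\frac{1}{2}}$ and $q^{-\frac{1}{2}}$ in the first two terms, while the $\vartheta_0$ coefficient is unchanged. Summing the three contributions in each case yields the two stated formulas. A sanity check is that the combination $q^{-\frac{1}{2}}\vartheta_{v_1}\vartheta_{v_2} - q^{\frac{1}{2}}\vartheta_{v_2}\vartheta_{v_1}$ reproduces the commutator relation \eqref{eq:comm_1} from Theorem \ref{thm_eq_cubic}, which is the ultimate target.

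The only substantive obstacle is inherited from \cite{gross2019cubic}: one must know that no other broken line configuration contributes. The function $F$ from Section \ref{section_quantum_broken_lines} together with Proposition \ref{prop_bound} severely restricts admissible bendings, and a direct case analysis in \emph{loc.\ cit.} rules out multiple bends as well as any bends at walls $\fd_{m,n}$ other than $\fd_{1,1}$. Since this classification is geometric and independent of $q$, it applies verbatim in our setting.
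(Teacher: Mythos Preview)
Your proposal is correct and follows essentially the same approach as the paper: you invoke the classical enumeration of broken-line configurations from \cite{gross2019cubic}, compute the three contributions (straight pair, crossing of $\rho_3$ picking up $t^{D_3}$, and single bend at $\fd_{1,1}$ extracting $\sum_m t^{D_3+L_{3m}}$ via Corollary~\ref{cor_ray_11}) together with their $q$-weights from $\langle s(\gamma_1),s(\gamma_2)\rangle$, and deduce the second identity by skew-symmetry. This matches the paper's proof essentially line-for-line.
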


\begin{proof}
According to the proof of \cite[Lemma 3.6]{gross2019cubic}, the only 
configurations of broken lines contributing to the product 
$\vartheta_{v_1}\vartheta_{v_2}$
are given by Figure
\ref{figure_coeff_v1v2_v1v2}, Figure 
\ref{figure_coeff_v3_v1v2}, and Figure 
\ref{figure_coeff_1_v1v2}
(see \cite[Figures 3.3-3.4]{gross2019cubic}).

Figure
\ref{figure_coeff_v1v2_v1v2} gives a term 
$q^{\frac{1}{2}} 
\vartheta_{v_1+v_2}$ in 
$\vartheta_{v_1} \vartheta_{v_2}$:
we have 
\[ c(\gamma_1)=1\,,\,\,c(\gamma_2)=1
\,,\
s(\gamma_1)=(1,0)\,,\,\,s(\gamma_2)=(0,1)\,,\ 
\langle s(\gamma_1),s(\gamma_2) \rangle = \langle (1,0),(0,1) \rangle =1\,,\]
and so 
$q^{\frac{1}{2} \langle s(\gamma_1),s(\gamma_2) \rangle }=q^{\frac{1}{2}}$.

Figure \ref{figure_coeff_v3_v1v2} 
gives a term $q^{-\frac{1}{2}} t^{D_3} \vartheta_3$
in $\vartheta_{v_1}\vartheta_{v_2}$:
we have $c(\gamma_1)=t^{D_3}$ 
because $\gamma_1$ crosses $\R_{\geq 0} v_3$ 
without bending, $c(\gamma_2)=1$, 
\[ s(\gamma_1)=(-1,0)\,,\,\, s(\gamma_2)=(0,1)\,,\
\langle s(\gamma_1),s(\gamma_2) \rangle = \langle (-1,0),(0,1) \rangle =-1\,,\] 
and so 
$q^{\frac{1}{2} \langle s(\gamma_1),s(\gamma_2) \rangle }=q^{-\frac{1}{2}}$.

Figure \ref{figure_coeff_1_v1v2} gives a term 
$\sum_{j=1}^8 t^{D_3+L_{3j}}$ in
$\vartheta_{v_1}\vartheta_{v_2}$:
we have 
$c(\gamma_1)=1$, 
$c(\gamma_2)=\sum_{j=1}^8 t^{D_3+L_{3j}}$
because $\gamma_2$ crosses $\R_{\geq 0}(v_1+v_2)$ 
with bending and contribution of the term proportional 
to $z^{-v_1-v_2}$ in Equation
\eqref{eq:ray_11}, 
\[s(\gamma_1)=(1,0)\,,\,\, s(\gamma_2)=(-1,0)\,,\ 
\langle s(\gamma_1),s(\gamma_2) \rangle = \langle (1,0),(-1,0) \rangle =0\,,\] 
and so $q^{\frac{1}{2} \langle s(\gamma_1),s(\gamma_2) \rangle }=1$.
We similarly compute $\vartheta_{v_2} \vartheta_{v_1}$:
as $\langle -,- \rangle$ is skew-symmetric, only the powers of $q$ change of sign.
\end{proof}

\begin{lem} \label{lem_commutators_theta}
We have 
\begin{equation} q^{-\frac{1}{2}}\vartheta_{v_1} 
\vartheta_{v_2}
-q^{\frac{1}{2}}\vartheta_{v_2} 
\vartheta_{v_1}
=(q^{-1}-q) t^{D_3} \vartheta_{v_3} 
-(q^{\frac{1}{2}} -q^{-\frac{1}{2}})
\left( \sum_{j=1}^8 t^{D_3+L_{3j}} \right) \,,
\end{equation}
\begin{equation} q^{-\frac{1}{2}} \vartheta_{v_2}
\vartheta_{v_3} 
-q^{\frac{1}{2}}\vartheta_{v_3}
\vartheta_{v_2}
=(q^{-1}-q) t^{D_1}\vartheta_{v_1} 
-(q^{\frac{1}{2}} -q^{-\frac{1}{2}})
\left( \sum_{j=1}^8 t^{D_1+L_{1j}} \right) \,,
\end{equation}
\begin{equation} q^{-\frac{1}{2}} \vartheta_{v_3}
 \vartheta_{v_1}
-q^{\frac{1}{2}} \vartheta_{v_1} 
\vartheta_{v_3}
=(q^{-1}-q) t^{D_2}\vartheta_{v_2} 
-(q^{\frac{1}{2}} -q^{-\frac{1}{2}})
\left( \sum_{j=1}^8 t^{D_2+L_{2j}} \right) \,.
\end{equation}
\end{lem}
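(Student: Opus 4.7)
The plan is to derive all three commutator relations from Lemma \ref{lem_product_theta_1} combined with the cyclic $\Z/3\Z$-symmetry of $(Y,D)$ permuting the three components of $D$, which was already used in the proof of Lemma \ref{lem_square_theta}. First I would establish the relation involving $\vartheta_{v_1}\vartheta_{v_2}$ directly from Lemma \ref{lem_product_theta_1}: multiplying the first identity there by $q^{-1/2}$ and the second by $q^{1/2}$, the two resulting expressions both contain the term $\vartheta_{v_1+v_2}$ with coefficient $1$. Subtracting eliminates this term and leaves
\begin{equation*}
q^{-\frac{1}{2}}\vartheta_{v_1}\vartheta_{v_2} - q^{\frac{1}{2}}\vartheta_{v_2}\vartheta_{v_1} = (q^{-1}-q)\,t^{D_3}\vartheta_{v_3} + (q^{-\frac{1}{2}}-q^{\frac{1}{2}})\sum_{j=1}^{8} t^{D_3+L_{3j}},
\end{equation*}
which is exactly the first claimed identity after rewriting $q^{-1/2}-q^{1/2} = -(q^{1/2}-q^{-1/2})$.

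For the two remaining relations, the plan is to invoke the cyclic automorphism of $(Y,D)$ sending $D_j \mapsto D_{j+1}$ (indices modulo $3$), which on curve classes is the transformation $S^{*}$ of Equation \eqref{eq:S_action} sending $L_{jk} \mapsto L_{j+1,k}$. By Lemma \ref{lem_S_preserving} this symmetry preserves the log Gromov-Witten invariants $N_{g,v}^{\beta}$, and combined with the fact that it permutes the rays $v_j$ cyclically it therefore induces an automorphism of the quantum scattering diagram $\fD_{\can}$, and hence of the algebra $\cA_{\fD_{\can}}$ sending $\vartheta_{v_j} \mapsto \vartheta_{v_{j+1}}$. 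Applying this automorphism twice to the first relation yields the other two.

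There is no real obstacle here: all the work was already done in Lemma \ref{lem_product_theta_1}, whose proof enumerated the relevant broken line configurations using the arguments from \cite{gross2019cubic}. The only point requiring minor care is bookkeeping of the signs and of the cyclic labeling $\{j,k,\ell\} = \{1,2,3\}$ so that the divisor $D_k + D_\ell$ appearing in the $\vartheta_{v_j}$-component of each commutator is the one opposite to $D_j$, matching the ray $\fd_j$ computed in Proposition \ref{prop_ray_rho_j_contribution} and the ray $\fd_{v_j+v_{j+1}}$ of Corollary \ref{cor_ray_11}.
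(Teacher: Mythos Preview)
Your proposal is correct and follows essentially the same approach as the paper: the paper's proof also invokes the cyclic $\Z/3\Z$-symmetry to reduce to the first relation and then says it follows immediately from Lemma \ref{lem_product_theta_1}. Your version simply spells out the subtraction and the role of $S^{*}$ in slightly more detail.
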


\begin{proof}
By the cyclic $\Z /3$-symmetry permuting 
$\{1,2,3\}$, it is enough to compute 
\[ q^{-\frac{1}{2}} \vartheta_{v_1} 
\vartheta_{v_2}
-q^{\frac{1}{2}} \vartheta_{v_2} 
\vartheta_{v_1}\,.\] 
The result follows immediately from 
Lemma \ref{lem_product_theta_1}.
\end{proof}

\begin{figure}[h]
\centering
\setlength{\unitlength}{1cm}
\begin{picture}(6,3.5)
\put(3,0.5){\circle*{0.1}}
\put(3,0.5){\line(1,0){4}}
\put(3,0.5){\line(-1,0){4}}
\put(3,0.5){\line(0,1){3}}
\put(3,0.5){\line(-1,1){3}}
\put(3,0.5){\line(1,-1){1}}
\put(3.25, 1){$z$}
\put(4.75,3){$\gamma_1$}
\put(4,0){$\gamma_2$}
\color{red}
\put(3.5,1.25){\circle*{0.1}}
\put(3.5,1.25){\line(1,1){2}}
\put(3.5,1.25){\line(1,-1){1.5}}
\end{picture}
\caption{Coefficient of $\vartheta_{2v_1}$
in $\vartheta_{v_1+v_2}
\vartheta_{v_3}$}
\label{figure_coeff_2v1_v1v2v3}
\end{figure}

\begin{figure}[h]
\centering
\setlength{\unitlength}{1cm}
\begin{picture}(6,3.5)
\put(3,0.5){\circle*{0.1}}
\put(3,0.5){\line(1,0){4}}
\put(3,0.5){\line(-1,0){4}}
\put(3,0.5){\line(0,1){3}}
\put(3,0.5){\line(-1,1){3}}
\put(3.5,1){$z$}
\put(4.75,3){$\gamma_1$}
\put(2,3){$\gamma_2$}
\color{red}
\put(3.5,1.25){\circle*{0.1}}
\put(3.5,1.25){\line(1,1){2}}
\put(3.5,1.25){\line(-1,1){2}}
\end{picture}
\caption{Coefficient of 
$\vartheta_{2v_2}$
in $\vartheta_{v_1+v_2}
\vartheta_{v_3}$}
\label{figure_coeff_2v2_v1v2v3}
\end{figure}

\begin{figure}[h]
\centering
\setlength{\unitlength}{1cm}
\begin{picture}(6,3.5)
\put(3,0.5){\circle*{0.1}}
\put(3,0.5){\line(1,0){4}}
\put(3,0.5){\line(-1,0){4}}
\put(3,0.5){\line(0,1){3}}
\put(3,0.5){\line(-1,1){3}}
\put(3,0.5){\line(1,-1){1}}
\put(3.7, 1.2){$z$}
\put(5,3){$\gamma_1$}
\put(5,-0.4){$\gamma_2$}
\color{red}
\put(4,1.5){\circle*{0.1}}
\put(4,1.5){\line(1,1){2}}
\put(4,0.5){\line(1,-1){1}}
\put(4,0.5){\line(0,1){1}}
\end{picture}
\caption{Coefficient of 
$\vartheta_{v_1}$
in $\vartheta_{v_1+v_2}
\vartheta_{v_3}$}
\label{figure_coeff_v1_v1v2v3}
\end{figure}

\begin{figure}[h]
\centering
\setlength{\unitlength}{1cm}
\begin{picture}(6,3.5)
\put(3,0.5){\circle*{0.1}}
\put(3,0.5){\line(1,0){4}}
\put(3,0.5){\line(-1,0){4}}
\put(3,0.5){\line(0,1){3}}
\put(3,0.5){\line(-1,1){3}}
\put(4,1.4){$z$}
\put(4.75,3){$\gamma_1$}
\put(2,3){$\gamma_2$}
\color{red}
\put(4,1.75){\circle*{0.1}}
\put(4,1.75){\line(1,1){2}}
\put(3,1.75){\line(-1,1){2}}
\put(3,1.75){\line(1,0){1}}
\end{picture}
\caption{Coefficient of $\vartheta_{v_2}$
in $\vartheta_{v_1+v_2}
\vartheta_{v_3}$}
\label{figure_coeff_v2_v1v2v3}
\end{figure}

\begin{figure}[h]
\centering
\setlength{\unitlength}{1cm}
\begin{picture}(6,3.5)
\put(3,0.5){\circle*{0.1}}
\put(3,0.5){\line(1,0){4}}
\put(3,0.5){\line(-1,0){4}}
\put(3,0.5){\line(0,1){3}}
\put(3,0.5){\line(-1,1){3}}
\put(3,0.5){\line(1,-1){1}}
\put(4.2, 1.2){$z$}
\put(5.5,2.5){$\gamma_1$}
\put(5,-0.4){$\gamma_2$}
\color{red}
\put(4.5,1){\circle*{0.1}}
\put(4,0.5){\line(1,1){2}}
\put(4,0.5){\line(1,-1){1}}
\end{picture}
\caption{Coefficient of 
$\vartheta_0=1$
in $\vartheta_{v_1+v_2}
\vartheta_{v_3}$}
\label{figure_coeff_1_v1v2v3}
\end{figure}

\begin{lem} \label{lem_product_theta_2}
We have 
\begin{equation} \vartheta_{v_1+v_2} 
\vartheta_{v_3} = q^{-1} t^{D_1} \vartheta_{2v_1}
+q t^{D_2} \vartheta_{2v_2} 
+q^{-\frac{1}{2}} 
\left( \sum_{j=1}^8 t^{D_1+L_{1j}}
\right) \vartheta_{v_1}  
+ q^{\frac{1}{2}} 
\left( 
\sum_{j=1}^8 t^{D_2+L_{2j}} \right)
\vartheta_{v_2}
\end{equation}
\[
+
\sum_{1 \leq j <j' \leq 8} t^{D_1+L_{1j}+L_{1j'}}
+(q^{\frac{1}{2}}
+q^{-\frac{1}{2}})^2 t^{D_1+D_2+D_3} \,.\]
\end{lem}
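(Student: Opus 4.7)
I will compute $\vartheta_{v_1+v_2}\vartheta_{v_3}$ by enumerating all pairs $(\gamma_1,\gamma_2)$ of quantum broken lines with charges $v_1+v_2$ and $v_3$ contributing to the structure constants, and summing the resulting weighted contributions via Equation \eqref{eq:structure_constant}. The strategy parallels the classical computation in \cite{gross2019cubic}, with the quantum refinements being the factor $q^{\frac{1}{2}\langle s(\gamma_1), s(\gamma_2)\rangle}$ and the explicit $q$-dependence of the ray functions $f_{\fd_j}$ and $f_{\fd_{1,1}}$ from Proposition \ref{prop_ray_rho_j_contribution} and Corollary \ref{cor_ray_11}.

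First, Proposition \ref{prop_bound} restricts contributing $p \in B(\Z)$ to those with $F(p) \leq F(v_1+v_2)+F(v_3) = 3$ and bounds the total amount of bending. The combinatorial analysis of possible broken-line directions from the classical argument of \cite{gross2019cubic} is insensitive to the quantum deformation, and limits the contributing configurations to the five depicted in Figures \ref{figure_coeff_2v1_v1v2v3}--\ref{figure_coeff_1_v1v2v3}.

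Second, for each configuration I will extract $c(\gamma_i)$ and $s(\gamma_i)$ from the figure using \eqref{eq:broken_line_contribution}--\eqref{eq:bending_formula} together with the explicit form of the ray functions. The four single-term contributions $q^{-1}t^{D_1}\vartheta_{2v_1}$, $qt^{D_2}\vartheta_{2v_2}$, $q^{-\frac{1}{2}}\left(\sum_j t^{D_1+L_{1j}}\right)\vartheta_{v_1}$, and $q^{\frac{1}{2}}\left(\sum_j t^{D_2+L_{2j}}\right)\vartheta_{v_2}$ arise respectively from Figures \ref{figure_coeff_2v1_v1v2v3}--\ref{figure_coeff_v2_v1v2v3}. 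The divisor classes come from the numerators and denominators of $f_{\fd_1}$ and $f_{\fd_2}$ combined with the $\alpha$-factors picked up at crossings of $\rho_1$ and $\rho_2$; the $q$-exponents $q^{\mp 1}$ and $q^{\mp \frac{1}{2}}$ come from the skew pairings $\frac{1}{2}\langle s(\gamma_1), s(\gamma_2)\rangle$ for the respective pairs of final vectors.

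The main obstacle is the third step: computing the constant term from Figure \ref{figure_coeff_1_v1v2v3}, where broken lines bend at the wall $\fd_{1,1}$. The sum $\sum_{j<j'} t^{D_1+L_{1j}+L_{1j'}}$ will arise from selecting two distinct linear factors from the numerator $\prod_{m=1}^8(1+t^{D_3+L_{3m}}z^{-v_1-v_2})$ of $f_{\fd_{1,1}}$, combined with $\alpha$-factors on crossings of $\rho_1$, where divisor identifications of the form $D_1+2D_3+L_{3m}+L_{3m'} = D_1 + L_{1j}+L_{1j'}$ (as in \cite{gross2019cubic}) convert between the two descriptions. The factor $(q^{\frac{1}{2}}+q^{-\frac{1}{2}})^2 = q^{-1}+2+q$ will emerge from the four denominator factors of $f_{\fd_{1,1}}$, whose weighted coefficients of $t^{D_1+D_2+2D_3}z^{-2v_1-2v_2}$ must assemble to this symmetric polynomial after combining the $q$-weights from \eqref{eq:bending_formula} with the pairing $q^{\frac{1}{2}\langle s(\gamma_1),s(\gamma_2)\rangle}$. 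Verifying this symmetrization, together with the correct cancellation of potential asymmetric cross-terms, is the delicate point.
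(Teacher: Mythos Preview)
Your overall strategy---enumerate the five broken-line configurations following \cite{gross2019cubic} and compute the quantum contribution of each---is exactly the paper's approach, and your treatment of Figures~\ref{figure_coeff_2v1_v1v2v3}--\ref{figure_coeff_v2_v1v2v3} is correct.

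However, your analysis of the constant-term configuration (Figure~\ref{figure_coeff_1_v1v2v3}) misidentifies the wall. You claim the bending is at $\fd_{1,1}$, but it is at $\fd_1=\rho_1$: the broken line $\gamma_2$ with charge $v_3$ crosses $\rho_1$ and bends by amount~$2$, picking up the coefficient of $z^{-2v_1}$ in $f_{\fd_1}$ together with the $\alpha$-factor $t^{D_1}$. Indeed, bending at $\fd_{1,1}$ cannot work: $\gamma_1$ has charge $(1,1)$ parallel to $\fd_{1,1}$ and so cannot bend there, while bending $\gamma_2$ at $\fd_{1,1}$ changes its exponent from $(-1,1)$ to $(-1-k,1-k)$, which equals $(-1,-1)$ for no integer~$k$. (You may be conflating this with Figure~\ref{figure_coeff_1_v1v2} in Lemma~\ref{lem_product_theta_1}, where the bending genuinely is at $\fd_{1,1}$.) Your proposed divisor identity $2D_3+L_{3m}+L_{3m'}=L_{1j}+L_{1j'}$ is also false: summing both sides over all pairs gives $56D_3+28(D_1+D_2)$ versus $28(D_2+D_3)$.

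Once you use the correct wall, the computation you flagged as ``delicate'' becomes immediate. From Equation~\eqref{eq:ray_rho_j_contribution}, the coefficient of $z^{-2v_1}$ in $f_{\fd_1}$ is
\[
\sum_{1\le j<j'\le 8} t^{L_{1j}+L_{1j'}} \;+\; (q^{-1}+2+q)\,t^{D_2+D_3},
\]
and since $N=|\det(v_1,v_3)|=1$ the bending formula~\eqref{eq:bending_formula} is just $f_{\fd_1}$ itself with no further $q$-weighting. Multiplying by $\alpha=t^{D_1}$ and noting $\langle s(\gamma_1),s(\gamma_2)\rangle=\langle(1,1),(-1,-1)\rangle=0$ gives the stated constant term directly, with $(q^{1/2}+q^{-1/2})^2=q^{-1}+2+q$ appearing without any symmetrization argument.
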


\begin{proof}
According to the proof of \cite[Lemma 3.6]{gross2019cubic}, the only 
configurations of broken lines contributing to the product 
$\vartheta_{v_1+v_2}\vartheta_{v_3}$
are given by Figure
\ref{figure_coeff_2v1_v1v2v3}, Figure 
\ref{figure_coeff_2v2_v1v2v3}, Figure 
\ref{figure_coeff_v1_v1v2v3}, Figure \ref{figure_coeff_v2_v1v2v3},
and Figure \ref{figure_coeff_1_v1v2v3}
(see \cite[Figures 3.5-3.6]{gross2019cubic}).

Figure \ref{figure_coeff_2v1_v1v2v3} gives a term $q^{-1}t^{D_1} 
\vartheta_{2v_1}$ in 
$\vartheta_{v_1+v_2}\vartheta_{v_3}$:
we have 
$c(\gamma_1)=1$, $c(\gamma_2)=t^{D_1}$ 
because $\gamma_2$
crosses $\R_{\geq 0} v_1$ without bending, 
\[s(\gamma_1)=(1,1)\,, 
s(\gamma_2)=(-1,1)\,,\,\,
\langle s(\gamma_1),s(\gamma_2) \rangle 
=\langle (1,1),(1,-1) \rangle =-2\,,\] 
and so 
$q^{\frac{1}{2} \langle s(\gamma_1),s(\gamma_2) \rangle}
=q^{-1}$.

Figure \ref{figure_coeff_2v2_v1v2v3} gives a term 
$q t^{D_2} \vartheta_{2v_2}$ in 
$\vartheta_{v_1+v_2}\vartheta_{v_3}$:
we have 
$c(\gamma_1)=1$, $ c(\gamma_2)=t^{D_2}$
as 
$\gamma_2$ crosses $\R_{\geq 0}v_2$ without bending, 
\[s(\gamma_1)=(1,1)\,,\,\, s(\gamma_2)=(-1,1)\,,\,\,
\langle s(\gamma_1),s(\gamma_2) \rangle 
=\langle (1,1),(-1,1) \rangle = 2\,,\] and so 
$q^{\frac{1}{2} \langle s(\gamma_1),s(\gamma_2) \rangle}=q$.

Figure \ref{figure_coeff_v1_v1v2v3} gives a term 
$q^{-\frac{1}{2}} \left( \sum_{j=1}^8 t^{D_1+L_{1j} }\right) \vartheta_{v_1}$ 
in $\vartheta_{v_1+v_2}\vartheta_{v_3}$:
we have 
$c(\gamma_1)=1$, $ c(\gamma_2)=\sum_{j=1}^8 t^{D_1+L_{1j}}$
because 
$\gamma_2$ crosses $\R_{\geq 0} v_1$ with bending 
and contribution of the term proportional to $z^{-v_1}$ in Equation
\eqref{eq:ray_rho_j_contribution}, 
\[s(\gamma_1)=(1,1)\,,\,\, s(\gamma_2)=(0,1)\,,\,\,
\langle s(\gamma_1),s(\gamma_2) \rangle = \langle (1,1),(0,-1) \rangle =-1\,,\] 
and so 
$q^{\frac{1}{2} \langle s(\gamma_1), s(\gamma_2) \rangle} =q^{-\frac{1}{2}}$.

Figure \ref{figure_coeff_v2_v1v2v3} gives a term 
$q^{\frac{1}{2}} \left( \sum_{j=1}^8 t^{D_2+L_{2j}} \right)\vartheta_{v_2}$ 
in $\vartheta_{v_1+v_2}\vartheta_{v_3}$:
we have 
$c(\gamma_1)=1$, 
$c(\gamma_2)=\sum_{j=1}^8 t^{D_2+L_{2j}}$
because $\gamma_2$ crosses $\R_{\geq 0}v_2$ with bending 
and contribution of the term proportional to $z^{-v_2}$ in Equation
\eqref{eq:ray_rho_j_contribution},
\[s(\gamma_1)=(1,1)\,,\,\, s(\gamma_2)=(-1,0)\,,\,\,
\langle s(\gamma_1), s(\gamma_2) \rangle 
=\langle (1,1), (-1,0) \rangle =1\,,\] and so
$q^{\frac{1}{2} \langle s(\gamma_1),s(\gamma_2) \rangle}
=q^{\frac{1}{2}}$.

Figure \ref{figure_coeff_1_v1v2v3} gives terms 
\[ \sum_{1 \leq j <j' \leq 8} t^{D_1+L_{1j}+L_{1j'}}
+(q^{\frac{1}{2}}
+q^{-\frac{1}{2}})^2 t^{D_1+D_2+D_3} \,\]
in $\vartheta_{v_1+v_2} \vartheta_{v_3}$.
Indeed, we have 
\[c(\gamma_1)=1\,,\,\, c(\gamma_2)=\sum_{1 \leq j <j' \leq 8} t^{D_1+L_{1j}+L_{1j'}}
+(q^{\frac{1}{2}}
+q^{-\frac{1}{2}})^2 t^{D_1+D_2+D_3} \] 
because 
$\gamma_2$ crosses $\R_{\geq 0} v_1$ with bending and contribution of the term 
proportional to $z^{-2v_1}$ in 
Equation \eqref{eq:ray_rho_j_contribution}, 
\[s(\gamma_1)=(1,1)\,,\,\, s(\gamma_2)=(-1,-1)\,,\,\,
\langle s(\gamma_1),s(\gamma_2) \rangle 
=\langle (1,1), (-1,-1) \rangle =0\,,\] 
and so 
$q^{\frac{1}{2} \langle s(\gamma_1),s(\gamma_2) \rangle }=1$.
\end{proof}

\subsection{Triple product of quantum theta functions}
\label{section_triple_product}

\begin{lem} \label{lem_cubic_equation_theta}
We have 
\begin{equation}
q^{-\frac{1}{2}}
\vartheta_{v_1} 
\vartheta_{v_2}
\vartheta_{v_3}
=q^{-1} t^{D_1}\vartheta_{v_1}^2 
+q t^{D_2} \vartheta_{v_2}^2 
+q^{-1} 
t^{D_3}\vartheta_{v_3}^2
+
q^{-\frac{1}{2}} 
\left( \sum_{j=1}^8 t^{D_1+L_{1j}}
\right) \vartheta_{v_1}  
+ q^{\frac{1}{2}} 
\left( 
\sum_{j=1}^8 t^{D_2+L_{2j}} \right)
\vartheta_{v_2}
\end{equation}
\[ 
+ q^{-\frac{1}{2}} 
\left( \sum_{j=1}^8 
z^{D_3+L_{3j}}
\right) 
\vartheta_{v_3}
+
\sum_{1 \leq j <j' \leq 8} t^{D_1+L_{1j}+L_{1j'}}
-(q^{\frac{1}{2}}-q^{-\frac{1}{2}})^2 t^{D_1+D_2+D_3}
\,.\]
\end{lem}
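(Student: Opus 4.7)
The plan is to derive the cubic identity by combining the three preceding lemmas in a purely algebraic manner: first multiply out $\vartheta_{v_1}\vartheta_{v_2}$, then right-multiply by $\vartheta_{v_3}$, and finally convert each occurrence of $\vartheta_{2v_j}$ back to $\vartheta_{v_j}^2$. There is no new scattering-diagram computation to perform; the hard work is already packaged into Lemmas \ref{lem_square_theta}, \ref{lem_product_theta_1}, and \ref{lem_product_theta_2}.

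Concretely, starting from Lemma \ref{lem_product_theta_1} I would write
\[
\vartheta_{v_1}\vartheta_{v_2}\vartheta_{v_3}
= q^{\frac{1}{2}}\vartheta_{v_1+v_2}\vartheta_{v_3}
+ q^{-\frac{1}{2}} t^{D_3}\vartheta_{v_3}^{2}
+ \Bigl(\sum_{j=1}^{8} t^{D_3+L_{3j}}\Bigr)\vartheta_{v_3}.
\]
The middle and last terms here directly contribute the $\vartheta_{v_3}^{2}$ and $\vartheta_{v_3}$ summands of the target identity (after the overall $q^{-\frac{1}{2}}$ normalization). To handle the first term, I would substitute the expression for $\vartheta_{v_1+v_2}\vartheta_{v_3}$ from Lemma \ref{lem_product_theta_2}, which already contains the $\vartheta_{v_1}$- and $\vartheta_{v_2}$-linear pieces with the correct coefficients $q^{-\frac{1}{2}}\sum_j t^{D_1+L_{1j}}$ and $q^{\frac{1}{2}}\sum_j t^{D_2+L_{2j}}$, as well as the constant term $\sum_{j<j'}t^{D_1+L_{1j}+L_{1j'}}$.

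The $\vartheta_{v_1}^{2}$ and $\vartheta_{v_2}^{2}$ pieces are produced by invoking Lemma \ref{lem_square_theta} in the form $\vartheta_{2v_1}=\vartheta_{v_1}^{2}-2t^{D_2+D_3}$ and $\vartheta_{2v_2}=\vartheta_{v_2}^{2}-2t^{D_1+D_3}$, applied to the $q^{-1}t^{D_1}\vartheta_{2v_1}$ and $qt^{D_2}\vartheta_{2v_2}$ summands in Lemma \ref{lem_product_theta_2}. Each such substitution releases an extra $t^{D_1+D_2+D_3}$ contribution, which must be combined with the $(q^{\frac{1}{2}}+q^{-\frac{1}{2}})^{2}t^{D_1+D_2+D_3}$ constant already present in Lemma \ref{lem_product_theta_2}.

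The only step requiring any care is the bookkeeping of the $t^{D_1+D_2+D_3}$ coefficient. After multiplying the output of Lemma \ref{lem_product_theta_2} by $q^{\frac{1}{2}}$, these three sources contribute $-2q^{-\frac{1}{2}}-2q^{\frac{3}{2}}+q^{\frac{1}{2}}(q+2+q^{-1})=-q^{-\frac{1}{2}}+2q^{\frac{1}{2}}-q^{\frac{3}{2}}$. Multiplying the full identity by $q^{-\frac{1}{2}}$ converts this into $-q^{-1}+2-q=-(q^{\frac{1}{2}}-q^{-\frac{1}{2}})^{2}$, exactly the constant term displayed in the statement. Every other term matches after the same $q^{-\frac{1}{2}}$ rescaling, which completes the derivation. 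I do not expect any genuine obstacle: the lemma is essentially a bookkeeping consequence of the three earlier lemmas, whose broken-line computations have already absorbed all the geometric content.
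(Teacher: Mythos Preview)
Your proposal is correct and follows essentially the same approach as the paper's own proof: use Lemma~\ref{lem_product_theta_1} to expand $\vartheta_{v_1}\vartheta_{v_2}$, right-multiply by $\vartheta_{v_3}$, substitute Lemma~\ref{lem_product_theta_2} for $\vartheta_{v_1+v_2}\vartheta_{v_3}$, and then use Lemma~\ref{lem_square_theta} to rewrite $\vartheta_{2v_1}$ and $\vartheta_{2v_2}$, with the constant-term bookkeeping reducing to the identity $(q^{\frac{1}{2}}+q^{-\frac{1}{2}})^2-2q-2q^{-1}=-(q^{\frac{1}{2}}-q^{-\frac{1}{2}})^2$.
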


\begin{proof}
By Lemma \ref{lem_product_theta_1}, we have 
\[ \vartheta_{v_1} 
\vartheta_{v_2}
\vartheta_{v_3}
=\left( 
q^{\frac{1}{2}} 
\vartheta_{v_1+v_2} +q^{-\frac{1}{2}} t^{D_3} \vartheta_{v_3} 
+
\sum_{j=1}^8 t^{D_3+L_{3j}} \right)
\vartheta_{v_3}
=q^{\frac{1}{2}} 
\vartheta_{v_1+v_2}\vartheta_{v_3} +q^{-\frac{1}{2}} t^{D_3}\vartheta_{v_3}^2 +
\left(
\sum_{j=1}^8 t^{D_3+L_{3j}} \right)
\vartheta_{v_3}\,,\]
and  so 
\[ q^{-\frac{1}{2}} 
\vartheta_{v_1}
\vartheta
_{v_2}
\vartheta_{v_3} 
=\vartheta_{v_1+v_2} 
\vartheta_{v_3} 
+q^{-1}t^{D_3} \vartheta_{v_3}^2 
+ q^{-\frac{1}{2}} 
\left( \sum_{j=1}^8 
t^{D_3+L_{3j}}
\right) 
\vartheta_{v_3} \,.\]
Using Lemma \ref{lem_product_theta_2}, we obtain
\[ 
q^{-\frac{1}{2}}
\vartheta_{v_1} 
\vartheta_{v_2}
\vartheta_{v_3}
=q^{-1} t^{D_1} \vartheta_{2v_1} 
+q t^{D_2} \vartheta_{2v_2} 
+q^{-1} 
t^{D_3} \vartheta_{v_3}^2
+
q^{-\frac{1}{2}} 
\left( \sum_{j=1}^8 z^{D_1+L_{1j}}
\right) \vartheta_{v_1}  
+ q^{\frac{1}{2}} 
\left( 
\sum_{j=1}^8 t^{D_2+L_{2j}} \right)
\vartheta_{v_2}
\]
\[ 
+ q^{-\frac{1}{2}} 
\left( \sum_{j=1}^8 
t^{D_3+L_{3j}}
\right) 
\vartheta_{v_3}
+
\sum_{1 \leq j <j' \leq 8} t^{D_1+L_{1j}+L_{1j'}}
+(q^{\frac{1}{2}}
+q^{-\frac{1}{2}})^2 t^{D_1+D_2+D_3}\,.\]
By Lemma \ref{lem_square_theta}, 
we have 
\[ q^{-1}t^{D_1} \vartheta_{2v_1} 
=
q^{-1} t^{D_1} \vartheta_{v_1}^2
-2 q^{-1} t^{D_1+D_2+D_3}\] 
and
\[q t^{D_2} \vartheta_{2v_2} 
=q t^{D_2} \vartheta_{v_2}^2
-2 q t^{D_1+D_2+D_3}\,.\]
Using that
\[ (q^{\frac{1}{2}}+q^{-\frac{1}{2}})^2
-2q-2q^{-1}=-(q^{\frac{1}{2}}-q^{-\frac{1}{2}})^2 \,,\]
we finally obtain Lemma \ref{lem_cubic_equation_theta}.

\end{proof}

\subsection{End of the proof of the presentation of $\cA_{\fD_\can}$}
\label{section_end_proof_presentation}

In this section, we end the proof of Theorem \ref{thm_eq_cubic}.

Recall that we defined in 
Section \ref{section_quantum_broken_lines} the monomials 
$m[p] \in \cA_{\fD_\can}$ as 
follows:
if $p=av_j+bv_{j+1}$ with 
$a \geq 0$ and $b \geq 0$, then 
$m[p]\coloneqq \vartheta_{v_j}^a 
\vartheta_{v_{j+1}}^b$.
We proved in Lemma \ref{lem_m_linear_basis} 
that $\{m[p]\}_{p \in B(\Z)}$ is a 
$\Z[q^{\pm}][NE(Y)]$-linear basis of $\cA_{\fD_\can}$.

Let $\cB$ be the $\Z[q^{\pm}][NE(Y)]$-algebra with generators 
$\vartheta_1$, $\vartheta_2$,
$\vartheta_3$
and relations 
\begin{equation} \label{eq_commutator_1}
q^{-\frac{1}{2}} \vartheta_1
\vartheta_2 
-q^{\frac{1}{2}} 
\vartheta_2 \vartheta_1 
=(q^{-1}-q) t^{D_3}\vartheta_3
-(q^{\frac{1}{2}}-q^{-\frac{1}{2}}) \left( \sum_{j=1}^8 t^{D_3+L_{3j}} \right) \,,
\end{equation}

\begin{equation} \label{eq_commutator_2}
q^{-\frac{1}{2}}\vartheta_2
\vartheta_3 
-q^{\frac{1}{2}} 
\vartheta_3 \vartheta_2 
=(q^{-1}-q) t^{D_1}\vartheta_1
-(q^{\frac{1}{2}}-q^{-\frac{1}{2}}) \left( \sum_{j=1}^8 t^{D_1+L_{1j}} \right)\,,
\end{equation}

\begin{equation} \label{eq_commutator_3}
q^{-\frac{1}{2}} \vartheta_3
\vartheta_1 
-q^{\frac{1}{2}} 
\vartheta_1 \vartheta_3 
=(q^{-1}-q) t^{D_2} \vartheta_2
-(q^{\frac{1}{2}}-q^{-\frac{1}{2}}) \left( \sum_{j=1}^8 t^{D_2+L_{2j}} \right)\,,
\end{equation}

\begin{multline}  \label{eq_cubic}
q^{-\frac{1}{2}}
\vartheta_1 
\vartheta_2
\vartheta_3
=q^{-1} t^{D_1} \vartheta_1^2 
+q t^{D_2} \vartheta_2^2 
+q^{-1} 
t^{D_3} \vartheta_3^2
+
q^{-\frac{1}{2}} 
\left( \sum_{j=1}^8 t^{D_1+L_{1j}}
\right) 
\vartheta_1  
+ q^{\frac{1}{2}} 
\left( 
\sum_{j=1}^8 t^{D_2+L_{2j}} \right)
\vartheta_2
\\
+ q^{-\frac{1}{2}} 
\left( \sum_{j=1}^8 
t^{D_3+L_{3j}}
\right) 
\vartheta_3
+
\sum_{1 \leq j <j' \leq 8} t^{D_1+L_{1j}+L_{1j'}}
-(q^{\frac{1}{2}}-q^{-\frac{1}{2}})^2 t^{D_1+D_2+D_3}\,.
\end{multline}

For every $p \in B(\Z)$, we define 
$n[p] \in \cB$ as the following monomials in $\vartheta_1$,
$\vartheta_2$, $\vartheta_3$:
if $p=av_j+bv_{j+1}$ with 
$a \geq 0$ and $b \geq 0$, then 
$n[p]\coloneqq \vartheta_j^a 
\vartheta_{j+1}^b$.

\begin{lem} \label{lem_tilde_m_linear_generators}
The monomials $n[p]$ for $p \in B(\Z)$ 
form a $\Z[q^{\pm}][NE(Y)]$-linear generating set of 
$\cB$.
\end{lem}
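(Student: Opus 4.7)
The plan is to prove the lemma by induction on the total degree in the generators $\vartheta_1,\vartheta_2,\vartheta_3$. Set $V_k \subset \cB$ to be the $R$-linear span of all words of length $\leq k$ in these generators, so that $\cB = \bigcup_{k \geq 0} V_k$. It suffices to show by induction on $k$ that each $V_k$ lies in the $R$-span of $\{n[p] : p \in B(\Z),\ F(p) \leq k\}$, where $F(p) = a+b$ whenever $p = av_j + bv_{j+1} \in \sigma_{j,j+1}$. The base cases $k = 0,1$ are immediate, with $1 = n[0]$ and $\vartheta_j = n[v_j]$.

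The key input is the behavior of the defining relations in the associated graded quotient $V_k/V_{k-1}$. The commutation relations \eqref{eq_commutator_1}--\eqref{eq_commutator_3} have right-hand sides lying in $V_1 \subseteq V_{k-1}$ whenever $k \geq 2$; thus, modulo $V_{k-1}$, the generators $q$-commute in the form $\vartheta_j\vartheta_{j+1} \equiv q\,\vartheta_{j+1}\vartheta_j$ (indices cyclic in $\{1,2,3\}$). The cubic relation \eqref{eq_cubic} expresses $q^{-1/2}\vartheta_1\vartheta_2\vartheta_3$ as an element of $V_2$, so the image of $\vartheta_1\vartheta_2\vartheta_3$ in $V_3/V_2$ vanishes. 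Using the $q$-commutativity to rearrange, any monomial $\vartheta_1^{a_1}\vartheta_2^{a_2}\vartheta_3^{a_3}$ with $a_1,a_2,a_3 \geq 1$ equals a $q$-scalar multiple of $\vartheta_1^{a_1-1}\vartheta_2^{a_2-1}\vartheta_3^{a_3-1}\cdot \vartheta_1\vartheta_2\vartheta_3$ modulo $V_{k-1}$, and therefore also vanishes in $V_k/V_{k-1}$.

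Consequently, $V_k/V_{k-1}$ is spanned by the images of monomials $\vartheta_1^{a_1}\vartheta_2^{a_2}\vartheta_3^{a_3}$ with $a_1+a_2+a_3 = k$ and at least one $a_i = 0$. If $a_3 = 0$ this monomial is $n[a_1 v_1 + a_2 v_2]$; if $a_1 = 0$ it is $n[a_2 v_2 + a_3 v_3]$; and if $a_2 = 0$ then $q$-commuting yields $\vartheta_1^{a_1}\vartheta_3^{a_3} \equiv q^{-a_1 a_3}\, n[a_3 v_3 + a_1 v_1]$ modulo $V_{k-1}$. Combined with the inductive hypothesis applied to $V_{k-1}$, this proves $V_k \subseteq \sum_{F(p) \leq k} R\cdot n[p]$, closing the induction. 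There is no substantive obstacle in this argument; it is a standard filtration-and-associated-graded computation, the essential feature being that the right-hand side of \eqref{eq_cubic} lies in strictly lower filtration, so that the triple product $\vartheta_1\vartheta_2\vartheta_3$ kills every ``three-generator'' monomial in the associated graded algebra.
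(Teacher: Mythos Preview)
Your proof is correct and follows essentially the same strategy as the paper's: use the commutation relations \eqref{eq_commutator_1}--\eqref{eq_commutator_3} to reduce to ordered monomials, and the cubic relation \eqref{eq_cubic} to eliminate any monomial involving all three generators. The paper states this in three sentences without making the filtration explicit, whereas you have carefully set up $V_k$ and carried out the induction on the associated graded; this is a clean and complete way to write the same argument.
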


\begin{proof}
By definition, $\vartheta_1$, 
$\vartheta_2$ and
$\vartheta_3$ generate $\cB$ as
a $\Z[q^{\pm}][NE(Y)]$-algebra. 
From the commutation relations 
\eqref{eq_commutator_1}-\eqref{eq_commutator_2}-\eqref{eq_commutator_3},
we deduce that the monomials 
$\vartheta_1^a \vartheta_2^b \vartheta_3^c$ for
$a,b,c \geq 0$
are linear generators of $\cB$.
We can use \eqref{eq_cubic} to eliminate from 
$\vartheta_1^a \vartheta_2^b \vartheta_3^c$
the theta function with the smallest power. It follows that the 
monomials $n[p]$ are 
$\Z[q^{\pm}][NE(Y)]$-linear generators of 
$\tilde{A}^q$.
\end{proof}

By Lemma \ref{lem_commutators_theta}
and Lemma \ref{lem_cubic_equation_theta}, 
there exists a unique algebra morphism 
\begin{equation} \alpha \colon \cB \longrightarrow \cA_{\fD_\can} \,,
\end{equation}
such that $\alpha(\vartheta_j)= \vartheta_{v_j}$
for every $j \in \{1,2,3\}$.
In order to prove 
Theorem \ref{thm_eq_cubic}, 
it remains to show that $\alpha$ is an isomorphism.

By Lemma \ref{lem_m_linear_basis}, the quantum theta functions 
$\vartheta_{v_1}$, $\vartheta_{v_2}$ and
$\vartheta_{v_3}$ generate 
$\cA_{\fD_\can}$ as 
$\Z[q^{\pm}][NE(Y)]$-algebra, and so 
$\alpha$ is surjective.
It remains to show that $\alpha$ is injective. Let $b \in \cB$
with $\alpha(b)=0$. By Lemma
\ref{lem_tilde_m_linear_generators}, we can write $b$ as a 
$\Z[q^{\pm}][NE(Y)]$-linear combination 
$b= \sum_p b_p n[p]$. As $\alpha(n[p])=m[p]$, we have 
$\alpha(b)= \sum_b b_p m[p]$. 
By Lemma \ref{lem_m_linear_basis}, 
$\{m[p]\}_{p \in B(\Z)}$ is a 
$\Z[q^{\pm}][NE(Y)]$-linear basis of $\cA_{\fD_\can}$, 
and so we deduce from 
$\sum_p b_p m[p]=0$ that $b_p=0$ for all $p$.
This concludes the proof of Theorem \ref{thm_eq_cubic}.

\section{Comparison of $\cA_{\fD_\can}$ and $\Sk_A(\bS_{0,4})$}
\label{section_comparison}

In this section, we end the proof of Theorems
\ref{thm:consistent_0_4} and \ref{thm_ring_isom}. 
In Section \ref{section_change_variables}, 
we collect a number of change of variables and algebraic identities, 
which are then used in Section \ref{sect_can_nu}
to compare the quantum scattering diagrams $\fD_\can$ and $\fD_{0,4}$, and to end the 
proof of Theorem \ref{thm:consistent_0_4}.
In Section \ref{section_end_proof}, we compare the algebras $\cA_{\fD_{0,4}}$
and $\Sk_A(\bS_{0,4})$, and we conclude the proof of Theorem 
\ref{thm_ring_isom}.

\subsection{Change of variables and identities}
\label{section_change_variables}
Let $L$ be the quotient of $A_1(Y)$ 
by the subgroup generated by $D_1, D_2, D_3$, 
and let $\nu \colon NE(Y) \rightarrow L$ be the quotient map.

Following \cite{gross2019cubic}, write 
\begin{equation}
\begin{aligned}
F_1 &\coloneqq H-E_{11}-E_{21}-E_{31} \,,\\
F_2 &\coloneqq H-E_{11}-E_{22}-E_{32}\,,\\
F_3 &\coloneqq H-E_{12}-E_{21}-E_{32}\,,\\
F_4 &\coloneqq H-E_{12}-E_{22}-E_{31} \,.
\end{aligned}
\end{equation}
If we take for $Y$ the (non-general) cubic surface 
obtained by blowing up the $6$ intersection points 
of a general configurations of four lines $L_1$,
$L_2$, $L_3$, $L_4$ in $\PP^2_{\kk}$, then 
$F_j$ is the class of the $(-2)$-curve 
given by the strict transform of $L_j$. 
Note that the $(-2)$-curves $F_1$, $F_2$, $F_3$ and $F_4$ are all disjoint.
For $1 \leq j \leq 4$, write 
\begin{equation} 
G_j \coloneqq \nu(F_j) \in L \,.
\end{equation}

\begin{lem} \label{lem_nu}
The image in $L$ by $\nu$ of the classes of the lines $L_{jk}$ are given as follows:
\begin{equation} 
\{\nu(L_{1m})\}_{1 \leq m \leq 8}
=\Big\{ \frac{1}{2}(\epsilon_1 G_1+ \epsilon_2 G_2)\,|\, \epsilon_1, \epsilon_2 \in \{ \pm 1\} \Big\}
\cup \Big\{ \frac{1}{2}(\epsilon_3 G_3+ \epsilon_4 G_4)\,|\, \epsilon_3, \epsilon_4 \in \{ \pm 1\} \Big\} \,,
\end{equation}
\begin{equation} 
\{\nu(L_{2m})\}_{1 \leq m \leq 8}
=\Big\{ \frac{1}{2}(\epsilon_1 G_1+ \epsilon_3 G_3)\,|\, \epsilon_1, \epsilon_3 \in \{ \pm 1\} \Big\}
\cup \Big\{ \frac{1}{2}(\epsilon_2 G_2+ \epsilon_4 G_4)\,|\, \epsilon_2, \epsilon_4 \in \{ \pm 1\} \Big\} \,,
\end{equation}
\begin{equation} 
\{\nu(L_{3m})\}_{1 \leq m \leq 8}
=\Big\{ \frac{1}{2}(\epsilon_1 G_1+ \epsilon_4 G_4)\,|\, \epsilon_1, \epsilon_4 \in \{ \pm 1\} \Big\}
\cup \Big\{ \frac{1}{2}(\epsilon_2 G_2+ \epsilon_3 G_3)\,|\, \epsilon_2, \epsilon_3 \in \{ \pm 1\} \Big\} \,.
\end{equation}
\end{lem}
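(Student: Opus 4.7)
The plan is to reduce everything to a finite linear-algebra computation inside $L$. Choose the generating set $\{h, e_1, e_2, e_3\}$ of $L$, where $h \coloneqq \nu(H)$ and $e_j \coloneqq \nu(E_{j1})$ for $j \in \{1,2,3\}$. The defining relations $D_j = H - E_{j1} - E_{j2}$ in $A_1(Y)$ become, in $L$, simply $\nu(E_{j2}) = h - e_j$. Using this, a direct expansion gives
\begin{equation*}
G_1 = h - e_1 - e_2 - e_3, \quad G_2 = -h - e_1 + e_2 + e_3, \quad G_3 = -h + e_1 - e_2 + e_3, \quad G_4 = -h + e_1 + e_2 - e_3.
\end{equation*}

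Next I would expand each of the eight classes listed in \eqref{eq:lines_1} under $\nu$. Using $\nu(E_{j2}) = h - e_j$ and the identity $\nu(E_{j1}) + \nu(E_{j2}) = h$, the two exceptional classes $E_{11}, E_{12}$ give $\pm e_1$ (up to a shift by $h$); the four classes of the form $H - E_{2a} - E_{3b}$ give $\pm(h - e_2 - e_3)$ and $\pm(e_2 - e_3)$; and the two conic classes collapse to $-e_1$ and $-(h-e_1)$ since $E_{21}+E_{22}$ and $E_{31}+E_{32}$ both equal $h$. Collecting, $\{\nu(L_{1m})\}_{1\leq m\leq 8} = \{\pm e_1,\ \pm(h-e_1),\ \pm(h-e_2-e_3),\ \pm(e_2-e_3)\}$.

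The verification is then a one-line linear algebra check: from the explicit formulas for $G_1, G_2$ one reads off $\tfrac{1}{2}(G_1+G_2) = -e_1$, $\tfrac{1}{2}(G_1-G_2) = h - e_2 - e_3$, and their negatives, recovering exactly the first four elements above. Likewise $\tfrac{1}{2}(G_3+G_4) = -(h-e_1)$ and $\tfrac{1}{2}(G_3-G_4) = -(e_2-e_3)$ recover the other four. This establishes the formula for $\nu(L_{1m})$.

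For the formulas concerning $L_{2m}$ and $L_{3m}$ I would either repeat the same mechanical computation using \eqref{eq:lines_2}-\eqref{eq:lines_3}, or, more cheaply, invoke the cyclic $\Z/3\Z$-symmetry permuting $(D_1, D_2, D_3)$ already used in Section~\ref{section_ray_rho_j_contribution} (induced by the $S$-action of \eqref{eq:S_action}), which permutes the indices of the $E_{jk}$ while permuting the indices of the $G_j$ according to the pattern visible in the statement. There is no real obstacle here; the lemma is essentially a bookkeeping exercise, and the only mild subtlety is getting the correct pairing of $G_j$'s with the triples $(j,k,\ell)$, which I would pin down by tracking how each $F_j$ meets the three boundary components.
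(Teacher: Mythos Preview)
Your proof is correct and follows essentially the same approach as the paper: both reduce the lemma to a direct linear-algebra computation in $L$. The only cosmetic difference is that the paper works the other way around, first expressing $\nu(H)$ and $\nu(E_{jk})$ directly as half-integer combinations of the $G_j$ (e.g.\ $\nu(E_{11})=-\tfrac{1}{2}(G_1+G_2)$, $\nu(H)=-\tfrac{1}{2}(G_1+G_2+G_3+G_4)$) and then substituting into \eqref{eq:lines_1}--\eqref{eq:lines_3}, whereas you first express the $G_j$ in your basis $h,e_1,e_2,e_3$ and match at the end; your symmetry shortcut for $j=2,3$ via the $S$-action is also valid (one checks $S^*$ fixes $G_1$ and cyclically permutes $G_2,G_3,G_4$, which is exactly the pattern in the statement).
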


\begin{proof}
Recalling that $D_1=H-E_{11}-E_{12}$, 
$D_2=H-E_{21}-E_{22}$ and 
$D_3=2H-E_{31}-E_{32}$, we check that 
\begin{equation}
\label{eq:nu_E}
\begin{aligned}
 \nu(E_{11})=-\frac{1}{2}(G_1+G_2) \,,\,\,
\nu(E_{12})=-\frac{1}{2}(G_3+G_4) \,, \\
\nu(E_{21})=-\frac{1}{2}(G_1+G_3) \,,\,\, \nu(E_{22})=-\frac{1}{2}(G_2+G_4) \,,\\
 \nu(E_{31})=-\frac{1}{2}(G_1+G_4) \,,\,\, 
\nu(E_{32})=-\frac{1}{2}(G_2+G_3) \,. \end{aligned}
\end{equation}
Similarly, as $H=D_1+D_2+D_3-\frac{1}{2}(F_1+F_2+F_3+F_4)$, we have 
\begin{equation} \label{eq:nu_H}
\nu(H)=-\frac{1}{2}(G_1+G_2+G_3+G_4) \,.
\end{equation}
It remains to apply $\nu$ to all the classes $L_{jm}$ 
expressed in terms of the classes $H$ and $E_{k\ell}$ by Equations
\eqref{eq:lines_1}-\eqref{eq:lines_2}-\eqref{eq:lines_3}.
\end{proof}

Define 
\begin{equation}
\label{eq:L_basis}
\begin{aligned}
e_1 &\coloneqq \frac{1}{2}(G_1+G_2) \,,\\
e_2 &\coloneqq \frac{1}{2}(G_1+G_3) \,,
\\
e_3 &\coloneqq \frac{1}{2}(G_1+G_4) \,,
\\
e_4 &\coloneqq \frac{1}{2}(G_1+G_2+G_3+G_4) \,.
\end{aligned}
\end{equation}

\begin{lem} \label{lem:L_basis}
The four elements 
$e_1$, $e_2$, $e_3$ and $e_4$ form a
$\Z$-linear basis of $L$.
\end{lem}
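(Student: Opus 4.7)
The plan is to explicitly identify $L$ with $\Z^4$ and then check that $(e_1,e_2,e_3,e_4)$ corresponds to a basis under this identification. First I would observe that since $D_1=H-E_{11}-E_{12}$, $D_2=H-E_{21}-E_{22}$, $D_3=H-E_{31}-E_{32}$, each relation $D_j=0$ lets us eliminate $E_{j2}=H-E_{j1}$ in the quotient. Hence $L$ is freely generated by the images $\nu(H),\nu(E_{11}),\nu(E_{21}),\nu(E_{31})$. In particular, $L$ has rank $4$, matching the number of putative basis elements.

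Next I would reuse the computations already performed in the proof of Lemma \ref{lem_nu}, namely the identities \eqref{eq:nu_E} and \eqref{eq:nu_H}:
\begin{equation*}
\nu(E_{11})=-\tfrac{1}{2}(G_1+G_2),\quad \nu(E_{21})=-\tfrac{1}{2}(G_1+G_3),\quad \nu(E_{31})=-\tfrac{1}{2}(G_1+G_4),
\end{equation*}
and $\nu(H)=-\tfrac{1}{2}(G_1+G_2+G_3+G_4)$. Comparing these with the definitions \eqref{eq:L_basis} gives immediately
\begin{equation*}
e_1=-\nu(E_{11}),\qquad e_2=-\nu(E_{21}),\qquad e_3=-\nu(E_{31}),\qquad e_4=-\nu(H).
\end{equation*}

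Therefore the transition matrix from $(\nu(H),\nu(E_{11}),\nu(E_{21}),\nu(E_{31}))$ to $(e_1,e_2,e_3,e_4)$ is diagonal with entries $-1$, hence has determinant $\pm 1$, and $(e_1,e_2,e_3,e_4)$ is a $\Z$-linear basis of $L$. No step poses any real obstacle: the only point requiring care is ensuring that the three relations $D_1,D_2,D_3$ are linearly independent in $A_1(Y)$, which is transparent from the matrix of their coordinates in the basis $(H,E_{11},E_{12},E_{21},E_{22},E_{31},E_{32})$.
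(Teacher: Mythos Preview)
Your proof is correct and takes essentially the same approach as the paper: both arguments identify $e_1,e_2,e_3,e_4$ with $-\nu(E_{11}),-\nu(E_{21}),-\nu(E_{31}),-\nu(H)$ via the formulas \eqref{eq:nu_E}--\eqref{eq:nu_H}, and both use the relations $D_j=H-E_{j1}-E_{j2}$ to eliminate $E_{12},E_{22},E_{32}$ and exhibit $\nu(H),\nu(E_{11}),\nu(E_{21}),\nu(E_{31})$ as a basis of $L$. Your version is marginally more direct in phrasing the conclusion as a determinant-$\pm 1$ change of basis, while the paper separates the rank computation from the generation statement, but the content is the same.
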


\begin{proof}
One checks that the subgroup generated by $D_1$, $D_2$ and $D_3$ in the
 free abelian group $A_1(Y)$ of rank $7$ is saturated of rank $3$, 
 and so the quotient $L$ is free of rank $4$.
 
Note that by Equation \eqref{eq:nu_E}, 
we have $e_1=\nu(-E_{11})$, $e_2=\nu(-E_{21})$, $e_3=\nu(-E_{31})$, 
and by Equation \eqref{eq:nu_H}, $e_4=\nu(-H)$, 
so we have indeed $e_1,e_2,e_3,e_4 \in L$. 
On the other hand, $H$ and $E_{ij}$ generate $A_1(Y)$, and as $E_{21}=
H-E_{11}-D_1$, $E_{22}=H-E_{21}-D_2$, 
$E_{32}=H-E_{31}-D_3$, we deduce that 
$e_1,e_2,e_3,e_4$ generate $L$. As 
$L$ is free of rank $4$, we obtain that $e_1,e_2,e_3,e_4$ 
indeed form a basis of $L$.
\end{proof}

\begin{remark}
Let $(-,-)$ be the unique symmetric bilinear form on $L$ such that $(G_j,G_j)=2$
for $1 \leq j \leq 4$, and $(G_j,G_k)=0$ for every $j \neq k$. Then 
$(L,(-,-))$ is isomorphic to the $D_4$ weight lattice in such a way that 
$\{\nu(L_{1m})\}_{1 \leq m \leq 8}$
(resp.\ $\{\nu(L_{2m})\}_{1 \leq m \leq 8}$ 
and $\{\nu(L_{3m})\}_{1 \leq m \leq 8}$)
is the set of weights of the irreducible fundamental 
(resp.\ left chiral spinor and right chiral spinor) representation of $\Spin(8)$. 
Physically, $(L,(-,-))$ is the lattice of flavour charges 
for the $\Spin(8)$
flavour symmetry group of the $\cN=2$ $N_f=4$ $SU(2)$ gauge theory.
\end{remark}

We view $L$ as a subgroup of the group 
$\frac{1}{2}L$, and the group algebra 
$\Z[A^{\pm}][L]$ as a subalgebra of the group algebra 
$\Z[A^{\pm}][\frac{1}{2}L]$. We also view 
$\Z[A^{\pm}][a_1,a_2,a_3,a_4]$
as a subalgebra of $\Z[A^{\pm}][\frac{1}{2}L]$ 
via the following identifications:
\begin{equation}
\label{eq:identif}
\begin{aligned}
a_1 = t^{\frac{G_1}{2}}
+t^{-\frac{G_1}{2}} \,,\\
a_2=t^{\frac{G_2}{2}}
+t^{-\frac{G_2}{2}} \,,\\
a_3=t^{\frac{G_3}{2}}
+t^{-\frac{G_3}{2}} \,,\\
a_4=t^{\frac{G_4}{2}}
+t^{-\frac{G_4}{2}}\,.
\end{aligned}
\end{equation}
Finally, recall that
we introduced the elements 
\[ R_{1,0}\,, R_{0,1}\,, R_{1,1}\,,
y \in \Z[A^{\pm}][a_1,a_2,a_3,a_4] \]
in Equations \eqref{eq:R} and \eqref{eq:y}. 
The elements 
$R_{1,0}$, $R_{0,1}$, $R_{1,1}$ and $y$ are algebraically independent over 
$\Z[A^{\pm}]$, and so we have the inclusion
\begin{equation}
\Z[A^{\pm}][R_{1,0}, R_{0,1}, R_{1,1}, y] \subset \Z[A^{\pm}][a_1,a_2,a_3,a_4] \,.
\end{equation}
The algebraic independence of 
$R_{1,0}$, $R_{0,1}$, $R_{1,1}$ follows from the more precise fact, 
proved in Appendix B of \cite{MR2649343},
that the morphism 
\begin{equation}
\begin{aligned}
\A^4 &\rightarrow \A^4 \,, \\
(a_1,a_2,a_3,a_4) &\mapsto 
(a_1a_2+a_3a_4,a_1a_3+a_2a_4,a_1a_4+a_2a_3,
a_1a_2a_3a_4+a_1^2+a_2^2+a_3^2+a_4^2-4) \,. 
\end{aligned}
\end{equation}
is a ramified cover of degree $24$.

\begin{prop}
\label{prop_key_identity}
Using the identifications 
\eqref{eq:identif}, 
the following identity holds between degree $8$ 
polynomials in the variable $x$ and with coefficients in $\Z[A^{\pm}][\frac{1}
{2}L]$: 
\begin{equation}
\label{eq:key_identity}
\begin{aligned}
\prod_{m=1}^8(1+t^{\nu(L_{1m})}x)
=
1+x^8+R_{1,0}(x+x^7)
+(y-A^4-2-A^{-4})(x^2+x^6) \\
+(R_{0,1}R_{1,1}-R_{1,0})(x^3+x^5)
+(R_{0,1}^2R_{1,1}^2-2y+2A^4+2+2A^{-4})x^4\,.
\end{aligned} 
\end{equation}
Similarly, $\prod_{m=1}^8(1+t^{\nu(L_{2m})}x)$ 
and $\prod_{m=1}^8(1+t^{\nu(L_{3m})}x)$
are given by the same expression up to 
cyclic permutation of $R_{1,0}$, $R_{0,1}$ and $R_{1,1}$.
\end{prop}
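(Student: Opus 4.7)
The plan is to use Lemma \ref{lem_nu} to split the set of exponents $\{\nu(L_{1m})\}_{m=1}^{8}$ into the two orbits of the sign-change group $\{\pm 1\}^2$, thereby factoring the left-hand side as a product of two palindromic polynomials of degree $4$, and then reduce the identity to an elementary symmetric function calculation. Writing $s_i \coloneqq t^{G_i/2}$ so that $a_i = s_i + s_i^{-1}$ under the identifications \eqref{eq:identif}, Lemma \ref{lem_nu} immediately gives
\begin{equation*}
\prod_{m=1}^{8}(1+t^{\nu(L_{1m})}x) = P_{12}(x)\cdot P_{34}(x), \qquad P_{ij}(x) \coloneqq \prod_{\epsilon,\epsilon' \in \{\pm 1\}}(1 + s_i^{\epsilon} s_j^{\epsilon'} x),
\end{equation*}
and analogous factorizations hold for $L_{2m}$ and $L_{3m}$ with pairings $\{1,3\},\{2,4\}$ and $\{1,4\},\{2,3\}$ respectively.

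First I would compute each $P_{ij}(x)$ by grouping reciprocal pairs of factors: pairing $(1+s_is_jx)$ with $(1+s_i^{-1}s_j^{-1}x)$ and $(1+s_is_j^{-1}x)$ with $(1+s_i^{-1}s_jx)$ produces two quadratic palindromes $1+Ux+x^2$ and $1+Vx+x^2$ satisfying $U+V=(s_i+s_i^{-1})(s_j+s_j^{-1})=a_ia_j$ and $UV=(s_i^2+s_i^{-2})+(s_j^2+s_j^{-2})=a_i^2+a_j^2-4$. Multiplying these two quadratics yields the palindrome
\begin{equation*}
P_{ij}(x) = 1 + a_ia_j\,x + (a_i^2+a_j^2-2)\,x^2 + a_ia_j\,x^3 + x^4.
\end{equation*}

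Next I would expand $P_{12}(x)P_{34}(x)$ and match coefficients with the right-hand side of \eqref{eq:key_identity}. The product is palindromic of degree $8$, so it suffices to identify the coefficients of $1, x, x^2, x^3, x^4$. The linear coefficient is $a_1a_2+a_3a_4=R_{1,0}$ by \eqref{eq:R}; the quadratic coefficient is $(a_1^2+a_2^2+a_3^2+a_4^2)+a_1a_2a_3a_4-4$, which equals $y-(A^2-A^{-2})^2-4=y-A^4-2-A^{-4}$ by \eqref{eq:y}; the cubic coefficient rearranges as $a_1a_2(a_3^2+a_4^2)+a_3a_4(a_1^2+a_2^2)-(a_1a_2+a_3a_4)$, which equals $R_{0,1}R_{1,1}-R_{1,0}$ after expanding $(a_1a_3+a_2a_4)(a_1a_4+a_2a_3)$; and the central coefficient $2+2a_1a_2a_3a_4+(a_1^2+a_2^2-2)(a_3^2+a_4^2-2)$ is rewritten using the elementary identity
\begin{equation*}
R_{0,1}^2 + R_{1,1}^2 = (a_1^2+a_2^2)(a_3^2+a_4^2) + 4\,a_1a_2a_3a_4,
\end{equation*}
verified by direct expansion, together with \eqref{eq:y}. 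The formulas for $L_{2m}$ and $L_{3m}$ then follow from the same calculation applied to the other two pairings of $\{1,2,3,4\}$: since the three pair-sums $a_ia_j+a_ka_l$ arising from those pairings are exactly $R_{1,0}, R_{0,1}, R_{1,1}$, the three computations are related by the cyclic permutation of $(R_{1,0},R_{0,1},R_{1,1})$ asserted in the statement. The whole proof is essentially routine symmetric function bookkeeping; the main organizational task is keeping track of the identities expressing the symmetric quantities in the $a_i$ in terms of $(R_{1,0},R_{0,1},R_{1,1},y)$, and no step presents a substantive obstacle.
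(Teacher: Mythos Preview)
Your proof is correct and follows essentially the same approach as the paper: both arguments hinge on the factorization of the degree-$8$ product into two palindromic quartics $P_{12}(x)P_{34}(x)$ via Lemma~\ref{lem_nu}, followed by the same symmetric-function bookkeeping in the $a_i$. The only cosmetic difference is direction---the paper expands the right-hand side in the $a_i$, observes that it factors, and then matches each factor with a product over sign-orbits, whereas you start from the orbit factorization and expand forward; the computations are identical. (Your identity $R_{0,1}^2+R_{1,1}^2=(a_1^2+a_2^2)(a_3^2+a_4^2)+4a_1a_2a_3a_4$ for the $x^4$ coefficient indeed yields $R_{0,1}^2+R_{1,1}^2-2y+2A^4+2+2A^{-4}$, consistent with the paper's own intermediate expansion; the $R_{0,1}^2R_{1,1}^2$ in the displayed statement is a typo for $R_{0,1}^2+R_{1,1}^2$.)
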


\begin{proof}
Using the definitions \eqref{eq:R} and 
\eqref{eq:y} of $R_{1,0}$, $R_{0,1}$, $R_{1,1}$ and $y$, 
we expand the left-hand side 
of \eqref{eq:key_identity} in terms of $a_1$, $a_2$, $a_3$ and $a_4$. 
We obtain
\begin{equation}
\begin{aligned}
1+x^8&+(a_1a_2+a_3a_4)(x+x^7)
+(a_1a_2a_3a_4+a_1^2+a_2^2+a_3^2+a_4^2-4)
(x^2+x^6) \\
&+(a_1^2a_3a_4
+a_2^2 a_3a_4
+a_1a_2a_3^2+a_1a_2a_4^2 
-a_1a_2-a_3a_4)(x^3+x^5)\\
&+(2a_1a_2a_3a_4+
a_1^2a_3^2+a_1^2a_4^2+a_2^2a_3^2+a_2^2 a_4^2
-2(a_1^2+a_2^2+a_3^2+a_4^2)+6)x^4 \,,
\end{aligned}
\end{equation}
which rather amazingly factors as 
\begin{equation}
(1+a_1a_2x+(a_1^2+a_2^2-2)x^2
+a_1a_2x^3+x^4)(1+a_3a_4x+(a_3^2+a_4^2-2)x^2
+a_3a_4x^3+x^4) \,.
\end{equation}
Using the identifications \eqref{eq:identif}, 
we check that 
\begin{equation}
1+a_1a_2x+(a_1^2+a_2^2-2)x^2
+a_1a_2x^3+x^4= \prod_{\substack{\epsilon_1 \in \{ \pm 1\}\\ \epsilon_2 \in \{\pm 1\}}}
(1+t^{\frac{1}{2}(\epsilon_1G_1+\epsilon_2 G_2)}x)
\end{equation}
and 
\begin{equation}
1+a_3a_4x+(a_3^2+a_4^2-2)x^2
+a_3a_4x^3+x^4= \prod_{\substack{\epsilon_3 \in \{ \pm 1\}\\ \epsilon_4 \in \{\pm 1\}}}
(1+t^{\frac{1}{2}(\epsilon_3 G_1+\epsilon_4 G_2)}x)\,.
\end{equation}
The result then follows from Lemma \ref{lem_nu}.
\end{proof}

\begin{cor} \label{cor:identif_form}
Using the identifications 
\eqref{eq:identif}, 
the following identities hold in
$\Z[A^{\pm}][\frac{1}{2}L]$: 
\begin{equation} \label{eq:identif_form}
R_{1,0}=\sum_{j=1}^8 
t^{\nu(L_{1j})} \,,\,\,
R_{0,1}=\sum_{j=1}^8 
t^{\nu(L_{2j})} \,,\,\,
R_{1,1}=\sum_{j=1}^8 
t^{\nu(L_{3j})} \,,
\end{equation}
\begin{equation} \label{eq:identif_form_2}
\sum_{1 \leq j<j' \leq 8}
t^{L_{1j}+L_{1j'}}
=
\sum_{1 \leq j<j' \leq 8}
t^{L_{2j}+L_{2j'}}
=\sum_{1 \leq j<j' \leq 8}
t^{L_{3j}+L_{3j'}}
=y-A^4-2-A^{-4} \,.
\end{equation}
\end{cor}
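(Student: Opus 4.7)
The plan is to deduce Corollary \ref{cor:identif_form} directly from Proposition \ref{prop_key_identity} by comparing coefficients of $x^k$ on both sides of \eqref{eq:key_identity}. Expanding the left-hand side of \eqref{eq:key_identity} as a polynomial in $x$ yields
\[ \prod_{m=1}^8(1+t^{\nu(L_{1m})}x) = \sum_{k=0}^8 e_k\bigl(t^{\nu(L_{11})},\dots,t^{\nu(L_{18})}\bigr) \, x^k, \]
where $e_k$ denotes the $k$-th elementary symmetric polynomial. Reading off the coefficient of $x^1$ on both sides of \eqref{eq:key_identity} gives
\[ \sum_{j=1}^8 t^{\nu(L_{1j})} = R_{1,0}, \]
which is the first identity of \eqref{eq:identif_form}. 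Reading off the coefficient of $x^2$ gives
\[ \sum_{1 \leq j<j' \leq 8} t^{\nu(L_{1j})+\nu(L_{1j'})} = y - A^4 - 2 - A^{-4}, \]
which is the first identity of \eqref{eq:identif_form_2} (with the notational convention that $t^{L_{ij}+L_{ij'}}$ in $\Z[A^{\pm}][\tfrac{1}{2}L]$ means $t^{\nu(L_{ij})+\nu(L_{ij'})}$, since the exponents must lie in $\tfrac{1}{2}L$).

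The two remaining identities in each of \eqref{eq:identif_form} and \eqref{eq:identif_form_2} then follow by applying the cyclic permutation $R_{1,0} \mapsto R_{0,1} \mapsto R_{1,1} \mapsto R_{1,0}$ stated at the end of Proposition \ref{prop_key_identity}, which correspondingly replaces $\{L_{1m}\}$ by $\{L_{2m}\}$ and $\{L_{3m}\}$; note that the coefficients of $x^2$ in \eqref{eq:key_identity} are invariant under this permutation (since $y$, $A^4+2+A^{-4}$, and the product $R_{0,1}R_{1,1}$ paired with the $x^2$ term don't mix after careful reading — in fact only $y-A^4-2-A^{-4}$ appears at $x^2$), so all three sums at $x^2$ coincide. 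Thus no additional computation beyond reading off coefficients is needed.

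The entire corollary is therefore a formal consequence of Proposition \ref{prop_key_identity}; there is no genuine obstacle, since the real content — the surprising factorization of $\prod_{m=1}^8(1+t^{\nu(L_{1m})}x)$ into two quartic factors involving the $a_i$ and the recognition of the coefficients in terms of $R_{1,0}$, $R_{0,1}R_{1,1}$, and $y$ — has already been carried out in the proof of Proposition \ref{prop_key_identity}. The only point requiring minor care is matching the notation $t^{L_{ij}+L_{ij'}}$ in the statement of the corollary with the images under $\nu$, which is harmless given that $R_{1,0}$, $R_{0,1}$, $R_{1,1}$, $y$ were built from the $a_i$ and we have fixed the identifications \eqref{eq:identif} via the basis $G_1,\dots,G_4$ of $\tfrac{1}{2}L$.
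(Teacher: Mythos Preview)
Your proposal is correct and matches the paper's own proof: the paper simply says that \eqref{eq:identif_form} and \eqref{eq:identif_form_2} follow by comparing the coefficients of $x$ and $x^2$, respectively, in the identity \eqref{eq:key_identity} of Proposition~\ref{prop_key_identity}. Your additional remarks on the cyclic permutation and the notational point about $t^{L_{ij}+L_{ij'}}$ versus $t^{\nu(L_{ij})+\nu(L_{ij'})}$ are accurate elaborations of what the paper leaves implicit.
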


\begin{proof}
Equation \eqref{eq:identif_form}
(resp. \eqref{eq:identif_form_2}) 
follows from comparing the coefficients of $x$
(resp. $x^2$) in the identity 
\eqref{eq:key_identity}
of Proposition \ref{prop_key_identity}.
\end{proof}

\begin{cor} \label{cor_identity_wall}
The following identity holds:
\begin{equation} \label{eq_identity_wall}
\begin{aligned}
\frac{\prod_{m=1}^8(1+t^{L_{1m}}x)}{(1-A^{-4} x^2)(1- x^2)^2
(1-A^4x^2)} 
=
1+& \frac{R_{1,0}x(1+x^2)}{(1-A^{-4}x^2)(1-A^4x^2)}
+\frac{yx^2}{(1-A^{-4}x^2)(1-A^4x^2)} \\
+ &\frac{R_{0,1}R_{1,1}x^3(1+
R_{0,1}R_{1,1}x+x^2)}{(1-A^{-4}x^2)
(1-x^2)^2(1-A^4x^2)}\,.
\end{aligned}
\end{equation}
\end{cor}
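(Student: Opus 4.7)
The plan is to reduce Corollary \ref{cor_identity_wall} to an explicit polynomial identity in $x$ and then verify it using the closed-form expansion of $\prod_{m=1}^8(1+t^{\nu(L_{1m})}x)$ provided by Proposition \ref{prop_key_identity}.

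First I would observe that both sides of \eqref{eq_identity_wall} share the common denominator $(1-A^{-4}x^2)(1-x^2)^2(1-A^4 x^2)$, so — applying $\nu$ and the identifications \eqref{eq:identif} to interpret the left-hand side in $\Z[A^{\pm}][\frac{1}{2}L][x]$ — the corollary reduces to the polynomial equality
\[ \prod_{m=1}^8 (1+t^{\nu(L_{1m})}x) = (1-A^{-4}x^2)(1-A^4x^2)(1-x^2)^2 + R_{1,0}\, x(1+x^2)(1-x^2)^2 + y\, x^2(1-x^2)^2 + R_{0,1}R_{1,1}\, x^3(1+R_{0,1}R_{1,1}\,x+x^2). \]

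Next, I would invoke Proposition \ref{prop_key_identity} to rewrite the left-hand side as an explicit palindromic polynomial of degree $8$ in $x$ with coefficients in $\Z[A^{\pm}][R_{1,0}, R_{0,1}R_{1,1}, y]$. For the right-hand side, I would expand each of the four summands separately: a short calculation gives $R_{1,0}\,x(1+x^2)(1-x^2)^2 = R_{1,0}(x-x^3-x^5+x^7)$, $y\,x^2(1-x^2)^2 = y(x^2-2x^4+x^6)$, $R_{0,1}R_{1,1}\,x^3(1+R_{0,1}R_{1,1}\,x+x^2) = R_{0,1}R_{1,1}(x^3+x^5)+R_{0,1}^2 R_{1,1}^2\, x^4$, and $(1-A^{-4}x^2)(1-A^4 x^2)(1-x^2)^2 = 1 - (2+A^4+A^{-4})x^2 + (2+2A^4+2A^{-4})x^4 - (2+A^4+A^{-4})x^6 + x^8$. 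Each of these four contributions is palindromic, so it suffices to compare the coefficients of $x^k$ for $0\leq k\leq 4$ against the closed form from Proposition \ref{prop_key_identity}; a term-by-term inspection yields equality.

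There is no real obstacle here: all of the geometric content of the identity is already packed into Proposition \ref{prop_key_identity}, and what remains is a routine polynomial verification of five coefficients. A useful sanity check is the coefficient of $x^4$, where the term $R_{0,1}^2 R_{1,1}^2$ must appear on both sides; this reflects the fact that the extra factor $(1-x^2)^2$ in the denominator on the left of \eqref{eq_identity_wall} — which arises geometrically from the two conics contributing to $f_{\fd_1}$ via Proposition \ref{prop_ray_rho_j_contribution} — is precisely what is needed to produce this $R_{0,1}^2 R_{1,1}^2\, x^4$ term from the numerator $\prod_{m=1}^8(1+t^{\nu(L_{1m})}x)$.
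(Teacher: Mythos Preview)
Your proposal is correct and follows essentially the same approach as the paper: both reduce the identity to Proposition~\ref{prop_key_identity} and then verify the resulting polynomial equality by elementary algebra. The paper's proof is terser (it just says to expand in powers of $R_{1,0}$, $R_{0,1}$, $R_{1,1}$, $y$ and simplify against the denominator), while you have made the coefficient-by-coefficient check explicit and used the palindromic symmetry to halve the work.
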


\begin{proof}
The identity 
\eqref{eq:key_identity}
of Proposition \ref{prop_key_identity}
expresses the numerator of the right-hand side of \eqref{eq_identity_wall}. 
We expand this expression in powers of $R_{1,0}$, 
$R_{0,1}$, $R_{1,1}$, $y$, 
and we simplify the resulting coefficients with the denominator.
\end{proof}

\begin{remark} \label{rem}
The left-hand side of Equation \ref{eq_identity_wall}
has exactly the form expected from the BPS spectrum of the 
$\cN=2$ $N_f=4$ $SU(2)$ gauge theory at large values of 
$u$ on the Coulomb branch: 
for every $(m,n) \in \Z^2$ with $m$ and $n$ coprime, 
we have one vector multiplet of charge $(2m,2n)$, which corresponds to 
to the denominator of the left-hand of Equation \eqref{eq_identity_wall},
$8$ hypermultiplets of charge $(m,n)$,
which correspond to the numerator 
of the left-hand of Equation \eqref{eq_identity_wall},
and no other states of charge a multiple of $(m,n)$
\cite{MR1306869}. 
The states of 
charge $(2,0)$ and $(1,0)$ can be seen classically 
(as $W$-bosons and elementary quarks respectively), 
and the general states of charge $(2m,2n)$ 
and $(m,n)$ are obtained from them by $SL_2(\Z)$ S-duality.
The states of charge $(2m,2n)$ 
are in the trivial representation of the $\Spin(8)$
flavour symmetry group, whereas the states of charge 
$(m,n)$ are in the $8$-dimensional fundamental 
(resp.\ left chiral spinor and right chiral spinor) 
of the $\Spin(8)$ flavour symmetry group if $(m,n)=(1,0) \mod 2$
(resp.\ $(0,1) \mod 2$ and $(1,1) \mod 2$). 
The $SL_2(\Z)$ $S$-duality group acts
on the flavour representations via the triality action of $PSL_2(\Z/2\Z) \simeq S_3$
permuting the three irreducible $8$-dimensional representations of $\Spin(8)$
(fundamental, left chiral spinor and right chiral spinor).
\end{remark}

\subsection{The quantum scattering diagram $\nu(\fD_\can)$}

\label{sect_can_nu}

In Section \ref{section_canonical_scattering}, 
we introduced and studied the quantum scattering diagram $\fD_{\can}$ over 
$\Z[q^{\pm}][NE(Y)]$. Recall that we use the notation $q=A^4$, 
and from now on we view $\fD_{\can}$ 
as a quantum scattering diagram over $\Z[A^{\pm}][NE(Y)]$.
In Section \ref{section_change_variables}, 
we considered the quotient $L$ of 
$A_1(Y)$ by the subgroup generated by 
$D_1$, $D_2$, $D_3$, and the quotient map
$\nu \colon NE(Y) \rightarrow L$.  
Given $f$ a power series with coefficients in $\Z[A^{\pm}][NE(Y)]$, 
we define the power series $\nu(f)$ with
coefficients in $\Z[A^{\pm}][L]$ by applying $\nu$ to each coefficient. 

\begin{defn}
We denote by $\nu(\fD_\can)$ the quantum scattering diagram over $\Z[A^{\pm}][L]$ 
obtained from $\fD_{\can}$ by applying 
$\nu$ to the quantum rays:
\begin{equation}
\nu(\fD_{\can}) \coloneqq \{ \nu(\fd_{m,n})\,|\, (m,n) \in B(\Z)\,, \gcd(m,n)=1 \} \,,
\end{equation}
where, for every quantum ray $\fd_{m,n}
=((m,n), f_{\fd_{m,n}})$, 
\begin{equation}
\nu(f_{\fd_{m,n}})\coloneqq ((m,n), \nu(f_{\fd_{m,n}})) \,.
\end{equation}
\end{defn}

As in Section \ref{section_change_variables}, we view 
$\Z[A^{\pm}][a_1,a_2,a_3,a_4]$
as a subalgebra of $\Z[A^{\pm}][\frac{1}{2}L]$ via \eqref{eq:identif}, 
and we use the 
the elements 
\[ R_{1,0}\,, R_{0,1}\,, R_{1,1}\,,
y \in \Z[A^{\pm}][a_1,a_2,a_3,a_4] \]
defined by \eqref{eq:R} and \eqref{eq:y}.
Recall that we introduced the rational function $F(r,s,y,x)$ 
in Equation \eqref{eq:def_F}.

The following Proposition
\ref{prop_ray_nu}
computes the quantum ray $\nu(\fd_1)=\nu(\fd_{1,0})$ of 
$\nu(\fD_\can)$.

\begin{prop} \label{prop_ray_nu}
The quantum ray $\nu(\fd_1)=(v_1,\nu(f_{\fd_1}))$
satisfies
\begin{equation}
\nu(f_{\fd_1}) = F(R_{1,0},R_{0,1}R_{1,1}, y, z^{-v_1}) \,.
\end{equation}
\end{prop}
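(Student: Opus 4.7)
The plan is to simply combine the explicit formula for $f_{\fd_1}$ from Proposition \ref{prop_ray_rho_j_contribution} with the algebraic identity of Corollary \ref{cor_identity_wall}, after observing how $\nu$ acts on the denominator.

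First, by Proposition \ref{prop_ray_rho_j_contribution} (applied with $j=1$, so $k=2$, $\ell=3$), we have
\begin{equation}
f_{\fd_1} = \frac{\prod_{m=1}^8 (1+t^{L_{1m}} z^{-v_1})}{(1-q^{-1} t^{D_2+D_3} z^{-2v_1})(1-t^{D_2+D_3} z^{-2v_1})^2 (1-q\, t^{D_2+D_3} z^{-2v_1})}.
\end{equation}
Since $L$ is the quotient of $A_1(Y)$ by the subgroup generated by $D_1,D_2,D_3$, the class $D_2+D_3$ maps to $0$ under $\nu$, hence $\nu(t^{D_2+D_3}) = 1$. Applying $\nu$ coefficientwise and using $q = A^4$, the denominator becomes $(1-A^{-4} z^{-2v_1})(1-z^{-2v_1})^2(1-A^4 z^{-2v_1})$, so
\begin{equation}
\nu(f_{\fd_1}) = \frac{\prod_{m=1}^8 (1+t^{\nu(L_{1m})} z^{-v_1})}{(1-A^{-4} z^{-2v_1})(1-z^{-2v_1})^2 (1-A^4 z^{-2v_1})}.
\end{equation}

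Second, Corollary \ref{cor_identity_wall}, specialized at $x = z^{-v_1}$, asserts precisely that the rational function on the right hand side above equals $F(R_{1,0}, R_{0,1}R_{1,1}, y, z^{-v_1})$; this immediately yields the proposition. No further argument is required beyond verifying that the variables match: the rational function $F$ was defined in Equation \eqref{eq:def_F} with the same denominator shape, and Corollary \ref{cor_identity_wall} is exactly the statement needed here.

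Thus the proof is essentially a one-line combination of results already established. The substantive work happens upstream, in Proposition \ref{prop_key_identity}, where the nontrivial factorization of $\prod_{m=1}^8 (1 + t^{\nu(L_{1m})} x)$ in terms of $R_{1,0}, R_{0,1} R_{1,1}, y$ is carried out; that identity is what makes the formula for the canonical scattering diagram match the skein algebra scattering diagram $\fD_{0,4}$ of Definition \ref{defn_scat_04}. For the present proposition there is no remaining obstacle.
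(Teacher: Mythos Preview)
Your proof is correct and follows essentially the same approach as the paper: start from the explicit formula of Proposition \ref{prop_ray_rho_j_contribution}, apply $\nu$ (using $\nu(D_2+D_3)=0$ and $q=A^4$), and then invoke Corollary \ref{cor_identity_wall} together with the definition of $F$. You spell out the intermediate step on the denominator a bit more explicitly than the paper does, but the argument is the same.
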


\begin{proof}
Equation \eqref{eq:ray_rho_j_contribution} 
of Proposition \ref{prop_ray_rho_j_contribution} gives a formula for
$f_{\fd_1}$. The result of applying $\nu$ is given 
by the identity \eqref{eq_identity_wall} in Corollary 
\ref{cor_identity_wall}. It remains to compare with the definition of
$F(r,s,y,x)$ in Equation \eqref{eq:def_F} to conclude.
\end{proof}

In the following Theorem
\ref{thm_nu_all_rays}, 
we compute all the quantum rays of $\nu(\fD_\can)$.

\begin{thm} \label{thm_nu_all_rays}
The quantum rays $\nu(\fd_{m,n})$ 
of the quantum scattering diagram $\nu(\fD_\can)$
are given as follows. For every 
$(m,n) \in B_0(\Z)$ with $m$ and $n$ coprime, 
\begin{enumerate}
\item if $(m,n)=(1,0) \mod 2$, then
$ \nu(f_{\fd_{m,n}})= F(R_{1,0}, R_{0,1}R_{1,1},y,z^{-(m,n)})$,
\item if $(m,n)=(0,1) \mod 2$, 
then
$ \nu(f_{\fd_{m,n}})= F(R_{0,1}, R_{1,0}R_{1,1},y,z^{-(m,n)})$,
\item if $(m,n)=(1,1) \mod 2$,
then  
$ \nu(f_{\fd_{m,n}})
\coloneqq F(R_{1,1}, R_{1,0}R_{0,1},y,z^{-(m,n)})$.
\end{enumerate}
\end{thm}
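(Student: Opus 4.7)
The plan is to leverage the $PSL_2(\Z)$-equivariance of the canonical quantum scattering diagram (Proposition \ref{prop_symmetry}) together with the single-ray computation of Proposition \ref{prop_ray_nu}. Since $PSL_2(\Z)$ acts transitively on primitive elements of $B(\Z)$, for every primitive $(m,n)$ I choose $M \in PSL_2(\Z)$ with $M(v_1)=(m,n)$; Proposition \ref{prop_symmetry} then gives
\[
\nu(f_{\fd_{m,n}}) \;=\; \nu\bigl(M^{*}(f_{\fd_1})\bigr),
\]
with the variable $z^{-v_1}$ in the formula of Proposition \ref{prop_ray_nu} replaced by $z^{-(m,n)}$. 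Comparing with the desired expression $F(\,\cdot\,,\,\cdot\,,y,z^{-(m,n)})$, it remains to understand how $\nu \circ M^{*}$ acts on the three coefficients $R_{1,0}$, $R_{0,1}R_{1,1}$, $y$ that enter that formula.

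The decisive claim is that this induced action factors through the reduction $PSL_2(\Z) \twoheadrightarrow PSL_2(\Z/2\Z) \simeq S_3$, where $S_3$ permutes the three non-zero mod $2$ classes $v_1, v_2, v_3 \in B(\Z/2\Z)$. By Corollary \ref{cor:identif_form} and Equation \eqref{eq:identif_form_2}, the variables $R_{1,0}, R_{0,1}, R_{1,1}$ are nothing but the sums $\sum_m t^{\nu(L_{jm})}$ for $j=1,2,3$, and $y$ is symmetric in the three sets. Thus the claim reduces to showing that $\nu \circ M^{*}$ permutes the three 8-element multisets $\{\nu(L_{jm})\}_{m=1}^{8}$ according to the $S_3$-action determined by $M \bmod 2$, while fixing $y$ and the product $R_{1,0}R_{0,1}R_{1,1}$. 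I will verify this on the two generators: for $S$ it is immediate from the $\Z/3\Z$-cyclic symmetry of $(Y,D)$ permuting $D_1 \to D_2 \to D_3$ (already used in Lemma \ref{lem_S_preserving}), which moves the line sets $\{L_{1m}\} \to \{L_{2m}\} \to \{L_{3m}\}$ on the nose in $A_1(Y)$; for $T$ the corresponding statements hold only after applying $\nu$, and must be checked by direct case-by-case computation using the explicit formulas \eqref{eq:T_action}.

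The main technical obstacle is this $T$-computation: one must show that for each of the eight classes $L_{2m}$, the class $T^{*}(L_{2m})$ equals $L_{3m'}$ modulo $\ker \nu$ for some $m'$ (and symmetrically between $L_{3m}$ and $L_{2m}$), while $T^{*}$ stabilizes $\{L_{1m}\}$ modulo $\ker \nu$; a parallel check is needed for the denominator factor $D_k+D_\ell$, where only $\nu$-vanishing is required. This generalizes the analogous verification performed for $\fd_{1,1}$ in Corollary \ref{cor_ray_11}. Once these bijections are established, the $S_3$-orbit structure together with the factorization proved in Corollary \ref{cor_identity_wall} directly yields the three formulas in the statement: the mod $2$ class $v_1$ produces $F(R_{1,0},R_{0,1}R_{1,1},y,z^{-(m,n)})$, and the $S$- and $ST$-translates produce the other two cases by cyclic permutation.
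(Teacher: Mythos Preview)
Your strategy is correct and is essentially the paper's: reduce the general ray to $\fd_1$ via the $PSL_2(\Z)$-equivariance of Proposition~\ref{prop_symmetry}, apply Proposition~\ref{prop_ray_nu}, and show that the induced action on the coefficients $R_{1,0},R_{0,1},R_{1,1},y$ factors through $PSL_2(\Z/2\Z)\simeq S_3$.

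The only substantive difference is in how the $T$-action is computed after applying $\nu$. You propose to verify, for each of the eight classes $L_{2m}$ (and similarly $L_{1m}$, $L_{3m}$), that $T^{*}(L_{2m})\equiv L_{3m'}\bmod\ker\nu$ for some $m'$. The paper instead first checks that $T^{*}$ preserves the subgroup $\langle D_1,D_2,D_3\rangle$ and hence descends to a well-defined linear endomorphism of the rank-$4$ quotient $L$; it then computes this induced map on the basis $e_1,\dots,e_4$ of Lemma~\ref{lem:L_basis}, from which the action on $G_1,\dots,G_4$ (and thus on $a_1,\dots,a_4$ via \eqref{eq:identif}) follows mechanically. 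This yields $T^{*}(R_{1,0})=R_{1,0}$, $T^{*}(R_{0,1})=R_{1,1}$, $T^{*}(R_{1,1})=R_{0,1}$, $T^{*}(y)=y$ all at once, and also makes the $\Spin(8)$ triality visible in the explicit half-integer formulas for $T^{*}(G_j)$. Your case-by-case check on twenty-four line classes would reach the same conclusion but is bulkier and hides this structure; the paper's linear-algebra route also handles the denominator $D_k+D_\ell$ automatically, since $\nu(D_k+D_\ell)=0$ for all $k,\ell$.
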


\begin{proof}
In
Section \ref{section_contribution_general_rays}, 
we expressed a general quantum
ray $\fd_{m,n}$ of $\fD_\can$ in terms 
of the quantum ray $\fd_{1,0}$ and
a $PSL_2(\Z)$-symmetry acting on curve classes. 
We will show that after applying the quotient map 
$\nu$, the $PSL_2(\Z)$-symmetry simplifies dramatically.

The transformation $S^{*}$ of $A_1(Y)$
is given by Equation \eqref{eq:S_action}.
We have $S^{*}(D_1)=S^{*}(D_2)$,
$S^{*}(D_2)=S^{*}(D_3)$
and $S^{*}(D_3)=S^{*}(D_1)$. Therefore, 
$S^{*}$ preserves the subgroup
of $A_1(Y)$ generated by $D_1$, $D_2$, 
$D_3$, and so defines a transformation
of the quotient $L$, that we still denote by $S^{*}$.
Computing the action of $S^{*}$ 
on the basis $e_1,e_2,e_3,e_4$ of $L$ 
given by Equation \eqref{eq:L_basis} and Lemma \ref{lem:L_basis}, we find 
\begin{equation}
S^{*}(e_1)=e_2 \,,S^{*}(e_2)=e_3\,,
S^{*}(e_3)=e_1\,,S^{*}(e_4)=e_4 \,.
\end{equation}
In particular, $S^{*} \colon L \rightarrow L$ is a bijection.

The transformation $T^{*}$ of 
$A_1(Y)$ is given by Equation \eqref{eq:T_action}. We have 
$T^{*}(D_1)=D_1+D_3$, 
$T^{*}(D_2)=0$ and 
$T^{*}(D_3)=D_2+D_3$.
Therefore, $T^{*}$ preserves the 
subgroup of $A_1(Y)$ generated by 
$D_1$, $D_2$, $D_3$, and so defines 
a transformation of the quotient $L$, that we still denote by 
$T^{*}$.
Computing the action of $T^{*}$ on the basis $e_1,e_2,e_3,e_4$ 
of $L$ given by Equation \eqref{eq:L_basis} and Lemma \ref{lem:L_basis}, we find 
\begin{equation}
T^{*}(e_1)=e_1\,,
T^{*}(e_2)=e_4-e_3\,,
T^{*}(e_3)=e_2\,,
T^{*}(e_4)=e_4\,.
\end{equation}
In particular, $T^{*} \colon 
L \rightarrow L$ is a bijection.

Therefore, $S^{*}$ and $T^{*}$ on $L$ 
defines an action of $PSL_2(\Z)$ on $L$ and so on $\Z[A^{\pm}][L]$
and 
$\Z[A^{\pm}][\frac{1}{2} L]$. Computing the action of $S^{*}$ on 
$G_1, G_2, G_3, G_4$, we find
\begin{equation}
S^{*}(G_1)=G_1\,, S^{*}(G_2)
=G_3 \,,
S^{*}(G_3)=G_4 \,,
S^{*}(G_4)=G_2 \,,
\end{equation}
and so \begin{equation}
S^{*}(a_1)=a_1 \,, S^{*}(a_2)=a_3 \,,
S^{*}(a_3)=a_4 \,,S^{*}(a_4)=a_2\,,
\end{equation}
\begin{equation} \label{S_R_action}
S^{*}(R_{1,0})=R_{0,1} \,, 
S^{*}(R_{0,1})=R_{1,1} \,,
S^{*}(R_{1,1})=R_{1,0}\,,
S^{*}(y)=y\,.
\end{equation}
Computing the action of $T^{*}$ on 
$G_1, G_2, G_3, G_4$, we find
\begin{equation}
\begin{aligned}
T^{*}(G_1)&=\frac{1}{2}(G_1+G_2+G_3-G_4) \,,\\
T^{*}(G_2)&=\frac{1}{2}
(G_1+G_2-G_3+G_4) \,,\\
T^{*}(G_3)&=\frac{1}{2}
(-G_1+G_2+G_3+G_4) \,,\\
T^{*}(G_4)&=\frac{1}{2}
(G_1-G_2+G_3+G_4) \,.
\end{aligned}
\end{equation}
and then 
\begin{equation} \label{eq:T_R_action}
T^{*}(R_{1,0})=R_{1,0} \,,
T^{*}(R_{0,1})=R_{1,1}\,,
T^{*}(R_{1,1})=R_{0,1} \,,
T^{*}(y)=y \,.
\end{equation}
From Equations \eqref{eq:S_action} and 
\eqref{eq:T_R_action}, we see that $PSL_2(\Z)$ 
acts trivially on $y$, and acts on 
$R_{1,0}$, $R_{0,1}$ and $R_{1,1}$ through its finite quotient 
$PSL_2(\Z/2\Z)$ acting on indices $m,n$ of $R_{m,n}$ 
viewed as integers modulo $2$. 
Recalling that $PSL_2(\Z/2 \Z)$ is isomorphic to the symmetric group $S_3$ 
of permutations of a set with three elements, $S^{*}$ acts on $\{R_{1,0},R_{0,1}R_{1,1}\}$ 
as a cyclic permutation, whereas $T^{*}$ acts as a transposition.

We can now end the proof of Theorem \ref{thm_nu_all_rays}. 
For every 
$(m,n) \in B_0(\Z)$ with $m$ and $n$, coprime, there exists $M \in SL_2(\Z)$ 
such that $M(m,n)=(1,0)$. By 
Proposition \ref{prop_symmetry}, we have $\fd_{m,n}=M(\fd_{1,0})$ and so
$\nu(\fd_{m,n})=M(\nu(\fd_{1,0}))$. 
The result then follows from Proposition 
\ref{prop_ray_nu} computing $\nu(\fd_{1,0})$, 
and from the above description of the action 
of $PSL_2(\Z)$ on $R_{1,0}$, $R_{0,1}$, $R_{1,1}$ and $y$ 
through the finite quotient  $PSL_2(\Z/2\Z)$.
\end{proof}

In Section \ref{section_scattering_04}, 
we defined the quantum scattering diagram $\fD_{0,4}$ over 
$\Z[A^{\pm}][R_{1,0},R_{0,1},R_{1,1},y]$.
By Theorem \ref{thm_nu_all_rays}, 
the quantum scattering diagram
$\nu(\fD_\can)$, which is a priori defined over 
$\Z[A^{\pm}][L]$, can be
viewed as a quantum scattering diagram over  $\Z[A^{\pm}][R_{1,0},R_{0,1},R_{1,1},y]$.

\begin{cor} \label{cor_eq_scat}
We have the equality 
$\fD_{0,4}=\nu(\fD_\can)$ of quantum scattering diagrams over 
$\Z[A^{\pm}][R_{1,0},R_{0,1},R_{1,1},y]$.
\end{cor}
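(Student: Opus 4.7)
The plan is to verify the claim by direct comparison of the defining data of the two quantum scattering diagrams, using the explicit computation of $\nu(\fD_\can)$ carried out in Theorem \ref{thm_nu_all_rays}. Since a quantum scattering diagram is specified by a collection of pairs (direction, attached function), it suffices to check (i) that the two diagrams have the same index set of quantum rays, and (ii) that, ray by ray, both the primitive direction and the attached power series coincide in $\Z[A^{\pm}][R_{1,0},R_{0,1},R_{1,1},y]$.

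For (i), recall from Definition \ref{defn_scat_04} that $\fD_{0,4}$ consists of the quantum rays $\rho_{m,n}$ with $(m,n)\in B_0(\Z)$ and $\gcd(m,n)=1$, while by construction $\nu(\fD_\can)$ consists of the $\nu(\fd_{m,n})$ with the same indexing set, since $\nu$ is applied coefficientwise and does not affect directions. So the index sets match. For (ii), the direction attached to $\rho_{m,n}$ is $(m,n)$ by Definition \ref{defn_rays_sk}, and the direction attached to $\nu(\fd_{m,n})$ is also $p_{\fd_{m,n}}=(m,n)$ by Definition \ref{defn_rays_can}; so the primitive directions agree.

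It then remains to compare the attached functions. On the $\fD_{0,4}$ side, Definition \ref{defn_rays_sk} describes $f_{\rho_{m,n}}$ by the three-case formula involving $F(r,s,y,x)$, with the triple $(r,s)$ cyclically permuted among $\{(R_{1,0},R_{0,1}R_{1,1}),\,(R_{0,1},R_{1,0}R_{1,1}),\,(R_{1,1},R_{1,0}R_{0,1})\}$ according to $(m,n)\bmod 2$. On the $\nu(\fD_\can)$ side, Theorem \ref{thm_nu_all_rays} yields exactly the same three-case formula for $\nu(f_{\fd_{m,n}})$. The two expressions are identical symbol by symbol, so $f_{\rho_{m,n}}=\nu(f_{\fd_{m,n}})$ for every primitive $(m,n)$.

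There is essentially no obstacle here: the substance is contained in Theorem \ref{thm_nu_all_rays}, whose proof in turn relied on Proposition \ref{prop_ray_nu} (the case $(m,n)=(1,0)$, obtained via Corollary \ref{cor_identity_wall} from the key identity \eqref{eq:key_identity}) together with the observation that the $PSL_2(\Z)$-action on $NE(Y)$ descends, under $\nu$, to an action on $\Z[A^{\pm}][\tfrac{1}{2}L]$ that factors through the triality quotient $PSL_2(\Z/2\Z)\simeq S_3$ permuting $\{R_{1,0},R_{0,1},R_{1,1}\}$ while fixing $y$. Given those inputs, the comparison of the two explicit case-by-case formulas is immediate and concludes the proof.
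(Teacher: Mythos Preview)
Your proof is correct and follows essentially the same approach as the paper: both argue by directly comparing the explicit description of $\nu(\fD_\can)$ from Theorem~\ref{thm_nu_all_rays} with Definition~\ref{defn_scat_04} of $\fD_{0,4}$. Your write-up is more detailed in spelling out the ray-by-ray comparison, but the content is identical.
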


\begin{proof}
This follows from comparing the description of $\nu(\fD_\can)$
given by Theorem \ref{thm_nu_all_rays}
with the Definition 
\ref{defn_scat_04}
of $\fD_{0,4}$.
\end{proof}

We can now end the proof of Theorem
\ref{thm:consistent_0_4}. By Theorem 
\ref{thm_can_consistent}, the quantum scattering diagram $\fD_\can$ is consistent, 
and so in particular, applying the quotient map $\nu$, 
the quantum scattering diagram $\nu(\fD_\can)$ is also consistent. Therefore, 
$\fD_{0,4}$ is consistent by Corollary 
\ref{cor_eq_scat}.

\subsection{End of the proof of positivity for $\Sk_A(\bS_{0,4})$}
\label{section_end_proof}

In the previous Section \ref{sect_can_nu}, we proved that 
$\fD_{0,4}=\nu(\fD_\can)$ and so in particular that $\fD_{0,4}$. 
Let $\cA_{\fD_{0,4}}$ be the corresponding 
$\Z[A^{\pm}][R_{1,0}, R_{0,1}, R_{1,1}, y]$-algebra given by Definition
\ref{def:algebra}, with its basis 
$\{ \vartheta_p \}_{p \in B(\Z)}$ of quantum theta functions.
Recall from Section \ref{section_stronger_04} that the isotopy classes 
$\{ \gamma_p\}_{p \in B(\Z)}$ of 
multicurves without peripheral components on $\bS_{0,4}$ 
form a basis of $\Sk_A(\bS_{0,4})$ as
\linebreak
$\Z[A^{\pm}][a_1,a_2,a_3,a_4]$-module, and that the bracelets basis is 
$\{ \mathbf{T}(\gamma_p)\}_{p \in B(\Z)}$.

In the present section, we prove Theorem \ref{thm_ring_isom}, 
that is, we will construct a morphism 
$\varphi \colon \cA_{\fD_{0,4}} 
\rightarrow \Sk_A(\bS_{0,4})$
of $\Z[A^{\pm}][R_{1,0},R_{0,1}, R_{1,1},y]$-algebras
such that $ \varphi(\vartheta_p)=T(\gamma_p)$ 
for every $p \in B(\Z)$, and which becomes an isomorphism of 
$\Z[A^{\pm}][a_1,a_2,a_3,a_4]$-algebras  
after extension of scalars for  $\cA_{\fD_{0,4}} $ from 
$\Z[A^{\pm}][R_{1,0},R_{0,1}, R_{1,1},y]$ to $\Z[A^{\pm}][a_1,a_2,a_3,a_4]$.

Bullock and Przytycki gave in \cite[Theorem 3.1]{MR1625701} 
the following presentation of $\Sk_A(\bS_{0,4})$.

\begin{thm}(\cite[Theorem 3.1]{MR1625701}) \label{thm:sk_cubic}
The $\Z[A^{\pm}][a_1,a_2,a_3,a_4]$-algebra
$\Sk_A(\bS_{0,4})$ admits the following presentation by generators and relation:
$\Sk_A(\bS_{0,4})$ is generated by 
$\gamma_{v_1}$, $\gamma_{v_2}$, $\gamma_{v_3}$, with the relations
\begin{equation} \label{eq:sk_comm_1}
 A^{-2} \gamma_{v_1}
\gamma_{v_2} 
-A^2 
\gamma_{v_2}\gamma_{v_1} 
=(A^{-4}-A^4)\gamma_{v_3}
-(A^2-A^{-2}) R_{1,1} \,,
\end{equation}

\begin{equation} \label{eq:sk_comm_2}
 A^{-2} \gamma_{v_2}
\gamma_{v_3} 
-A^2 
\gamma_{v_3}\gamma_{v_2} 
=(A^{-4}-A^4)\gamma_{v_1}
-(A^2-A^{-2}) R_{1,0}\,,
\end{equation}

\begin{equation} \label{eq:sk_comm_3}
 A^{-2} \gamma_{v_3}
\gamma_{v_1} 
-A^2 
\gamma_{v_1}\gamma_{v_3} 
=(A^{-4}-A^4)\gamma_{v_2}
-(A^2-A^{-2}) R_{0,1}\,,
\end{equation}

\begin{equation} \label{eq:sk_cubic}
A^{-2}
\gamma_{v_1} 
\gamma_{v_2}
\gamma_{v_3}
=A^{-4} \gamma_{v_1}^2 
+A^4 \gamma_{v_2}^2 
+A^{-4} \gamma_{v_3}^2
+
A^{-2} R_{1,0}  \gamma_{v_1}
+ A^2 R_{0,1}\gamma_{v_2}
+A^{-2} R_{1,1} \gamma_{v_3}+y-2(A^4+A^{-4})\,.
\end{equation}
\end{thm}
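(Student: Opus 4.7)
\emph{Proof proposal.} This presentation is classical (attributed to Bullock--Przytycki), but let me outline the approach I would follow. The plan is to verify the four relations directly by Kauffman-bracket calculations, and then argue by a dimension-counting comparison with the multicurve basis of $\Sk_A(\bS_{0,4})$.

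First I would establish that $\gamma_{v_1}, \gamma_{v_2}, \gamma_{v_3}$, together with the central peripheral curves $a_1,\dots,a_4$, generate $\Sk_A(\bS_{0,4})$ as a $\Z[A^{\pm}]$-algebra. The simple closed curves $\gamma_{v_j}$ are the three curves separating the four punctures into the three possible pairings. Any multicurve on $\bS_{0,4}$ without peripheral components, once placed in generic position, can be expressed as a $\Z[A^{\pm}][a_1,a_2,a_3,a_4]$-linear combination of words in $\gamma_{v_1}, \gamma_{v_2}, \gamma_{v_3}$: each crossing of a multicurve with $\gamma_{v_1}$ (say) can be resolved via the Kauffman relation, producing pieces of strictly smaller Dehn--Thurston complexity (measured, for instance, by the $F$-function of Section \ref{section_quantum_broken_lines}), so an induction closes.

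Second I would verify the three commutator relations \eqref{eq:sk_comm_1}--\eqref{eq:sk_comm_3}. For $\gamma_{v_1}\gamma_{v_2}$, one arranges the two simple closed curves to intersect transversally in exactly two points on $\bS_{0,4}$. Applying the Kauffman relation $\KPB = A\,\KPC + A^{-1}\,\KPD$ at each crossing produces $2^2 = 4$ resolved diagrams; after isotopy each one is identified either as $\gamma_{v_3}$ with a framing correction, as a peripheral product appearing in $R_{1,1} = a_1 a_4 + a_2 a_3$, or as a diagram producing a loop to be removed using $L \cup \KPA = -(A^2+A^{-2}) L$. Forming the combination $A^{-2}\gamma_{v_1}\gamma_{v_2} - A^2 \gamma_{v_2}\gamma_{v_1}$ cancels the symmetric terms and leaves exactly the right-hand side of \eqref{eq:sk_comm_1}. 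The other two cases are analogous under the cyclic $\Z/3\Z$-symmetry of the pants decomposition.

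Third, and this I expect to be the main obstacle, I would verify the cubic relation \eqref{eq:sk_cubic}. Two approaches are possible. The direct approach computes $\gamma_{v_1}\gamma_{v_2}\gamma_{v_3}$ by stacking all three curves and resolving every crossing via the Kauffman relation, keeping careful track of framings and of the $A$-weight contributed by each smoothing; the hard part is managing the combinatorial explosion and checking that the quantum corrections assemble into precisely $y - 2(A^4 + A^{-4})$ rather than the classical constant $-4$. A cleaner approach is to compute $\gamma_{v_1} \cdot (\gamma_{v_2}\gamma_{v_3})$ using \eqref{eq:sk_comm_2} to expand $\gamma_{v_2}\gamma_{v_3}$ in terms of $\gamma_{v_3}\gamma_{v_2}$ and $\gamma_{v_1}$, then combine with $\gamma_{v_1}\gamma_{v_2}$ expansions from \eqref{eq:sk_comm_1} and an expansion of $\gamma_{v_i}^2$ (which requires only a single two-crossing skein calculation). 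Finally, I would conclude by a basis comparison: use the commutator relations and the cubic relation to show that the abstract $\Z[A^{\pm}][a_1,\dots,a_4]$-algebra defined by the presentation is spanned over $\Z[A^{\pm}][a_1,\dots,a_4]$ by monomials $\gamma_{v_j}^a \gamma_{v_{j+1}}^b$ indexed by $B(\Z)$; since these map to the multicurve classes $\gamma_p$, which form a $\Z[A^{\pm}][a_1,\dots,a_4]$-basis of $\Sk_A(\bS_{0,4})$ by Przytycki's theorem, surjectivity forces the map from the presented algebra to be an isomorphism.
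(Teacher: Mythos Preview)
The paper does not give its own proof of this theorem: it is quoted verbatim as \cite[Theorem 3.1]{MR1625701} (Bullock--Przytycki), and the only content the paper adds is the one-line remark that Bullock--Przytycki use the generators $\gamma_{v_1},\gamma_{v_2},\gamma_{v_1+v_2}$ rather than $\gamma_{v_1},\gamma_{v_2},\gamma_{v_3}$, and that swapping $\gamma_{v_1+v_2}$ for $\gamma_{v_3}$ amounts to replacing $A$ by $A^{-1}$ in the relations. So there is nothing in the paper to compare your argument against beyond that change of variables.

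Your outline is a reasonable reconstruction of the Bullock--Przytycki argument and would work. One small correction to your final step: the monomials $\gamma_{v_j}^a\gamma_{v_{j+1}}^b$ do not map to the multicurve classes $\gamma_p$ on the nose; rather they agree with $\gamma_p$ modulo lower-order terms in the filtration by $F$ (this is exactly the content of Lemma~\ref{lem_filtration} and Lemma~\ref{lem_m_linear_basis} on the $\cA_{\fD_\can}$ side). That triangularity is all you need for the basis comparison, so the conclusion still goes through. Also, if you wanted to match the paper's conventions precisely, you would either carry out the skein computations with the triple $\gamma_{v_1},\gamma_{v_2},\gamma_{v_1+v_2}$ and then make the $A\leftrightarrow A^{-1}$ substitution at the end, or keep track of the extra crossing introduced by working with $\gamma_{v_3}$ directly.
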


Note that in Theorem \ref{thm:sk_cubic}, 
we use the generators $\gamma_{v_1}$, $\gamma_{v_2}$, $\gamma_{v_3}$, 
whereas the generators $\gamma_{v_1}$, $\gamma_{v_2}$, $\gamma_{v_1+v_2}$ 
are used in \cite[Theorem 3.1]{MR1625701}.
Using $\gamma_{v_3}$ rather than 
$\gamma_{v_1+v_2}$ has for unique effect on the equations to replace $A$ by $A^{-1}$. 

On the other hand, applying the quotient map 
$\nu$ to the presentation of $\fD_{\can}$ given by Theorem \ref{thm_eq_cubic}, 
and using the identities
\eqref{eq:identif_form} and \eqref{eq:identif_form_2}
given by Corollary 
\ref{cor:identif_form},
we obtain the following presentation of $\cA_{\fD_{0,4}}$.

\begin{thm} \label{thm_eq_cubic_nu}
The $\Z[A^{\pm}][R_{1,0},R_{0,1},R_{1,1},y]$-algebra
$\cA_{\fD_{0,4}}$ admits the following presentation by generators and relation:
$\cA_{\fD_\can}$ is generated by 
$\vartheta_{v_1}$, $\vartheta_{v_2}$, $\vartheta_{v_3}$, with the relations
\begin{equation} \label{eq:comm_1_nu}
 A^{-2} \vartheta_{v_1}
\vartheta_{v_2} 
-A^2
\vartheta_{v_2}\vartheta_{v_1} 
=(A^{-4}-A^4)\vartheta_{v_3}
-(A^2-A^{-2})R_{1,1} \,,
\end{equation}
\begin{equation} \label{eq:comm_2_nu}
A^{-2} \vartheta_{v_2}
\vartheta_{v_3} 
-A^2 
\vartheta_{v_3} \vartheta_{v_2} 
=(A^{-4}-A^4)\vartheta_{v_1}
-(A^2-A^{-2})R_{1,0}\,,
\end{equation}

\begin{equation} \label{eq:comm_nu}
A^{-2} \vartheta_{v_3}
\vartheta_{v_1} 
-A^2 
\vartheta_{v_1} \vartheta_{v_3} 
=(A^{-4}-A^4)\vartheta_{v_2}
-(A^2-A^{-2}) R_{0,1}\,,
\end{equation}

\begin{equation} \label{eq:cubic_nu}
A^{-2}
\vartheta_{v_1} 
\vartheta_{v_2}
\vartheta_{v_3}
=A^{-4}  \vartheta_{v_1}^2 
+A^4\vartheta_{v_2}^2 
+A^{-4} \vartheta_{v_3}^2
+
A^{-2}R_{1,0} 
\vartheta_{v_1}  
+ A^2R_{1,0}
\vartheta_{v_2}
+ A^{-2} R_{1,1}
\vartheta_{v_3}
+
y-2(A^4+A^{-4})\,.
\end{equation}
\end{thm}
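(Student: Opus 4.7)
The plan is to derive the presentation of $\cA_{\fD_{0,4}}$ directly from the presentation of $\cA_{\fD_\can}$ supplied by Theorem \ref{thm_eq_cubic}, by pushing forward along the quotient map $\nu \colon NE(Y) \to L$ coefficient by coefficient and identifying the resulting expressions through Corollary \ref{cor:identif_form}. The starting point is Corollary \ref{cor_eq_scat}, which gives $\fD_{0,4} = \nu(\fD_\can)$. Since the structure constants in \eqref{eq:structure_constant} depend polynomially on the wall coefficients, base change along $\nu$ commutes with the formation of the algebra $\cA_{-}$, so applying $\nu$ to the coefficients of the defining relations of $\cA_{\fD_\can}$ produces defining relations for $\cA_{\fD_{0,4}}$ with the same generators $\vartheta_{v_1}, \vartheta_{v_2}, \vartheta_{v_3}$.

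The substitution then goes as follows. Writing $q = A^4$ so that $q^{\pm 1/2} = A^{\pm 2}$, and noting that $\nu$ kills each $D_j$ (so $\nu(t^{D_j}) = 1$), the first identity of Corollary \ref{cor:identif_form} sends the sums $\sum_{j=1}^{8} t^{D_3 + L_{3j}}, \sum_{j=1}^{8} t^{D_1 + L_{1j}}, \sum_{j=1}^{8} t^{D_2 + L_{2j}}$ to $R_{1,1}, R_{1,0}, R_{0,1}$ respectively. The three commutation relations \eqref{eq:comm_1}--\eqref{eq:comm_3} of Theorem \ref{thm_eq_cubic} then transform termwise into \eqref{eq:comm_1_nu}--\eqref{eq:comm_nu}. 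The quadratic and linear terms of the cubic relation \eqref{eq:cubic} are handled by the same recipe and produce the matching terms of \eqref{eq:cubic_nu}.

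The only step requiring a short computation is the constant part of the cubic relation. By the second identity of Corollary \ref{cor:identif_form}, $\nu\bigl(\sum_{1 \leq j < j' \leq 8} t^{D_1 + L_{1j} + L_{1j'}}\bigr) = y - A^4 - 2 - A^{-4}$. On the other hand, $\nu\bigl(-(q^{1/2} - q^{-1/2})^2 t^{D_1 + D_2 + D_3}\bigr) = -(A^2 - A^{-2})^2 = -A^4 + 2 - A^{-4}$. Summing yields $y - 2(A^4 + A^{-4})$, which is exactly the constant appearing in \eqref{eq:cubic_nu}.

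No genuine obstacle is anticipated: once Corollaries \ref{cor_eq_scat} and \ref{cor:identif_form} are available, the argument is a direct substitution. The only point worth recording is that after applying $\nu$ every coefficient in each relation in fact lies in the subring $\Z[A^{\pm}][R_{1,0}, R_{0,1}, R_{1,1}, y]$ of $\Z[A^{\pm}][L]$, so the resulting presentation is valid over this smaller coefficient ring -- which is precisely the form in which $\fD_{0,4}$ and hence $\cA_{\fD_{0,4}}$ were defined.
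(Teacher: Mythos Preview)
Your proposal is correct and follows essentially the same approach as the paper: the paper obtains Theorem \ref{thm_eq_cubic_nu} by applying the quotient map $\nu$ to the presentation of $\cA_{\fD_\can}$ in Theorem \ref{thm_eq_cubic} and invoking the identities \eqref{eq:identif_form} and \eqref{eq:identif_form_2} from Corollary \ref{cor:identif_form}. Your explicit verification of the constant term and your remark that the resulting coefficients land in the subring $\Z[A^{\pm}][R_{1,0},R_{0,1},R_{1,1},y]$ are useful additions that the paper leaves implicit.
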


Comparing Theorems \ref{thm:sk_cubic} and \ref{eq:cubic_nu}, 
we obtain that there exists a unique morphism 
\begin{equation}\varphi \colon \cA_{\fD_{0,4}} 
\longrightarrow \Sk_A(\bS_{0,4})
\end{equation}
of $\Z[A^{\pm}][R_{1,0},R_{0,1}, R_{1,1},y]$-algebras
such that $ \varphi(\vartheta_{v_j})=\gamma_{v_j}$ for $j \in \{1,2,3\}$, 
and moreover that $\varphi$ becomes an isomorphism of 
$\Z[A^{\pm}][a_1,a_2,a_3,a_4]$-algebras  
after extension of scalars for  $\cA_{\fD_{0,4}} $ from 
$\Z[A^{\pm}][R_{1,0},R_{0,1}, R_{1,1},y]$ to $\Z[A^{\pm}][a_1,a_2,a_3,a_4]$.
Therefore, to conclude the proof of Theorem \ref{thm_ring_isom}, 
it remains to show the following result.

\begin{thm} \label{thm:basis_comparison}
For every $p \in B(\Z)$, we have 
\begin{equation}
\varphi(\vartheta_p) = \mathbf{T}(\gamma_p)\,.
\end{equation}
\end{thm}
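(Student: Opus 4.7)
The strategy, as outlined in Section \ref{section_intro_proof}, is to first establish $\varphi(\vartheta_{kv_j}) = \mathbf{T}(\gamma_{kv_j})$ for $j \in \{1,2,3\}$ and all $k \in \Z_{\geq 0}$ by an explicit quantum broken-line computation, and then to propagate this to all $p \in B(\Z)$ using a $PSL_2(\Z)$-symmetry shared by the two algebras. The base case proceeds by induction on $k$, the cases $k=0,1$ being immediate from $\vartheta_0 = 1$ and $\varphi(\vartheta_{v_j}) = \gamma_{v_j}$. For the inductive step, I would apply the quotient $\nu$ to Lemma \ref{lem_power_theta}; since $\nu(D_i) = 0$, the relation specializes in $\cA_{\fD_{0,4}}$ to
\begin{equation*}
\vartheta_{v_j}\vartheta_{nv_j} = \vartheta_{(n+1)v_j} + \vartheta_{(n-1)v_j}.
\end{equation*}
After applying $\varphi$ and invoking the inductive hypothesis, this becomes exactly the Chebyshev recursion \eqref{eq:chebyshev} for $T_{n+1}(\gamma_{v_j})$, which forces $\varphi(\vartheta_{(n+1)v_j}) = T_{n+1}(\gamma_{v_j}) = \mathbf{T}(\gamma_{(n+1)v_j})$.

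For the equivariance step, I would endow $\cA_{\fD_{0,4}}$ with the $PSL_2(\Z)$-action exhibited in the proof of Theorem \ref{thm_nu_all_rays}: $M \in PSL_2(\Z)$ permutes the walls of $\fD_{0,4}$ and sends $\vartheta_p \mapsto \vartheta_{M(p)}$, and acts on the coefficient ring through the quotient $PSL_2(\Z/2\Z) \simeq S_3$ that permutes $\{R_{1,0}, R_{0,1}, R_{1,1}\}$ while fixing $y$, as recorded in \eqref{S_R_action} and \eqref{eq:T_R_action}. On $\Sk_A(\bS_{0,4})$, the corresponding $PSL_2(\Z)$-action is realized geometrically by the mapping class group of $\bS_{0,4}$ via the hyperelliptic presentation $\bS_{0,4} \simeq (\R^2 \setminus (\tfrac{1}{2}\Z)^2) / (\Z^2 \rtimes \langle -\id \rangle)$ recalled in Section \ref{section_stronger_04}; it sends $\gamma_p \mapsto \gamma_{M(p)}$ and induces the same $S_3$-permutation of $\{R_{1,0}, R_{0,1}, R_{1,1}\}$. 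Since $\varphi$ is an algebra morphism determined by its three values $\varphi(\vartheta_{v_j}) = \gamma_{v_j}$, verifying equivariance reduces to checking compatibility on these generators for the standard generators $S, T \in PSL_2(\Z)$: the $S$-action is a manifest cyclic symmetry of both presentations in Theorems \ref{thm:sk_cubic} and \ref{thm_eq_cubic_nu}, while for $T$ one uses the defining relations \eqref{eq:comm_1_nu}--\eqref{eq:cubic_nu} on one side and \eqref{eq:sk_comm_1}--\eqref{eq:sk_cubic} on the other to expand $T^*\vartheta_{v_j}$ and $T^*\gamma_{v_j}$ as polynomials in the generators and match them term by term.

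Once equivariance is established, for any primitive $p \in B(\Z)$ choose $M \in PSL_2(\Z)$ with $M(v_1) = p$; then equivariance combined with the base case yields $\varphi(\vartheta_{kp}) = M^*\varphi(\vartheta_{kv_1}) = M^*T_k(\gamma_{v_1}) = T_k(\gamma_p) = \mathbf{T}(\gamma_{kp})$, while the case $p = 0$ is trivial. The hard part will be the explicit verification of the $PSL_2(\Z)$-equivariance: the $T$-action on $\Sk_A(\bS_{0,4})$ is a Dehn twist whose action on the generators $\gamma_{v_j}$ mixes them with the other generators and with the peripheral data in a nontrivial way via the noncommutative cubic relation \eqref{eq:sk_cubic}, and matching this precisely with the wall-relabeling definition of $T^*$ on $\cA_{\fD_{0,4}}$ requires careful bookkeeping of the quantization corrections in both presentations.
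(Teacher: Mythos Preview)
Your proposal is correct and follows essentially the same route as the paper: induction via Lemma~\ref{lem_power_theta} for the Chebyshev base case, then $PSL_2(\Z)$-equivariance checked on the generators $S,T$. The only refinement worth noting is that the $T$-check is lighter than you anticipate: since $Tv_1=v_1$, $Tv_3=v_2$, and $Tv_2=v_1+v_2$, the sole nontrivial verification is $\varphi(\vartheta_{v_1+v_2})=\gamma_{v_1+v_2}$, and the paper obtains this not from the cubic relation but by comparing the identity $A^2\vartheta_{v_1+v_2}=\vartheta_{v_1}\vartheta_{v_2}-A^{-2}\vartheta_{v_3}-R_{1,1}$ (Lemma~\ref{lem_product_theta_1} after applying $\nu$) with its exact skein counterpart.
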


\begin{proof}
We first prove that, for every $k \geq 0$, we have 
\begin{equation} \label{eq:v1}
\varphi(\vartheta_{kv_1})
=\mathbf{T}(\gamma_{kp_1})\,.
\end{equation} 
The isotopy class $\gamma_{kp_1}$ is the class of $k$ 
disjoint curves isotopic to $\gamma_1$, 
and so $\mathbf{T}(\gamma_{kp_1})=T_k(\gamma_{kp_1})$.
Recall that the Chebyshev polynomials
$T_k(x)$ are defined by $T_0(x)=1$, $T_1(x)=x$, $T_2(x)=x^2-2$, 
and for every $k\geq 2$, 
$T_{k+1}(x)=xT_k(x)-T_{k-1}(x)$. 

We prove that $\varphi(\vartheta_{kv_1})
=\mathbf{T}(\gamma_{kp_1})$ for every $k \geq 0$ by induction on $k$. 
The result holds trivially for $k=0$ as $\vartheta_0=1$ and $\mathbf{T}(\gamma_0)=1)$. 
It holds for $k=1$ by construction of $\varphi$: 
$\varphi(\vartheta_{v_1})=\gamma_{v_1}
=T_1(\gamma_{v_1})$.
It also holds for $k=2$: using Lemma
\ref{lem_square_theta}, we have 
\begin{equation}
\varphi(\vartheta_{2v_1})
=\varphi(\vartheta_{v_1}^2-2)
=\varphi(\vartheta_{v_1})^2-2
=\gamma_{v_1}^2-2=T_2(\gamma_{v_1}) \,.
\end{equation}
Let $k \geq 2$ and assume that the result holds for all $k' \leq k$. 
Then, using 
Lemma \ref{lem_power_theta}, we have
\begin{equation}
\begin{aligned} \varphi( \vartheta_{(k+1)v_1})
&=\varphi(\vartheta_{v_1}\vartheta_{kv_1}-\vartheta_{(k-1)v_1})
= \varphi(\vartheta_{v_1})\varphi(\vartheta_{kv_1})-\varphi(\vartheta_{(k-1)v_1}) \\
&=\gamma_{v_1}T_k(\gamma_{v_1})
-T_{k-1}(\gamma_{v_1})=T_{k+1}(\gamma_{v_1}) \,,
\end{aligned}
\end{equation}
and so the result holds for $k+1$.

We now explain how to deduce the result for general $p \in B(\Z)$ 
from the result for $p=kv_1$ using $PSL_2(\Z)$-symmetry. 
In order to simplify the notation, we write $R$ for
$\Z[A^{\pm}][R_{1,0},R_{0,1},R_{1,1},y]$. 
Recall from the proof of Theorem \ref{thm_nu_all_rays} that 
$PSL_2(\Z)$ acts through its finite quotient $PSL_2(\Z/2\Z)$ on
$R$
by $\Z[A^{\pm}]$-algebra automorphisms permuting 
$R_{1,0}$, $R_{0,1}$, $R_{1,1}$, and fixing $y$. 
We define  below actions of $PSL_2(\Z)$ on 
$\cA_{\fD_{0,4}}$ and $\Sk_A(\bS_{0,4})$ lifting the action on $R$.

For every $M \in SL_2(\Z)$, we define a lift $\Psi_M$ to $\cA_{\fD_{0,4}}$ 
of the action of $M$ on $R$ by 
\begin{equation}\label{eq:psi}
\Psi_M(\vartheta_p) \coloneqq \vartheta_{Mp} \,,
\end{equation}
where $p \mapsto Mp$ is the action of $PSL_2(\Z)$ on $B(\Z)$. 
We claim that $\Psi_M$ is an automorphism of $\cA_{\fD_{0,4}}$ as $\Z[A^{\pm}]$-algebra.
Indeed, the Definition \ref{defn_scat_04} of $\fD_{0,4}$ 
has the following manifest $PSL_2(\Z)$-symmetry: 
for every $M \in PSL_2(\Z)$ and $p=(m,n) \in B(\Z)$ 
with $m$ and $n$ coprime, 
the function attached to the quantum ray  $\rho_{Mp}$ is obtained
by applying the action of $M \in PSL_2(\Z)$ on $R$ 
to the coefficients of the function attached to the quantum ray 
$\rho_p$. The compatibility of $\Psi_M$ with the product structure of $\cA_{\fD_{0,4}}$ 
then follows from the Definition \ref{def:algebra} of the product of $\cA_{\fD_{0,4}}$
in terms of quantum broken lines for 
$\fD_{0,4}$. Thus, $M \mapsto \Psi_M$ defines an action of $PSL_2(\Z)$ on 
$\cA_{\fD_{0,4}}$ by automorphisms of $\Z[A^{\pm}]$-algebras lifting the action on $R$. 

On the other hand, given the geometric definition of the skein algebra, 
there is a natural action of the mapping class group $MCG(\bS_{0,4})$ 
of $\bS_{0,4}$ on $\Sk_A(\bS_{0,4})$ by 
automorphisms of $\Z[A^{\pm}]$-algebras. 
Recall that the mapping class group is the group of isotopy classes of 
orientation-preserving diffeomorphisms. 
The mapping class group $MCG(\bS_{0,4})$ 
contains a natural subgroup isomorphic to $PSL_2(\Z)$,  
which is coming from the description of $\bS_{0,4}$ as a quotient of a $4$-punctured torus
by an involution, and from the fact that 
the mapping class group of the torus is $SL_2(\Z)$. 
In fact  $MCG(\bS_{0,4})$ is a semi-direct product of 
$PSL_2(\Z)$ with $\Z/2\Z \times \Z/2\Z$ (see e.g.\ Section
2.2.5 of \cite{MR2850125}). 
The action of $PSL_2(\Z)$ on 
$\Sk_A(\bS_{0,4})$ is reviewed at the beginning of 
Section 4 of \cite{bakshi2018multiplying}: this action
$M \mapsto \Phi_M$ lifts the action of $PSL_2(\Z)$ on $R$ and satisfies 
\begin{equation} \label{eq:phi}
\Phi_M(\gamma_p) = \gamma_{Mp} \,,
\end{equation} 
for every $M \in PSL_2(\Z)$ and $p \in B(\Z)$. 

We claim that $\varphi \colon \cA_{\fD_{0,4}} \rightarrow \Sk_A(\bS_{0,4})$ 
intertwines between the actions $\Psi$ and $\Phi$ of $PSL_2(\Z)$ on 
$\cA_{\fD_{0,4}}$ and $\Sk_A(\bS_{0,4})$, that is 
\begin{equation}\label{eq:intertw}
\varphi \circ \Psi_M= \Phi_M \circ \varphi
\end{equation} for every $M \in PSL_2(\Z)$. 
It is enough to check it for the generators $S$ and $T$ of $PSL_2(\Z)$ given in \eqref{eq:ST}. 
The result is clear for $S$: we have $Sv_1=v_2$, $Sv_2=v_3$, $Sv_3=v_1$, 
and so $\varphi \circ \Psi_S(\vartheta_{v_j})=\Phi_S(\gamma_{v_j})$ 
for $j \in \{1,2,3\}$ follows by combining Equations 
\eqref{eq:psi} and \eqref{eq:phi}. 
Similarly, we have $T(v_1)=v_1$, $T_{v_2}=v_1+v_2$, $Tv_3=v_2$, 
so $\varphi \circ \Psi_T(\vartheta_{v_j})=\Phi_T(\gamma_{v_j})$ for
$j \in \{1,3\}$ follows by combining Equations 
\eqref{eq:psi} and \eqref{eq:phi}.
But we need an extra argument for $j=2$: 
one needs to show that $\varphi(\vartheta_{v_1+v_2})
=\gamma_{v_1+v_2}$. This follows from the fact that 
\begin{equation}
A^2 \vartheta_{v_1+v_2} 
=\vartheta_{v_1} \vartheta_{v_2} 
- A^{-2} \vartheta_{v_3}
- R_{1,1} \,,
\end{equation}
in $\cA_{\fD_{0,4}}$ by Lemma \ref{lem_product_theta_1}
and 
\begin{equation}
A^2 \gamma_{v_1+v_2} 
=\gamma_{v_1} \gamma_{v_2} 
-A^{-2} \gamma_{v_3}
- R_{1,1} \,,
\end{equation}
in $\Sk_A(\bS_{0,4})$ by the formula above Equation (2.5) in \cite{bakshi2018multiplying}.

We can now end the proof of Theorem \ref{thm:basis_comparison}. 
Let $p \in B(\Z)$. There exists $M \in PSL_2(\Z)$ and $k \in \Z_{\geq 0}$ 
such that $p=M(kv_1)$. Then, 
\begin{equation} \varphi(\vartheta_p)
=\varphi(\vartheta_{M(kv_1)})
=\varphi(\Psi_M(\vartheta_{kv_1}))
=\Phi_M(\varphi(\vartheta_{kv_1}))
\end{equation}
\[=\Phi_M(\mathbf{T}(\gamma_{kv_1}))
=\mathbf{T}(\Phi_M(\gamma_{kv_1}))
=\mathbf{T}(\gamma_{M(kv_1)})
=\mathbf{T}(\gamma_p) \,,\]
where we use successively \eqref{eq:psi},
\eqref{eq:intertw}, \eqref{eq:v1}, 
the fact that $\Phi_M$ is an algebra automorphism,
and \eqref{eq:phi}.
\end{proof}

\newcommand{\etalchar}[1]{$^{#1}$}

\vspace{+8 pt}
\noindent
Institute for Theoretical Studies \\
ETH Zurich \\
8092 Zurich, Switzerland \\
pboussea@ethz.ch

\end{document}